\documentclass[oneside,english]{amsart}
\usepackage[T1]{fontenc}
\usepackage{color}
\usepackage{babel}
\usepackage{amstext}
\usepackage{amsthm}
\usepackage{amsmath}
\usepackage{amssymb}
\usepackage{bbold}
\usepackage{graphicx}
\usepackage[show]{ed}

\usepackage[a4paper]{geometry}

\usepackage[unicode=true,pdfusetitle,
 bookmarks=true,bookmarksnumbered=false,bookmarksopen=false,
 breaklinks=false,pdfborder={0 0 1},backref=false,colorlinks=true]
 {hyperref}
 \usepackage{mathrsfs}
 
 \usepackage{bm}

\definecolor{myblue}{rgb}{0,0,0.6}

\hypersetup{
 linkcolor=red,urlcolor=red,citecolor=red}

\makeatletter

\AtBeginDocument{\providecommand\lemref[1]{\ref{lem:#1}}}
\AtBeginDocument{\providecommand\thmref[1]{\ref{thm:#1}}}

\numberwithin{equation}{section}
\numberwithin{figure}{section}

\theoremstyle{plain}
\newtheorem{thm}{\protect\theoremname}[section]

\theoremstyle{definition}
\newtheorem{defn}[thm]{\protect\definitionname}

\theoremstyle{plain}
\newtheorem{lem}[thm]{\protect\lemmaname}

\theoremstyle{plain}

\theoremstyle{plain}
\newtheorem{prop}[thm]{\protect\propositionname}

\theoremstyle{plain}
\newtheorem*{claim*}{\protect\claimname}

\theoremstyle{remark}

\newtheorem{remark}[thm]{\protect\remarkname}

\theoremstyle{plain}
\newtheorem{assum}[thm]{Assumption}

\makeatother

\providecommand{\definitionname}{Definition}
\providecommand{\lemmaname}{Lemma}
\providecommand{\propositionname}{Proposition}
\providecommand{\theoremname}{Theorem}
\providecommand{\remarkname}{Remark}
\providecommand{\claimname}{Claim}
\providecommand{\corname}{Corollary}

\newcommand\R{\mathbb{R}}
\newcommand\N{\mathbb{N}}

\newcommand\bna{\begin{align*}}
\newcommand\ena{\end{align*}}

\newcommand\bnan{\begin{align}}
\newcommand\enan{\end{align}}

\newcommand\bneq{\begin{eqnarray*}\left\lbrace \begin{array}{rcl}}
\newcommand\eneq{\end{array} \right.\end{eqnarray*}}
\newcommand\bneqn{\begin{eqnarray}\left\lbrace \begin{array}{rcl}}
\newcommand\eneqn{\end{array} \right.\end{eqnarray}}

\newcommand\nor[2]{\left\|#1\right\|_{#2}}

\newcommand{\e}{\varepsilon}

\newcommand\tend[2]{\underset{#1 \to #2}{\longrightarrow}}

\newcommand\limsu[2]{\underset{#1\to #2}{\varlimsup}}
\newcommand\limvar[2]{\underset{#1\to #2}{\lim}}
\newcommand{\SO}{ S_{\Omega}}
\newcommand{\NLSO}{{\mathscr S}_{\Omega}}
\newcommand{\NLSF}{\mathscr S_{\mathbb R ^3}}

\newcommand{\EO}{\mathscr E_{\Omega}}
\newcommand{\SF}{S_{\mathbb R ^3}}

\newcommand{\Hu}{\mathcal{H}}
\newcommand\petito[1]{o(#1)}

\def\OO{{\mathcal O}}

\title[Nonlinear waves outside weakly trapping obstacles]{
On scattering and profile decomposition for critical nonlinear waves outside weakly trapping obstacles}

\author[D. Lafontaine]{David Lafontaine}
\address{CNRS and Institut de Mathématiques de Toulouse ; UMR5219, Université de Toulouse ; CNRS, UPS IMT, F-31062 Toulouse Cedex 9 (France)}
\email{david.lafontaine@math.univ-toulouse.fr}

\author[C. Laurent]{Camille Laurent}
\address{CNRS UMR 9008, Universit\'e Reims-Champagne-Ardennes, Laboratoire de Math\'ematiques de Reims (LMR), Moulin de la Housse-BP 1039, 51687 REIMS cedex 2, France}
\email{camille.laurent@univ-reims.fr}

\begin{document}

\maketitle
\begin{abstract}
    We prove scattering for the defocusing energy-critical non-linear wave equation with Dirichlet boundary conditions outside two strictly convex obstacles in dimension three.
    This is the first large data scattering result for such an equation in the presence of trapped trajectories.
    
    Our result is in fact more general and can be used as a black box in other geometries. 
    More precisely, under the assumptions that the corresponding linear wave equation satisfies global Strichartz estimates, that the domain is weakly non-trapping and that trajectories do not reconcentrate, we show linear and nonlinear profile decompositions in infinite time. This implies scattering under the rigidity assumption that the only compact-flow solution is the trivial one.
    
\end{abstract}

\setcounter{tocdepth}{1}
\tableofcontents

\section{Introduction}

We are interested in the defocusing energy-critical non-linear wave equation outside some obstacles in $\mathbb R^3$ with Dirichlet boundary condition
\begin{equation} \label{eq:NLWomega} \tag{NLW$_\Omega$}
\begin{cases}
\partial^2_t u - \Delta u + u^5 =0 \text{ in }\Omega,\\
u = 0 \text{ on }\partial \Omega, \\
\vec u_{\restriction t=0}= (\varphi^0, \varphi^1) \in \dot H_0 ^1 \times L^2(\Omega),
\end{cases}
\end{equation}
where $\Omega := \mathbb R^3 \backslash \Theta$ and $\Theta \subset \mathbb R^3$ is a  compact subset of $\mathbb R^3$ with smooth boundary, and $\vec u$ denotes $(u, \partial_t u)$. Solutions enjoy the conserved energy
$$
\EO(\vec u(t)) := \frac 12 \int_\Omega |\nabla u(t,x)|^2\,dx + \frac 12 \int_\Omega |\partial_t u(t,x)|^2\,dx + \frac 16 \int_\Omega |u(t,x)|^6\,dx,
$$
which yields a uniform bound of the norm of solutions in $\dot H^1 \times L^2$.

In the case of the free space $\Omega = \mathbb R^3$,
global existence of solutions was proved for smooth radial data by Struwe \cite{struwe88},
then extended to the non-radial case by Grillakis \cite{Grillakis90}, and 
global existence for data in the energy space $\dot H ^1 \times L^2$ was then obtained by Shatah and Struwe \cite{ShatahStruwe94}. Global well-posedness of solutions in the particular case of the exterior of a strictly convex obstacle was then proved by Smith and Sogge \cite{SS95}.
Global well-posedness of solutions with the equation posed \emph{inside} a domain was finally obtained by Burq, Lebeau, and Planchon \cite{BurqLebeauPlanchon08}, and their result extends immediately to the case we are interested in of the exterior of any obstacle by finite speed of propagation. 
 
 As solutions exist for all time, it is natural to wonder if they scatter: that is, if solutions behave linearly in large time. More precisely, we say that a solution to (\ref{eq:NLWomega}) \emph{scatters} in $\dot H ^1 \times L^2$ if there exists a solution $u_L$ to the \emph{linear} wave equation in $\Omega$ 
\begin{equation} \label{eq:LWomega} \tag{LW$_\Omega$}
\partial^2_t u_L - \Delta u_L =0 \text{ in }\Omega,\hspace{0.3cm} u_L = 0 \text{ on } \partial \Omega,
\end{equation}
 so that
 $$
 \Vert (u(t), \partial_t u(t)) - (u_L(t), \partial_t u_L(t))\Vert_{\dot H ^1 \times L^2} \rightarrow 0 \; \text{ as } t \rightarrow \pm \infty.
 $$ 
In a seminal work, Bahouri and Shatah \cite{BahouriShatah98} proved the decay of the nonlinear part of the energy $\int |u|^6 \rightarrow 0$ for solutions in the free space. Bahouri and G\'erard \cite{BahouriGerard99} then combined this decay estimate with suitable Strichartz estimates, such as described below in the paper, to show the scattering in $\mathbb R^3$.
 The decay of  the nonlinear part of the energy is obtained by integrating a divergence identity over a light-cone extending to infinity. Blair, Smith and Sogge \cite{MR2566711} remarked that, reproducing the same integration by parts outside an obstacle, a boundary term arises, which has the right sign when the obstacle is star-shaped, leading  to scattering in this case. In general, one can still obtain scattering provided the boundary term decays sufficiently in large time: this yields to the remark that, if there is no energy in mean in large time near the obstacle, then, there is scattering (see Lemma \ref{lem:contrenerg} below) -- such a property can be seen as a non-linear counterpart to the decay of the local energy for the linear equation, the study of which was pioneered by Morawetz, Raltson and Strauss \cite{Morawetz61, Morawetz75, Strauss75, MorawetzRalston77}. A natural conjecture is that such a decay of the local energy of solutions to (\ref{eq:NLWomega}), and hence the scattering, should hold at least for any non-trapping obstacle, for which all the rays of geometrical optics are going to infinity. However, such a result seems out of reach with the current techniques: as there is little hope to be able to use truly micro-local techniques for such a non-linear problem, a strategy in the spirit of \cite{GV85a} is to use vector-field multipliers computations, 
 inspired by the  works of Morawetz \cite{Morawetz75}, and relying on the choice of a multiplier adapted to the geometry of the obstacle.
 However, this method turns out to be very rigid in the nonlinear case, because the terms that were of lower order in the linear case now contribute a priori equally, and for these terms to have the right sign, a multiplier with a negative bilaplacian would be needed, which seems rather unatural (such a remark already goes back to \cite{Strauss75} in the linear case, before it was observed that theses terms are indeed of lower order in this case thanks to Lax theory).  For this reason, scattering of solutions to (\ref{eq:NLWomega}) had for now only been shown for star-shaped obstacles \cite{MR2566711} and generalisations  of this notion \cite{Farah1, Farah2}, \cite{DStrW}. We refer also to some recent results for small perturbation of the flat metric, \cite{LT:21, DL:26}.
 
 The situation is even worse in the \emph{trapping} case, i.e.~when some rays of geometrical optics stay in a compact for all time. Indeed, if $Z$ is a Morawetz vector-field (for example, the gradient of a multiplier such as described above), $(x, \xi) \rightarrow Z(x) \cdot \xi$ is an espace function, that is, a function of the phase-space that is strictly increasing along the flow of geometrical optics. Therefore, periodic orbits rule out the existence of such vector fields. As a consequence, there is no hope to be able to show the decay of the nonlinear local energy, and hence the scattering, by vector fields methods only, outside a trapping obstacle. One of the purposes of this paper is to show the first scattering result  in such a trapping situation. More precisely, we are interested in the canonical case of \emph{unstable} trapping for the problem with boundaries: the exterior of two strictly convex obstacles, for which there is only one, unstable, trajectory trapped both in the future and in the past. Recall that 
such an unstable trapping framework has attracted a lot of attention in the linear case since the works of Ikawa \cite{Ikawa1, Ikawa2} -- see in particular \cite{BurqDuke}, \cite{NZ1, NZ2}. We are able to deal with pairs of strictly convex obstacles $\Theta_1, \Theta_2$ satisfying the following symmetry assumption, which is for example verified in the case of two balls.
\begin{assum}\label{ass:normal}
The line which carries the trapped trajectory intersects $\partial\Theta_1$ and $\partial\Theta_2$ only normally.
\end{assum} 
Our first main result is the following.
 
 \begin{thm}
 \label{th:main}
Let $\Theta_1$ and $\Theta_2$ be two smooth strictly convex subsets of $\mathbb R ^3$ verifying Assumption \ref{ass:normal}, and $\Omega := \mathbb R^3 \backslash (\Theta_1 \cup \Theta_2)$. Then, any solution of \eqref{eq:NLWomega} scatters.
Moreover, for any $R_0>0$, there exists $C(R_0)>0$ so that for any $(\varphi^0, \varphi^1) \in \dot H_0 ^1 \times L^2 (\Omega)$ with $\EO (\varphi^0, \varphi^1)\leq R_0$, the solution $u$ of \eqref{eq:NLWomega} satisfies
\begin{align}
\label{unifStrich}
    \nor{u}{L^5(\R,L^{10}(\Omega)) }\leq C(R_0).
\end{align}
\end{thm}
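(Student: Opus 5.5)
The proof will follow the concentration--compactness/rigidity scheme of Kenig--Merle, using the general black-box result announced in the abstract. That result provides a profile decomposition in infinite time. It applies under three hypotheses: global Strichartz estimates for (LW$_\Omega$), weak non-trapping, and non-reconcentration of trajectories. The first step is to verify these three hypotheses for $\Omega=\mathbb R^3\backslash(\Theta_1\cup\Theta_2)$.

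\emph{Global Strichartz estimates.} Outside two strictly convex obstacles, the trapped set is a single hyperbolic orbit. For such geometries, Strichartz estimates with logarithmic loss, or without loss after a suitable smoothing argument, are known from the works following Ikawa (Burq, Nonnenmacher--Zworski, and the Strichartz results of Burq--Guillarmou--Hassell type). Combined with local smoothing away from the trapped ray, this should give global-in-time $L^5L^{10}$ estimates. I will take this as input from the literature.

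\emph{Weak non-trapping and non-reconcentration.} The set of trapped directions, namely the points of the unique periodic ray carried by the line joining the obstacles, has measure zero in phase space. Every other generalized bicharacteristic leaves any compact set. Assumption~\ref{ass:normal} is used here: it guarantees that the trapped ray hits the boundaries normally. As a result, the gliding and grazing issues do not interact with the trapping, and the Melrose--Sj\"ostrand flow is continuous near the trapped set. From this I expect that a microlocal defect measure cannot stay concentrated near the trapped orbit. The measure is invariant, so it would have to be supported on a measure-zero invariant set. Hyperbolicity should prevent mass from accumulating on the orbit over long times, because rays only approach it along its stable manifold.

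With these in hand, the general theorem reduces scattering to the following statement. If scattering failed at some critical energy $E_c$, there would exist a nonzero critical element $u_c$ whose trajectory $\{\vec u_c(t)\}$ is precompact in $\dot H^1_0\times L^2$ modulo nothing, since translations are excluded by the obstacle and the limit at infinity would be a free-space profile, which scatters by Bahouri--G\'erard. The remaining step is rigidity, namely $u_c\equiv0$. Here I will use Lemma~\ref{lem:contrenerg}: a compact-flow solution has uniformly small energy outside a large ball. A flux and Morawetz-type estimate on the light cone, as in Bahouri--Shatah and Blair--Smith--Sogge, then shows that the boundary contribution is controlled by the local energy near the obstacle. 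Compactness, together with the linear fact that no nonzero finite-energy solution can keep its energy near the obstacles, then forces $\vec u_c=0$. I expect this to go through a unique continuation argument or via the profile decomposition applied to time translates. The uniform bound~\eqref{unifStrich} follows from the same contradiction argument. If no such $C(R_0)$ existed, a sequence with diverging norms would produce a critical element, which contradicts rigidity.

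The main obstacle is rigidity, since vector-field methods are unavailable because of trapping. My first attempt will be to show that the compact orbit of $u_c$ generates, by weak limits in time, a solution whose defect measure is invariant and supported on the trapped ray, and then to exclude it by hyperbolicity. As a fallback, I will argue that compactness together with decay of local energy for the linear flow, which holds here with a loss, forces the nonlinear local energy to vanish in mean. Lemma~\ref{lem:contrenerg} then concludes the argument.
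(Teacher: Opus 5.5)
Your overall architecture matches the paper's: Theorem \ref{th:const_intro} as a black box, then rigidity for compact-flow solutions, closed by the scattering criterion of Lemma \ref{lem:contrenerg}. The bound \eqref{unifStrich} then follows from the definition of $E_c$. The gap is that the rigidity step, which is the heart of the theorem, has no working mechanism in your proposal.

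Your first attempt cannot succeed. The orbit $\{\vec u_c(t)\}$ is strongly precompact modulo nothing, so every sequence $\vec u_c(t_n)$ converges strongly along subsequences. It therefore carries no high-frequency defect measure on which the ray dynamics, and hence the hyperbolicity of the trapped orbit, could act. The fallback fails too. Linear local energy decay (with loss) does not transfer to a solution that is by hypothesis not asymptotically linear, since the Duhamel source $u_c^5$ does not decay. Neither unique continuation nor profile decompositions of time translates of a precompact orbit provide the monotone quantity you need.

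The missing idea is an \emph{almost} Morawetz multiplier that exploits the fact that the trapped set lies on a single line. Take $\chi(x)=|x-c|+|x+c|$ with $c=(c_1,0,0)$ on the axis of the trapped ray and $c_1$ large. Then:
\begin{itemize}
\item $\chi$ is convex, $\Delta^2\chi=-8\pi(\delta_c+\delta_{-c})\le0$, and $\Delta\chi\gtrsim1$ on $B(0,A)$;
\item $\nabla\chi\cdot(-n)\ge0$ on $\partial\Omega$ (Lemma \ref{lem:poids}). By strict convexity and Assumption \ref{ass:normal}, after reducing to the plane through $x$ containing the axis, the inward normal line from each boundary point meets the segment $(-c,c)$. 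There $\nabla\chi=0$, and $\nabla\chi$ is monotone.
\end{itemize}
Lemma \ref{lem:moment} then gives $\int_0^T\int_\Omega |u|^6\Delta\chi+(D^2\chi\nabla u,\nabla u)\lesssim E$ uniformly in $T$. The Hessian degenerates only on the axis. Precisely, $(D^2\chi\xi,\xi)\gtrsim\alpha|\xi|^2$ on $S(\alpha)=\Omega\cap B(0,A)\cap\{b^2\ge\alpha\}$, where $b(x)$ is the sine of the angle between $x-c$ and $x+c$. The rest of $\Omega\cap B(0,A)$ has measure $m(\alpha)\to0$. Compactness is used only through Lemma \ref{lem:relcompfam}, which makes $\sup_t\int_{(\Omega\cap B(0,A))\setminus S(\alpha)}|\nabla u|^2$ small. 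Taking $\alpha=T^{-1/2}$ yields exactly the local energy decay in mean required by Lemma \ref{lem:contrenerg}.

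The verification of the geometric hypotheses is also missing.
\begin{itemize}
\item Assumption \ref{ass:nonreco} is a statement about rays issued from a point, not about defect measures near the trapped orbit. You give no argument for it.
\item Assumption \ref{ass:weaktrap} is a fiberwise statement at every $x_0$. It covers the forward- and backward-trapped tails, with escape times uniform off small closed sets of directions. It does not follow from the trapped set being null in phase space.
\end{itemize}
The paper proves both (Lemmas \ref{lem:rays_noreco} and \ref{lem:finite_trap}) by a dispersing-billiard monotonicity. Along a fixed reflection history, $(x_1^{(k)}-x_2^{(k)})\cdot(\xi_1^{(k)}-\xi_2^{(k)})$ does not decrease at reflections, by convexity. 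It grows by $(t_1^{(k)}+t_2^{(k)})(1-\xi_1^{(k)}\cdot\xi_2^{(k)})$ along free flights. Coinciding endpoints force it to be $\le0$ at the end. Hence each history carries at most one direction from $x$ to $x_0$ in time $t$, and at most two forward- and two backward-trapped rays pass through each point.

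Finally, Assumption \ref{ass:normal} plays no role in the flow properties: a two-bounce periodic ray always hits both obstacles normally. What the assumption constrains is the other intersections of that line with $\partial\Theta_j$. It is used only to sign the boundary term $-\frac12\int_{\partial\Omega}|\partial_n u|^2\partial_n\chi$ in the rigidity argument.
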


Note that by \emph{strictly convex}, we mean having principal curvatures bounded below by a strictly positive constant.

Because of the periodic trajectory, a Morawetz multiplier cannot exist in the exterior of 
$\Theta_1 \cup \Theta_2$. However, following the progress \cite{D2pot} concerning a model case without boundary, we are able to exhibit an \emph{almost} Morawetz multiplier, that is, a multiplier which has the right behaviour \emph{everywhere but in an arbitrarily small neighborhood of the trapped trajectory}. 
This remark alone is not enough to obtain scattering, but it permits to rule out the existence of (hypothetical) non-trivial non-scattering solutions with a compact flow in $\dot H^1 \times L^2$ (see Theorem \ref{thm:rig}): such solutions cannot concentrate their energy in a set of arbitrarily small measure, hence in particular in the neighborhood of the trapped ray, and therefore must scatter by our almost-multiplier argument.  
This argument will be nothing but the \emph{rigidity} step in a concentration-compactness/rigidity scheme, first introduced by Kenig and Merle \cite{KeMe06, KeMe08} and relying on profile decompositions that originate from \cite{BahouriGerard99} in their modern form, to show the scattering. Arguing by contradiction, one constructs in a \emph{concentration-compactness} step a non-trivial, non-scattering compact-flow solution, the existence of which is ruled out in the rigidity step. 

Whereas this later rigidity step of our proof relies heavily on the precise geometry we are interested in, we are able to show profile decompositions and carry the concentration-compactness step in a quite general framework of \emph{weak trapping}. This constitutes the second main result of this paper, stated as Theorem \ref{th:const_intro} below, and we now state the precise assumptions under which we are able to do so. In the following, $t \mapsto \gamma_t(x, \xi)$ denotes the generalized geodesic starting at $x\in \overline \Omega$ in direction $\xi \in \mathbb S^2$ and parametrized by the time $t\in \mathbb R$. More precisely, it is defined in the following way: we denote $\varphi_t$ the generalized bicharacteristic flow for $-\Delta$ in $\overline{\Omega}$, defined on the compressed co-tangent bundle $^b T^* 
\Omega$, $j : T^*\overline{\Omega} \to ^b T^* \Omega$ the canonical projection (we refer to \cite[Section 3.1]{B:97b} for the precise definitions; see Section \ref{sec:conc_prof} below for more details), and $\pi_x$ the projection on $\overline{\Omega}$. Then 
$\gamma_t(x,\xi):=\pi_x  \varphi_t j(x,\xi)$.

\begin{assum}[Non reconcentration] \label{ass:nonreco}
For all $x,x_{0}\in\overline{\Omega}$ and all $t\geq0$ we have
\[
\Big|\big\{ \xi\in \mathbb{S}^2\ \text{s.t.}\ \gamma_t(x,\xi)=x_{0} \big\}\Big| =0,
\]
where $|\cdot|$ denote the Lebesgues mesure  on $\mathbb{S}^2$.
\end{assum}
\begin{assum}[Weak trapping] \label{ass:weaktrap}
For any $x_{0}\in\overline{\Omega}$, there exists a non-increasing family of closed subsets $(V_n)_{n\in\N}$, $V_n \subset \mathbb{S}^2$, so that the following holds: $|V_n| \to 0$ as $n \to \infty$, and, for any $R>0$ and $n\in \N$, there exists a time $T_{R,n}<+\infty$ so that for all direction $\xi\notin V_{n}$ we have $\gamma_t(x_0,\xi)\in B(0,R)^c$ for all $|t|\geq T_{R,n}$.
\end{assum}

\begin{assum}[Strichartz estimates]  \label{ass:strichartz}
The linear wave equation in $\Omega$ verifies the same global-in-time Strichartz estimates as in $\mathbb R^3$, at least for the Strichartz-admissible pair $( 5, 10)$ and another Strichartz admissible-pair $(r, s)$ with $s>10$. In other words, there exists $(r, s)$ with $s>10$  veryfying 
\begin{align}
\label{admissibleStric}
   \frac 12 = \frac 1r + \frac 3s, \hspace{0.5cm} \frac 1r \leq \frac 12 - \frac 1s, \hspace{0.5cm} r \neq 2,
\end{align}
so that for any solution $u$ to the linear wave equation in $\Omega$ 
$$
\Vert u \Vert_{L^p(\mathbb R, L^{q}(\Omega))} \lesssim \Vert (u(0), \partial_t u(0)) \Vert_{\dot H^1 \times L^2} \; \text{ for } (p, q) \in \{ (5, 10), (r, s) \}.
$$
\end{assum}

Let us comment on these assumptions. Assumption \ref{ass:nonreco} states that in any point of the domain, the set of directions that allow the trajectories of geometrical optics to re-concentrate later in time is of finite measure: the typical examples of geometries ruled out by this assumption are obstacles the boundary of which contain a (concave) portion of a sphere. This assumption is verified in the exterior of two strictly convex obstacles in Lemma \ref{lem:rays_noreco}. While, to the difference of Assumptions \ref{ass:weaktrap} and \ref{ass:strichartz} 
(that are in particular always true outside any non-trapping obstacle as we will see), this assumption might be only technical, and we believe that it is verified generically.
Assumption \ref{ass:weaktrap} expresses that in any point, all directions but a set of arbitrarily small measure allow to escape at infinity:
this reflects the fact that the obstacle is weakly trapping. It is proved in the exterior of two strictly convex obstacles in Lemma \ref{lem:finite_trap}. Remark that it is an assumption on the \emph{size} (at any point of the physical space) of the trapped set (and its tails, i.e.~trajectories trapped only in the future or only in the past), not on its stability. The \emph{stability} of the trapped set in turns plays a role in Assumption \ref{ass:strichartz}, that concerns the linear flow in the domain. This assumption states that the trapping, and finite-time concentration effects, are sufficiently  weak to preserve the free Strichartz estimates (obtained in $\mathbb R^3$ by \cite{Str77}, \cite{GV85b}, \cite{LS}, and \cite{KT}) for at least two well chosen couple of Strichartz exponents. Such estimates were obtained (with the full range of indices) outside one convex obstacle by \cite{SS95} (see also \cite{Oana10} for the Schr\"odinger equation). In the exterior of two strictly convex obstacles, this is the main result of \cite{DStrW} (see also \cite{DStrS} for the Schr\"odinger equation, \cite{DStrM} for finitely many strict convex verifying the Ikawa condition, and \cite{BGH} for the analogous boundaryless case). Remark that \cite{BurqLtG} proved that under the non-trapping assumption, the local-energy decay permits to glue finite-time Strichartz estimates to obtain global ones. In the case of \cite{DStrW}, and following \cite{BGH}, the loss due to the trapped trajectory in the local-energy decay (when asking for a rate with respect to the energy of the data)  is compensated by proving Strichartz estimates in time $\sim | \log h |$ for a data of frequency $\sim h^{-1}$. Finally, let us mention that when a portion of the boundary is concave, there is a loss in the range of admissible Strichartz pairs with respect to the free case \cite{Oana12}.
However, \cite{BurqLebeauPlanchon08} and \cite{MR2566711} showed that (finite time) Strichartz estimates with Strichartz pairs $(5, 10)$ and some $(r, s)$ with $r<5$ (for example $(\frac 72, 14)$) always hold: combined with the result of \cite{BurqLtG}, this shows that Assumption \ref{ass:strichartz} in particular always holds outside a non-trapping obstacle.

We are able to carry the concentration-compactness step of our proof of scattering under the previous general assumptions, which constitutes the second main result of this paper:
\begin{thm}[Concentration-compactness under weak trapping]\label{th:const_intro}
Assume that $\Omega$ has a smooth, compact boundary and verifies Assumptions \ref{ass:nonreco}, \ref{ass:weaktrap}, \ref{ass:strichartz}. If (\ref{eq:NLWomega})
has no non-trivial solution with a relatively compact flow $\big\{ (u(t), \partial_t u(t)), \; t \in \mathbb R\big\}$ in $\dot H_0^1 \times L^2 (\Omega)$, then
any solution of  (\ref{eq:NLWomega}) scatters and satisfies the same uniform Strichartz estimates as \eqref{unifStrich}.
\end{thm}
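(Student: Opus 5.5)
We follow the Kenig--Merle concentration-compactness scheme. The whole difficulty lies in building linear and nonlinear profile decompositions adapted to $\Omega$ and valid for infinite time. For $A>0$ set
$$L(A):=\sup\{\nor{u}{L^5(\R,L^{10}(\Omega))} : \EO(\vec u(0))\le A\},$$
and let $A_c:=\sup\{A: L(A)<\infty\}$. Small data theory, which follows from Assumption \ref{ass:strichartz} by a standard fixed point in $L^5L^{10}$, gives $A_c>0$. It also gives that a finite $L^5L^{10}$ norm implies scattering. So it suffices to show $A_c=\infty$, and we argue by contradiction, assuming $A_c<\infty$. We then take a sequence of solutions $u_n$ with $\EO\to A_c$ and $\nor{u_n}{L^5L^{10}}\to\infty$. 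The aim is to extract a critical element with relatively compact flow, which the hypothesis forbids.

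\textbf{Linear profile decomposition.} We decompose bounded sequences in $\dot H^1_0\times L^2(\Omega)$ as
$$\sum_j S_\Omega(-t_n^j)\,\Lambda_n^j \varphi^j+w_n^J,\qquad \limsup_n\nor{S_\Omega(t)w_n^J}{L^5L^{10}}\to0 \text{ as } J\to\infty.$$
Here $\Lambda_n^j$ is a concentration operator with scales $h_n^j$ and cores $x_n^j$. There are three types of profiles: those living at scale $1$ in $\Omega$; those concentrating at a point, possibly of $\partial\Omega$, which become free-space or half-space profiles after rescaling; and those escaping to infinity, which become free profiles. Smallness of the remainder comes from interpolation. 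We use the refined $L^5L^{10}$ bound between $L^\infty L^6$ and the second pair $(r,s)$, $s>10$, from Assumption \ref{ass:strichartz}, together with a Besov/weak-limit extraction that detects the concentration parameters $(t_n,x_n,h_n)$. Orthogonality of the parameters gives the Pythagorean expansion of the energy.

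\textbf{Nonlinear profiles — the main obstacle.} This is where the geometry enters. Take a profile concentrating ($h_n\to0$) at a point $x_0$, with $t_n/h_n$ unbounded. We must show that the associated nonlinear solution, which is built from the $\R^3$ solution via the scattering of NLW in $\R^3$ \cite{BahouriGerard99}, approximates the true solution in $\Omega$ for \emph{all} times, including after it meets the obstacle. In finite rescaled time the free approximation holds up to boundary reflections. In the long run, the energy of the concentrating wave is transported along the generalized geodesics $\gamma_t(x_0,\xi)$. Assumption \ref{ass:weaktrap} says that all but a small-measure set $V_n$ of directions leave $B(0,R)$ after time $T_{R,n}$, and Assumption \ref{ass:nonreco} prevents the energy from refocusing at a single point. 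We therefore expect to prove, via propagation of microlocal defect measures of the linear flow (the Burq--Lebeau measure propagation along $\varphi_t$), that the linear evolution of a concentrating profile becomes \emph{small} in $L^5L^{10}$ on late time intervals. The reason is that it is spread over a set of positive-measure directions without reconcentration, so the nonlinearity becomes negligible there. Once the profile is small in this sense, the long-time perturbation lemma, again based on Assumption \ref{ass:strichartz}, glues the pieces together. This is the step we expect to be hardest: it requires converting the measure-theoretic statements on rays into quantitative $L^5L^{10}$ smallness, uniformly in time, despite the trapped set. The small sets $V_n$ carry little energy and are handled with the $L^\infty\dot H^1$ bound.

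\textbf{Conclusion.} Given the nonlinear profile decomposition and the perturbation lemma, the standard argument shows that at most one profile is nonzero, and that the remainder vanishes in energy. This yields a critical solution $u_c$ with $\EO=A_c$ and infinite $L^5L^{10}$ norm whose flow, after the parameters are fixed, is relatively compact. The parameters are fixed because concentrating or escaping profiles would scatter by the step above and the free theory, so the only profile must be of scale $1$ with bounded $x_n$ and $t_n$. Since $u_c$ is nontrivial, this contradicts the hypothesis. Hence $A_c=\infty$, which gives scattering and the uniform bound \eqref{unifStrich}.
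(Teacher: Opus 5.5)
Your architecture is the paper's: critical energy, a weak-limit linear decomposition whose remainder is made small by interpolating $L^\infty L^6$ against the $(r,s)$ bound, and concentrating nonlinear profiles equal to rescaled $X_{\mathcal O}$-solutions for $|t-t_n|\le Th_n$ and to the linear Dirichlet flow of the scattering data outside, glued perturbatively. But the step you defer is the real content of the theorem, and the mechanism you propose for it does not reach infinite time.

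\textbf{The mechanism stops at bounded times.} The semiclassical defect measure is tested against symbols compactly supported in $t$. Combined with Assumption~\ref{ass:nonreco} and Lions's concentration-compactness lemma, it gives $\|v_n\|_{L^\infty([C_1,C_2],L^6)}\to0$ for fixed $0<C_1<C_2$, and nothing more. It says nothing as $t_n\to\infty$. For $h_n\ll t_n\to0$ it only sees $t=0$, where all the energy sits at $x_\infty$.

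Discarding the directions near $V_n$, as you propose, is only the first half. Its purpose is that the remaining non-trapped data leave every $B(0,R)$ after a \emph{fixed} time $T_{\rm esc}(R)$. The missing idea is to then compare with the free flow started at $T_{\rm esc}$:
\begin{itemize}
\item On $[T_{\rm esc},T_{\rm esc}+A]$, the measure gives vanishing local energy near the obstacle, hence closeness to the free flow by a Duhamel comparison.
\item For all later times, the strong Huygens principle keeps the free evolution of these essentially compactly supported data away from the obstacle. So the Dirichlet and free flows stay $o(1)$-close in energy on $[T_{\rm esc},\infty)$ (Proposition~\ref{propescapedom}).
\item You must still prove $L^6$ decay of that free flow as $t_n\to\infty$, uniformly in $n$. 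The paper uses the $\mathbb R^3$ profile decomposition plus Friedlander radiation fields.
\item The regime $h_n\ll t_n\to0$ needs a separate argument, by rescaling to unit time as in Gallagher--G\'erard.
\end{itemize}
The right target is the qualitative statement $\|v_n\|_{L^\infty(|t|\ge C_nh_n,L^6)}\to0$ for every $C_n\to\infty$ (Proposition~\ref{propnoncon_loc}). Interpolation with the $(r,s)$ bound then gives the $L^5L^{10}$ smallness, so no quantitative conversion is needed.

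\textbf{Two further claims are asserted rather than proved.}

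First, the geometric input is already needed in the linear decomposition, not only for nonlinear profiles. Your nonlinear decoupling relies on the successively extracted cores being pairwise orthogonal. Proving this requires $T_{(\mathcal O^2)^{-1},n}S_\Omega(t^2_n)\vec\psi_{\Omega,\mathcal O^1,n}\rightharpoonup0$ for orthogonal cores (Lemma~\ref{lm:orth_weak}). When $h^1_n\to0$ and $|t^2_n-t^1_n|/h^1_n\to\infty$, this is exactly the non-concentration property, and it fails e.g.\ on $\mathbb S^3$. The $L^6$ Pythagorean expansion needs it too. So ``orthogonality of the parameters'' does not come for free in $\Omega$.

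Second, ``escaping profiles become free profiles'' must hold for all times, even though these waves do eventually reach the obstacle. The dilating regime $\lambda_n\to\infty$, which needs its own argument, is also not in your list. The paper (Lemma~\ref{lem:assfree}) shows that the energy such profiles deposit near the obstacle is negligible:
\begin{itemize}
\item when $|x_n|\to\infty$, via radiation fields and a volume estimate for the intersection of a small ball with a thin spherical shell;
\item when $\lambda_n\to\infty$, via the hidden-regularity flux bound $\int_{t_1}^{t_2}\int_{\partial\Omega}|\partial_\nu u|^2\lesssim(1+t_2-t_1)E$, applied over the time window of length $O(\lambda_n)$ during which the wave meets the obstacle.
\end{itemize}
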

As a consequence, the conjecture of scattering in any non-trapping geometry stated in the beginning of our introduction now reduces (at least for non reconcentrating obstacles, i.e.\ verifying Assumption \ref{ass:nonreco}) to showing the rigidity property that the equation has no non-trivial compact flow solution. In addition, 
note that Theorem \ref{th:const_intro}  applies in particular to the exterior of a star-shaped obstacle satisfying Assumption \ref{ass:nonreco}  where the scattering was already proved, see \cite{MR2566711} using arguments of \cite{BahouriShatah98}, but it does not seem that the uniform Strichartz estimate \eqref{unifStrich} was known.

At the heart of the proof of Theorem \ref{th:const_intro} lies a \emph{linear profile decomposition} for the linear wave flow in $\Omega$, in the spirit of the one introduced by \cite{BahouriGerard99} in the free space, which we are able to prove under the same Assumptions \ref{ass:nonreco}, \ref{ass:weaktrap}, \ref{ass:strichartz} and is stated as Theorem \ref{th:lindec} below. Observe that \cite{GG} introduced such a profile decomposition in finite time for the wave equation outside a strictly convex obstacle. Our decomposition given in Theorem {\ref{th:lindec}} is more general than their result, in three directions. First, it is valid under the general weak trapping assumption discussed above, wereas \cite{GG} is only valid outside one convex obstacle. Second, it is proved in infinite time. Finally, we don't make the ``compactness at infininity'' assumption used by \cite{GG}, and hence our analysis involves more regimes of parameters. 

The proof of the linear profile decomposition relies on the asymptotic description of the profiles in all such possible regimes. We show that, when the center of a profile is going to infinity, {or} its scale parameter goes to infinity, it behaves asymptotically as a free profile, that is, a profile for the flow in $\mathbb R^n$ (see Lemma \ref{lem:assfree}) and the analog is true for corresponding non-linear profiles (Lemma \ref{lem:assfree_nl}). The most delicate case is when a profile concentrates (that is, when its scale parameter goes to zero) while remaining in a bounded region of space. In this case, the typical frequency of the solution tends to $+\infty$ and it is expected that, at least in finite time, the approximation by geometric optic is valid. As discussed before, the point is to detect where there can be some points of reconcentration which, as noticed in G\'erard \cite{G:96} following the concentration-compactness principle of Lions \cite{L:85}, can induce that the Strichartz norm becomes large. This explains our Assumption \ref{ass:nonreco} which ensures non-reconcentration for the ``classical flow''. For the linear solutions of the wave equation, this program was fulfilled by \cite{BahouriGerard99} in the free space where no reconcentration can occur. This turns out to be more complicated in non flat geometry. Such description was made in Gallagher-G\'erard \cite{GG} in the case of the exterior of a convex obstacle where no reconcentration can occur. In a compact manifold, without further assumption, the linear profile decomposition was achieved by the second author in \cite{L:11} using results of Ibrahim \cite{I:04}. Note that the phenomenum of reconcentration is described in the specific case of the sphere $\mathbb{S}^{3}$ where a data concentrating at the south pole can reconcentrate at the north pole.

Once the linear profile decomposition is at hand, we can follow the Kenig-Merle scheme to prove Theorem \ref{th:const_intro}, by showing an analogous non-linear profile decomposition, that involves now the asymptotic description of the corresponding non-linear profiles. We use in particular the known result of scattering in the free space \cite{BahouriGerard99} to deal with the non-compact non-linear profiles. 

Finally, in the exterior of two convex obstacles, the combination of Theorem \ref{th:const_intro} and of the rigidity argument discussed above and carried out as Theorem \ref{thm:rig} gives Theorem \ref{th:main}.

We end this introduction by giving a few more references related to the problem. When a Neumann boundary condition is imposed in (\ref{eq:NLWomega}), much less is known -- this is due to the fact that the boundary term which arises in Morawetz-type computations is not signed even in the simplest case of the exterior of a ball. Scattering for such a problem has been shown in the radial case in \cite{DL}, following a concentration-compactness/rigidity approach, where the rigidity part is obtained thanks to the channels of energy method (introduced in \cite{DuKeMe11a}, \cite{DuKeMe13}).
Finally, note that similar questions also appeared for Schr\"odinger like equations where the global geometry is important even in small times due to the infinite time of propagation. Some profile decompositions were studied in specific geometries, for instance in \cite{IP:12} for $\mathbb{T}^{3}$, by \cite{IPS:12} in the hyperbolic space, in \cite{PTW:14} on $\mathbb{S}^{3}$, in \cite{KVZ:16} in the exterior of a convex obstacle, and in \cite{J:19} for a non-trapping compact metric perturbation of $\R^{3}$.

\subsection*{Structure of the paper}

As preliminaries, Section \ref{sec:preli} introduces the notations used throughout the paper, states perturbative results, and discuss the vocabulary used to describe profiles.
Section  \ref{sec:ass_prof} describes the profiles with center or scale parameter going to infinity.
Section  \ref{sec:conc_prof} deals with the concentrating profiles with a localised center.  
Section \ref{sec:lin_prof} states and proves the linear profile decomposition (Theorem \ref{th:lindec}).  Section \ref{sec:cc} proves Theorem \ref{th:const_intro}. Section \ref{sec:rigi} proves the rigidity part of the argument in the exterior of two strictly convex obstacles (Theorem \ref{thm:rig}). Section \ref{sec:verifA} verifies that the assumptions of Theorem \ref{th:const_intro} are satisfied in this case.

\section{Preliminaries} \label{sec:preli}

\subsection{Notations}

\subsubsection*{Spaces of functions}
\begin{itemize}

\item $L^{p}L^{q}:=L^{p}(\mathbb{R},L^{q}(\Omega))$.
\item For an arbitrary domain $X$, $\mathcal{H}( X ) := \dot{H}^1_0(X)\times L^{2}( X )$. 
{Here $\dot{H}^1_0(X)$ is the completion of $C^{\infty}_0(X)$ for the norm $\nor{f}{X}=\nor{\nabla f}{L^2(X)}$.}
\item $\mathcal H (\Omega)$ will often be denoted $\mathcal H$.
\end{itemize}

\subsubsection*{Time derivatives}
\begin{itemize}

\item If $u$ is a function of time and space, $\vec u$ is understood to be $(u,\partial_t u)$.
\item {Conversely, if $\vec u \in \mathcal H(\Omega)$, $u$ is understood to be the first component of $\vec u$.}

\end{itemize}

\subsubsection*{Linear and nonlinear flows}
\begin{itemize}
\item $\SF$ and $\SO$ are flows of the linear wave
equation respectively in $\mathbb{R}^{3}$ and in $\Omega$
with Dirichlet boundary condition. If $(u_0,u_1)$ is the initial data,
we will denote { by} $\SF(t)(u_0,u_1)$ or $( \SF(u_0,u_1)\big)(t)$,   
and $\SO(t)(u_0,u_1)$ or $( \SO(u_0,u_1)\big)(t)$ the corresponding solutions evaluated at time $t$. 

\item $\mathscr{S}_{\mathbb{R}^{3}}$ and $\mathscr{S}_{N}$ are the corresponding
nonlinear flows for the defocusing energy critical wave equation in $\mathbb R^3$
and $\Omega$ with Dirichlet boundary condition respectively.

\item We use similar notations for $\SO$, $\SF$, and the nonlinear flows $\NLSO$ and $\NLSF$. 
The arrowed versions $\vec{ \SF }$
and $\vec{ \SO }$ denote the flows together with their
first time derivative.

\end{itemize}

\subsubsection*{Projection and extension by zero}

\begin{itemize}

\item We denote by 
$\mathcal P_\Omega$ the orthogonal projection from $\dot H^1(\mathbb R^3)$ onto
$\dot H_0^1(\Omega)$. 
\item Abusing slightly notations, if $f\in \dot H^1(\mathbb R^3)$, we denote also  by $\mathcal P_\Omega f$ the extension by zero of $\mathcal P_\Omega f$ to
$\dot H^1(\mathbb R^3)$. 
\item Finally, we denote $P_\Omega := (\mathcal P_\Omega, \mathbb{1}_\Omega)$.
\end{itemize}

\subsubsection*{Cores $\mathcal O$, profiles $\vec \varphi _{\Omega, \mathcal{O},k}$ , scaled domain $\Omega_n$ and limit domain $X_{\mathcal O}$} The notations corresponding to these notions can be found in the dedicated \S \ref{subsec:profnot}.

\subsection{Perturbative theory}
We first notice that Assumption \ref{ass:strichartz} actually implies more estimates.
\begin{lem} \label{lem:L1L2_}
 If Assumption \ref{ass:strichartz} holds, then there is $C>0$ so that for any $T>0$ we have the estimates 
 $$
 \Vert u \Vert_{L^p([0,T], L^q(\Omega))} \leq C\big(\Vert (u(0), \partial_t u(0))\Vert_{\mathcal H(\Omega)}+ \nor{f}{L^1([0,T],L^2(\Omega))} \big)$$
 for any $(p,q)$ with $r\leq p\leq +\infty$ satisfying \eqref{admissibleStric} and $u$ solution of $\Box u=f$ together with the Dirichlet boundary condition.
\end{lem}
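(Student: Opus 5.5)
We plan to interpolate the two estimates of Assumption \ref{ass:strichartz} with the energy estimate, and then pass from free solutions to the inhomogeneous problem by Duhamel's formula together with Minkowski's inequality.

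\textbf{Homogeneous estimates.} For solutions of the linear equation we have three endpoint bounds:
\begin{itemize}
\item the $L^\infty L^6$ bound, which follows from conservation of energy and the Sobolev embedding $\dot H^1_0(\Omega)\subset L^6(\Omega)$ (extend by zero to $\mathbb R^3$);
\item the $(5,10)$ estimate;
\item the $(r,s)$ estimate.
\end{itemize}
An admissible pair in \eqref{admissibleStric} with $\dot H^1$ scaling is determined by $1/p$, and $1/q=(1/2-1/p)/3$ is affine in $1/p$. Hence for $p\in[r,\infty]$ the point $(1/p,1/q)$ lies on the segment joining $(1/r,1/s)$ and $(0,1/6)$. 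This segment passes through $(1/5,1/10)$, since $s>10$ forces $r<5$.

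We then use the mixed-norm Hölder (log-convexity) inequality
$$\|u\|_{L^{p}L^{q}}\le \|u\|_{L^{p_0}L^{q_0}}^{1-\theta}\|u\|_{L^{p_1}L^{q_1}}^{\theta}$$
for exponents related by $1/p=(1-\theta)/p_0+\theta/p_1$ and $1/q=(1-\theta)/q_0+\theta/q_1$. This is applied pointwise in time via Hölder in $x$ and then Hölder in $t$, for the single function $u$, so no operator interpolation is needed. It gives $\|u\|_{L^pL^q(\mathbb R\times\Omega)}\lesssim\|\vec u(0)\|_{\mathcal H}$ for all $p\in[r,\infty]$. Only the pairs $(r,s)$ and $(\infty,6)$ are really needed here.

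\textbf{Inhomogeneous term.} Write $u(t)=S_\Omega(t)\vec u(0)+\int_0^t S_\Omega(t-\tau)(0,f(\tau))\,d\tau$, where the integrand is the solution at time $t$ with data $(0,f(\tau))$ at time $\tau$. Restrict to $t\in[0,T]$ and bound the integrand by $\mathbb 1_{\tau<t}\,\|S_\Omega(\cdot-\tau)(0,f(\tau))\|$. Minkowski's inequality in $L^p([0,T],L^q)$ then gives
$$\Big\|\int_0^t\cdots\Big\|_{L^pL^q}\le\int_0^T\big\|S_\Omega(\cdot-\tau)(0,f(\tau))\big\|_{L^p(\mathbb R,L^q)}\,d\tau.$$
Dropping the truncation $\mathbb 1_{\tau<t}$ is harmless because the norm is monotone in the integration domain. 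The time-translated free solution has the global bound from the first step, since the Strichartz norm over $\mathbb R$ is translation invariant. Moreover $\|(0,f(\tau))\|_{\mathcal H}=\|f(\tau)\|_{L^2}$. Integrating in $\tau$ yields the $\|f\|_{L^1L^2}$ term, with a constant independent of $T$.

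\textbf{Potential obstacles.} The argument is essentially routine, but two points need care.
\begin{itemize}
\item The exact Duhamel formulation with Dirichlet boundary conditions. For $f\in L^1L^2$ the solution is defined through the Duhamel formula itself, as the $\mathcal H$-valued flow is unitary for the energy, so the formula is justified by density.
\item Checking that every $p\in[r,\infty]$ satisfying \eqref{admissibleStric} lies on the interpolated segment. This is immediate from the linear relation above, including the constraint $1/p\le 1/2-1/q$, which holds on the segment because it holds at both endpoints.
\end{itemize}
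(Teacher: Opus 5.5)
Your proposal is correct and follows the paper's argument: Duhamel's formula, then Minkowski's inequality with the truncation $\mathbb 1_{\tau<t}$ dropped and the time integral extended to $\mathbb R$, then the global homogeneous estimate applied to each time-translated free solution, giving the $\|f\|_{L^1L^2}$ term with a $T$-independent constant. Your log-convexity interpolation between the $(r,s)$ estimate and the energy/Sobolev $L^\infty L^6$ bound makes explicit a step the paper leaves implicit, since it invokes Assumption~\ref{ass:strichartz} directly for an arbitrary admissible pair with $p\geq r$.
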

\begin{proof}
As is classical, this is a simple consequence of Minkowski inequality: indeed, by Assumption \ref{ass:strichartz} and Duhamel formula
    $$
        \Vert u \Vert_{L^p([0,T], L^q(\Omega))}
        \leq \Vert (u(0), \partial_t u(0))\Vert_{\mathcal H(\Omega)}+ \Big\Vert \int_0^t \SO(t-s)(0, f(s)) \, ds \Big\Vert_{L_t^p([0,T], L^q(\Omega))},
    $$
    where, by Minkowski inequality and Assumption \ref{ass:strichartz}
    \begin{align*}
    &\Big\Vert \int_0^t \SO(t-s)(0, f(s)) \, ds \Big\Vert_{L_t^p([0,T], L^q(\Omega))}= 
    \Big\Vert \int_0^T \mathbf 1_{s\in[0,t]}\SO(t-s)(0, f(s)) \, ds \Big\Vert_{L_t^p([0,T], L^q(\Omega))}  \\
    &\qquad\leq \int_0^T \Vert \mathbf 1_{s\in[0,t]}\SO(t-s)(0, f(s)) \Vert_{L_t^p([0,T], L^q(\Omega))} \, ds \\
    &\qquad\leq \int_0^T \Vert \SO(t-s)(0, f(s)) \Vert_{L_t^p (\mathbb R, L^q(\Omega))} \, ds \lesssim \int_0^T \Vert f(s) \Vert_{L^2} \, ds. 
    \end{align*}
\end{proof}

\begin{defn}
 \label{def:scattering}
We say that a solution $u$ of the nonlinear wave equation (\ref{eq:NLWomega})  \emph{scatters in the future} when there exists a solution $u_L$ of the corresponding linear wave equation  such that 
$$\lim_{t\to+\infty}\left\|\vec{u}(t)-\vec{u}_L(t)\right\|_{\mathcal H(\Omega)}=0.$$
We define similarly \emph{scattering in the past}. We say that the solution \emph{scatters} when it scatters both in the future and in the past.
\end{defn}

The next two Propositions are consequences of Strichartz estimates in a classical way.

\begin{prop}
\label{prop:perturb_scat}Let $(u_0,u_1)\in\mathcal{H}(\Omega)$ and $u(t)=\NLSO(t)(u_0,u_1)$.
\begin{equation}
u\in L^{5}\left([0,+\infty),L^{10}\right)\implies u \text{ scatters in the future}.\label{eq:pert_scat_carac}
\end{equation}
A similar property holds in the past.
Moreover, there exists $\epsilon_{0}>0$ such that, for any $(u_0,u_1)\in\mathcal{H}(\Omega)$,
\begin{equation}
\Vert(u_0,u_1)\Vert_{\mathcal{H}(\Omega)}\leq\epsilon_{0}\implies \NLSO(\cdot)(u_0,u_1)\in L^{5}(\R,L^{10}(\Omega),\label{eq:pert_scat_small}
\end{equation}
together with the estimate
\begin{equation}
\label{e:strigchglob}
    \nor{\NLSO(\cdot)(u_0,u_1)}{L^{5}(\R,L^{10}(\Omega))}\leq C \Vert(u_0,u_1)\Vert_{\mathcal{H}(\Omega)}
\end{equation}
and $\NLSO(\cdot)(u_0,u_1)$ scatters.
 Finally, for any $(u_0,u_1)\in\mathcal{H}(\Omega)$,
 there exists a solution $U^{\pm} \in L^5(\mathbb R_{\pm}, L^{10})$ of (\ref{eq:NLWomega}) such that
 \begin{equation}
 \Vert \vec{U}^{\pm}(t)-\vec \SO(t)(u_0,u_1)\Vert_{\mathcal{H}(\Omega)}\longrightarrow0,\text{ as }t\longrightarrow\pm\infty.\label{eq:pert_scat_Uj}
 \end{equation}
\end{prop}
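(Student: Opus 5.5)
The three claims all follow from the inhomogeneous Strichartz estimate of Lemma \ref{lem:L1L2_} together with the pointwise bound $\|u^5\|_{L^1 L^2}=\|u\|_{L^5L^{10}}^5$. I would need Lemma \ref{lem:L1L2_} in its energy form as well: by energy conservation for the linear Dirichlet problem, a solution of $\Box w=f$ satisfies
$$\sup_t\|\vec w(t)\|_{\mathcal H}\lesssim \|\vec w(0)\|_{\mathcal H}+\|f\|_{L^1L^2},$$
and the Duhamel term $\int_s^t \vec\SO(t-\tau)(0,f(\tau))\,d\tau$ has $\mathcal H$-norm at most $\int_s^t\|f\|_{L^2}$.

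\emph{Scattering criterion \eqref{eq:pert_scat_carac}.} Since $\SO$ is an isometric group on $\mathcal H$, set
$$\vec v(t)=\vec\SO(-t)\vec u(t)=(u_0,u_1)-\int_0^t\vec\SO(-s)(0,u^5(s))\,ds.$$
For $t'>t$,
$$\|\vec v(t')-\vec v(t)\|_{\mathcal H}\le \int_t^{t'}\|u(s)\|_{L^{10}}^5\,ds,$$
which tends to $0$ as $t,t'\to+\infty$ because $u\in L^5([0,\infty),L^{10})$. Hence $\vec v(t)\to \vec v_+$ in $\mathcal H$, and $u_L:=\SO(\cdot)\vec v_+$ gives the scattering.

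\emph{Small data \eqref{eq:pert_scat_small}, \eqref{e:strigchglob}.} For any interval $I\ni0$, Lemma \ref{lem:L1L2_} with $(p,q)=(5,10)$ (admissible, and $r\le 5$ since $s>10$) gives
$$\|u\|_{L^5(I,L^{10})}\le C\|(u_0,u_1)\|_{\mathcal H}+C\|u\|_{L^5(I,L^{10})}^5 .$$
The function $T\mapsto\|u\|_{L^5([-T,T],L^{10})}$ is continuous, vanishes at $T=0$, and is finite for each $T$ by local well-posedness (the global solution of \cite{BurqLebeauPlanchon08} lies in $L^5_{loc}L^{10}$). A continuity/bootstrap argument then shows that if $C\epsilon_0$ is small, the bound $\le 2C\|(u_0,u_1)\|_{\mathcal H}$ holds for every $T$. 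Letting $T\to\infty$ gives \eqref{e:strigchglob}, and scattering follows from the first part.

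\emph{Wave operators \eqref{eq:pert_scat_Uj}.} I would treat $t\to+\infty$; the past is symmetric. Let $u_L=\SO(\cdot)(u_0,u_1)$. By Assumption \ref{ass:strichartz}, $u_L\in L^5L^{10}$, so we can choose $T$ with $\|u_L\|_{L^5([T,\infty),L^{10})}\le\eta$. Then solve by a fixed point in $L^5([T,\infty),L^{10})$ the integral equation
$$U(t)=u_L(t)+\int_t^{\infty}\SO(t-s)(0,U^5(s))\,ds .$$
The inhomogeneous estimate on $[T,\infty)$ is the time-reversed version of Lemma \ref{lem:L1L2_}, proved by the same Minkowski argument. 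With $U^5-V^5$ controlled by $\|U-V\|_{L^5L^{10}}\big(\|U\|^4+\|V\|^4\big)$, the map is a contraction on a small ball for $\eta$ small. Then
$$\|\vec U(t)-\vec u_L(t)\|_{\mathcal H}\le \int_t^\infty\|U\|_{L^{10}}^5\to0,$$
and $U$ solves the equation on $[T,\infty)$. Extending $U$ to all of $\mathbb R$ with the global flow $\NLSO$, we need $U\in L^5(\mathbb R_+,L^{10})$; this follows from the bound on $[T,\infty)$ and local boundedness on $[0,T]$.

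The only mildly delicate point is the use of local well-posedness and continuity of the Strichartz norm in the bootstrap, that is, checking that the energy solutions of \cite{BurqLebeauPlanchon08} coincide with the Strichartz (Duhamel) solutions so that the Duhamel formula applies. This is standard given that the $L^5L^{10}$ norm is locally finite.
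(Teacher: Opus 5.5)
Your proposal is correct and follows the same route as the paper. The paper only sketches this proposition as a classical consequence of the global Strichartz estimates of Assumption \ref{ass:strichartz} (via Lemma \ref{lem:L1L2_}), together with a fixed point argument for \eqref{eq:pert_scat_Uj}. You supply the standard details the paper omits: the Cauchy criterion for $\vec\SO(-t)\vec u(t)$, the bootstrap on $\|u\|_{L^5([-T,T],L^{10})}$, and the contraction in $L^5([T,\infty),L^{10})$ for the final-state problem. You also correctly identify the one point to check, namely that the energy solutions coincide with Duhamel solutions in $L^5_{loc}L^{10}$.
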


\begin{proof}[{Sketch of proof}]
The properties (\ref{eq:pert_scat_carac}) and (\ref{eq:pert_scat_small})
are classical consequences of the global in time Strichartz estimates,
and (\ref{eq:pert_scat_Uj}) can be proved by a fixed point argument using the Strichartz estimates as well.
\end{proof}

\begin{prop}[Perturbation] 
\label{lem:perturb}For any $M>0$, there exists $\epsilon(M)>0$
such that, for any $0<\epsilon\leq\epsilon(M)$, and all $(u_0, u_1), \,(\tilde u_0, \tilde u_1)\in\mathcal{H}(\Omega)$,
$e\in L^{1}L^{2}$ and $u\in L^{5}L^{10}$ verifying 
\[
\Vert u\Vert_{L^{5}L^{10}}\leq M,\hspace{1em}\Vert \SO(\cdot)\big((u_0, u_1)-(\tilde u_0, \tilde u_1)\big)\Vert_{L^5L^{10}}\leq\epsilon,\hspace{1em}\Vert e\Vert_{L^{1}L^{2}}\leq\epsilon,\hspace{1em}\Vert \tilde e\Vert_{L^{1}L^{2}}\leq\epsilon,
\]
if $u,\tilde u$ are solutions of 
\begin{equation*}
\begin{tabular}{cc}
$
\left\{
\begin{aligned}
\partial_{t}^{2}u-\Delta u&=-u^{5}+e\text{ in }\Omega, \\
\vec u_{\restriction t=0}&=(u_0,u_1),\\
u&=0\text{ on }\partial \Omega,
\end{aligned} \right.
$
&
$
\left\{
\begin{aligned}
\partial_{t}^{2}\tilde u-\Delta \tilde u&=-\tilde u^{5} + \tilde e\text{ in }\Omega, \\
\vec {\tilde u}_{\restriction t=0}&=(\tilde u_0,\tilde u_1),\\
\tilde u&=0\text{ on }\partial \Omega,
\end{aligned}\right.
$
\end{tabular}
\end{equation*}
then $\tilde u\in L^{5}L^{10}$ and we have 
\[
\Vert u-\tilde u\Vert_{L^{5}L^{10}}\lesssim\epsilon.
\]
In addition, the same statement holds for the corresponding equations in $\mathbb{R}^{3}$.
\end{prop}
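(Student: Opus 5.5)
The plan is the standard long-time perturbation argument: first a short-time bound on small intervals, then an induction over a partition of time adapted to the size of $\Vert u\Vert_{L^5L^{10}}$. Set $w=\tilde u-u$. Then $w$ solves
\[
\Box w = -\big((u+w)^5-u^5\big) + \tilde e - e,\qquad w=0 \text{ on }\partial\Omega,\qquad \vec w_{\restriction t=0}=(\tilde u_0-u_0,\tilde u_1-u_1).
\]
Using $|(u+w)^5-u^5|\lesssim |w|\,(|u|^4+|w|^4)$ and Hölder, on any interval $I$ we get
\[
\Vert (u+w)^5-u^5\Vert_{L^1(I,L^2)}\lesssim \Vert w\Vert_{L^5(I,L^{10})}\big(\Vert u\Vert^4_{L^5(I,L^{10})}+\Vert w\Vert^4_{L^5(I,L^{10})}\big).
\]

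\textbf{Splitting time.} Fix a small $\eta>0$, to be chosen below depending only on the Strichartz constant $C$ of Lemma \ref{lem:L1L2_}. Split $\R$ into $J\lesssim (M/\eta)^5$ intervals $I_j=[t_j,t_{j+1}]$ with $\Vert u\Vert_{L^5(I_j,L^{10})}\leq\eta$; by time symmetry it is enough to treat $t\geq 0$, with $t_0=0$.

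\textbf{Estimate on one interval.} By Duhamel on $I_j$, with the free part written as $\SO(t-t_j)\vec w(t_j)$, Lemma \ref{lem:L1L2_} (with time translated) gives
\[
\Vert w\Vert_{L^5(I_j,L^{10})}\leq \Vert \SO(\cdot-t_j)\vec w(t_j)\Vert_{L^5(I_j,L^{10})}+C\big(2\epsilon+\eta^4\Vert w\Vert_{L^5(I_j,L^{10})}+\Vert w\Vert_{L^5(I_j,L^{10})}^5\big).
\]
Choose $\eta$ with $C\eta^4\leq 1/2$. A continuity argument in the right endpoint (continuity holds because $\tilde u$ is a local solution, hence in $L^5L^{10}$ on compact subintervals) then gives
\[
\Vert w\Vert_{L^5(I_j,L^{10})}\lesssim \gamma_j+\epsilon,\qquad \gamma_j:=\Vert \SO(\cdot-t_j)\vec w(t_j)\Vert_{L^5([t_j,\infty),L^{10})},
\]
provided $\gamma_j+\epsilon$ is below a fixed small threshold.

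\textbf{Propagating the free part.} The key point is to control $\gamma_j$ without using $\Vert\vec w(t_j)\Vert_{\mathcal H}$, which need not be small: the hypothesis only makes $\SO(\cdot)\vec w(0)$ small in $L^5L^{10}$. By Duhamel from time $0$,
\[
\SO(t-t_{j+1})\vec w(t_{j+1}) = \SO(t)\vec w(0) + \int_0^{t_{j+1}} \SO(t-s)\big(0,F(s)\big)\,ds,\qquad F=-\big((u+w)^5-u^5\big)+\tilde e-e .
\]
Hence, by the Minkowski argument of Lemma \ref{lem:L1L2_},
\[
\gamma_{j+1}\leq \epsilon + C\sum_{k\leq j}\Vert F\Vert_{L^1(I_k,L^2)}\leq \epsilon + C'\sum_{k\leq j}(\gamma_k+\epsilon).
\]
Induction then yields $\gamma_j\leq (C'')^{j}\epsilon$. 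Since $J$ depends only on $M$, choosing $\epsilon(M)$ small enough keeps every $\gamma_j+\epsilon$ below the threshold for all $j\leq J$. Summing over the intervals gives $\Vert w\Vert_{L^5L^{10}}\lesssim_M\epsilon$, and in particular $\tilde u\in L^5L^{10}$ globally.

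This propagation step is the main obstacle: one must keep working with the free evolution of $\vec w$ from time $0$ rather than with its $\mathcal H$ norm, and the constant obtained depends on $M$. The argument in $\R^3$ is identical, using the free Strichartz estimates there.
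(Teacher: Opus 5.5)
Your argument is correct and is the classical long-time perturbation argument that the paper only cites (\cite[Proposition 4.7]{MR2838120}, \cite[Proposition 2.13]{DL}) as a consequence of Strichartz estimates alone, with the small sources $e,\tilde e$ carried along as you do. In particular, you propagate $\SO(\cdot-t_j)\vec w(t_j)$ via Duhamel from time $0$ rather than through $\Vert \vec w(t_j)\Vert_{\mathcal H}$, and this is exactly what makes the statement work without any $\mathcal H$-bound on the data difference.
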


\begin{proof}[{References for proof}]
The proof is classical and similar to Proposition 4.7 of \cite{MR2838120}. It can be found as a sole consequence of the Strichartz estimates in \cite[Proposition 2.13]{DL}  with the slight addition that we added a small source term to both equations.
\end{proof}

Finally, we will need the following \emph{linear} scattering result
\begin{lem}[Linear scattering {\cite[Chapter 9, Proposition 5.5 p.230]{Taylor2}}] \label{lem:lin_scat}
Let $\vec \varphi \in \mathcal H(\Omega)$. Then, there exist $\vec \varphi_\pm^\prime \in \mathcal H(\mathbb R^3)$ so that
$$
\Vert \SO(t) \vec \varphi - \SF(t) \vec \varphi_\pm^\prime \Vert_{\mathcal H(\Omega)} \rightarrow 0 \text{ as }t\rightarrow \pm \infty.
$$
\end{lem}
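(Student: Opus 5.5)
The plan is to go through the wave operators and Cook's method, using the local energy decay of the linear flow in $\Omega$ together with free dispersion. It suffices to treat $t\to+\infty$, since the case $t\to-\infty$ follows by time reversal. Because $\SO(t)$ and $\SF(t)$ are unitary on $\mathcal H(\Omega)$ and $\mathcal H(\mathbb R^3)$ respectively, the set of $\vec\varphi$ for which the conclusion holds is closed in $\mathcal H(\Omega)$. Indeed, if $\vec\varphi^k\to\vec\varphi$ and $\vec\varphi^{k\prime}_+$ are the associated free data, then
\[
\Vert \SF(t)(\vec\varphi^{k\prime}_+-\vec\varphi^{j\prime}_+)\Vert_{\mathcal H(\mathbb R^3)} = \Vert\vec\varphi^{k\prime}_+-\vec\varphi^{j\prime}_+\Vert .
\]
Taking $t\to\infty$, the left side is bounded in the limit by $\limsup_t\Vert \SO(t)(\vec\varphi^k-\vec\varphi^j)\Vert_{\mathcal H(\Omega)}$ plus vanishing terms, once one checks that the part of $\SF(t)\vec\varphi'$ lying on $\Theta$ tends to zero. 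So $(\vec\varphi^{k\prime}_+)_k$ is Cauchy, and the limit works for $\vec\varphi$. It therefore suffices to prove the statement on a dense subset.

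The natural candidate is $\vec\varphi'_+ = \lim_{t\to\infty}\SF(-t)\,\chi\,\SO(t)\vec\varphi$, where $\chi$ is a smooth cutoff vanishing near $\Theta$ and equal to $1$ outside a large ball $B(0,R)$, and $\chi\SO(t)\vec\varphi$ is extended by zero to $\mathbb R^3$. Set $w(t)=\chi\,\SO(t)\vec\varphi$. Its first component solves the free wave equation in $\mathbb R^3$ with source
\[
F(t)= -2\nabla\chi\cdot\nabla u-(\Delta\chi)u ,
\]
which is supported in the compact annulus $K=\operatorname{supp}\nabla\chi$. By Duhamel,
\[
\frac{d}{dt}\SF(-t)w(t)=\SF(-t)(0,F(t)),
\]
so Cook's method gives existence of the limit provided $\int_0^\infty\Vert F(t)\Vert_{L^2}\,dt<\infty$, or more weakly provided the local energy on $K$ is square-integrable in time and one uses a dual local smoothing estimate for the free flow.

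The main obstacle is controlling the local energy of $\SO(t)\vec\varphi$ on the compact set $K$. This is exactly the Lax–Phillips/Morawetz local energy decay. For data that are smooth, compactly supported, and orthogonal to possible zero-energy states (there are none for Dirichlet conditions in an exterior domain in dimension $3$), one has $\int_0^\infty\Vert \SO(t)\vec\varphi\Vert_{\mathcal H(K)}^2\,dt<\infty$. I would take this from Taylor's treatment (RAGE theorem plus absence of embedded eigenvalues) rather than reprove it. The RAGE theorem gives $\frac1T\int_0^T\Vert 1_K\SO(t)\vec\varphi\Vert^2\,dt\to0$, since the generator has purely absolutely continuous spectrum by Rellich's theorem and unique continuation. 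Combined with the Kato smoothing estimate for the free resolvent, this gives the needed integrability of $F$ in the $L^2_tL^2$ sense against the free local-smoothing estimate
\[
\Big\Vert\int_0^\infty \SF(-s)(0,F(s))\,ds\Big\Vert_{\mathcal H(\mathbb R^3)}\lesssim\Vert F\Vert_{L^2_tL^2(K)} .
\]

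Finally, $\Vert \SO(t)\vec\varphi-\SF(t)\vec\varphi'_+\Vert_{\mathcal H(\Omega)}$ is bounded by $\Vert(1-\chi)\SO(t)\vec\varphi\Vert$ plus $\Vert w(t)-\SF(t)\vec\varphi'_+\Vert$. The second term tends to zero by construction. The first is a local energy term on $B(0,R)\cap\Omega$. It tends to zero along $t\to\infty$ because the local energy decays for our dense class, as follows from the square-integrability together with uniform continuity in $t$ for data in the domain of the generator. This yields the claim, with $\vec\varphi'_-$ obtained symmetrically.
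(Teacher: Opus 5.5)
The paper does not prove this lemma; it quotes it from Taylor, so your argument has to stand on its own. Its skeleton is sound: reduction to a dense class by unitarity of the two groups (using free local energy decay on $\Theta$), Cook's method for $\SF(-t)\,\chi\,\SO(t)\vec\varphi$ with a source $F$ supported in the compact set $K$, the free local energy estimate $\Vert \mathbf 1_K\partial_t \SF(\cdot)\vec g\Vert_{L^2_{t,x}}\lesssim\Vert\vec g\Vert_{\mathcal H(\mathbb R^3)}$ (dual to your Duhamel bound), and the final step. But everything rests on
\[
\int_0^\infty \Vert \SO(t)\vec\varphi\Vert^2_{\mathcal H(K)}\,dt<\infty \quad\text{for a dense set of }\vec\varphi,
\]
and your justification of this does not work. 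The RAGE theorem only gives the Ces\`aro statement $\frac1T\int_0^T\Vert \mathbf 1_K\SO(t)\vec\varphi\Vert^2dt\to0$, which implies neither square-integrability nor even pointwise decay. The Kato smoothing estimate for the \emph{free} resolvent controls $\SF$, not the local energy of $\SO$, so combining the two yields nothing about $F$. Rellich's uniqueness theorem plus unique continuation only exclude eigenvalues; they do not give absolute continuity of the spectrum. Finally, the dense class you chose (smooth, compactly supported data) is the wrong one in the generality of the lemma, which allows an arbitrary smooth compact, possibly trapping, obstacle. For such data only logarithmic local energy decay is available in general (Burq), so time integrability of the local energy cannot be taken for granted.

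There are two ways to close the gap.

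\emph{(a) Stationary input.} Supply a limiting absorption principle for the Dirichlet Laplacian on $\Omega$: the cut-off resolvent $\chi(-\Delta_D-(\lambda\pm i0)^2)^{-1}\chi$ is bounded uniformly for $\lambda$ in compact subsets of $(0,\infty)$. This is classical for any smooth compact obstacle; the absence of real nonzero poles is exactly where Rellich's theorem and unique continuation enter. Kato's smooth-perturbation theory then gives $\int_{\mathbb R}\Vert\chi\SO(t)\vec\varphi\Vert^2_{\mathcal H}dt\lesssim_I\Vert\vec\varphi\Vert^2_{\mathcal H}$ for data spectrally localized in a compact $I\subset(0,\infty)$. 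Such data are dense, and your Cook argument then goes through unchanged.

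\emph{(b) Keep only the Ces\`aro decay.} Retain the RAGE decay, but replace Cook's method by a Lax--Phillips argument. Fix $\rho$ with $\Theta\subset B(0,\rho)$, and take $T$ large with small local energy near $\overline{B(0,\rho)}$. Then $\SO(T)\vec\varphi$ is close to data vanishing in $B(0,\rho)$, which in odd dimension split as $d_++d_-$ with $d_+$ outgoing and $d_-$ incoming. The outgoing part evolves freely: $\SO(t)d_+=\SF(t)d_+$ for $t\geq0$. The incoming projection of $\SO(T)\vec\varphi$ is $o(1)$ as $T\to\infty$: for incoming $d$ one has $\langle\SO(T)\vec\varphi,d\rangle=\langle\vec\varphi,\SF(-T)d\rangle$, and $\SF(-T)d$ vanishes in $B(0,\rho+T)$. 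Since Ces\`aro decay provides arbitrarily large such $T$, this shows directly that $\SF(-t)\SO(t)\vec\varphi$ is Cauchy as $t\to+\infty$.
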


\subsection{Description of profiles} \label{subsec:profnot}
The description of the (loss of) compactness for sequences of solutions requires the introduction of linear and nonlinear {\it profiles} associated to different {\it scale-cores}. 

\textbf{Scale cores $\mathcal O$.} A {\it scale-core} written in general $\mathcal{O}$ will be a sequence $\mathcal{O}_n=(\lambda_n,t_n,x_n)$ with $\lambda_{n}>0$, $t_{k}\in \R$ and $x_{n}\in \R^{3}$.
\begin{def}
\label{defprofils}
Let $\mathcal{O}=\{(\lambda_k,t_k,x_k)\}$ be a scale core and let $\vec \varphi = (\varphi^0, \varphi^1 )\in \mathcal H  (\mathbb R^3)$. 

\textbf{Profiles $\vec \varphi _{\Omega, \mathcal{O},k}$.} We define the associated profile in $\Omega$, $\{\vec \varphi^\Omega _{\mathcal{O},k}\}$ as the sequence given by
\begin{equation}\label{EProf}
\vec \varphi_{\Omega, \mathcal{\mathcal{O}},k}:=\SO(-t_k) T^\Omega_{\mathcal{O},k}\vec \varphi ,\quad 
\end{equation}
where 
\begin{align*}
T_{\mathcal{O},k}\vec \varphi (x)&:=\left(\frac{1}{\lambda_k^{1/2}}\varphi^{0} \left(\frac{x-x_k}{\lambda_k}\right),\frac{1}{\lambda_k^{3/2}}\varphi^{1} \left(\frac{x-x_k}{\lambda_k}\right)\right),\\
T_{\mathcal{O},k}^{\Omega}\vec \varphi&:=P_{\Omega}T_{\mathcal{O},k}\vec \varphi.
\end{align*}
and we recall that $P_{\Omega}=(\mathcal{P}_{\Omega},1_{\Omega})$ is the orthogonal projection from $\dot{H}^{1}(\R^{3})\times L^{2}(\R^{3})$ into $\dot{H}^{1}_{0}(\Omega)\times L^{2}({\Omega})$.

Note that the operators $T_{\mathcal{O},k}$ and $T_{\mathcal{O},k}^{\Omega}$ do not depend on the component $t_k$ of $\mathcal{O}$. 

\textbf{Orthogonality of cores.} We say that two frames $\mathcal{O}=\{(\lambda_k , t_k , x_k )\}$ and $\widetilde{\mathcal{O}}=\{(\widetilde{\lambda}_k , \widetilde{t}_k , \widetilde{x}_k  )\}$ are \emph{orthogonal} if
$$
\lim_{k\to +\infty }\ln \left(\frac{\lambda_k}{\widetilde{\lambda}_k} + \lambda_k^{-1}|t_k-\widetilde{t}_k | + \lambda_k^{-1}|x_k-\widetilde{x}_k | 
\right)= +\infty.
$$
Two frames that are not orthogonal are called equivalent.
\end{def}

\textbf{Conjugate core $\mathcal O^{-1}$.}
We also define $\mathcal{O}^{-1}=\{(\lambda_k^{-1},-t_k\lambda_k^{-1},-x_k\lambda_k^{-1})\}$, so that 
$$
T_{\mathcal{O}^{-1},k}\circ T_{\mathcal{O},k}=\operatorname{I},\hspace{0.3cm}T_{\mathcal{O},k}^*=T_{\mathcal{O}^{-1},k}$$
for the duality of $\dot{H}^1(\R^3)\times L^2(\R^3)$.

\textbf{Scaled domain $\Omega_n$ and limit domain $X_{\mathcal O}$.}
Given a frame $\mathcal{O}=\{(\lambda_k , t_k , x_k )\}$, denote $\Omega_k:=\frac{\Omega-x_k}{\lambda_k}$. Note that $T_{\OO,k} $ is an isometry from $\mathcal{H}(\Omega_{k})$ to $\mathcal{H}(\Omega)$. We will use several time in the paper that
\begin{equation}
\label{eq:conj_proj}
T_{\mathcal{O}^{-1},n} P_{\Omega}T_{\mathcal{O},n}= P_{\Omega_n}.
\end{equation}

\begin{def} \label{defconvergespace} Following \cite{GG}, we say that $\Omega_k$ converges to $X_\mathcal{O}$ if 
\begin{itemize}
\item $\forall K$ compact subset of $X_{\OO}$, $\exists N\in \N$ so that $\forall k\geq N$, $K\subset \Omega_{k}$,
\item $\forall K'$ compact subset of $\overline{X_\mathcal{O}}^c$, $\exists N'\in \N$ so that $\forall k\geq N'$, $K'\subset \overline{\Omega_{k}}^c$.
\end{itemize}
\end{def}
The limit space $X_{\OO}$ of a scale core $\mathcal{O}$, when it exists (which can always be assumed up to a subsequence) will be denoted $X_\mathcal{O}$ since it is unique up to subsequence. It is quite clear that the asymptotics of $\mathcal{O}$ will describe very different behaviors, and we will have to distinguish cases :
\begin{itemize}
\item The case $\lambda_k\to 0$ is considered in \cite{GG}. If $\lambda_{k}\rightarrow 0$ and $x_{k}\rightarrow x_{\infty}\in \Omega$, then we have $\Omega_{k }\rightarrow X_{\OO}=\R^{3}$. If $\lambda_k \rightarrow 0$ and $x_{k}\rightarrow x_{\infty}\in \partial \Omega$, we assume that $\Omega$ is defined locally by $\Omega=\left\{x\in\R^{3};\phi(x)>0\right\} $ with $\nabla\phi(x_{\infty})=\vec n(x_{\infty})\neq 0$. We define $\alpha=\lim \frac{\phi(x_{k)}}{h_{k}}\in\overline{\R}$ and obtain $\Omega_{k }\rightarrow X_{\OO}=\left\{x\in\R^{3};x\cdot \vec n(x_{\infty}) > -\alpha\right\}$.
\item If $\lambda_n\to +\infty $ or $|x_n|\to +\infty$, we define $X_{\OO}:=\R^{3}$. \item If $\lambda_n\to \lambda_0 \in \R^*_+$ and $x_n\to x_{0}$, we define $X_{\OO}:=\frac{\Omega-x_{0}}{\lambda_0}$.
\end{itemize}
\section{Dilating profiles and profiles going to infinity} \label{sec:ass_prof}

The goal of this section is to show that dilating profiles and profiles going to infinity are asymptotically free, in the following sense.

\begin{lem} \label{lem:assfree}
Let $\vec \varphi = (\varphi^0, \varphi^1 )\in \dot H^1 \times L^2 (\mathbb R^3)$, $f\in L^1(\mathbb R, L^2(\mathbb R^3))$, $(t_n)_{n\geq1}$ be an arbirary sequence of times, and $(\lambda_n)_{n\geq 1}$, $(x_n)_{n\geq 1}$ 
be so that 
$$
\lambda_n \rightarrow \infty \hspace{0.7cm}\text{or}\hspace{0.7cm} |x_n|\rightarrow \infty.
$$ 
We define 
$$
f_n := \lambda_n^{-5/2}f\left(\frac{\cdot  - t_n}{\lambda_n}, \frac{\cdot-x_{n}}{\lambda_n}\right),
$$
and $v_n$ to be the solution of
$$
\begin{cases}
\partial_t^2 v_n - \Delta v_n = f_n \text{ in }\mathbb R^3, \\
(v_n, \partial_t v_n) (t=t_n) = \Big(\lambda_n^{-1/2} \varphi^0(\frac{\cdot - x_n}{\lambda_n}),  \lambda_n^{-3/2} \varphi^1(\frac{\cdot-x_n}{\lambda_n})\Big).
\end{cases}
$$
In addition, let
 $u_n$ be the solution of
$$
\begin{cases}
\partial_t^2 u_n - \Delta u_n =\mathbb{1}_{\Omega} f_n \text{ in } \Omega, \\
u_n = 0 \text{ on } \partial \Omega \\
(u_n, \partial_t u_n) (t= t_n) = (\mathcal P_\Omega v_n, \mathbb{1}_\Omega \partial_t v_n) (t=t_n). 
\end{cases}
$$
Then, as $n\rightarrow\infty$,
\begin{equation}
\label{znNRJ}
\sup_{t\in\mathbb R} \int_\Omega |\nabla(u_n - v_n)|^2 + |\partial_t(u_n - v_n)|^2 \to 0.
\end{equation}
In addition, 
\begin{equation}
\label{strichartzzn}
\Vert u_n - v_n \Vert_{L^{\infty}(\mathbb R, L^{6}(\Omega))}+\Vert u_n - v_n \Vert_{L^5(\mathbb R, L^{10}(\Omega))} \to 0.
\end{equation}
\end{lem}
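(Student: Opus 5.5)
We first reduce by density and translation in time. Since both flows are energy-isometric (up to the $L^1L^2$ source), and the Strichartz estimates of Assumption \ref{ass:strichartz} and Lemma \ref{lem:L1L2_} hold uniformly in $\Omega$ and in $\mathbb R^3$, we may assume that $\vec\varphi\in C_0^\infty(\mathbb R^3)^2$ and $f\in C_0^\infty(\mathbb R\times\mathbb R^3)$. Indeed, replacing $(\vec\varphi,f)$ by an approximation within $\delta$ changes $u_n,v_n$ by $O(\delta)$ in the energy and Strichartz norms, uniformly in $n$. Here one uses that $P_\Omega$ is a contraction and that the rescalings $T_{\mathcal O,n}$ and $f\mapsto f_n$ are isometries of $\mathcal H$ and $L^1L^2$. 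We then set $w_n:=u_n-v_n$ restricted to $\Omega$.

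Next we localise near the obstacle. Fix $\chi\in C_0^\infty$ with $\chi=1$ on a neighbourhood of $\Theta$, and write $v_n=\chi v_n+(1-\chi)v_n$. The function $(1-\chi)v_n$ vanishes on $\partial\Omega$ and solves, in $\Omega$,
$$\Box\big((1-\chi)v_n\big)=(1-\chi)f_n+[\Delta,\chi]v_n .$$
By Duhamel, $u_n-(1-\chi)v_n$ is the $\Omega$-solution with data $\vec u_n(t_n)-(1-\chi)\vec v_n(t_n)$ and source $\mathbb 1_\Omega\chi f_n-[\Delta,\chi]v_n$. Energy conservation (with sources in $L^1L^2$) and Lemma \ref{lem:L1L2_} then reduce both \eqref{znNRJ} and \eqref{strichartzzn} to three smallness statements:
\begin{itemize}
\item[(a)] $\|\vec u_n(t_n)-(1-\chi)\vec v_n(t_n)\|_{\mathcal H}\to0$, which is implied by $\|\chi \vec v_n(t_n)\|_{\mathcal H}\to0$ (use that $\mathcal P_\Omega$ fixes $(1-\chi)v_n$ and is a contraction);
\item[(b)] $\|\chi f_n\|_{L^1L^2}\to0$;
\item[(c)] $\|[\Delta,\chi]v_n\|_{L^1L^2}\to0$.
\end{itemize}
The remaining term $\chi v_n$ is then handled directly: its energy and its $L^5L^{10}$, $L^\infty L^6$ norms are controlled by the same local quantities.

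\textbf{Case $|x_n|\to\infty$ with $\lambda_n$ bounded.} By finite speed of propagation and the compact support of the rescaled data, $v_n$ is supported in $\{|x-x_n|\le C\lambda_n+|t-t_n|\}$. Hence the statements (a)--(c) would be trivial on finite time windows, but here the time window is infinite. So I would instead rely on local energy decay for the free equation: for compactly supported data, $\int_{t}\|\nabla_{t,x}v_n\|_{L^2(B)}^p$ type bounds. More precisely, I plan to use the strong Huygens principle in $\mathbb R^3$. Free solutions with data supported in $B(x_n,C\lambda_n)$ live in the shell $\big||x-x_n|-|t-t_n|\big|\le C\lambda_n$, so they meet $\mathrm{supp}\,\chi$ only during a time interval of length $O(\lambda_n)$. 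Combined with the pointwise bound $|\nabla v_n|\lesssim \lambda_n^{-3/2}\langle (t-t_n)/\lambda_n\rangle^{-1}$, this gives
$$\|\nabla_{t,x}v_n\|_{L^1L^2(\mathrm{supp}\nabla\chi)}\lesssim \lambda_n\cdot \lambda_n^{-3/2}\,(\lambda_n/|x_n|)\,\lambda_n^{3/2}\cdot O(1)\to0$$
as $|x_n|/\lambda_n\to\infty$. The same argument applies to the source part via Duhamel, and also shows $\chi \vec v_n(t_n)=0$ for $n$ large.

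\textbf{Case $\lambda_n\to\infty$.} Here the obstacle has size $1\ll\lambda_n$, so we use scaling. Near the obstacle, the decay and smoothness of $\vec\varphi$ give $|v_n|\lesssim\lambda_n^{-1/2}$ and $|\nabla v_n|\lesssim\lambda_n^{-3/2}$, together with the dispersive bound $\lambda_n^{-1/2}\langle (t-t_n)/\lambda_n\rangle^{-1}$. Over the relevant time scale $O(\lambda_n)$, (c) becomes
$$\int \|v_n\|_{H^1(\mathrm{supp}\nabla\chi)}\,dt\lesssim \int \lambda_n^{-1/2}\langle (t-t_n)/\lambda_n\rangle^{-1}dt,$$
which is \emph{not} obviously small: it is of order $\lambda_n^{1/2}\log$. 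This is the main obstacle. To overcome it, I would avoid the commutator and instead compare via the zero-mode problem. We write $u_n-v_n=-\mathcal{P}$-corrector $z_n$ solving the homogeneous wave equation in $\Omega$ with boundary data $-v_n|_{\partial\Omega}$, and estimate it by the energy flux through $\partial\Omega$, namely $\int|\partial_t v_n||\partial_\nu z_n|$. The key point is that $v_n$ and $\partial_t v_n$ are essentially constant at scale $\lambda_n\gg1$ near $\Theta$, with $\partial_t v_n=O(\lambda_n^{-3/2})$. Alternatively, and preferably, I would scale the whole problem: the $\Omega$-flow rescaled by $T_{\mathcal O^{-1},n}$ is the flow outside the shrinking obstacle $\Theta/\lambda_n$. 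I would then show that the Dirichlet problem outside a shrinking obstacle converges to the free one in energy, uniformly in time, using that $\dot H^1_0(\mathbb R^3\setminus\epsilon\Theta)$ is dense in $\dot H^1$ as $\epsilon\to0$ (capacity of a point in 3D is zero) together with the uniform Strichartz estimates (which are scale invariant) to pass to global time. Finally, the Strichartz part \eqref{strichartzzn} follows from \eqref{znNRJ} via Sobolev for $L^\infty L^6$, while for $L^5L^{10}$ we use Lemma \ref{lem:L1L2_} applied to the Duhamel representation above, together with interpolation with the $(r,s)$ estimate where needed.
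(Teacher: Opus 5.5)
Your treatment of the case $|x_n|\to\infty$ with $\lambda_n$ bounded is essentially right. The cutoff $\chi$, the Duhamel reduction to (a)--(c), strong Huygens, and the pointwise $\langle t\rangle^{-1}$ decay of free waves with smooth compactly supported data together give a somewhat more elementary route than the paper's use of Friedlander radiation fields and Lemma \ref{lm:volC}. The bookkeeping needs fixing, though. When $\lambda_n\to0$, the shell $\{\,||x-x_n|-|t-t_n||\le C\lambda_n\}$ meets $\operatorname{supp}\chi$ during a time set of length $O(1)$, not $O(\lambda_n)$. The $L^2$ volume factor is then $\lambda_n^{1/2}$ (shell thickness), not $\lambda_n^{3/2}$. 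The correct count gives $O(|x_n|^{-1})$ for the local $L^1L^2$ and $L^\infty L^2$ norms, so your conclusion survives.

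The genuine gap is the dilating case $\lambda_n\to\infty$. You correctly observe that the commutator term is of size $\lambda_n^{1/2}$ there. You also write down the right identity: the energy of $u_n-v_n$ changes only through the flux $-\int_{\partial\Omega}\partial_t v_n\,\partial_\nu(u_n-v_n)\,d\sigma$. But you give no control of $\partial_\nu u_n$ on $\partial\Omega$, and pointwise in time this trace is not bounded by the energy. The missing ingredient is the hidden-regularity estimate of Lemma \ref{lem:apriori_bdd}, obtained from a multiplier with $\nabla\chi=-\nu$ on $\partial\Omega$: $\int_{t_1}^{t_2}\int_{\partial\Omega}|\partial_\nu u|^2\,d\sigma\,dt\lesssim(1+t_2-t_1)\big(\|\vec\varphi\|^2_{\mathcal H}+\|f\|^2_{L^1L^2}\big)$. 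Apply it to $u_n$, which has zero boundary data, and bound $\partial_\nu v_n$ pointwise. Strong Huygens shows $v_n$ is non-zero on $\partial\Omega$ only for $t$ in a window of length $O(\lambda_n)$, and there $|\partial_t v_n|+|\partial_\nu v_n|\lesssim\lambda_n^{-3/2}$. Cauchy--Schwarz then bounds the total flux by $\lambda_n^{-3/2}\cdot\lambda_n^{1/2}\cdot\lambda_n^{1/2}=\lambda_n^{-1/2}$, uniformly in $t$.

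Your preferred alternative does not close. Density of $\dot H^1_0(\R^3\setminus\epsilon\Theta)$ in $\dot H^1$ only yields convergence of the projected data, not of the Dirichlet flows. Strichartz estimates are uniform bounds, not convergence statements, so they cannot upgrade finite-time convergence to $\sup_{t\in\R}$. What could do that is the outgoing (Huygens) property once the free wave has passed the rescaled obstacle. That argument is not uniform in the sub-case $|x_n|/\lambda_n\to\infty$, where the rescaled obstacle escapes to infinity and is reached only at rescaled times of order $|x_n|/\lambda_n\to\infty$. The boundary-flux argument handles all sub-cases of $\lambda_n\to\infty$ at once.
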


Lemma \ref{lem:assfree} is proved in subsections \S \ref{subsec:to_infinity} and \S  \ref{subsec:dilating} below, specializing respectively to the cases of a profile going to infinity ($|x_n| \rightarrow \infty$ with $\lambda_n$ bounded), and a dilating profile ($\lambda_n \rightarrow \infty$). Subsection \S \ref{subsec:assfree_reduc} is dedicated to the reduction of both cases to the proof of (\ref{znNRJ}) for $t_n = 0$ and eliminating the data projection.
 
As a consequence of Lemma \ref{lem:assfree}, we get an analog result for the associated nonlinear profiles:

\begin{lem} \label{lem:assfree_nl}
Let $\vec \varphi = (\varphi^0, \varphi^1 )\in \dot H^1 \times L^2 (\mathbb R^3)$, $(t_n)_{n\geq1}$ be an arbirary sequence of times, and $(\lambda_n)_{n\geq 1}$,  $(x_n)_{n\geq 1}$  be so that
$$
\lambda_n \rightarrow \infty \hspace{0.7cm}\text{or}\hspace{0.7cm} |x_n|\rightarrow \infty.
$$ 
Let $v \in L^5 L^{10}$ be solution of
$$
\partial^2_t v - \Delta v =- v^5 \text{ in }\mathbb R^3, 
$$
and
$$
v_n := \frac{1}{\lambda_n ^{1/2}}v\left(\frac{t-t_n}{\lambda_n}, \frac{x-x_n}{\lambda_n}\right).
$$
In addition, let $u_n$ be solution to
$$
\begin{cases}
\partial^2_t u_n - \Delta u_n =- u_n^5 \text{ in } \Omega, \\
u_n = 0 \text{ on }\partial\Omega, \\
(u_n, \partial_t u_n)(t_n) =  (\mathcal P_{\Omega}v_n, \mathbb{1}_\Omega \partial_t v_n)(t_n).
\end{cases}
$$
Then,
$$
\sup_n \Vert u_n \Vert_{L^5 L^{10}} < \infty
$$
and
$$
\sup_{t\in\mathbb R} \int_\Omega |\nabla(u_n - v_n)|^2 + |\partial_t(u_n - v_n)|^2 
+
\Vert u_n - v_n \Vert_{L^5(\mathbb R, L^{10}(\Omega))} \to 0.
$$

\end{lem}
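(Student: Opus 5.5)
The plan is to treat $v_n$ as an approximate solution of the nonlinear equation in $\Omega$ and compare it to $u_n$ through the perturbation result, Proposition~\ref{lem:perturb}. The error terms are controlled by the linear asymptotic freeness of Lemma~\ref{lem:assfree}. The case $f=0$ of that lemma handles the data, and the case with a source handles the nonlinearity. Since $v\in L^5L^{10}(\mathbb R\times\mathbb R^3)$, we have $v^5\in L^1L^2$. By scaling, $v_n$ solves $\Box v_n = f_n$ in $\mathbb R^3$ with $f:=-v^5$, and $f_n$ is exactly the rescaled source of Lemma~\ref{lem:assfree}. Its initial data at $t=t_n$ is $\vec v(0)$ rescaled. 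Moreover $\Vert v_n\Vert_{L^5L^{10}(\mathbb R^3)}=\Vert v\Vert_{L^5L^{10}}=:M$.

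\textbf{Step 1 (linear comparison).} Let $w_n$ be the solution in $\Omega$ of $\Box w_n=\mathbb 1_\Omega f_n=-\mathbb 1_\Omega v_n^5$ with Dirichlet condition and data $(\mathcal P_\Omega v_n,\mathbb 1_\Omega\partial_t v_n)(t_n)$. Lemma~\ref{lem:assfree} gives
\[
\sup_t\Vert \vec w_n-\vec v_n\Vert_{\mathcal H(\Omega)}+\Vert w_n-v_n\Vert_{L^5L^{10}(\Omega)}\to0 .
\]

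\textbf{Step 2 (approximate nonlinear solution).} Rewrite the equation for $w_n$ as
\[
\Box w_n=-w_n^5+e_n,\qquad e_n:=w_n^5-v_n^5 \ \text{ on }\Omega .
\]
By Hölder, with all norms on $\mathbb R\times\Omega$,
\[
\Vert e_n\Vert_{L^1L^2}\lesssim \Vert w_n-v_n\Vert_{L^5L^{10}}\big(\Vert w_n\Vert_{L^5L^{10}}^4+\Vert v_n\Vert_{L^5L^{10}}^4\big)\to0,
\]
and $\Vert w_n\Vert_{L^5L^{10}}\le M+o(1)$.

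\textbf{Step 3 (perturbation).} Apply Proposition~\ref{lem:perturb} (translated in time to start at $t_n$; it is time-translation invariant) with $u=w_n$, $e=e_n$, $\tilde u=u_n$ and $\tilde e=0$. The two solutions have identical data at $t_n$, so the data difference is $0$. For $n$ large, $\Vert e_n\Vert_{L^1L^2}\le\epsilon(2M)$. The proposition then gives $u_n\in L^5L^{10}$ and $\Vert u_n-w_n\Vert_{L^5L^{10}}\to0$. Hence $\sup_n\Vert u_n\Vert_{L^5L^{10}}<\infty$ (the finitely many small $n$ are fine by global well-posedness plus the fact that the $L^5L^{10}$ norm is finite, or one can simply discard them), and $\Vert u_n-v_n\Vert_{L^5L^{10}}\to0$.

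\textbf{Step 4 (energy convergence).} Set $r_n:=u_n-w_n$. It solves $\Box r_n=-(u_n^5-w_n^5)-e_n$ with zero data at $t_n$. The energy estimate (Duhamel) gives
\[
\sup_t\Vert\vec r_n\Vert_{\mathcal H}\lesssim\Vert u_n^5-w_n^5\Vert_{L^1L^2}+\Vert e_n\Vert_{L^1L^2},
\]
and the Hölder bound of Step 2, together with the uniform $L^5L^{10}$ bounds, shows this tends to $0$. Combining with Step 1 yields the uniform energy convergence.

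The only delicate point is checking that the source $-v^5$ fits Lemma~\ref{lem:assfree}. This needs $v^5\in L^1L^2$ and the correct scaling exponent $\lambda_n^{-5/2}$, which is consistent with $(\lambda_n^{-1/2})^5$. The rest is standard.
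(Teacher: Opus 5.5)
Your proof is correct and follows the paper's argument: you build the same auxiliary solution $w_n$ (the paper's $z_n$) from Lemma~\ref{lem:assfree} with $f=-v^5$, show $u_n$ is close to it in $L^5L^{10}$, and conclude with the energy estimate; the only difference is that you invoke Proposition~\ref{lem:perturb} as a black box, whereas the paper redoes the Gronwall/continuity argument by hand. One small correction: for the finitely many small $n$ you cannot appeal to global well-posedness, since in this conditional setting it does not give finiteness of the $L^5L^{10}$ norm; the uniform bound should therefore be read for $n$ large, discarding the first terms, which is also all the paper's own proof yields (for $n\ge n_0$).
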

\begin{proof}
Similarly to \cite[Lemma 3.3]{DL}, the Lemma follows from the linear case, here Lemma \ref{lem:assfree}, by a Gronwall argument that we reproduce  for the convenience of the reader.

Let $z_n$ be the solution of 
\begin{equation}
\label{e:znomega}
\left\{
\begin{aligned}
\partial_t^2z_n-\Delta z_n + \mathbb{1}_\Omega v_n^5&=0\quad\text{in }\Omega, \\
z_n&=0\quad \text{on }\partial\Omega,\\
\vec{z}_{n\restriction t= t_n}&=\vec{u}_{n\restriction t= t_n}.\end{aligned}\right.
\end{equation}
By Lemma \ref{lem:assfree} applied to $z_n$ and $v_n$ with $f:=-v^5$, we get
\begin{equation} \label{eq:dilnl_ZU}
\sup_{t\in \mathbb R} \Vert \vec z_n - \vec v_n\Vert_{\mathcal H(\Omega)} + \Vert z_n - v_n\Vert_{L^5 L^{10}(\Omega)} \to 0.
\end{equation}

We now show that
\begin{equation} \label{eq:nlporf_goal}
 \sup_n \Vert u_n \Vert_{L^5 L^{10}} < \infty \; \text{ and } \;\Vert u_n - v_n \Vert_{L^5(\mathbb R, L^{10})}  \to 0.
\end{equation}
As $L^5(\mathbb R, L^{10})$ is invariant by translation in time, in order to show the above, we can assume that $t_n = 0$ by replacing $u_n, v_n, z_n$ by $u_n(\cdot + t_n)$, $v_n(\cdot + t_n)$, $z_n(\cdot + t_n)$. 
Let now $T>0$; observe that
\begin{equation*}
\left\{
\begin{aligned}
\partial_t^2(z_n-u_n) + \Delta (z_n-u_n) &= -\left(u_n ^5 - 1_\Omega v_n ^5\right) \quad\text{in }\Omega, \\
(z_n-u_n)&=0\quad \text{on }\partial\Omega,\\
\vec{z}_{n} - \vec{u}_{n\restriction t= t_n}&=\vec{0},
\end{aligned}\right.
\end{equation*}
and therefore, we have, by Strichartz estimates together with Hölder and Minkowski inequalities, with an
implicit constant which is independent of $T>0$
\begin{align}
\Vert z_n - u_n \Vert_{L^5(-T,T) L^{10}} &\lesssim \Vert u_n^5 - v_n^5 \Vert_{L^1(-T,T) L^{2}} \nonumber \\
&\lesssim \int_{-T} ^T \Big[ \Vert v_n(t) \Vert^4_{L^{10}} \Vert u_n(t) - v_n(t) \Vert_{L^{10}} + \Vert u_n(t) - v_n(t) \Vert^5_{L^{10}} \Big] \; dt \nonumber \\
&\lesssim \int_{-T}  ^T   \Big[ \Vert v_n(t) \Vert_{L^{10}}^4 \Vert z_n(t) - u_n(t) \Vert_{L^{10}} + \Vert z_n(t) - u_n(t) \Vert^5_{L^{10}} \Big] \,dt\;+\epsilon_n(T), \label{eq:dilnl_dec1} 
\end{align}
where we decomposed $u_n(t) - v_n(t) = u_n(t) - z_n(t) + z_n(t) - v_n(t)$ in the last line, and 
\begin{equation*}
\epsilon_n(T) :=  \int_{-T}^T \Vert v_n(t) - z_n(t)\Vert_{L^{10}}^5 + \Vert v_n(t)\Vert_{L^{10}}^4\Vert v_n(t) - z_n(t)\Vert_{L^{10}} \; dt.
\end{equation*}
By Hölder inequality and (\ref{eq:dilnl_ZU})
\begin{equation}\label{eq:dilnl_epsp}
\epsilon'_n := \sup_{T>0} \epsilon_n(T) \leq \Vert {v}_n - {z}_n\Vert_{L^5(\mathbb R, L^{10})}^5 + \Vert v \Vert_{L^5(\mathbb R, L^{10}(\mathbb R ^3))}^4\Vert {v}_n - {z}_n\Vert_{L^5(\mathbb R, L^{10})} \to 0.
\end{equation}
By (\ref{eq:dilnl_dec1}), we have, with an implicit constant independent of $T$
\begin{equation} \label{eq:dilnl_decprefin}
\Vert z_n - u_n \Vert_{L^5(-T,T) L^{10}} \lesssim \int_{-T} ^T  \|v_n(t)\|_{L^{10}}^4 \Vert z_n(t) - u_n(t) \Vert_{L^{10}} \; dt +   \epsilon'_n  + \Vert z_n - u_n \Vert_{L^5(-T,T) L^{10}} ^5.
\end{equation}
Now, 
$\|v_n\|_{L^{10}}^{4}\in L^{\frac{5}{4}}(\mathbb R^3)$ and $\left\|\|v_n\|_{L^{10}}^4\right\|_{L^{\frac{5}{4}}(\mathbb R^3)}=\|v\|_{L^{5}L^{10}}^4.  $
Thus we get,  by (\ref{eq:dilnl_decprefin}), using the Gronwall-type lemma of \cite[Lemma 8.1]{MR2838120}, for all $T>0$, with $C>0$ independent of $T>0$:
\begin{equation} \label{eq:dilnl_decfin}
\Vert z_n - u_n \Vert_{L^5(-T,T) L^{10}} \leq C \big(  \epsilon'_n  + \Vert z_n - u_n \Vert_{L^5(-T,T) L^{10}} ^5 \big).
\end{equation} 
Let $\epsilon>0$ be small enough so that 
$
2 C\epsilon^5 \leq \frac{1}{2} \epsilon,
$
and $n_0$ large enough so that 
$
\epsilon'_n \leq \epsilon ^5.
$
for all $n\geq n_0$.
From (\ref{eq:dilnl_decfin}), it follows that if $T$ is such that $\Vert z_n - u_n \Vert_{L^5(-T,T) L^{10}} \leq \epsilon$, we have
$$
\Vert z_n - u_n \Vert_{L^5(-T,T) L^{10}} \leq \frac{1}{2} \epsilon.
$$
By a continuity argument, it gives $
\Vert z_n - u_n \Vert_{L^5(-T,T) L^{10}} \leq \epsilon/2
$ for any $T>0$ and $n\geq n_0$.
We can therefore send $T$ to infinity, and we obtain (\ref{eq:nlporf_goal}) thanks to (\ref{eq:dilnl_ZU}). 

To conclude, it remains to show that
$$
\sup_{t\in \mathbb R} \Vert \vec u_n(t) - \vec v_n(t) \Vert_{\mathcal H(\Omega)} \to 0.
$$
In order to do so, thanks to (\ref{eq:dilnl_ZU}) again, it suffices to show that 
\begin{equation*}\label{eq:dilnl_finish}
\sup_{t\in \mathbb R} \Vert \vec z_n(t) - \vec u_n(t) \Vert_{\mathcal H(\Omega)} \to 0.
\end{equation*}
This follows from (\ref{eq:nlporf_goal}): indeed, by equation \eqref{e:znomega}, energy estimates, then H\"older inequality
\begin{align*}
&\sup_{t\in\mathbb R} \Vert \vec z_n(t) - \vec u_n(t) \Vert_{\mathcal H(\Omega)} \leq \Vert u_n^5 - v_n^5 \Vert_{L^1(\mathbb R, L^2)} \\
&\hspace{1cm}\lesssim \int_{-\infty}  ^\infty   \Big[ \Vert v_n(t) \Vert_{L^{10}}^4 \Vert v_n(t) - u_n(t) \Vert_{L^{10}} + \Vert v_n(t) - u_n(t) \Vert^5_{L^{10}} \Big] \,dt 
\\ &\hspace{1cm}\lesssim \Vert v \Vert^{4}_{L^5(\mathbb R, L^{10})} \Vert v_n - u_n \Vert_{L^5(\mathbb R, L^{10})} + \Vert v_n - u_n \Vert^5_{L^5(\mathbb R, L^{10})},
\end{align*}
 which goes to zero thanks to (\ref{eq:nlporf_goal}), and the Lemma follows.\end{proof}

\subsection{Reduction to $t_n = 0$ and energy norms.} \label{subsec:assfree_reduc}

The goal of this paragraph is to reduce ourselves to $t_n=0$ and energy norms, and show that the projection of the initial data is harmless.

 \begin{lem} \label{lem:asfree_reduc}
 In order to show Lemma \ref{lem:assfree}, it suffices to obtain (\ref{znNRJ}) in the case $t_n =0$, $\vec\varphi\in C^{\infty}_c(\R^3)^2$ and $f\in C^{\infty}_c(\R^{1+3})$. It is also sufficient to assume \emph{either} $\lambda_n\to+\infty$; \emph{or} $|x_n|\to+\infty$ and $\lambda_n\to\lambda_0\in \R_+$.
\end{lem}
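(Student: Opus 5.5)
The plan is to perform three successive reductions: to $t_n=0$, to smooth compactly supported data, and to one of the two parameter regimes. After that, \eqref{strichartzzn} is deduced from \eqref{znNRJ} by interpolating between the energy level and the Strichartz pair $(r,s)$ of Assumption \ref{ass:strichartz}.

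\emph{Reduction to $t_n=0$.} The domain $\Omega$ does not depend on time. Hence $\tilde u_n:=u_n(\cdot+t_n)$ and $\tilde v_n:=v_n(\cdot+t_n)$ solve the same problems with data prescribed at $t=0$. The source becomes
$$f_n(t+t_n,x)=\lambda_n^{-5/2}f\big(t/\lambda_n,(x-x_n)/\lambda_n\big),$$
which is exactly the source associated with $t_n=0$. Both the quantity in \eqref{znNRJ} and the norms in \eqref{strichartzzn} are invariant under time translation, so we may take $t_n=0$.

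\emph{Reduction to smooth compactly supported data.} The maps $(\vec\varphi,f)\mapsto \vec u_n$ and $(\vec\varphi,f)\mapsto \vec v_n$ are linear. By the energy estimate, the contraction property of $P_\Omega$ and the scale invariance of the norms, both are bounded uniformly in $n$ by $\Vert\vec\varphi\Vert_{\mathcal H(\mathbb R^3)}+\Vert f\Vert_{L^1L^2}$, both in $L^\infty\mathcal H$ and in $L^5L^{10}$. For the latter we use Lemma \ref{lem:L1L2_} (letting $T\to\infty$) for $u_n$, and the free Strichartz estimates for $v_n$. Approximating $\vec\varphi$ in $\mathcal H(\mathbb R^3)$ by elements of $C^\infty_c(\mathbb R^3)^2$, and $f$ in $L^1L^2$ by elements of $C^\infty_c(\mathbb R^{1+3})$, therefore changes $u_n-v_n$ by an amount that is small uniformly in $n$, both in $L^\infty\mathcal H$ and in $L^5L^{10}$.

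\emph{Reduction to one regime.} We argue by contradiction along a subsequence on which the quantity to be shown to tend to zero stays bounded below. Up to a further subsequence, either $\lambda_n\to+\infty$, or $\lambda_n$ is bounded. In the second case necessarily $|x_n|\to\infty$, and we may extract $\lambda_n\to\lambda_0\in\R_+$.

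\emph{From energy convergence to \eqref{strichartzzn}.} This is the substantive step. Since $\partial\Omega$ is compact and smooth, there is an extension operator bounded on $\dot H^1(\Omega)$. Combined with the Sobolev embedding this gives $\Vert g\Vert_{L^6(\Omega)}\lesssim\Vert\nabla g\Vert_{L^2(\Omega)}$ for $g\in\dot H^1(\Omega)$. Applied to $g=u_n(t)-v_n(t)$, where $v_n(t)$ need not vanish on $\partial\Omega$, this shows that \eqref{znNRJ} yields $\Vert u_n-v_n\Vert_{L^\infty L^6(\Omega)}\to0$.

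We then use the fact that $\sup_n\Vert u_n-v_n\Vert_{L^rL^s(\Omega)}<\infty$. For $u_n$ this follows from Lemma \ref{lem:L1L2_}, since $\Vert\mathbb 1_\Omega f_n\Vert_{L^1L^2}\le\Vert f\Vert_{L^1L^2}$. For $v_n$ it follows from the free Strichartz estimates, by scaling.

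Finally we interpolate. Admissible pairs are parametrised linearly in $(1/p,1/q)$ by $\tfrac12=\tfrac1p+\tfrac3q$, and $(5,10)$ lies strictly between $(\infty,6)$ and $(r,s)$ because $6<10<s$; this is precisely where the hypothesis $s>10$ is used. Hölder's inequality then gives
$$\Vert u_n-v_n\Vert_{L^5L^{10}}\le\Vert u_n-v_n\Vert_{L^\infty L^6}^{\theta}\,\Vert u_n-v_n\Vert_{L^rL^s}^{1-\theta}\to0,\qquad\theta\in(0,1),$$
which is \eqref{strichartzzn}.

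The main point needing care is this last step, namely obtaining the $L^6$ smallness from energy smallness measured only on $\Omega$ even though $v_n$ does not satisfy the Dirichlet condition. The extension/Sobolev inequality on the exterior domain handles it; the time-shift and density reductions are routine.
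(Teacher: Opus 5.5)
Your proposal is correct and follows the paper's proof. The steps match: time translation to reach $t_n=0$, density using uniform energy and Strichartz bounds, subsequence extraction to isolate the two regimes, and interpolation between $L^\infty L^6$ and a second Strichartz norm. Your choice of the pair $(r,s)$ with $s>10$, hence $r<5$, is the right one for that interpolation; the paper's text writes $6<q<10$, $5<p<\infty$, which is reversed. Your extension-operator argument for the $L^6$ bound on $u_n-v_n$, which need not vanish on $\partial\Omega$, fills in what the paper simply calls ``Sobolev embedding''.
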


\begin{proof}
The reduction to $\vec\varphi\in C^{\infty}_c(\R^3)^2$ and $f\in C^{\infty}_c(\R^{1+3})$ is immediate by density using energy and Strichatz estimates fo all the linear equations. The second reduction is only performed by taking a subsequence in the case $\lambda_n$ bounded.

\textbf{Reduction to the energy-norm decay.} We first show that (\ref{znNRJ}) implies (\ref{strichartzzn}). By Sobolev embedding, (\ref{znNRJ}) implies the $L^{\infty}L^{6}$ decay
$$
\Vert u_n - v_n \Vert_{L^\infty L^6} \rightarrow 0.
$$
To obtain the decay in Strichartz norm (\ref{strichartzzn}) from the above, it suffices to interpolate between $L^{\infty}L^{6}$ and another Strichartz norm admissible for $\R^{3}$ and $\Omega$. Thanks to Assumption \ref{ass:strichartz}, we can select one $6<q<10$ and $5<p<+\infty$ with $ \frac 12 = \frac 1p + \frac 3q$ so that the pair $(p,q)$ is admissible. With an appropriate $0<\theta<1$, we get
\begin{align*}
\nor{u_{n}-v_{n}}{L^{5}L^{10}}&\leq \nor{u_{n}-v_{n}}{L^{\infty}L^{6}}^{\theta}\nor{u_{n}-v_{n}}{L^{p}L^{q}}^{1-\theta} \\
&\leq \nor{u_{n}-v_{n}}{L^{\infty}L^{6}}^{\theta}\left(\nor{u_{n}}{L^{p}L^{q}}+\nor{v_{n}}{L^{p}L^{q}}\right)^{1-\theta}.
\end{align*}
\textbf{Reduction to $t_n = 0$.} Now, we assume that Lemma \ref{lem:assfree} holds for $t_n = 0$ and show Lemma \ref{lem:assfree}
in the general case.
Let 
$$
\widetilde{u_n} := u_n(\cdot + t_n), \hspace{0.3cm} \widetilde{v_n} := v_n(\cdot + t_n),
 \hspace{0.3cm} \widetilde{f_n} := f_n(\cdot + t_n).
$$
Observe that $\widetilde{u_n}$, $\widetilde{v_n}$, $\widetilde{f_n}$ verify the assumptions of Lemma \ref{lem:assfree} with $\widetilde{t_n} := 0$. It follows that
$$
\sup_{t\in \mathbb R} \int_{\Omega} |\nabla(\widetilde{u_n} - \widetilde{v_n})(t)|^2 + |\partial_t(\widetilde{u_n} - \widetilde{v_n})(t)| ^2\to 0.
$$
But, for any $t$,
\begin{align*}
\int_{\Omega} |\nabla({u_n} - {v_n})(t)|^2 + |\partial_t(u_n - {v_n})(t)| 
&= \int_{\Omega} |\nabla(\widetilde{u_n} - \widetilde{v_n})(t-t_n)|^2 + |\partial_t(\widetilde{u_n} - \widetilde{v_n})(t-t_n)| \\ &\leq \sup_{s\in \mathbb R} \int_{\Omega} |\nabla(\widetilde{u_n} - \widetilde{v_n})(s)|^2 + |\partial_t(\widetilde{u_n} - \widetilde{v_n})(s)| ^2,
\end{align*}
and the result follows.
\end{proof}

\subsection{Asymptotic behaviour of projections}
The first show that the effect of the projections on $\Omega_n$ in harmless in the regimes we are interested in.
\begin{lem}\label{lm:projectdilat}
Assume that $\lambda_n \to + \infty$ or $|x_n| \to +\infty$. Then,
for any $\vec \varphi\in \mathcal{H}(\mathbb R^3)$, we have as $n \to \infty$
\begin{equation*}
 \Big\Vert P_{\Omega_n}\vec \varphi - \vec \varphi \Big\Vert_{\mathcal{H}(\mathbb R^3)} 
\underset{n\to +\infty}{\longrightarrow}  0.
\end{equation*}
\end{lem}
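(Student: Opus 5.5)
We reduce to a density argument followed by an explicit cutoff construction. The second component is simple: $\Vert \mathbb 1_{\Omega_n}\varphi^1-\varphi^1\Vert_{L^2}=\Vert\varphi^1\Vert_{L^2(\Omega_n^c)}$. Here $\Omega_n^c=(\Theta-x_n)/\lambda_n$, where $\Theta=\mathbb R^3\setminus\Omega$ is compact, contained in $B(0,R_\Theta)$. So $\Omega_n^c\subset B(-x_n/\lambda_n,R_\Theta/\lambda_n)$. If $\lambda_n\to\infty$, this set has measure going to zero. If $|x_n|\to\infty$ with $\lambda_n$ bounded (after extracting a subsequence, as in Lemma \ref{lem:asfree_reduc}), its center goes to infinity while its radius stays bounded. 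In both cases the norm tends to $0$ by dominated convergence, or by absolute continuity of the integral. For the first component, $\mathcal P_{\Omega_n}$ is an orthogonal projection, hence of norm $\le 1$ uniformly in $n$. By density it therefore suffices to treat $\varphi^0\in C_c^\infty(\mathbb R^3)$.

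For $\varphi^0\in C^\infty_c$, we use that $\Vert \mathcal P_{\Omega_n}\varphi^0-\varphi^0\Vert_{\dot H^1}\le \Vert g_n-\varphi^0\Vert_{\dot H^1}$ for any $g_n\in\dot H^1_0(\Omega_n)$, since the projection gives the best approximation. We take $g_n:=(1-\chi_n)\varphi^0$, with $\chi_n$ a cutoff equal to $1$ near $\Omega_n^c$.

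\textbf{Case $|x_n|\to\infty$, $\lambda_n\to\lambda_0$.} Here $\Omega_n^c$ is contained in a ball of bounded radius whose center $-x_n/\lambda_n$ goes to infinity. For $n$ large it is disjoint from $\operatorname{supp}\varphi^0$, so $\varphi^0\in\dot H^1_0(\Omega_n)$ and the error vanishes exactly. (If $\lambda_0=0$, the radius $R_\Theta/\lambda_n$ may blow up, so we reduce to $\lambda_0>0$ or refine the argument. With $\lambda_n\to0$ and $|x_n|\to\infty$, however, the obstacle $\Omega_n^c$ need not leave the support, so the hypothesis presumably intends $\lambda_n$ bounded below in that case. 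We would note this and restrict to $\lambda_0\in(0,\infty)$, consistent with Lemma \ref{lem:asfree_reduc}.)

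\textbf{Case $\lambda_n\to\infty$.} Here $\Omega_n^c\subset B(y_n,\rho_n)$ with $y_n=-x_n/\lambda_n$ and $\rho_n=R_\Theta/\lambda_n\to0$. This is the capacity-type step and the main point. In dimension $3$, points have zero capacity, so we take $\chi_n(x)=\chi\big((x-y_n)/\delta_n\big)$ with $\chi\in C_c^\infty(B(0,2))$, $\chi=1$ on $B(0,1)$, and $\delta_n:=\rho_n$. Then
$$\Vert\nabla(\chi_n\varphi^0)\Vert_{L^2}\le \Vert\varphi^0\Vert_{L^\infty}\Vert\nabla\chi_n\Vert_{L^2}+\Vert\nabla\varphi^0\Vert_{L^\infty}\Vert\chi_n\Vert_{L^2}\lesssim \delta_n^{1/2}+\delta_n^{3/2}\to0,$$
because $\Vert\nabla\chi_n\Vert_{L^2}^2\sim\delta_n^{3-2}=\delta_n$. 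Moreover $g_n=(1-\chi_n)\varphi^0$ is smooth, compactly supported and vanishes near $\Omega_n^c$, so it lies in $\dot H^1_0(\Omega_n)$. This concludes the proof in the dilating case. The cases where $\lambda_n\to\infty$ and $|x_n|\to\infty$ hold simultaneously are covered by this same estimate, which is uniform in $y_n$.
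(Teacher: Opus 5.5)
Your proof has a genuine gap in the regime $|x_n|\to+\infty$, $\lambda_n\to 0$. You set this regime aside on the grounds that the obstacle $\Omega_n^c$ ``need not leave the support'', and that claim is false. The regime is part of the hypothesis, and the paper needs it. Lemma~\ref{lem:asfree_reduc} allows $\lambda_0\in\R_+$, and the proof in \S\ref{subsec:to_infinity} treats $\lambda_0=0$ explicitly, through Lemma~\ref{lem:proj_harmless}. In Lemma~\ref{lm:projectgeneral}, the present lemma is the only place where cores with $\lambda_n\to0$ and $|x_n|\to\infty$ are handled. So dropping the case is not ``consistent with Lemma~\ref{lem:asfree_reduc}''.

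The blow-up of the radius $R_\Theta/\lambda_n$ is harmless because the center moves away faster. For $y\in\Theta\subset B(0,R_\Theta)$ and $n$ large,
\[
\Big|\frac{y-x_n}{\lambda_n}\Big|\geq \frac{|x_n|-R_\Theta}{\lambda_n}\geq \frac{|x_n|-R_\Theta}{M},\qquad M:=\sup_n\lambda_n<\infty .
\]
Hence $\operatorname{dist}(0,\Omega_n^c)\to+\infty$ whenever $|x_n|\to\infty$ and $\lambda_n$ is merely bounded above. So $\Omega_n^c$ eventually misses $\operatorname{supp}\varphi^0$, and $P_{\Omega_n}\vec\varphi=\vec\varphi$ exactly; this is the paper's last case, with $\lambda_n\le M$. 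The same bound repairs your $L^2$ argument for the second component. There, ``its radius stays bounded'' fails when $\lambda_n\to0$, but $\Omega_n^c\subset\R^3\setminus B\big(0,(|x_n|-R_\Theta)/M\big)$ suffices.

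The rest of your argument is correct. The density reduction, the best-approximation bound and the $\lambda_n\to\infty$ case all go through, and your treatment of the dilating case is somewhat more direct than the paper's. The paper splits into $|x_n|/\lambda_n\to\infty$ and $x_n/\lambda_n\to y_\infty$. In the latter it uses density of $C^\infty_c(\R^3\setminus\{y_\infty\})$ (Lemma~\ref{lem:H1moins_point}), so that $P_{\Omega_n}\vec\varphi=\vec\varphi$ exactly for $n$ large. You instead cut off on the moving, shrinking ball $B(y_n,2\rho_n)$. This is the same capacity computation, but it is uniform in the center and removes the need for the subsequence split.
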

\begin{proof}
By density and the fact that all operators are projections, it is enough to prove the result for $\vec \varphi\in C_c^{\infty}(\R^3)^2$. We have $\Omega_n=\frac{\Omega- x_n}{\lambda_n}=\R^3\setminus \Theta_n$ with $\Theta_n=\frac{\Theta- x_n}{\lambda_n}$. 

We first assume $\lambda_n \to +\infty$. Up to a subsequence, we can distinguish the case $\frac{|x_n|}{\lambda_n}\to +\infty$ and $\frac{x_n}{\lambda_n}\to y_{\infty}\in \R^3$. When $\frac{|x_n|}{\lambda_n}\to +\infty$, we denote $R$ so that $\operatorname{supp}(\vec \varphi)\subset B(0,R)$. Since $\Theta$ is compact, there exists $n$ large enough so that $\Theta_n=\frac{\Theta- x_n}{\lambda_n}$ is included in $B(\frac{x_n}{\lambda_n},1)$ which is included in $B(0,R)^c$ for $n$ large enough. In particular, $ \operatorname{supp}(\vec \varphi)\subset \Theta_n^c=\Omega$. This implies $\vec \varphi\in \mathcal{H}(\Omega_n)$ and $P_{\Omega_n}\vec \varphi=\vec \varphi$. 

If $\lambda_n \to +\infty$ and $\frac{x_n}{\lambda_n}\to y_{\infty}\in \R^3$, using Lemma~\ref{lem:H1moins_point} and a similar result for $L^2(\R^3)$, it is enough to prove the result for $\vec \varphi\in C_c^{\infty}(\R^3\setminus \{y_{\infty}\})^2$. Let $\e>0$ so that $\operatorname{supp}(\vec \varphi)\subset \R^3 \setminus B(y_{\infty},\e)$. Since $\Theta$ is compact, $\lambda_n \to +\infty$ and $\frac{x_n}{\lambda_n}\to y_{\infty}$, there exists $n$ large enough so that $\Theta_n=\frac{\Theta- x_n}{\lambda_n}$ is included in $B(y_{\infty},\e)$. In particular, $\operatorname{supp}(\vec \varphi)\subset \R^3 \setminus \Theta_n=\Omega_n$. This gives again $\vec \varphi\in \mathcal{H}(\Omega_n)$ and $P_{\Omega_n}\vec \varphi=\vec \varphi$.

The remaining case, up to a subsequence is $|x_n| \to +\infty$ and $\lambda_n$ bounded by a constant $M>0$. In particular, $\frac{1}{\lambda_n}\geq \frac{1}{M}$. For $n$ large enough, $\Theta- x_n$ is included in $B(0,RM)^c$. In particular, $\Theta_n$ is included in $B(0,R)^c$ and we conclude as before.
\end{proof}

\begin{lem} \label{lem:proj_harmless}
Assume that $\lambda_n \to + \infty$ or $|x_n| \to +\infty$. Then, as $n\rightarrow \infty$,
\begin{equation}\label{eq:comp_dil000}
\Big(\lambda_n^{-1/2} \mathcal P_\Omega\big(\varphi^0(\frac{\cdot - x_n}{\lambda_n})\big),  \lambda_n^{-3/2}\mathbb{1}_\Omega \varphi^1(\frac{\cdot- x_n}{\lambda_n})\Big) 
-
\Big(\lambda_n^{-1/2} \varphi^0(\frac{\cdot- x_n}{\lambda_n}),  \lambda_n^{-3/2} \varphi^1(\frac{\cdot- x_n}{\lambda_n})\Big)
\to  0 
\quad\text{in}\quad\mathcal{H}(\mathbb R^3).
\end{equation}
\end{lem}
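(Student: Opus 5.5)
The plan is to reduce the statement to Lemma \ref{lm:projectdilat} by conjugating with the scaling operators. The difference in \eqref{eq:comp_dil000} is exactly
$$
P_\Omega T_{\mathcal O,n}\vec\varphi - T_{\mathcal O,n}\vec\varphi,
$$
where $\mathcal O$ is the core $(\lambda_n,0,x_n)$. Here $P_\Omega=(\mathcal P_\Omega,\mathbb 1_\Omega)$, and the output is extended by zero outside $\Omega$ as in the notation section.

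First step. We observe that $T_{\mathcal O,n}$ is an isometry of $\mathcal H(\mathbb R^3)$, since the scalings $\lambda^{-1/2}\varphi^0((\cdot-x_n)/\lambda_n)$ and $\lambda^{-3/2}\varphi^1((\cdot-x_n)/\lambda_n)$ preserve the $\dot H^1$ and $L^2$ norms respectively. Its inverse is $T_{\mathcal O^{-1},n}$. Hence the $\mathcal H(\mathbb R^3)$ norm of the difference equals
$$
\big\Vert T_{\mathcal O^{-1},n}P_\Omega T_{\mathcal O,n}\vec\varphi - \vec\varphi\big\Vert_{\mathcal H(\mathbb R^3)}.
$$

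Second step. We use the conjugation identity \eqref{eq:conj_proj}, $T_{\mathcal O^{-1},n}P_\Omega T_{\mathcal O,n}=P_{\Omega_n}$ with $\Omega_n=(\Omega-x_n)/\lambda_n$. To be careful, we would check this directly. The map $T_{\mathcal O,n}$ sends $\mathcal H(\Omega_n)$ (extended by zero) isometrically onto $\mathcal H(\Omega)$, because it maps $C_c^\infty(\Omega_n)$ onto $C_c^\infty(\Omega)$. A unitary operator conjugates the orthogonal projection onto a closed subspace into the orthogonal projection onto the image subspace. The $L^2$ component is just multiplication by the indicator, which transforms the same way. The norm therefore becomes $\Vert P_{\Omega_n}\vec\varphi-\vec\varphi\Vert_{\mathcal H(\mathbb R^3)}$.

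Third step. Under the assumption $\lambda_n\to\infty$ or $|x_n|\to\infty$, this quantity tends to $0$ by Lemma \ref{lm:projectdilat}, which concludes the argument.

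There is no real obstacle. The only point requiring care is the unitarity and conjugation identity, namely that $T_{\mathcal O,n}$ maps $\dot H^1_0(\Omega_n)$ exactly onto $\dot H^1_0(\Omega)$. This follows from density of smooth compactly supported functions together with the isometry property.
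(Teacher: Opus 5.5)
Your proposal is correct and follows essentially the same route as the paper: a change of variables, which is exactly the conjugation identity $T_{\mathcal{O}^{-1},n}P_{\Omega}T_{\mathcal{O},n}=P_{\Omega_n}$ from \eqref{eq:conj_proj}, reduces the claim to $\Vert P_{\Omega_n}\vec\varphi-\vec\varphi\Vert_{\mathcal{H}(\mathbb R^3)}\to 0$, which is Lemma \ref{lm:projectdilat}. The appeal to the limit domain and to Lemma \ref{lem:H1moins_point} in the paper's proof is already contained in that lemma, and your explicit check of the unitary conjugation makes the change-of-variables step cleaner than the paper's.
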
 

\begin{proof}
By change of variables
\begin{equation*}
\big\Vert \text{l.h.s of }(\ref{eq:comp_dil000}) \big\Vert_{\mathcal{H}(\mathbb R^3)} = \Big\Vert\big(\mathcal P_{\frac{\Omega- x_n}{\lambda_n}}\varphi^0,  \mathbf 1_{\frac{\Omega- x_n}{\lambda_n}} \varphi^1\big) - \vec \varphi \Big\Vert_{\mathcal{H}(\mathbb R^3)}
\longrightarrow \Big\Vert\big(\mathcal P_{X_{\mathcal O}}\varphi^0, \mathbb{1}_{X_{\mathcal O}} \varphi^1\big) - \vec \varphi\Big\Vert_{\mathcal{H}(\mathbb R^3)} = 0,
\end{equation*}
which is proved in Lemma~\ref{lm:projectdilat}.
 Here $X_{\mathcal O} = \mathbb R^3$ or $\mathbb R^3 \backslash \{ 0 \}$, and we used the fact that $\dot H_0^1(\mathbb R^3 \backslash \{ 0 \}) = \dot H^1(\mathbb R^3)$ (see Lemma \ref{lem:H1moins_point}) and hence $\mathcal P_{X_{\mathcal O}} = \operatorname{Id}$.
\end{proof}

\subsection{Profiles going to infinity} \label{subsec:to_infinity}

We now show Lemma \ref{lem:assfree} for a profile going to infinity, that is with
$$
\lambda_n \rightarrow \lambda_{0}\in {\R_+} \hspace{0.5cm} \text{and} \hspace{0.5cm} |x_n| \rightarrow \infty.
$$

\begin{proof}[Proof of Lemma \ref{lem:assfree} for $\lambda_n \rightarrow \lambda_{0}\in \R_+$ and $|x_n| \rightarrow \infty$.]
By Lemma \ref{lem:asfree_reduc}, we can assume that $t_{n}=0$, and it suffices  to obtain (\ref{znNRJ}). 
 Hence
 $$
 v_n = \lambda_n^{-1/2}v(\frac{\cdot}{\lambda_n},\frac{\cdot - x_n}{\lambda_n}),
 $$
 where $v$ is solution to
 $$
 \begin{cases}
 \partial_t^2 v - \Delta v = f \text{ in }\mathbb R^3, \\
 (v, \partial_t v) (t=0) = \vec \varphi.
 \end{cases}
 $$
Let $\beta \in C^\infty_c$ be so that $\beta = 1$ near $\Omega^c$ (i.e. near the obstacles) and
$$
z_n := u_n - (1-\beta)v_n.
$$
Observe that $z_n$ satisfies (recalling the notation $P_\Omega := (\mathcal P_\Omega, \mathbf 1_\Omega)$)
$$
\begin{cases}
\partial_t^2 z_n - \Delta z_n = -[\Delta, \beta]v_n+\beta f_n \text{ in } \Omega, \\
\vec{z_n}(0) = {-}\beta P_\Omega \vec {v_n} (0)
+ \vec{\epsilon_n}, \\
z_n = 0 \text{ on }\partial \Omega,
\end{cases}
$$
where
$$
\vec{\epsilon_n} := (1 - \beta)(P_\Omega - \operatorname{I})\vec{v_n}(0){+\beta (P_\Omega - \operatorname{I})\vec{v_n}(0)}
$$
verifies, by Lemma \ref{lem:proj_harmless}, 
$$
\vec{\epsilon_n} \to \vec 0 \; \text{ in }\dot H^1 \times L^2 (\mathbb R^3).
$$
Therefore, using Duhamel formula, it suffices in order to obtain (\ref{znNRJ}) to show that
\begin{equation} \label{eq:pre_xn_infty_reduc}
\Vert[\Delta, \beta]v_n \Vert_{L^1 L^2} +\Vert\beta v_n \Vert_{L^{\infty} H^{1}}+\Vert\beta \partial_{t}v_n \Vert_{L^{\infty} L^{2}}+\Vert\beta f_n \Vert_{L^{1} L^{2}}\rightarrow 0,
\end{equation} 
where all the norms are taken in $\mathbb R \times \mathbb R^3$ and we used again the fact that, by Lemma \ref{lem:proj_harmless}, $P_\Omega \vec{v_n}(0) - \vec{v_n}(0) \to 0$ in $\dot H^1 \times L^2 (\mathbb R^3)$ in order to eliminate  the projection.
In order to show (\ref{eq:pre_xn_infty_reduc}), it is sufficient to prove that for any $\gamma \in C^\infty_c(\mathbb R^3)$ equal to one on the support of $\beta$, we have
\begin{equation} \label{eq:xn_infty_reduc}
\Vert \gamma v_n \Vert_{L^1 L^2} + \Vert \gamma \nabla v_n \Vert_{L^1 L^2} + \Vert \gamma v_n \Vert_{L^{\infty} L^2} + \Vert \gamma \nabla v_n \Vert_{L^{\infty} L^2} 
+\Vert \gamma \partial_{t}v_n \Vert_{L^{\infty} L^{2}}+ \Vert \gamma f_n \Vert_{L^{1} L^{2}}\ \to 0.
\end{equation}
Showing (\ref{eq:xn_infty_reduc}) occupies the remainder of the proof.
As noticed in Lemma \ref{lem:asfree_reduc}, we can assume that both $\vec \varphi = (\varphi^0, \varphi^1 )$ and $f$ are smooth and compactly supported.

\textbf{The $L^1 L^2$ decay.} We will first show
\begin{equation} \label{eq:xn_infty_L1L2}
\Vert \gamma v_n \Vert_{L^1 L^2} + \Vert \gamma \nabla v_n \Vert_{L^1 L^2} \rightarrow 0.
\end{equation} 
As
$
v_{n}=\lambda_n^{-1/2}v(\frac{\cdot}{\lambda_n}, \frac{\cdot-x_{n}}{\lambda_n})
$,
we have
\begin{equation}\label{eq:xn_infty_chgtvar}
\Vert\gamma \nabla v_n \Vert_{L^1 L^2}=\lambda_{n}\Vert\gamma(\cdot\lambda_{n}+x_{n}) \nabla v \Vert_{L^1 L^2}.
\end{equation}
As $\vec \varphi = (\varphi^0, \varphi^1 )$ and $f$ are assumed to be compactly supported, by finite speed of propagation and Huygens principle, $v$ is supported in 
\begin{equation} \label{eq:xn_inf:suppv}
\operatorname{supp} v \subset \{ -R \leq |t| - |x| \leq R \}
\end{equation}
for some $R>0$. Moreover, if $\gamma$ is supported in $B(0,C)$,  $\gamma(\cdot\lambda_{n}+x_{n})$ is supported in 
\begin{equation} \label{eq:xn_inf_suppgamman}
\operatorname{supp} \gamma(\cdot\lambda_{n}+x_{n}) \subset B(-\frac{x_{n}}{\lambda_{n}},\frac{C}{\lambda_{n}}) \subset \Big\{|x| \in \big[\frac{|x_n| - C}{\lambda_n}, \frac{|x_n| + C}{\lambda_n}\big] \Big\}.
\end{equation}
In particular by (\ref{eq:xn_inf:suppv}) and (\ref{eq:xn_inf_suppgamman}), the product $\gamma(\cdot\lambda_{n}+x_{n}) \nabla v$ is supported in a time interval 
\begin{equation} \label{eq:xn_inf_suppgproduit}
\operatorname{supp} \gamma(\cdot\lambda_{n}+x_{n}) \nabla v \subset \big\{ |t|\in I_{n} \big \}, \; I_{n} :=\Big[\frac{|x_n|-C}{\lambda_n}-R, \frac{|x_n|+C}{\lambda_n}+R\Big]
\end{equation}
of size 
\begin{equation}
 \label{eq:xn_inf:sizeIn}
|I_{n}|\leq \frac{2C}{\lambda_{n}} + 2R \lesssim \frac{1}{\lambda_n}
\end{equation}
We notice also that $I_n$ is going to infinity, in the sense that, for any $T >0$, $I_n \cap [-T, T] = \emptyset$ for $n$ big enough in both cases $\lambda_0=0$ or $\lambda_0>0$ and $|x_n|\to +\infty$. We only treat the part of the norm where $t\in I_n$, $t>0$ (that is $t$ large positive). The other part for large negative $t$ is treated similarly. 

Recall that $f$ is assumed now compactly supported in time thanks to the approximation. In particular, for $t$ large enough, $v$ is solution of $\Box v=0$. Denoting $\widetilde{F}\in L^{2}(\R\times \mathbb{S}^{2})$ the radiation field of Friedlander \cite{F:80}, see for instance \cite[Proposition 1.1]{CL} we have
\begin{equation*} 
\Big\Vert\nabla v(t)-\frac{1}{|\cdot|}\widetilde{F}(|\cdot|-t, \frac{\cdot}{|\cdot|})\frac{.}{|.|}\Big\Vert_{L^2}\underset{t\to+\infty}{\longrightarrow}0.\end{equation*} 
We fix $\e>0$. Approximating $\widetilde{F}$ in $L^{2}(\R\times \mathbb{S}^{2})$, we can pick $F\in C^{0}_{c}(\R\times \mathbb{S}^{2})^{3}$ and $T_0>0$ so that 
\begin{equation} \label{eq:profil_LaurentCote0}
\Big\Vert\nabla v(t)-\frac{1}{|\cdot|}F(|\cdot|-t, \frac{\cdot}{|\cdot|})\Big\Vert_{L^2}\leq \epsilon, \hspace{0.5cm}\forall t \geq T_0.
\end{equation} 
Let $n$ big enough so that $I_n \subset [T_0, + \infty)$, then, by the above (\ref{eq:profil_LaurentCote0}) together with (\ref{eq:xn_inf:sizeIn})
\begin{equation} \label{eq:xn_inf_approx}
\lambda_{n}\Big\Vert\gamma(\cdot\lambda_{n}+x_{n}) \left(\nabla v - \frac{1}{|\cdot|}F(|\cdot|-t,  \frac{\cdot}{|\cdot|})\frac{.}{|.|}\right) \Big\Vert_{L^1(I_n) L^2} \lesssim \lambda_n |I_n| \epsilon \lesssim \epsilon.
\end{equation}
On the other hand, using  (\ref{eq:xn_inf_suppgamman}) to bound $\frac{1}{|x|}$, then (\ref{eq:xn_inf:sizeIn})
\begin{align} 
\lambda_{n}\Big\Vert\gamma(\cdot\lambda_{n}+x_{n}) \frac{1}{|\cdot|}F(|\cdot|-t,  \frac{\cdot}{|\cdot|}))\Big\Vert_{L^1(I_n) L^2} &\leq \lambda_n |I_n| \frac{\lambda_n}{|x_n|}\sup_{t\in I_n} \Big\Vert \gamma(\cdot\lambda_{n}+x_{n})F(|\cdot|-t,  \frac{\cdot}{|\cdot|})) \Big\Vert_{L^2} \nonumber \\
&\leq \frac{\lambda_n}{|x_n|}\sup_{t\in I_n} \Big\Vert \gamma(\cdot\lambda_{n}+x_{n})F(|\cdot|-t,  \frac{\cdot}{|\cdot|})) \Big\Vert_{L^2}. \label{eq:xn_inf_L1L2}
\end{align}
But, for fixed time $t \in I_n$, the support of $\gamma(\cdot\lambda_{n}+x_{n})F(|\cdot|-t,  \frac{\cdot}{|\cdot|}))$ is included in 
\begin{equation}\label{eq:Cn0}
\operatorname{supp}\gamma(\cdot\lambda_{n}+x_{n})F(|\cdot|-t,  \frac{\cdot}{|\cdot|})) \subset C_n(t) := B(-\frac{x_n}{\lambda_n}, \frac{C}{\lambda_n}) \cap \big\{x\in \R^3\textnormal{ s.t. } |x| \in [t-R, t+R] \big\}.
\end{equation}
Applying Lemma \ref{lm:volC} with $x_0=\frac{x_n}{\lambda_n}$ and $r=\frac{C}{\lambda_n}$ with $\epsilon=\max(C |x_n|^{-1},R\lambda_n|x_n|^{-1})\lesssim |x_n|^{-1}$ and taking $n$ large enough so that the Lemma applies, we obtain $|C_n(t)| \lesssim R t^2 |x_n|^{-1}$ for $t\geq \epsilon_0^{-1}R$. Therefore, for $t \in I_n$ (recall the definition (\ref{eq:xn_inf_suppgproduit})) we have the estimate
\begin{equation} \label{eq:taille_Cn}
\sup_{t\in I_n}|C_n(t)| \lesssim \frac{|x_n|}{\lambda_n^2}.
\end{equation}

Now, from (\ref{eq:xn_inf_L1L2}) together with (\ref{eq:Cn0}) and (\ref{eq:taille_Cn})
\begin{equation*}
\lambda_{n}\Big\Vert\gamma(\cdot\lambda_{n}+x_{n}) \frac{1}{|\cdot|}F(|\cdot|-t,  \frac{\cdot}{|\cdot|}))\Big\Vert_{L^1(I_n) L^2} \lesssim  \frac{\lambda_n}{|x_n|}\sup|F| \sup|\gamma|\sup_{t\in I_n} |C_n(t)|^\frac{1}{2} \lesssim \frac{1}{|x_n|^{\frac 12}} \to 0.
\end{equation*} 
Combining the above with (\ref{eq:xn_inf_approx}), (\ref{eq:xn_inf_suppgproduit}) and (\ref{eq:xn_infty_chgtvar}), we obtain
$$
\Vert \gamma \nabla v_n \Vert_{L^1L^2} \rightarrow 0.
$$
For the $L^{2}$ term $\Vert \gamma v_n \Vert_{L^2}$, in the case $\lambda_{0}=0$, we only need to estimate
$$
\Vert\gamma  v_n \Vert_{L^1 L^2}=\lambda_{n}^{2}\Vert\gamma(\cdot\lambda_{n}+x_{n}) v \Vert_{L^1 L^2}\leq \lambda_{n}^{2}\Vert v \Vert_{L^1(I_{n}) L^2}\leq \lambda_{n}\Vert v \Vert_{L^{\infty}L^2}\leq C\lambda_{n} \rightarrow 0,
$$
where we have used \eqref{eq:xn_inf:sizeIn}.
In the case $\lambda_{0}\neq 0$, we make the same reasoning as for the gradient term approximating $v$ in $L^{2}$. The $L^1L^2$ decay (\ref{eq:xn_infty_L1L2}) follows.

\textbf{The uniform energy decay.} We now show
\begin{equation} \label{eq:xn_infty_LinftyL2}
\Vert \gamma v_n \Vert_{L^\infty L^2} + \Vert \gamma \nabla v_n \Vert_{L^\infty L^2} + \Vert \gamma \partial_t v_n \Vert_{L^\infty L^2} \rightarrow 0.
\end{equation} 
For fixed $t$, we estimate
\begin{equation} \label{eq:xn_infty_chgtvar2}
\Vert\gamma \nabla v_n(t) \Vert_{L^2}=\Vert \gamma(\cdot\lambda_{n}+x_{n})\nabla v(\frac{t}{\lambda_{n}}) \Vert_{L^2}.
\end{equation}
By (\ref{eq:xn_inf_suppgproduit}), this term is supported in 
\begin{equation} \label{eq:xn_infty_suppprod2}
\operatorname{supp} \gamma(\cdot\lambda_{n}+x_{n})\nabla v(\frac{t}{\lambda_{n}}) \subset \Big\{|t| \in \lambda_n I_n \Big\} \subset \Big\{ |t| \in \big[|x_n| - C, |x_n| + C\big]\Big\},
\end{equation}
for another constant $C$. We take $n$ large enough so that $\lambda_n I_n \subset [T_0, + \infty)$. Then, using (\ref{eq:profil_LaurentCote0})
\begin{equation} \label{eq:xn_infty_approx2}
\sup_{|t|\in \lambda_n I_n} \Big\Vert \gamma(\cdot\lambda_{n}+x_{n})(\nabla v(\frac{t}{\lambda_{n}}) - \frac{1}{|\cdot|}F(|\cdot|-\frac{t}{\lambda_n}, \frac{\cdot}{|\cdot|}) \Big\Vert_{L^2} \leq \epsilon. 
\end{equation}
On the other hand, by (\ref{eq:xn_inf_suppgamman}), (\ref{eq:Cn0}), and (\ref{eq:taille_Cn})
\begin{equation*}
\sup_{|t|\in \lambda_n I_n} \Big\Vert \gamma(\cdot\lambda_{n}+x_{n}) \frac{1}{|\cdot|}F(|\cdot|-\frac{t}{\lambda_n}, \frac{\cdot}{|\cdot|}) \Big\Vert_{L^2} 
\leq \frac{\lambda_n}{|x_n|} \sup|\gamma| \sup|F| \sup_{t\in \lambda_n I_n} |C_n(\frac{t}{\lambda_n})|^\frac{1}{2} \lesssim \frac{1}{|x_n|^{\frac 12}},
\end{equation*}
and combining the above with (\ref{eq:xn_infty_chgtvar2}), (\ref{eq:xn_infty_suppprod2}) and (\ref{eq:xn_infty_approx2}) gives the decay of the gradient term
$$
\Vert \gamma \nabla v_n \Vert_{L^\infty L^2} \to 0.
$$
We proceed similarly for the term $\Vert\gamma \partial_{t} v_n(t) \Vert_{L^2}$.
For the $L^{2}$ term $\Vert\gamma v_n(t) \Vert_{L^2}$, in the case $\lambda_0 = 0$, we only need to estimate
$$
\Vert\gamma  v_n(t) \Vert_{L^2}=\lambda_{n}\Vert \gamma(\cdot\lambda_{n}+x_{n}) v(\frac{t}{\lambda_{n}}) \Vert_{L^2}\leq C \lambda_{n}\Vert v\Vert_{L^{\infty}L^2},
$$
and in the case $\lambda_n \neq 0$, we proceed in the same way as for the gradient term approximating $v$ in $L^{2}$. The uniform decay (\ref{eq:xn_infty_LinftyL2}) follows.

\textbf{Conclusion.}
To finish, we need to estimate $\Vert \gamma  f_{n} \Vert_{L^1 L^2}$. But, as we assumed that $f$ is compactly supported in $\R\times \R^{3}$, for $n$ large enough $\gamma f_{n}=0$. Combined with the two previous steps (\ref{eq:xn_infty_L1L2}) and (\ref{eq:xn_infty_LinftyL2}), we thus have (\ref{eq:xn_infty_reduc}) and hence the uniform decay of the energy  (\ref{znNRJ}).

\end{proof}
\begin{remark}
We chose to present the above proof dealing with all the cases $\lambda_n$ bounded, $|x_n|\to\infty$ at once. But observe that the 
case where $\lambda_n$ is bounded away from zero (included in the proof above), is actually simpler: for example, via Huygens principle, $\Vert \gamma \nabla v_n \Vert_{L^1 L^2} = \lambda_n\Vert \gamma(\cdot + x_n) \nabla v \Vert_{L^1 L^2} \lesssim \Vert \nabla v \Vert_{L^\infty(t\geq x_n / 2) L^\infty}\rightarrow 0$.
\end{remark}

\subsection{Dilating profiles} \label{subsec:dilating}

We now show Lemma \ref{lem:assfree} for a dilating profile, that is with
$$
\lambda_n \rightarrow + \infty.
$$
The proof will rely on the following, related to the hidden regularity.

\begin{lem}[A priori control of the boundary term] \label{lem:apriori_bdd}
Assume that $\partial\Omega$ is smooth and bounded. Then, there exists $C>0$ so that, 
for any $\vec \varphi \in \dot H^1 \times L^2 (\mathbb R^3)$ and $f\in L^1(\mathbb R, L^2(\mathbb R^3))$, if $u$
is solution to
$$
\begin{cases}
\partial_t^2 u - \Delta u = f \text{ in } \Omega, \\
u = 0 \text{ on } \partial \Omega, \\
(u, \partial_t u) (t=0) = \vec \varphi,
\end{cases}
$$
then, for any $t_2>t_1$,
$$
\int_{t_1}^{t_2} \int_{\partial \Omega} |\partial_\nu u |^2 \; d\sigma dt \leq C \big(1+t_2 - t_1) \big(\Vert \vec \varphi \Vert^2_{\dot H^1 \times L^2} + \Vert f \Vert^2_{L^1L^2}\big).
$$
\end{lem}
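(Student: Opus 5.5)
The plan is the classical multiplier argument behind hidden regularity, applied with a vector field $Z$ that is smooth, compactly supported near $\partial\Omega$, and extends the outward (with respect to $\Omega$) unit normal: $Z=\nu$ on $\partial\Omega$. Such a $Z$ exists because $\partial\Omega$ is smooth and compact. By density (and energy estimates) we first reduce to smooth data and smooth source compatible with the boundary condition, so that all integrations by parts are justified. We then multiply the equation $\partial_t^2u-\Delta u=f$ by $Z\cdot\nabla u$ and integrate over $[t_1,t_2]\times\Omega$.

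The standard computation gives the identity
\begin{align*}
\int_{t_1}^{t_2}\!\!\int_\Omega f\, Z\cdot\nabla u
&=\Big[\int_\Omega \partial_t u\, Z\cdot\nabla u\Big]_{t_1}^{t_2}
+\int_{t_1}^{t_2}\!\!\int_\Omega \Big(\partial_j Z^k\,\partial_j u\,\partial_k u-\tfrac12 (\operatorname{div}Z)\big(|\nabla u|^2-|\partial_t u|^2\big)\Big)\\
&\quad -\tfrac12\int_{t_1}^{t_2}\!\!\int_{\partial\Omega}(Z\cdot\nu)\,|\partial_\nu u|^2\,d\sigma\,dt ,
\end{align*}
up to signs and conventions. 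Here we use that $u=0$ on $\partial\Omega$ implies $\nabla u=(\partial_\nu u)\nu$ and $\partial_t u=0$ there, so both boundary contributions, from $\Delta u\, Z\cdot\nabla u$ and from $|\nabla u|^2\, Z\cdot\nu$, reduce to multiples of $|\partial_\nu u|^2$ and combine into $\tfrac12 (Z\cdot\nu)|\partial_\nu u|^2$. Since $Z\cdot\nu=1$ on $\partial\Omega$, this isolates $\frac12\int\!\!\int_{\partial\Omega}|\partial_\nu u|^2$.

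All remaining terms are then bounded by the energy. The time-boundary terms are at most $2\|Z\|_\infty\sup_t E(t)$. The bulk terms are at most $C\|Z\|_{W^{1,\infty}}(t_2-t_1)\sup_t E(t)$. The source term is at most $\|Z\|_\infty\|f\|_{L^1L^2}\sup_t\|\nabla u\|_{L^2}$. Finally, the energy estimate for the Dirichlet problem gives $\sup_t\|\vec u(t)\|_{\dot H^1\times L^2}\lesssim \|\vec\varphi\|+\|f\|_{L^1L^2}$, and with $ab\le a^2+b^2$ this yields the claimed bound $C(1+t_2-t_1)(\|\vec\varphi\|^2+\|f\|^2_{L^1L^2})$. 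Note that only $\dot H^1$-type quantities appear, since $Z$ is bounded with bounded derivatives, so no $L^2$ norm of $u$ itself is needed.

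The main (mild) obstacle is the justification step. The normal derivative trace must be defined for energy-class solutions; this is exactly the content of the estimate, obtained by proving it for smooth compatible data and extending by density. Concretely, one approximates $\vec\varphi$ in $\mathcal H(\Omega)$ by elements of $D(A^\infty)$ (with $A$ the Dirichlet wave generator) and $f$ by smooth compactly supported functions, so that the corresponding solutions are smooth up to the boundary. The computation above then holds for these, and the uniform bound lets $\partial_\nu u$ converge in $L^2_{loc}(\mathbb R\times\partial\Omega)$.

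(Here $\vec\varphi$ is understood in $\mathcal H(\Omega)$, as in the statement's use.)
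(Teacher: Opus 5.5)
Your proposal is correct and is essentially the paper's proof. The paper uses the same multiplier $Z\cdot\nabla u$ with $Z=\nabla\chi$, where $\chi$ is compactly supported and $\nabla\chi=-\nu$ on $\partial\Omega$, together with the energy bound $\|\vec u(t)\|_{\dot H^1\times L^2}\le\|\vec\varphi\|_{\dot H^1\times L^2}+\|f\|_{L^1L^2}$ and Cauchy--Schwarz; using a general vector field instead of a gradient changes nothing, and your density step (data in $D(A^\infty)$, source in $C^\infty_c$) is a slightly more careful justification than the paper's.
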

\begin{proof}We assume $\vec \varphi\in C^{\infty}_c(\R^{1+3})^2$, $f\in C^{\infty}_c(\R^{1+3})$ and conclude by density.
We first derive a bound on the energy. Multiplying the equation by $\partial_t u$ and integrating by parts, we get
$$
\partial_t \Vert (u, \partial_tu) (t) \Vert^2_{\dot H^1 \times L^2} = 2 \int_\Omega f \partial_t u.
$$
Hence, by Cauchy-Schwarz inequality
$$
\Big| \partial_t \mathfrak \Vert (u, \partial_tu) (t) \Vert^2_{\dot H^1 \times L^2} \Big| \leq 2 \Vert f \Vert_{L^2} \Vert \partial_t u \Vert_{L^2}
\leq  2 \Vert f \Vert_{L^2} \Vert (u, \partial_tu) (t) \Vert_{\dot H^1 \times L^2}.
$$
It follows that, for any $t$
\begin{equation} \label{eq:apriori_bdd_control_energy}
\Vert (u, \partial_tu) (t) \Vert_{\dot H^1 \times L^2} \leq \Vert \vec \varphi \Vert_{\dot H^1 \times L^2} + \Vert f \Vert_{L^1L^2}.
\end{equation}

Now, observe that, for any $\chi$ sufficiently regular and decaying, using the equation and integrating by parts yields the following multiplier identity
\begin{multline} \label{eq:Mo_basic}
\partial_t\Big( \int_\Omega - \partial_t u \nabla u \cdot \nabla \chi\Big) = \int_\Omega D^2 \chi \nabla u \cdot \nabla u - \frac 1 2 \int_\Omega | \nabla u |^2 \Delta \chi  + \frac 1 2 \int_\Omega | \partial_t u |^2 \Delta \chi \\ - \int f \nabla u \cdot \nabla \chi - \frac 1 2 \int_{\partial \Omega} |\partial_\nu u |^2 \partial_\nu \chi. 
\end{multline}
Let $A>0$ be so that $\Omega^c \subset B(0,A)$ and define $\chi \in C^\infty_c$ so that
$$
\operatorname{supp}\chi\subset B(0,A+1), \hspace{0.3cm} \nabla \chi = - \nu \text{ on }\partial \Omega.
$$
Integrating (\ref{eq:Mo_basic}) between $t_1$ and $t_2$, the bound on the energy (\ref{eq:apriori_bdd_control_energy}) and Cauchy-Schwarz inequality give the result.
\end{proof}

\begin{proof}[Proof of Lemma \ref{lem:assfree} for $\lambda_n \rightarrow +\infty$.] 
By Lemma \ref{lem:asfree_reduc}, we can assume that $t_{n}=0$ and $\vec\varphi\in C^{\infty}_c(\R^3)^2$, $f\in C^{\infty}_c(\R^{1+3})$, 
 and it then suffices to show (\ref{znNRJ}). Hence $v_n$ writes
$$
v_n = \lambda_n^{-1/2}v(\frac{\cdot}{\lambda_n},\frac{\cdot - x_n}{\lambda_n}),
$$
where $v$ is solution to
$$
\begin{cases}
\partial_t^2 v - \Delta v = f \text{ in }\mathbb R^3, \\
(v, \partial_t v) (t=0) = \vec \varphi.
\end{cases}
$$
Using the equations and integrating by parts, we get
\begin{equation}\label{eq:comp_dil1}
\frac 12 \frac{d}{dt}\int_\Omega |\nabla(u_n - v_n)|^2 + |\partial_t(u_n - v_n)|^2 =- \int_{\partial \Omega} \partial_t v_n \partial_\nu(u_n - v_n) \; d\sigma.
\end{equation}
Now, observe that, as $\partial_t v$ and $\nabla v$ are bounded,
\begin{equation}\label{eq:comp_dil2}
\sup_{\partial \Omega}|\partial_t v_n | + |\partial_\nu v_n | \lesssim \lambda_n^{-3/2}.
\end{equation}
In addition, as  $\vec \varphi$ and $f$ are compactly supported, by the strong Huygens principle, $v$ is supported in $\{ -R \leq |t|-|x| \leq R \}$, and thus
\begin{equation}\label{eq:comp_dil3}
\operatorname{supp} v_n \cap \big(\mathbb R \times \partial\Omega\big)\subset\Big\{ - A - R\lambda_n + |x_n| \leq |t| \leq A + R\lambda_n + |x_n|\Big\}.
\end{equation}
Therefore, combining (\ref{eq:comp_dil1}) with (\ref{eq:comp_dil2}) and (\ref{eq:comp_dil3}) and using Cauchy-Schwarz inequality together with the fact that $|\partial \Omega | < \infty$ we get, {for $t>0$}
\begin{multline}\label{eq:comp_dil3a}
\frac{d}{dt}\int_\Omega |\nabla(u_n - v_n)|^2 + |\partial_t(u_n - v_n)|^2 \lesssim \lambda_n^{-3}\mathbb{1}_{t \in [{|x_n|}-C\lambda_n, {|x_n|}+C\lambda_n]} \\ + \lambda_n^{-3/2}\mathbb{1}_{t \in [{|x_n|}-C\lambda_n, {|x_n|}+C\lambda_n]} \Big( \int_{\partial \Omega} |\partial_\nu u_n|^2 d\sigma\Big)^{1/2}.
\end{multline}
For $t>0$, integrating the above between $0$ and $t$ and using Lemma \ref{lem:proj_harmless} to control the integral at $t=0$, then using Cauchy-Schwarz inequality, we obtain 
\begin{multline} \label{eq:comp_dil4}
\int_\Omega |\nabla(u_n - v_n)|^2(t) + |\partial_t(u_n - v_n)|^2(t) \lesssim o(1) + \lambda_n^{-3/2}  \int_{{|x_n| - C\lambda_n}}^{{|x_n|+}C\lambda_n} \Big( \int_{\partial \Omega} |\partial_\nu u_n|^2 d\sigma \Big)^{1/2}dt \\
\lesssim  o(1) +  \lambda_n^{-1}  \Big( \int_{{|x_n| - C\lambda_n}}^{{|x_n|+}C\lambda_n} \int_{\partial \Omega} |\partial_\nu u_n|^2 \; d\sigma dt\Big)^{1/2}.
\end{multline}
In addition, by the a-priori estimate of the boundary term Lemma \ref{lem:apriori_bdd} above, then using the fact that $P_\Omega : \mathcal H(\mathbb R^3) \to \mathcal H(\Omega)$ is bounded and the scale invariance of $\dot H^1 \times L^2(\mathbb R^3)$ and $L^1 L^2 (\mathbb R^3)$, we  have
\begin{equation}\label{eq:comp_dil5}
\int_{{|x_n| - C\lambda_n}}^{{|x_n|+}C\lambda_n} \int_{\partial \Omega} |\partial_\nu u_n|^2 \; d\sigma dt \lesssim \lambda_n \big(\Vert P_\Omega \vec{v_n}(0)\Vert_{\mathcal H} + \Vert f_n \Vert_{L^1 L^2}) \lesssim \lambda_n.
\end{equation}
Combining (\ref{eq:comp_dil4}) with (\ref{eq:comp_dil5}) gives
$$
\sup_{t>0} \int_\Omega |\nabla(u_n - v_n)|^2 + |\partial_t(u_n - v_n)|^2 \to 0.
$$
The proof of the decay of the supremum over $t<0$ is similar, integrating
$- \frac{d}{dt}\int_\Omega |\nabla(u_n - v_n)|^2 + |\partial_t(u_n - v_n)|^2$. 
This finishes the proof.
\end{proof}

\section{Concentrating profiles with localized center} \label{sec:conc_prof}

In this Section, we deal with profiles $\lambda_{n}\to 0$ and $x_{n}\to x_{\infty}\in \R^{3}$. In order to be consistent with the semiclassical literature, we will denote here $h_n := \lambda_n$.
One of the main goal of the Section is to prove the following non concentration property.

\begin{prop}[Non concentration in the case of concentrating localised profiles]
\label{propnoncon_loc}
$\;$\newline Let $(C_n)_{n\geq 1}$ be an arbitrary sequence converging to $+\infty$, and $v_n$ a linear concentrating wave at scale $h_n$, that is, $v_n = \varphi_{\Omega,\mathcal{O},n}$ for a scale-core $\mathcal{O} = \{ (h_n)_{n\geq 1}, 0, (x_n)_{n\geq 1}  \}$ with $h_n \to 0$ and $x_n \to x_0 \in \overline \Omega$.
Then, we have for $I_n=\R \setminus [-C_n h_n,C_n h_n]$,
$$
\nor{v_n}{L^{\infty}(I_n,L^6(\Omega))}\rightarrow 0.
$$
\end{prop}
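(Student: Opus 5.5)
We argue by contradiction and compactness, in the spirit of \cite{BahouriGerard99,GG,L:11}. Suppose that along a subsequence there are times $s_n\in I_n$ with $\nor{v_n(s_n)}{L^6(\Omega)}\geq \eta>0$. Since $v_n(s_n)$ is bounded in $\dot H^1_0(\Omega)$, the refined Sobolev inequality (in the form $\nor{w}{L^6}\lesssim \nor{w}{\dot H^1}^{1/3}\nor{w}{\dot B^{1/2}_{2,\infty}}^{2/3}$-type, or via the profile extraction of \cite{G:96}) yields a scale $\mu_n$ and a center $y_n$ such that the rescaled functions $\mu_n^{1/2}v_n(s_n,\mu_n\cdot+y_n)$, extended by zero, converge weakly in $\dot H^1(\mathbb R^3)$ to a nonzero $\psi$. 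The goal is to show that this is incompatible with the structure of $v_n$, and we split according to the relative size of $s_n$ and $\mu_n$ compared with $h_n$.

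\textbf{Step 1: scale considerations.} The data $T^\Omega_{\mathcal O,n}\vec\varphi$ is $h_n$-oscillating (after approximating $\vec\varphi$ by functions with Fourier transform supported in an annulus, which is harmless by energy and the uniform bound on $L^\infty L^6$), and the Dirichlet wave flow preserves $h_n$-oscillation uniformly in time (spectral calculus for $-\Delta_D$, which commutes with $\SO$). Hence necessarily $\mu_n\sim h_n$; otherwise the weak limit vanishes. Next, if $|s_n|/h_n$ stays bounded (impossible since $|s_n|\geq C_nh_n\to\infty$), or if $s_n\to s_\infty$ finite, we are in a semiclassical regime: we pass to the microlocal defect measure $\nu_t$ of $(v_n(t))$ at scale $h_n$. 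At $t=0$ this measure is concentrated on $\{x_0\}\times$ (directions), and its projection on directions is absolutely continuous with respect to the Lebesgue measure on $\mathbb S^2$, being given by $|\xi|^2|\widehat{\varphi}|^2$-type densities. By the propagation theorem for defect measures of Dirichlet waves (Melrose--Sjöstrand / Lebeau, \cite{B:97b}), $\nu_t$ is transported along the generalized bicharacteristic flow $\varphi_t$. Then the concentrated mass at the point $y_\infty=\lim y_n$ at time $s_\infty$ is bounded by the measure of $\{\xi:\gamma_{s_\infty}(x_0,\xi)=y_\infty\}$, which is zero by Assumption \ref{ass:nonreco}. Since a nonzero weak limit $\psi$ at scale $h_n$ forces positive defect-measure mass on the fiber over $y_\infty$, this is a contradiction. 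The case $s_\infty=0$ but $|s_n|/h_n\to\infty$ is the free regime: near $x_0$ the solution is, to leading order, the free (or half-space, depending on $X_{\mathcal O}$) wave evolved for time $s_n/h_n\to\infty$, whose $L^6$ norm tends to zero by dispersion in $\mathbb R^3$ and in the half-space (by reflection).

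\textbf{Step 2: large times.} If $|s_n|\to\infty$, we use Assumption \ref{ass:weaktrap}. Split the directions into $V_m$ and its complement by a microlocal cutoff of the data at $t=0$; the part in $V_m$ has energy $\lesssim |V_m|^{1/2}$ times constants, hence small $L^\infty L^6$ norm uniformly by energy bounds. For the complement, all rays exit $B(0,R)$ after time $T_{R,m}$; by propagation of the defect measure, at time $T_{R,m}$ the energy of this part is, up to $o(1)$, located outside $B(0,R)$, and then it evolves essentially as a free wave (by Lemma \ref{lem:assfree}-type arguments with $|x_n|\to\infty$, or finite speed of propagation and Lemma \ref{lem:lin_scat}), so its $L^6$ norm at later times $s_n$ is controlled by the free $L^6$ decay of an $h_n$-concentrated wave, which tends to zero. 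Intermediate bounded times are covered by Step 1.

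I expect the main obstacle to be Step 1: rigorously justifying that a nonzero $\dot H^1$ weak profile at scale $h_n$ near $y_\infty$ (possibly on $\partial\Omega$) yields positive defect measure concentrated exactly over $y_\infty$, and propagating measures through glancing points, uniformly enough to combine with the nonreconcentration hypothesis; here one would rely on the generalized flow on ${}^bT^*\Omega$ and Burq–Lebeau results, treating boundary-centered profiles with the half-space limit $X_{\mathcal O}$.
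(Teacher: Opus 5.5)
Your architecture is the paper's. You split according to the limit of $s_n$. For $s_n\to s_\infty\neq0$ you use the $h_n$-semiclassical measure transported along the generalized flow together with Assumption~\ref{ass:nonreco}; your profile extraction replaces the paper's use of Lions' concentration-compactness lemma, cf.\ Lemmas~\ref{lemmapropmeasure} and~\ref{lemmapropconcentrwave}. For $s_n\to0$ you use the local half-space analysis of \cite{GG}. For $|s_n|\to\infty$ you remove the directions in $V_m$ (Assumption~\ref{ass:weaktrap}), let the rest escape, and compare with a free wave.

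The gap is in Step~2. The defect measure only controls bounded time intervals: it gives $\|\chi\vec v_n\|_{L^\infty([T,C],\Hu)}\to0$ for each \emph{fixed} $C$. Knowing that the energy lies outside $B(0,R)$ at time $T$ says nothing by itself about times of order $s_n\to\infty$. The tools you invoke do not supply the required uniformity in $n$. Lemma~\ref{lem:assfree} concerns rescalings of a fixed profile with $|x_n|\to\infty$, whereas $\vec v_n(T)$ is an $n$-dependent datum localized in a fixed ball. Lemma~\ref{lem:lin_scat} is a statement about one fixed solution.

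What is missing is the strong Huygens principle (Lemma~\ref{lmeuHuygens}). Since $\vec v_n(T)$ is, up to $o(1)$, supported in a fixed ball, the free wave $\SF(t-T)\vec v_n(T)$ vanishes near the obstacle for $t\geq T+A$, and therefore also solves the Dirichlet problem there. On the compact interval $[T,T+A]$, the Dirichlet and free waves are close by the Duhamel comparison of Lemma~\ref{solproche}, because the local energy near the obstacle is $o(1)$ there. This is Proposition~\ref{propescapedom}.

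Your last step, $\|\SF(s_n-T)\vec v_n(T)\|_{L^6}\to0$, is also not simply ``the free decay of an $h_n$-concentrated wave''. After interacting with the obstacle, $\vec v_n(T)$ is no longer a rescaling of a fixed datum. So a priori one of its Bahouri--G\'erard components with concentration time $t^j_n\to+\infty$ could refocus at time $s_n$. The paper excludes this by decomposing the data into profiles at scale $h_n$. Using Friedlander radiation fields and the volume bound of Lemma~\ref{lm:volC}, it shows such a component carries vanishing energy in any fixed ball, hence is negligible by the localization of the data. Components with bounded $t^j_n$ simply disperse.

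Finally, in the regime $s_n\to0$, $s_n/h_n\to\infty$, your phrase ``to leading order'' needs an argument. The comparison with the half-space wave is only available for $|t|\leq Ch_n$ (Lemma~\ref{lmapproxconc}). \cite{GG} instead rescale at scale $s_n$ rather than $h_n$. The domain then tends to a half-space and the wave concentrates at scale $h_n/s_n$ at time $1$. They conclude with a Wigner measure for the limiting flat problem and concentration-compactness.
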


\subsection{Reduction to non-trapped data}
We first show that, thanks to the weak trapping Assumption \ref{ass:weaktrap}, we can reduce ourselves to show Proposition \ref{propnoncon_loc} for a non-trapped data, in the sense of the following definition. In the following, $\widetilde \varphi_t$ denotes the bicharacteristic flow of $-\Delta$  in $\Omega$, defined on the compressed co-tangent bundle $^b T^* \Omega$, renormalized at speed one, and $j : T^* \Omega \to ^b T^* \Omega$ is the canonical projection (see the next subsection for more details).

\begin{defn}
[Non-trapped data] \label{ass:nontrapdata}Let $x_{0}\in\overline{\Omega}$ and $\vec \varphi\in \mathcal{S}(\R^{3})^{2}$. We say that $(x_{0},\vec \varphi)$ is a non-trapped data if for any $R>0$, there exists $T_{\rm esc}(R)>0$ so that for every non zero direction $\xi\in \operatorname{supp}(\widehat{\varphi^{0}})\cup \operatorname{supp}(\widehat{\varphi^{1}})$ we have $\widetilde \varphi_t j(x_0, \xi)\in T^*B(0,R)^c$ for all $\vert t\vert \geq T_{\rm esc}(R)$.
\end{defn}

\begin{lem}
\label{lm:decphonontrap}
Let $\vec \varphi\in \mathcal{S}(\R^{3})^{2}$ and $\e>0$. Then, under Assumption \ref{ass:nonreco}, we can decompose $\vec \varphi=\vec \varphi_{\rm trap}+\vec \varphi_{\rm nontrap}$ with $\nor{\vec \varphi_{\rm trap}}{\dot{H}^{1}\times L^{2}(\R^{3})}\leq \e$ and $\vec \varphi_{\rm nontrap}\in \mathcal{S}(\R^{3})^{2}$  is a non-trapped data as in Definition \ref{ass:nontrapdata}.
\end{lem}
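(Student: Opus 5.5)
Note first that the statement names Assumption \ref{ass:nonreco}, but the property actually needed is the weak trapping Assumption \ref{ass:weaktrap} applied at the point $x_0$; this is the assumption the reduction announced at the start of this subsection rests on. The plan is a frequency-space cutoff in the directions $\xi/|\xi|$. Fix $x_0\in\overline\Omega$ and let $(V_m)_{m\in\N}$ be the family of closed sets of directions given by Assumption \ref{ass:weaktrap}, with $|V_m|\to0$. Since the generalized bicharacteristic flow renormalized at speed one projects to the curve $\gamma_t(x_0,\xi/|\xi|)$, escape is a property of the direction only. So if $\widehat{\varphi^0}$ and $\widehat{\varphi^1}$ are supported in a cone $\{\xi\ne0,\ \xi/|\xi|\notin V_m\}$, then $(x_0,\vec\varphi)$ is non-trapped with $T_{\rm esc}(R):=T_{R,m}$. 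It therefore suffices to split $\vec\varphi$ into a piece whose Fourier support avoids the cone over $V_m$ and a small remainder.

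Because $V_m$ is closed with small measure, I would choose an open neighbourhood $W_m\supset V_m$ in $\mathbb S^2$ with $|W_m|\le 2|V_m|$, using outer regularity of the measure. Then take $\chi_m\in C^\infty(\mathbb S^2)$ with $0\le\chi_m\le1$, $\chi_m=1$ on a neighbourhood of $V_m$, and $\operatorname{supp}\chi_m\subset W_m$. Extend it homogeneously of degree $0$, so that $\chi_m(\xi/|\xi|)$ is smooth on $\R^3\setminus\{0\}$. To avoid singularities at the origin, also pick a low-frequency cutoff $\psi_\delta(\xi)=\psi(\xi/\delta)$ that vanishes near $0$ and equals $1$ for $|\xi|\ge 2\delta$, together with a high-frequency cutoff if convenient. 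Then define
$$
\widehat{\varphi^j_{\rm nontrap}}:=\psi_\delta(\xi)\big(1-\chi_m(\xi/|\xi|)\big)\widehat{\varphi^j},\qquad \vec\varphi_{\rm trap}:=\vec\varphi-\vec\varphi_{\rm nontrap}.
$$
The multiplier $\psi_\delta(1-\chi_m)$ is smooth and vanishes near $0$, and every derivative of it is bounded, so $\varphi^j_{\rm nontrap}$ stays in $\mathcal S$. Its Fourier support avoids both $0$ and the cone over $V_m$, so it is non-trapped by the first paragraph.

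It remains to show that $\vec\varphi_{\rm trap}$ is small. Since $\vec\varphi_{\rm trap}$ has Fourier transform $\big(1-\psi_\delta(1-\chi_m)\big)\widehat{\vec\varphi}$, its squared $\dot H^1\times L^2$ norm is bounded by
$$
\int_{|\xi|\le 2\delta}\big(|\xi|^2|\widehat{\varphi^0}|^2+|\widehat{\varphi^1}|^2\big)\,d\xi+\int_{\xi/|\xi|\in W_m}\big(|\xi|^2|\widehat{\varphi^0}|^2+|\widehat{\varphi^1}|^2\big)\,d\xi .
$$
The first integral tends to $0$ as $\delta\to0$ by dominated convergence. For the second, write it in polar coordinates as $\int_{W_m}g(\omega)\,d\omega$ with $g\in L^1(\mathbb S^2)$, since $\widehat{\vec\varphi}$ is Schwartz. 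Absolute continuity of the integral and $|W_m|\to0$ then make it tend to $0$. Choosing first $m$ large and then $\delta$ small gives a norm at most $\e$.

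The only delicate point is justifying that escape depends only on the direction $\xi/|\xi|$, and that the flow $\widetilde\varphi_t$ through $j(x_0,\xi)$ is the one generating $\gamma_t(x_0,\xi/|\xi|)$, including when $x_0\in\partial\Omega$ and $\xi$ is incoming or tangent there. This follows from the homogeneity of the renormalized bicharacteristic flow together with the definition of $\gamma_t$ given in the introduction.
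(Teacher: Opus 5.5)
Your argument is correct and is essentially the paper's: a Fourier-side cutoff that removes low frequencies and a neighbourhood of the cone over $V_m$, with smallness coming from $|V_m|\to0$. The paper instead uses a Urysohn cutoff supported in an annulus $(\frac\eta2,3R)\mathcal O_n$ over nested open sets $\mathcal O_n\supset V_n$, adds a high-frequency cutoff, and concludes by dominated convergence, where you use a homogeneous angular cutoff and absolute continuity of the integral; you are also right that the assumption actually used is Assumption~\ref{ass:weaktrap}, not Assumption~\ref{ass:nonreco}.

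One small slip: an open $W_m\supset V_m$ with $|W_m|\le 2|V_m|$ cannot exist when $V_m$ is nonempty of measure zero, so require $|W_m|\le |V_m|+1/m$ instead, which outer regularity does give.
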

\begin{proof}
We first cut the mass at infinity and near zero. Let $R\gg1$ be so that 
$$
\Vert  \widehat{\vec \varphi} \Vert_{ L^2(|\xi|^2d\xi, |\xi|\geq R) \times L^2(|\xi|\geq R)} \leq  \frac \epsilon3,
$$
then, $R>0$ being fixed, let $\chi_{\leq R}(x) := \chi(\frac{|x|^2}{R^2})$ where $\chi \in C_c^\infty(\mathbb R)$ is so that $\chi = 1$ near $[0,R]$, $\operatorname{supp} \chi \subset [0,2R]$, and $0\leq \chi \leq 1$, in such a way that 
$$
\Vert (1-\chi_{\leq R})  \widehat{\vec \varphi} \Vert_{L^2(|x|^2dx) \times L^2} \leq \frac \epsilon3.
$$
Similarly, for $\eta > 0$ small enough, we construct $\chi_{\leq \eta}$ so that $\chi = 1$ near $B(0, \eta)$, $\operatorname{supp} \chi_{\leq \eta} \subset B(0, 2\eta)$ and
$$
\Vert \chi_{\leq \eta} \widehat{\vec \varphi} \Vert_{L^2(|x|^2dx) \times L^2} \leq \frac \epsilon3.
$$

We now cut the trapped frequencies in $B(0,2R) \backslash B(0, \eta)$.
 Recall the definition of the set $V_n \subset \mathbb{S}^2$ from the weak trapping Assumption \ref{ass:weaktrap}.
Since $|V_n| \to 0$ and from the regularity of the Lebesgues measure on $\mathbb{S}^2$, there exists a family of open sets $U_n$ so that $V_n \subset U_n$ and $|U_n| \to 0$. From this family, using $V_{n+1}\subset V_n$, we can construct a sequence of open sets $\mathcal O_n \subset \mathbb S^2$ so that
$$
V_{n} \subset \mathcal O_n, \hspace{0.3cm} \mathcal{O}_{n+1} \subset \mathcal{O}_n, \hspace{0.3cm} |\mathcal O_n| \to 0.
$$
Indeed, to construct $\mathcal O_n$, it suffices to work inductively, and to set $\mathcal O_{n+1} := U_{n+1} \cap \mathcal O_{n}$. We can check by induction that it satisfies the above property. The second property is immediate and the third one is a consequence of $|U_n| \to 0$ and $\mathcal O_{n} \subset U_{n}$. For the first one, we use the property $V_{n+1}\subset U_{n+1}$ and the decreasing property $V_{n+1}\subset V_n$, combined with the iteration assumption $V_{n} \subset \mathcal O_n$ to prove $V_{n+1}\subset U_{n+1}\cap \mathcal O_{n}= \mathcal O_{n+1}$ which is the result at step $n+1$.

 Now, denoting $IX := \{ \lambda x, \; \lambda \in I, \, x\in X\}$, observe that $\mathbb{1}_{(\frac \eta 2,3R) {\mathcal O_n}} \to 0$ almost surely because $|(\frac \eta 2,3R) {\mathcal O_n}| \to 0$ and $(\frac \eta 2,3R) {\mathcal O_n}$ are non-increasing. Hence, by dominated convergence,
    $$
    \Vert 1_{(\frac \eta 2,3R) \mathcal O_n)}\widehat{\vec \varphi}  \Vert_{(L^2(|\xi|^2 d\xi) \times L^{2})} \to 0.
    $$
    We now fix $n \gg 1$ big enough so that the above quantity is $\leq \frac \epsilon 3$.
    Thanks to (the differential version of) Urysohn's Lemma, there exists a function $\chi_{\rm trap} \in C^\infty(\mathbb R^3, [0,1])$ so that $\chi_{\rm trap} = 1$ near $[\eta,2R]V_{n}$ and $\operatorname{supp} \chi_{\rm trap} \subset (\frac \eta 2, 3R)\mathcal O_n$. We then have
    $$
    \Vert \chi_{\rm trap}\widehat{\vec \varphi}  \Vert_{L^2(|\xi|^2 d\xi) \times L^{2}} \leq \frac \epsilon 3.
    $$
    To conclude, we take
    \begin{align*}
     &\widehat{\vec \varphi_{\rm nontrap}} := \chi_{\leq R}(1-\chi_{\leq \eta})(1-\chi_{\rm trap}) \widehat{\vec \varphi}, \\ &\widehat{\vec \varphi_{\rm trap}} := (1-\chi_{\leq R}) \widehat{\vec \varphi} + \chi_{\leq \eta}\widehat{\vec \varphi}+ \chi_{\leq R}(1-\chi_{\leq \eta})\chi_{\rm trap} \widehat{\vec \varphi},
    \end{align*}
    which satisfies the claim by construction.
\end{proof} 

\begin{lem}
\label{lm:enoughnonconc}
In order to show Proposition \ref{propnoncon_loc}, 
we can assume that the data $\vec \varphi_n$ is non-trapped as in Definition \ref{ass:nontrapdata}.
\end{lem}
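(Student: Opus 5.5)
The plan is a density argument based on the decomposition of Lemma \ref{lm:decphonontrap} and the uniform bound coming from the Strichartz Assumption \ref{ass:strichartz}.

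\textbf{Reduction to Schwartz data.} First, I reduce to $\vec\varphi\in\mathcal S(\mathbb R^3)^2$. The map $\vec\varphi\mapsto v_n=\SO(\cdot)T^\Omega_{\mathcal O,n}\vec\varphi$ is a composition of $T_{\mathcal O,n}$, which is an isometry of $\mathcal H(\mathbb R^3)$, the projection $P_\Omega$, which has norm at most one, and the linear flow $\SO$, which preserves the $\mathcal H(\Omega)$ norm. Together with the Sobolev embedding $\dot H^1_0(\Omega)\subset L^6(\Omega)$, this gives
$$
\sup_n \nor{v_n}{L^\infty(\mathbb R,L^6(\Omega))}\lesssim \nor{\vec\varphi}{\mathcal H(\mathbb R^3)},
$$
with a constant independent of $n$ and of the interval $I_n$. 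So if $\vec\psi$ is close to $\vec\varphi$ in $\mathcal H(\mathbb R^3)$, the associated waves are uniformly close in $L^\infty L^6$. Hence it suffices to prove the proposition for Schwartz data.

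\textbf{Splitting off the trapped part.} Fix $\e>0$ and Schwartz data $\vec\varphi$. I apply Lemma \ref{lm:decphonontrap} at the limit point $x_0$, using the sets $V_n$ that Assumption \ref{ass:weaktrap} provides at $x_0$. This gives $\vec\varphi=\vec\varphi_{\rm trap}+\vec\varphi_{\rm nontrap}$ with $\nor{\vec\varphi_{\rm trap}}{\mathcal H}\le\e$ and $(x_0,\vec\varphi_{\rm nontrap})$ non-trapped in the sense of Definition \ref{ass:nontrapdata}. By linearity, $v_n=v_n^{\rm trap}+v_n^{\rm nontrap}$, where each piece is the wave built from the corresponding data with the same scale-core $\mathcal O$. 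The uniform bound above gives $\sup_n\nor{v^{\rm trap}_n}{L^\infty(I_n,L^6)}\lesssim\e$. Therefore
$$
\limsup_n \nor{v_n}{L^\infty(I_n,L^6)}\lesssim \e+\limsup_n\nor{v_n^{\rm nontrap}}{L^\infty(I_n,L^6)}.
$$
If the proposition holds for non-trapped data, the last term vanishes. Letting $\e\to0$ then gives the proposition for $\vec\varphi$.

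\textbf{Main point to check.} The step that needs care is compatibility between the decomposition and the definition of a non-trapped datum. The decomposition must be performed at the limit point $x_0$ rather than at the moving centers $x_n$, since Definition \ref{ass:nontrapdata} refers to $x_0$. The Fourier cutoffs of Lemma \ref{lm:decphonontrap} are $n$-independent and act on the unscaled profile, so no uniformity in $n$ is needed at this stage. Only the directions of $\xi$ enter, and the radial cutoffs keep $\vec\varphi_{\rm nontrap}$ Schwartz with frequency support away from $0$. The remaining issue is that the non-trapped property involves the bicharacteristic flow $\widetilde\varphi_t$ started at $j(x_0,\xi)$, whereas Assumption \ref{ass:weaktrap} is phrased with generalized geodesics $\gamma_t(x_0,\xi)$. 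These match because $\gamma_t$ is the projection of that flow renormalized at speed one, and the escape conditions are homogeneous in $|\xi|$. I would state this identification explicitly, noting that the uniform escape time $T_{R,n}$ for $\xi\notin V_n$ provides $T_{\rm esc}(R)$.
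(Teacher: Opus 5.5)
Your argument is correct and is essentially the paper's proof. You approximate by Schwartz data, split off a small trapped part via Lemma~\ref{lm:decphonontrap}, and bound the small remainder in $L^\infty(I_n, L^6)$ uniformly in $n$ by Sobolev embedding, energy conservation, the isometry property of $T_{\mathcal O,n}$ and the contractivity of $P_\Omega$. As your own estimate shows, the Strichartz Assumption~\ref{ass:strichartz} you invoke in your opening sentence plays no role here; energy conservation suffices.
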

 \begin{proof} 
 Fix $\e>0$. For $\vec \varphi\in \dot{H}^1\times L^2(\R^3)$, we approximate $\vec \varphi$ by a function $\vec \varphi_{\rm reg}\in \mathcal{S}(\R^{3})^{2}$ so that $\vec \varphi= \vec \varphi_{\rm reg}+\vec \varphi_{\rm rem1}$ and $\nor{\vec \varphi_{rem}}{\dot{H}^1\times L^2(\R^3)}\leq \e$. Now, using Lemma \ref{lm:decphonontrap} for $\vec \varphi_{\rm reg}$, we can decompose $\vec \varphi_{\rm reg}=\vec \varphi_{\rm trap}+\vec \varphi_{\rm nontrap}$ with $\nor{\vec \varphi_{\rm trap}}{\dot{H}^{1}\times L^{2}(\R^{3})}\leq \e$ and $\vec \varphi_{\rm nontrap}\in \mathcal{S}(\R^{3})^{2}$  is a non-trapped data. We have therefore $\vec \varphi= \vec \varphi_{\rm nontrap}+\vec \varphi_{\rm rem2}$ with $\nor{\vec \varphi_{\rm rem2}}{\dot{H}^1\times L^2(\R^3)}\leq 2\e$. Using this decomposition for the linear concentrating wave, with  the obvious notations, we decompose $v_n=v_{n, {rem2}}+v_{n, {\rm nontrap}}$. By Sobolev embedding and conservation of the energy, we can write 
\begin{align*}
&\nor{v_{n, {\rm rem2}}}{L^{\infty}(\R,L^6(\Omega))}\lesssim \nor{v_{n,  {\rm rem2}}}{L^{\infty}(\R,\dot{H}^1(\Omega))}\lesssim \nor{\vec v_{n, {\rm rem2}}(0)}{\mathcal{H}(\Omega)}\\
&\quad\lesssim \nor{ T_{\OO, k}^{\Omega}\vec \varphi_{ {\rm rem2}}}{\mathcal{H}(\Omega)}\lesssim \nor{ T_{\OO, k}\vec \varphi_{ {\rm rem2}}}{\mathcal{H}(\R^3)}\lesssim\nor{ \vec \varphi_{ {\rm rem2}}}{\mathcal{H}(\R^3)}\leq \e,
\end{align*} and thus
\begin{align*}
\nor{v_n}{L^{\infty}(I_n,L^6(\Omega))}&\leq \nor{v_{n,  {\rm rem2}}}{L^{\infty}(I_n,L^6(\Omega))}+\nor{v_{n, {\rm nontrap}}}{L^{\infty}(I_n,L^6(\Omega))}\\
&\leq  {C} \e+\nor{v_{n, {\rm nontrap}}}{L^{\infty}(I_n,L^6(\Omega))}.
\end{align*}
 \end{proof}  
 
\subsection{Boundary measure}
We will use semiclassical microlocal defect measure or Wigner measure. It was defined independently by  G\'erard \cite{G:90} and Lions-Paul \cite{LP:93} (see also G\'erard \cite{G:91} Tartar \cite{T:90} for classical microlocal defect measures). We refer to the the review article \cite{B:97} for more precisions and historical remarks, and to \cite{B:97b} for an introduction to semiclassical measures.
We here follow the approach of \cite{B:97}. 

\subsubsection{Notations} We begin by gathering  the notations used throughout this subsection. We first recall the notations from \cite[Section 3.2]{B:97}.
\begin{itemize}
    \item $M:=\R_t \times \Omega$, with points denoted $z=(t,x)\in M$ (here we differ from the notation of \cite{B:97} for coherence). 
    \item For $\vec u_{n}\in C(\R,\mathcal{H})$, we denote $\underline{{u_{n}}}$ the extension by zero of ${u_{n}}$ to $\R^{1+3}$.
    \item Denote $^bTM$ the bundle of rank $3+1$ whose sections are the vector fields tangent to $\partial M$, by $^bT^*M$ the dual bundle (Melrose's compressed cotangent bundle), and by $j:T^*M\rightarrow ^bT^*M$ the canonical map (restriction dual of the embedding $^bTM\hookrightarrow TM$). Abusing slightly notations, we also denote $j$ the canonical projection $T^*\Omega \to ^b T^* \Omega $.
    \item Let $^b\pi_{\tau,x,\xi}$ be the projection from $^bT^*M$ suppressing the component in $t$: $^b\pi_{\tau,x,\xi}(t,\tau,x,\xi)=(\tau,x,\xi)$. Similarly, we define $^b\pi_{t,x}$, and $\pi_{t,x}$ the corresponding projection from $T^* M$.
    \item We denote by $\textnormal{Char}(\widetilde{P})$ the characteristic set of the wave operator $\widetilde{P}=\partial_t^{2}-\Delta$ and denote by $Z$ its projection
\begin{align*}
\textnormal{Char}(\widetilde{P})&=\left\{(z,\eta)=(t,x,\tau,\xi)\in T^*\R^{1+3}_{\left|\overline{M}\right.};p(x,\xi)=\tau^{2}\right\},\\
Z&=j(\textnormal{Char}(\widetilde{P})),\\
Y&=^b\pi_{\tau,x,\xi}(Z), 
\end{align*}
where $p(x,\xi)$ is the principal symbol of $-\Delta$ in the chosen local coordinates.

\item For $a\in C^\infty_c(T^{*}\R^{1+3})$, we define the associated semiclassical pseudo-differential operator acting at scale $h_n$ by (with $z=(t,x)$)
\begin{align*}
\operatorname{Op}(a)(z,h_{n}D_z)f&:=\frac{1}{(2\pi)^{1+3} }\iint e^{i(z-s)\cdot \zeta}a(z,h_{n}\zeta)f(s)~d\zeta ds.
\end{align*}
\end{itemize}

\noindent Three distinct bicharacteristic flows appear in the statements and proofs below. We refer for example to \cite[Section 3.2]{B:97b} for precise definitions, and recap them here:
\begin{itemize}
    \item $\Phi$ is the generalized flow of $\partial_t^2 - \Delta$ on $^bT^*M$.
    \item $\varphi$ is the generalized flow of $ - \Delta$ on $^bT^*\Omega$. In particular,
    $$
        \Phi_s(t, \tau, x, \xi) = (t-2\tau s, \tau, \varphi_s(x, \xi)).
    $$
    \item $\widetilde{\varphi}$ is the generalized flow of $ - \Delta$ on $^bT^*\Omega$ reparametrized at speed one:
    $$
        \varphi_s(x, \xi) = \widetilde{\varphi}_{2s|\xi|}(x, \xi).
    $$
\end{itemize}
\noindent Finally, we recall the definition of the push-forward measure:
\begin{itemize}
\item  For two measurable spaces $(X_1, \Sigma_1)$, $(X_2, \Sigma_2)$, $\mu \in \mathcal M(\Sigma_1)$ and $f:X_1\to X_2$, the push-forward measure $f_* \mu \in \mathcal M(\Sigma_2)$ is defined by
$$
\langle f_* \mu, a \rangle = \langle \mu, a \circ f \rangle,
$$
for all measurable function $a$ defined on $X_2$.
\end{itemize} 

\subsubsection{Definition and properties of the measure}
The following lemma describes the behavior of the solution for some semiclassical times of the order of $h_n$ (in the case of the exterior of a convex obstacle, this corresponds to Lemma 2.3.3 of \cite{GG}).

\begin{prop}
\label{propmeasure}
Let $\vec v_n$ be a linear concentrating profile associated to the initial data $f$ and $\OO$ as above. 
Then, the following holds.
\begin{enumerate}
    \item[\textbf{(1) Existence of the measure.}] Up to a subsequence, there exists a positive Radon measure $\mu$ on $T^{*}\R^{1+3}$ such that 
\begin{equation*}
\forall a\in C^\infty_c(T^{*}\R^{1+3}), \quad \limvar{n}{\infty} \left(\operatorname{Op}(a)(t,x,h_nD_{t,x})\underline{\partial_t v_n},\underline{\partial_t v_n}\right)_{L^2} + \left(\operatorname{Op}(a)(t,x,h_nD_{t,x})\underline{\nabla v_n},\underline{\nabla v_n}\right)_{L^2}=\left\langle \mu, a\right\rangle.
\end{equation*}
   \item[\textbf{(2) Invariance.}]
   $\mu$ is invariant by the generalized bicharacteristic flow $\Phi$ of $\partial_t^2 - \Delta$, that is, for $m\in C^{0}_c(T^{*}\R^{1+3})$
\begin{align}
\label{propagflow}
\left\langle \mu, m\circ\Phi_s\right\rangle= \left\langle \mu, m\right\rangle=\left\langle \mu(t,\tau,x,\xi), m(t-2\tau s,\tau, \varphi_s(x,\xi)\right\rangle
\end{align}
where $\varphi_s$ is the generalized flow of $-\Delta$ on $^bT^*\Omega$ (note that it is well defined because $\Phi_{s}$ is well defined $\mu$-almost everywhere).

   \item[\textbf{(3) Trace measure.}] In particular, $j_{*}\mu$ (seen as a measure with value in $\mathcal{M}(Z)$) is continuous in time. 
We can define the trace of the measure at time $t$: there exists $\mu_t\in \mathcal{M}(Y)$  such that
$$
j_{*}\mu=dt\otimes \mu_t
$$
and $\mu_t=(\operatorname{Id}_{\tau}, \widetilde \varphi_{(\operatorname{sgn}\tau)t})_* \mu_0$, that is for $a\in C^{0}_c(Y)$
\begin{align} \label{eq:prop_repar}
\left\langle \mu_t, a\circ(\operatorname{Id}_{\tau}, \widetilde \varphi_{(\operatorname{sgn}\tau)t})\right\rangle= \left\langle \mu_0, a\right\rangle=\left\langle \mu_t(\tau,x,\xi),a(\tau, \widetilde \varphi_{(\operatorname{sgn}\tau)t}(x,\xi)\right\rangle,
\end{align}
where $\widetilde \varphi_t$ is the generalized bicharacteristic flow at speed one.
   \item[\textbf{(4) Initial measure.}] In appropriate local coordinates, 
the initial measure $\mu_0$ satisfies
\begin{itemize}
\item if $x_{\infty}\in \Omega$:
\begin{equation}\label{eq:mes_data_int}
\mu_0(\tau,x,\xi)=\frac{1}{2}\sum_{\pm}\delta_{x=x_{\infty}}\otimes \delta_{\tau=\pm |\xi|} \left|\widehat{\varphi^{0}}(\xi)\pm |\xi|\widehat{\varphi^{1}}(\xi)\right|^2d\xi,\\
\end{equation}
\item if $x_{\infty}\in \partial \Omega$: 
\begin{equation} \label{eq:mes_data_bord}
\mu_0 = j_*(\delta_{x_\infty}\otimes \lambda_0), \quad \lambda_0(\xi,\tau)=\sum_{\pm} \delta(\tau \mp |\xi|)H_{\pm}(\xi)d\xi,
\end{equation}
where $H_{\pm}(\xi)\in L^1(\R^3_{\xi})$ are functions depending on $\vec \varphi$ and $\OO$.
\end{itemize}
In addition, in both cases, if $\vec \varphi$ is nontrapping  (Definition \ref{ass:nontrapdata}), then, for any $R>0$ there exists $T>0$ so that
\begin{equation}\label{eq:mes_echap}
s \geq T \implies \forall (x, \tau, \xi) \in \operatorname{supp}\mu_0, \; \widetilde \varphi_s(x, \xi) \in T^* B(0,R)^c.
\end{equation}
\end{enumerate}
\end{prop}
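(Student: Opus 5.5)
Most of this proposition is an adaptation of Burq's boundary-measure theory \cite{B:97} and of \cite[Lemma 2.3.3]{GG}, and the plan is to quote the general results and supply only what is specific to our concentrating profiles.

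\textbf{Step (1): existence of the measure.} By energy conservation and the bound $\|T^\Omega_{\mathcal O,n}\vec\varphi\|_{\mathcal H}\le\|\vec\varphi\|_{\mathcal H(\mathbb R^3)}$, the sequences $\underline{\partial_t v_n}$ and $\underline{\nabla v_n}$ are bounded in $L^2_{loc}(\mathbb R^{1+3})$. The standard G\'erard / Lions--Paul construction, with a diagonal extraction over a countable dense family of symbols, then gives a subsequence and a nonnegative Radon measure $\mu$ with the stated limit; nonnegativity comes from the sharp G\aa rding inequality.

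\textbf{Step (2): invariance.} The data is $h_n$-oscillating, since it is a rescaling at scale $h_n$ of a fixed profile. Hence the sequence is $h_n$-oscillating in the sense of \cite{B:97}, and $\mu$ is carried by $\mathrm{Char}(\widetilde P)$. Because $v_n$ solves the Dirichlet wave equation exactly, Burq's propagation theorem for the boundary-value problem (\cite{B:97}, after Lebeau, Melrose--Sj\"ostrand) applies. The one extra point to check is that $\mu$ does not charge the glancing set $\{\tau=0\}$ or the zero section. This follows from uniform integrability in frequency of the data: $\widehat{\vec\varphi}$ lies in $L^2(|\xi|^2d\xi)\times L^2$, and after rescaling by $h_n$ the frequencies sit in $|\xi|\sim h_n^{-1}$ up to an $\epsilon$-small tail.

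\textbf{Step (3): trace measure.} Invariance under $\Phi_s(t,\tau,x,\xi)=(t-2\tau s,\tau,\varphi_s(x,\xi))$ with $\tau\neq0$ $\mu$-a.e. gives $\partial_t$-transport. Disintegrating $j_*\mu$ in $t$ yields $j_*\mu=dt\otimes\mu_t$ with $\mu_t$ continuous in $t$. Reparametrizing $s=t/(2|\tau|)$ and using $|\xi|=|\tau|$ on $\mathrm{Char}$ gives $\mu_t=(\mathrm{Id}_\tau,\widetilde\varphi_{(\mathrm{sgn}\tau)t})_*\mu_0$.

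\textbf{Step (4): initial measure.} Compute $\mu_0$ from the data by splitting into half-waves $\partial_t\pm i|D|$.
\begin{itemize}
\item If $x_\infty\in\Omega$, then $\Omega_n\to\mathbb R^3$ and the projection is asymptotically trivial, as in Lemma~\ref{lm:projectdilat}. A rescaled Fourier computation gives \eqref{eq:mes_data_int}.
\item If $x_\infty\in\partial\Omega$, the data $P_{\Omega_n}\vec\varphi$ converges to $P_{X_{\mathcal O}}\vec\varphi$ on a half space. Its rescaled transform gives a measure $\delta_{x_\infty}\otimes\lambda_0$ with $L^1$ densities $H_\pm$. We only need this form, not explicit formulas.
\end{itemize}

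\textbf{Escape property \eqref{eq:mes_echap}.} Since $\mu_0$ is supported over $x_\infty$ with $\xi$-directions in $\mathrm{supp}\,\widehat{\varphi^0}\cup\mathrm{supp}\,\widehat{\varphi^1}$, Definition~\ref{ass:nontrapdata} gives the uniform escape time directly. This is immediate when $x_\infty\in\Omega$. At the boundary it needs care, because $\mathcal P_\Omega$ destroys the Fourier support. The plan there is:
\begin{enumerate}
\item use that $j$ identifies directions differing by reflection, so $H_\pm$ is supported on $j$-images of directions in the original support together with their reflections;
\item note that these generate the same generalized rays.
\end{enumerate}

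I expect this boundary case of Step~(4), and specifically the support of $H_\pm$, to be the main obstacle. I would first try to follow \cite[Lemma 2.3.3]{GG}: compute the half-space projection explicitly by odd reflection, which preserves $|\xi|$ and maps directions to reflected ones.
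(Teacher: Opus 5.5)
Your Steps (1)--(3) follow the paper in substance. Existence and invariance are quoted from Burq \cite{B:97,B:97b}, and the passage from $\varphi_s$ to $\widetilde\varphi_t$ is the reparametrization of Lemma~\ref{lmrepar}, which only needs $\mu(\{\tau=0\})=0$. Getting continuity of $t\mapsto\mu_t$ from invariance plus disintegration is a legitimate variant; the paper instead reads it off the final formula $\mu=\varphi_t\mu_0\otimes dt$. (Minor point: on $\mathrm{Char}(\widetilde P)$ the set $\{\tau=0\}$ is the zero section, not the glancing set, and glancing rays are already handled inside Burq's theorem.)

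The genuine gap is Step (4) when $x_\infty\in\partial\Omega$. Steps (1)--(3) give you \emph{some} continuous family $\mu_t$, but they do not determine its value at $t=0$. You propose to identify $\mu_0$ with the Wigner measure of $P_{\Omega_n}\vec\varphi$, split by the free half-waves $\partial_t\pm i|D|$. That splitting only describes $v_n$ on a time interval where $v_n$ is a free wave. For $x_\infty\in\Omega$ finite speed of propagation supplies such an interval, and this is how the paper argues there. At a boundary concentration point there is no such interval: the wave meets $\partial\Omega$ after a time tending to $0$, so reflection happens in the limit $t\to0^+$. Nothing in your plan shows that the value at $t=0$ of the continuous trace of $j_*\mu$ equals the measure you compute from the data. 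You must also rule out loss of mass, notably near the glancing directions $\xi\cdot N(x_\infty)=0$, so that $\mu_0$ carries the whole limit energy $\lim_n\|\vec v_n(0)\|^2_{\mathcal H}$. This equality of masses is exactly what Lemma~\ref{lemmapropmeasure} later uses to get $e_\infty(t)dt=(\pi_{t,x})_*\mu$.

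The paper fills this step with the symmetrization of \cite[Lemma 2.3.5]{GG}. After flattening $\partial\Omega$ and reflecting, one gets a wave equation with a Lipschitz metric whose Wigner measure $\nu$ is continuous in $t$ and coincides with $\mu$ in $\Omega$. Its initial value is computable by \cite[(3.64)']{GG}, and this produces $\lambda_0$ and $H_\pm$. Since convexity is unavailable, the paper only uses that rays from $x_\infty$ with $\frac{\xi}{\tau}\cdot N>\epsilon$ stay in $\Omega$ for $t\in[0,t_0(\epsilon)]$. The argument then runs as follows:
\begin{enumerate}
\item \cite[Lemma 2.3.7]{GG} gives $\mu\geq\varphi_t\big(j_*(\delta_{x_\infty}\otimes\lambda_0\mathbb{1}_{\frac{\xi}{\tau}\cdot N>\epsilon})\big)\otimes dt$ for small times;
\item invariance extends this to all times, and letting $\epsilon\to0$ removes the cutoff;
\item equality follows because both sides have total mass $\lim_n\|\vec v_n(0)\|^2_{\mathcal H}$, by energy conservation.
\end{enumerate}

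By comparison, the support question you single out is secondary. The paper reads it off the explicit $H_\pm$ using $j(x_\infty,R\xi)=j(x_\infty,\xi)$, which is your reflection idea. Your worry that the projection moves the Fourier support is nonetheless justified when $X_{\OO}$ is a half space, because $H_\pm$ then involves $\widehat{\mathbb{1}_{X_{\OO}}\psi}$ and $\widehat{\mathcal P_{X_{\OO}}\varphi}$. In that case it is cleaner to run the non-trapping reduction of Lemma~\ref{lm:decphonontrap} on the odd extension of $P_{X_{\OO}}\vec\varphi$ across $\partial X_{\OO}$, using the $R$-symmetric sets $V_n\cup RV_n$.
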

\begin{proof}
The proof is a combination of several ingredients that can be found in the literature. We explain where to find and how to combine these arguments, and give details when modifications or generalizations are needed.

\textbf{(1)--(2): Existence and propagation.} The existence and propagation properties are similar to Burq \cite{B:97}, with the addition of one argument originally written for microlocal defect measures in \cite{B:97b}. More precisely, \cite[Theorem 4]{B:97} (see also \cite[Theorem 15]{B:97b} in the context of microlocal defect measure) states that the measure satisfies 
\begin{align} \label{eq:inv_Hp}
 ^{t}H_{p}(\mu)=\int_{\rho\in \mathcal{H}\cap\mathcal{G}}\frac{\delta(\xi-\xi_{+}(\rho))-\delta(\xi-\xi_{-}(\rho))}{\langle\xi_{+}-\xi_{-},n(x(\rho)\rangle} \nu (d\rho)
\end{align}
where $\nu$ is the semiclassical measure of the normal derivative $(\partial_{\nu} u_{n})_{\left|\partial M\right.}$ at the boundary.  In the context of microlocal defect measures (not semiclassical), it is shown in \cite[Theorem 15 , iv)]{B:97b} that the analog of (\ref{eq:inv_Hp}) implies the invariance of the measure by the generalised bicharacteristic flow, i.e. the analog to \eqref{propagflow}. As remarked e.g in \cite[p.13]{B:02} for the stationary problem, the same arguments still hold verbatim for the semiclassical measure, hence (\ref{eq:inv_Hp}) gives \eqref{propagflow}.

\textbf{(3)--(4) Trace mesure and initial measure.}

Observe that, once the existence of the trace measure is established, if $x_\infty \in \Omega$, approximating the data by a compactly supported function, the semiclassical measure coincides in small times with the one in the free case, for which the result \eqref{eq:mes_data_int} is classical. We will therefore focus on the most difficult case $x_\infty \in \partial \Omega$.

\underline{(a) Existence of the trace measure, and initial measure for  $x_\infty \in \partial \Omega$, proof of \eqref{eq:mes_data_int}--\eqref{eq:mes_echap}.}
\newline The proof  is  
similar to the proof of  \cite[Lemma 2.3.5]{GG}; we sketch it stressing the necessary modifications to generalize it to our  setting. The proof uses a symmetrization argument in appropriate coordinates that does not use the specific expression of the propagator $\Phi_t$, denoted $G_t$ in \cite{GG}.

In appropriate geodesic normal coordinates, the boundary is flattened to $\{x_{1}=0\}$. By a symmetrization argument (formula (3.38) in \cite{GG}), we end up with a solution $v_{n}$ to the wave equation (formula (3.39) in \cite{GG}) in a domain with a Lipschitz metric that is smooth except at $\{x_{1}=0\}$, where there is a jump of  normal derivative. We denote $\nu$ the Wigner measure of $v_{n}$. Up to change of coordinates, $\nu$ coincides with $\mu$ in $\{x_{1}>0\}$, which corresponds to the interior of $\Omega$ in original coordinates. The same computations lead to the fact that $\nu$ is continuous in $t$. Moreover, formula  \cite[(3.55)]{GG} of transport equation for the measure still holds and did not use any convexity assumption. In particular, it is still possible to compute the initial condition for $\nu_{\left| t=0\right.}$ and to obtain formula \cite[(3.64)']{GG} that provides \eqref{eq:mes_data_bord}. Now, the proof differs slightly from \cite{GG} only by the fact that the property 
$$
\frac\xi\tau \cdot N > 0 \implies \forall t \geq 0, \hspace{0.3cm}\pi_x\varphi_t(x_\infty, \tau, \xi) \in \Omega
$$
(where $N = N(x_\infty)$ denotes the normal to the boundary) used in \cite{GG} to end the proof relies on the convexity assumption and is not true anymore in our more general setting. We detail the necessary modification as follows.

Let $\epsilon >0$. There exists $t_0(\epsilon)>0$ so that for any $\xi$ with $\frac\xi\tau \cdot N > \epsilon$ and any $t \in [0, t_0(\epsilon)]$,
$$
\pi_x\varphi_t(x_\infty, \tau, \xi) \in \Omega,
$$
therefore, applying \cite[Lemma 2.3.7]{GG} to 
$$
\nu^\epsilon := \nu 1_{\frac\xi\tau \cdot N > \epsilon}
$$
in times $[0, t_0(\epsilon)]$ in the complement of $\{ \tau = 0\}\cup \{ \frac\xi\tau \cdot N = 0\}$ gives, with the same proof,
\begin{equation} \label{eq:gen_GG0}
\nu(t) \geq \varphi_t(j_* (\delta_{x_\infty}\otimes \lambda_0 1_{\frac\xi\tau \cdot N > \epsilon})),
\end{equation}
where $\lambda_0$ is explicitly given by \cite[(3.64)']{GG} and {has total mass}
\begin{equation} \label{eq:gen_GG1}
\operatorname{TM}(\lambda_0)= \Vert P_{{X_{\mathcal{O}}}} \vec \psi \Vert^2_{\mathcal H} = \lim_{n \rightarrow \infty} \Vert \vec u_n(0) \Vert^2_{\mathcal H}.
\end{equation} 
Using the fact that  that $\mu = \nu$ in $\Omega$, \eqref{eq:gen_GG0} implies
$$
\forall \epsilon>0, \; \forall t \in [0, t_0(\epsilon)], \hspace{0.3cm} \mu \geq \varphi_t(j_*(\delta_{x_\infty}\otimes \lambda_0 1_{\frac\xi\tau \cdot N > \epsilon})) \otimes dt.
$$
As both these measures are invariant by the bicharacteristic flow, we therefore have
$$
\forall \epsilon>0, \; \forall t \in [0, T], \hspace{0.3cm} \mu \geq \varphi_t(j_*(\delta_{x_\infty}\otimes \lambda_0 1_{\frac\xi\tau \cdot N > \epsilon})) \otimes dt,
$$
and hence
\begin{equation} \label{eq:gen_GG2}
\mu \geq \varphi_t(j_*(\delta_{x_\infty}\otimes \lambda_0)) \otimes dt.
\end{equation}
On the other hand, conservation of energy together with \eqref{eq:gen_GG1} implies that both measures in \eqref{eq:gen_GG2} have the same total mass. It follows that \eqref{eq:gen_GG2} is in fact an equality,
\begin{equation}
\label{propagmut}
\mu = \varphi_t(j_*(\delta_{x_\infty}\otimes \lambda_0)) \otimes dt = \varphi_t \mu_0 \otimes dt, \quad \mu_0 := j_*(\delta_{x_\infty}\otimes \lambda_0),
\end{equation}
 so that the measure $\varphi_t(j_{*}(\delta_{x_\infty}\otimes \lambda_0))$ is continuous in time. The computation of $\lambda_0$ comes from the formula \cite[(3.64')]{GG}.
 It gives 
 $$
\lambda_0(\xi,\tau)=\sum_{\pm} \delta(\tau \mp |\xi|)H_{\pm}(\xi)d\xi,
$$ 
with $H$ defined as follows
\begin{equation}
H_{\pm}(\xi)=
\begin{cases}
\frac{1}{2}\left(\left|\hat{\psi}(\xi)\pm i|\xi|\hat{\varphi} (\xi)\right|^2 +\left|\hat{\psi}(R(\xi))\pm i|\xi|\hat{\varphi}(R(\xi))\right|^2 \right) & \textnormal{ if }X_{\OO}=\R^3,\\
\frac{1}{2}\left|\widehat{ \mathbb{1}_{X_{\OO}}\psi}(\xi)-\widehat{\mathbb{1}_{X_{\OO}}\psi}(R(\xi))e^{2i\alpha \xi\cdot \vec n}\right| \\
\left.
\qquad \pm i|\xi| \left(\widehat{ \mathcal{P}_{X_{\OO}}\varphi}(\xi)-\widehat{\mathcal{P}_{X_{\OO}}\varphi}(R(\xi))e^{2i\alpha \xi\cdot \vec n} \right) \right|^2 & \textnormal{ if }X_{\OO}=\left\{\xi\cdot \vec n(x_{\infty})>\alpha\right\},\end{cases}
\end{equation}
 where $\vec n=\vec n(x_{\infty})$ is the normal vector and $R$ is the symmetry with respect to $T_{x_\infty}(\partial \Omega)$, defined by
 $$
R(\xi) := \xi - 2(\xi \cdot \vec n(x_\infty))\vec n(x_\infty).
 $$
The fact that $H_\pm \in L^1$ comes directly from the fact that $\vec \varphi = (\varphi, \psi) \in \dot H^1 \times L^2$.
 Finally,
 $$
    {}^b\pi_{x,\xi} \operatorname{supp} \mu_0  \subset  j_* \operatorname{supp}(\delta_{x_\infty} \otimes \lambda_0) \subset j_*\big(\{x_\infty\}\times(\operatorname{supp}(\widehat \varphi) \cup \operatorname{supp}(\widehat \varphi\circ  R)\cup \operatorname{supp}(\widehat \psi) \cup \operatorname{supp}(\widehat \psi\circ  R)\big), 
 $$
 but, as $j(x_\infty, R(\xi)) = j(x_\infty, \xi)$, this gives
 $$
{}^b\pi_{x,\xi} \operatorname{supp} \mu_0  \subset
j_*\big(\{x_\infty\}\times(\operatorname{supp}(\widehat \varphi))\cup \operatorname{supp}(\widehat \psi)\big),
 $$
 and (\ref{eq:mes_echap}) follows by definition of a non-trapped data (Definition \ref{ass:nontrapdata}).

\underline{(b) Reparametrizing the flow to obtain (\ref{eq:prop_repar}).}  It only remains to show (\ref{eq:prop_repar}), and hence to make the link between $\varphi_{t}$ and $\widetilde{\varphi}_{t}$. To do so, we reparametrize the flow. Notice that 
$$
\Phi_s(t, \tau, x, \xi) = (t-2\tau s, \tau, \varphi_s(x, \xi))
= (t-2\tau s, \tau, \widetilde \varphi_{2s|\xi|}(x, \xi)),
$$
hence, as $\mu$ is supported in $\{ \tau^2 = |\xi|^2\}$
$$
\Phi_s(t, \tau, x, \xi) = (t-2\tau s, \tau, \widetilde \varphi_{2s|\tau|}(x, \xi)) \quad \text{ on }\operatorname{supp}\mu.
$$
For any $c<d$ and $0<a<b$, $\Phi_s$ is well defined for $(s,\tau,t,(x,\xi))\in [c,d]\times [a,b]\times \R\times ^bT^*\Omega$ to $[a,b]\times \R\times ^bT^*\Omega$ and we can check that it is proper. Then, applying Lemma \ref{lmrepar}, as by our previous steps, $\mu(\{\tau =0 \})=0$, $\mu$ is also invariant by
$$
\widetilde{\Phi}_{s}(t, \tau, x, \xi)=\Phi_{\frac{s}{2\tau}}(t, \tau, x, \xi) = (t- s, \tau, \widetilde{\varphi}_{(\operatorname{sgn}\tau) s}(x, \xi)),
$$ 
hence
\begin{equation*}
    \left<\mu,a\circ \Phi_{\frac{s}{2\tau}}\right>=\int_t a\left(\operatorname{Id}, \widetilde{\varphi}_{(\operatorname{sgn}\tau)s}(x,\xi)\right),
\end{equation*}
and (\ref{eq:prop_repar}) follows.
\end{proof}

\subsubsection{From properties of the measure to properties of the concentrating wave}
We define 
$$
e_{n}(t):=(\partial_{t}u_{n}(t))^{2}+(\nabla u_{n}(t))^{2}dx
$$
the local density energy which satisfies the equation on $\Omega$
\begin{align*}
\partial_{t}e_{n}=2\operatorname{div}_{x}(\partial_{t} u_{n}\nabla_{x}u_{n}).
\end{align*}
Notice that since $\partial_{t}u_{n}=0$ on $\R\times \partial \Omega$, the previous equation still holds in $\R\times \R^{3}$  if we extend $e_{n}$ and $u_{n}$ by $0$ outside $\Omega$. As a consequence, $e_{n}$ is weakly-* equicontinuous as a measure-valued function of $t$ and
the limit points $e_{\infty}$ of $e_{n}$ are weakly-* continuous measure-valued functions of $t$.

\begin{lem}
\label{lemmapropmeasure}
We have the following formula for the local limit energy density
\begin{align}
\label{formuleeinfty}
 e_{\infty}(t)dt=\int_{\R_{\tau}\times \R^{3}_{\xi}}\mu(t,x,d\tau,d\xi) =
 \int_{\tau, \xi}j_*\mu(t,x,d\tau,d\xi),
\end{align}
in the sense that
$$
 e_{\infty}(t)dt= (\pi_{t,x})_* \mu =  ({}^b\pi_{t,x})_* j_* \mu,
$$
where $\mu$ is described by Proposition \ref{propmeasure}. In addition, it satisfies the following properties:
\begin{enumerate}
\item \label{enumescape}(Escape property) Assume that the data is non-trapping (Definition \ref{ass:nontrapdata}). Then, for any $\varphi \in C^{\infty}_0(B(0,R))$, we have $\varphi(x) e_{\infty}(t)=0$ for $t\geq T_{\rm esc}(R)$ where $T_{\rm esc}(R)\in \R$ is defined in Definition \ref{ass:nontrapdata}.
\item \label{finitespeed} (Finite speed) For any $t\in \R$, $ e_{\infty}(t)$ is supported in $B(0,\vert t\vert+\vert x_{0}\vert+1)$.  
\item \label{enumnoncon} (Non reconcentration) Assume Assumption \ref{ass:nonreco}. Then, for any $t\neq 0$ and any $y_0\in\R^3$, $ e_{\infty}(t)\left[\left\{y_0\right\}\right]=0$.
\end{enumerate}
\end{lem}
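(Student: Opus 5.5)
We first establish the formula \eqref{formuleeinfty} and then read off the three properties from the description of $\mu$ in Proposition \ref{propmeasure}.

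\textbf{The formula.} Take test functions $a(t,x)$ independent of $(\tau,\xi)$. Pick a cutoff $\chi\in C^\infty_c(\R^{1+3}_{\tau,\xi})$ equal to one near zero. Apply Proposition \ref{propmeasure}(1) to the symbol $a(t,x)\chi(\tau/R,\xi/R)$ and let $R\to\infty$. To justify this limit we need the energy of $\underline{\partial_t v_n},\underline{\nabla v_n}$ to be $h_n$-oscillating, i.e.\ no mass at frequencies $\gg h_n^{-1}$ nor $\ll h_n^{-1}$. This follows in space from the scaling of the data, since $T_{\OO,n}$ maps a fixed profile to frequencies $\sim h_n^{-1}$ and $P_\Omega$ is controlled using Lemma \ref{lm:projectdilat}-type arguments. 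In time it follows from the equation, since $\partial_t^2 = \Delta$. We also use that energy is conserved and bounded, so the localization in $t$ by $a$ absorbs the time integration. The consequence is $\int a\, e_\infty(t)\,dt=\langle \mu, a\rangle$, which is $(\pi_{t,x})_*\mu=e_\infty dt$. Since $a$ depends only on $(t,x)$ it factors through $j$, which gives $({}^b\pi_{t,x})_*j_*\mu$. Combined with Proposition \ref{propmeasure}(3) this yields $e_\infty(t)=({}^b\pi_x)_*\mu_t$ for a.e.\ $t$, and then for every $t$ by weak-$*$ continuity of $e_\infty$ and of $\mu_t$.

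\textbf{Properties (1) and (2).} For \eqref{enumescape}, write $\langle e_\infty(t),\varphi\rangle=\langle \mu_0, \varphi\circ\pi_x\circ\widetilde\varphi_{(\operatorname{sgn}\tau)t}\rangle$ using \eqref{eq:prop_repar}. By \eqref{eq:mes_echap}, for $|t|\ge T_{\rm esc}(R)$ every point of $\operatorname{supp}\mu_0$ is mapped outside $T^*B(0,R)$, so this quantity vanishes. For \eqref{finitespeed}, the same formula applies: $\mu_0$ lives over $x_\infty$, and the speed-one flow moves $x$ by at most $|t|$. Hence the support lies in $\overline B(x_\infty,|t|)\subset B(0,|t|+|x_0|+1)$, with $x_0=x_\infty$. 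Alternatively one can use finite speed of propagation for $v_n$ directly.

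\textbf{Property (3), the main step.} Fix $t\ne0$ and $y_0$. We have
\[
e_\infty(t)(\{y_0\})=\mu_0\big(\{(\tau,x,\xi):\pi_x\widetilde\varphi_{(\operatorname{sgn}\tau)t}(x,\xi)=y_0\}\big).
\]
By \eqref{eq:mes_data_int}–\eqref{eq:mes_data_bord}, $\mu_0$ is $\delta_{x_\infty}$ times a measure in $\xi$ that is absolutely continuous with respect to $d\xi$ (densities in $L^1$), pushed forward by $j$ in the boundary case. Since $\widetilde\varphi$ is homogeneous of degree zero in the direction along rays, the set above is a cone $\{\xi: \xi/|\xi|\in A_\pm\}$ with $A_\pm=\{\omega\in\mathbb S^2:\gamma_{\pm t}(x_\infty,\omega)=y_0\}$. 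Assumption \ref{ass:nonreco} says $|A_\pm|=0$. It is applied with $t$ replaced by $|t|$, using time-reversal symmetry of generalized geodesics for negative times (the flow of $-\xi$). A cone over a null subset of the sphere has Lebesgue measure zero in $\R^3$, so absolute continuity gives zero mass. In the boundary case, the reflection $R$ appearing in $H_\pm$ preserves null sets, and $j(x_\infty,R\xi)=j(x_\infty,\xi)$, so the same conclusion holds.

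The obstacle I expect is making this rigorous on ${}^bT^*\Omega$. One must check that the measurability and definition of $\gamma_t=\pi_x\varphi_t j$ match the flow used in \eqref{eq:prop_repar}, including points where the flow is defined only $\mu$-a.e. One must also handle the directions tangent to $\partial\Omega$ at $x_\infty$, which form a null set and so do not matter.
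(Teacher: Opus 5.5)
Your proof is correct. For the three properties it follows the paper. For the formula \eqref{formuleeinfty} it takes a genuinely different route.

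\textbf{The three properties.} Properties (1) and (2) are read off from \eqref{eq:prop_repar}, \eqref{eq:mes_echap} and the fact that $\mu_0$ lies over $x_\infty$, as in the paper. For (3) you do the same push-forward computation. You make explicit several points the paper leaves implicit:
\begin{itemize}
\item the relevant set is a cone over a null subset of $\mathbb S^2$;
\item $\lambda_0$ has $L^1$ densities $H_\pm$, so absolute continuity gives zero mass;
\item negative flow times $(\operatorname{sgn}\tau)t<0$ are handled by time reversal, which is needed because Assumption \ref{ass:nonreco} is stated only for $t\ge 0$.
\end{itemize}

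\textbf{The formula: your route versus the paper's.} You test $\mu$ against $a(t,x)\chi(\tau/R,\xi/R)$ and let $R\to\infty$. This requires $\underline{\partial_t v_n}$ and $\underline{\nabla v_n}$ to be $h_n$-oscillating in $(t,x)$, locally uniformly in time.

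The paper avoids any oscillation statement at $t\neq 0$. It uses only the always-true inequality $(\pi_{t,x})_*\mu\le e_\infty(t)\,dt$ and then compares total masses:
\begin{itemize}
\item $\operatorname{TM}\big((\pi_x)_*\mu_t\big)=\operatorname{TM}(\lambda_0)=\lim_n\|\vec u_n(0)\|^2_{\mathcal H}$, by \eqref{eq:gen_GG1} and the equality case of \eqref{eq:gen_GG2};
\item $\operatorname{TM}(e_\infty(t))\le\lim_n\|\vec u_n(t)\|^2_{\mathcal H}=\lim_n\|\vec u_n(0)\|^2_{\mathcal H}$, by energy conservation.
\end{itemize}
Together these force equality. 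So oscillation is used only at $t=0$, where it is already built into the computation of $\lambda_0$.

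\textbf{What your route costs.} Your route works, but its justification is thin. Scaling the data gives oscillation only at $t=0$; in the concentrating regime the relevant input is Lemma \ref{lm:projectgeneral}, not Lemma \ref{lm:projectdilat}. To propagate oscillation to later times under the Dirichlet flow you need two further facts:
\begin{itemize}
\item $h_n$-oscillation can be characterized spectrally through $-\Delta_D$, and this characterization is preserved by $\SO(t)$;
\item the spectral characterization is equivalent to the Fourier one for the extensions by zero, as in \cite[Proposition 2.2.4]{GG}. This is not automatic, since $\underline{\nabla v_n}$ jumps across $\partial\Omega$.
\end{itemize}
Also, you only need the absence of mass at frequencies $\gg h_n^{-1}$. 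Low frequencies are already captured, since the symbols need not vanish at $(\tau,\xi)=0$.

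In short, your argument is more direct and self-contained given the oscillation input. The paper's argument is cheaper because it reuses the mass identity already established in Proposition \ref{propmeasure}.
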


\begin{proof}
We have classically
\begin{align*}
\int_{\R_{\tau}\times \R^{3}_{\xi}}\mu(t,x,d\tau,d\xi)\leq e_{\infty}(t)dt.
\end{align*}
Moreover, since \eqref{eq:gen_GG2} has been proved to be an equality, we have, with $\operatorname{TM}$ denoting the total mass,
\begin{align*}
\operatorname{TM}\left(\int_{\R_{\tau}\times \R^{3}_{\xi}}\mu(t,x,d\tau,d\xi)\right)=\operatorname{TM}(\lambda_0)=\Vert P_{\Omega_\infty} \vec \psi \Vert^2_{\mathcal H} = \lim_{n \rightarrow \infty} \Vert \vec u_n(0) \Vert^2_{\mathcal H}
\end{align*}
where we have used \eqref{eq:gen_GG1}. Concerning $e_{\infty}$, we compute
\begin{align*}
\operatorname{TM}(e_{\infty}(t))\leq \limsup_{n \rightarrow \infty} TM(e_{n}(t))= \lim_{n \rightarrow \infty} \Vert \vec u_n(t) \Vert^2_{\mathcal H}=\Vert P_{\Omega_\infty} \vec \psi \Vert^2_{\mathcal H}.
\end{align*}
In particular, we have equality in all the previous inequalities and we have obtained the expected formula \eqref{formuleeinfty}.  The proof of the second equality is similar.

Now that formula \eqref{formuleeinfty} is obtained, the remaining Properties are direct consequences of the precise description of $\mu$ given by Proposition \ref{propmeasure}. 

More precisely, Property \ref{enumescape} follows from (\ref{eq:mes_echap}). 

Property \ref{finitespeed} follows from the semiclassical finite speed of propagation and the fact that $\mu_{0}$ is supported at $x_{\infty}$.

For Property \ref{enumnoncon}, observe that, using (\ref{eq:prop_repar})
\begin{align*}
e_\infty(t) \otimes dt &= ({}^b\pi_{t,x})_* j_* \mu = ({}^b\pi_{t,x})_* (\mu_t \otimes dt)
= [({}^b\pi_{x})_*\mu_t]\otimes dt \\
&= [({}^b\pi_{x})_*(\operatorname{Id}_{\tau},\widetilde \varphi_{(\operatorname{sgn}\tau)t})_*(\delta_{x_\infty}\otimes\lambda_0)]\otimes dt,
\end{align*}
therefore
$$
e_\infty(t) = ({}^b\pi_{x})_*(\operatorname{Id}_{\tau},\widetilde\varphi_{(\operatorname{sgn}\tau)t})_* (\delta_{x_\infty}\otimes\lambda_0),
$$
and we have, using Proposition \ref{propmeasure}, for example in the case where $x_\infty \in \partial\Omega$:
\begin{align*}
 &e_{\infty}(t)\left[\left\{y_0\right\}\right]  =
({}^b\pi_{x})_*(\operatorname{Id}_{\tau},\varphi_{(\operatorname{sgn}\tau)t})_*(\delta_{x_\infty}\otimes\lambda_0)(\{ y_0 \}) \\
&\hspace{0.5cm}=  (\operatorname{Id}_{\tau},\varphi_{(\operatorname{sgn}\tau)t})^*(\delta_{x_\infty}\otimes \lambda_0)({}^b\pi_{x}^{-1}(\{ y_0 \} )) \\
&\hspace{0.5cm}= (\delta_{x_\infty}\otimes \lambda_0)\big((\operatorname{Id}_{\tau},\widetilde\varphi_{-(\operatorname{sgn}\tau)t})({}^b\pi_{x}^{-1}(\{ y_0 \} )) \big)\\
&\hspace{0.5cm}= \lambda_0\left\{ (\xi', \tau) \in {\mathbb R}^{2+1}, \;\text{s.t.} \;  (x_{\infty},\xi')\in \widetilde\varphi_{-(\operatorname{sgn}\tau)t}({}^b\pi_{x}^{-1}(\{ y_0 \} )) \right\}
 \\
&\hspace{0.5cm}= \lambda_0\big(\{ (\xi', \tau) \in {\mathbb R}^{2+1}, \;\text{s.t.} \;  {}^b\pi_{x}\widetilde\varphi_{(\operatorname{sgn}\tau)t}(x_\infty, \xi')=y_{0}\}\big) = 0,
\end{align*}
thanks to Assumption \ref{ass:nonreco}, where the last two identities are written in local coordinates near $x_\infty$.
\end{proof}

These three properties of the measure can be immediatly translated into three properties of the concentrating wave:
\begin{lem}
\label{lemmapropconcentrwave}
Let $v_n$ be a linear concentrating profile as above. Then, it satisfies the following properties:
\begin{enumerate}
\item \label{enumescapewave} For any $\chi \in C^{\infty}_0(B(0,R))$, we have for any $C\geq T_{\rm esc}(R)$,
$$
\nor{\chi(x)v_n}{L^{\infty}([T_{\rm esc}(R),C],\Hu)}\rightarrow 0.
$$
 where $T_{\rm esc}\in \R$ is defined in Assumption \ref{ass:nontrapdata}.
\item \label{finitespeedwave} For any $T>0$ there exists $\chi\in C^{\infty}_0(\R^3)$ such that we have
$$v_n=\chi(x) v_n+o(1)_{L^{\infty}([0,T],\Hu)}.$$
\item \label{enumnonconwave} For any $0<C_1<C_2$, we have
$$
\nor{v_n}{L^{\infty}([C_1,C_2],L^6)}\rightarrow 0.
$$
\end{enumerate}
\end{lem}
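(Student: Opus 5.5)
The three properties will be obtained from Lemma \ref{lemmapropmeasure}, applied to every subsequence, together with the weak-* equicontinuity of $e_n$ in $t$. The common mechanism is as follows. Suppose a statement uniform in $t$ fails. Then there exist $\delta>0$, a subsequence and times $t_n$ in a compact interval $[a,b]$ along which the relevant quantity stays above $\delta$. Extracting further, we may assume $t_n\to t_*$ and that $e_n$ converges to some $e_\infty$ as in Lemma \ref{lemmapropmeasure}. Because $\partial_t e_n=2\operatorname{div}_x(\partial_tu_n\nabla u_n)$ with uniformly bounded energy, the convergence $e_n(t)\rightharpoonup e_\infty(t)$ is locally uniform in $t$ when tested against a fixed smooth compactly supported function. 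Hence $e_n(t_n)\rightharpoonup e_\infty(t_*)$, and we reach a contradiction with the properties of $e_\infty$.

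\textbf{Property (1).} Apply the mechanism on $[T_{\rm esc}(R),C]$ with test function $\chi^2$. We get $\int\chi^2 e_n(t_n)\to\langle e_\infty(t_*),\chi^2\rangle=0$ by the escape property of Lemma \ref{lemmapropmeasure}. This controls $\chi\nabla v_n$ and $\chi\partial_tv_n$ in $L^2$. For the full $\mathcal H$ norm of $\chi v_n$ it remains to treat $v_n\nabla\chi$ in $L^2$. Since $h_n\to 0$, the profile has frequency $\to\infty$, so $v_n\rightharpoonup0$ in $\dot H^1$ uniformly on compact time intervals (the energy is bounded, $\partial_tv_n$ is bounded in $L^2$, and the data converge weakly to $0$). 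Rellich's theorem on the bounded set $\operatorname{supp}\chi$ then gives $\|v_n\nabla\chi\|_{L^\infty([a,b],L^2)}\to0$.

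\textbf{Property (2).} By the finite speed property, $e_\infty(t)$ is supported in $B(0,T+|x_0|+1)$ for $t\in[0,T]$. Take $\chi=1$ on $B(0,T+|x_0|+2)$. The total energy of $\vec v_n(t)$ is constant and converges to the total mass of $e_\infty(t)$, as shown in the proof of Lemma \ref{lemmapropmeasure}. Therefore $\int(1-\chi^2)e_n(t_n)\to0$, using a tightness argument: no mass is lost since the total masses match. Rellich handles the lower-order term exactly as in (1).

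\textbf{Property (3).} This is the main obstacle, since we need $L^6$ smallness rather than energy smallness. I would argue via concentration-compactness in the style of G\'erard/Lions. First, by (2) we may localise to a compact set. Second, $v_n(t_n)\rightharpoonup0$ in $\dot H^1$, because of the high frequency (scale $h_n\to0$). By the profile decomposition for $\dot H^1\hookrightarrow L^6$ (the lack of compactness is only by concentration at points, as in \cite{G:96} and \cite{L:85}), if $\|v_n(t_n)\|_{L^6}\geq\delta$ then there exist points $y_n$, with $y_n\to y_0$ after extraction by the localisation, such that $\int_{B(y_n,\rho)}|\nabla v_n(t_n)|^2\geq c(\delta)>0$ for every fixed $\rho>0$ and $n$ large. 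This gives $e_\infty(t_*)(\overline{B(y_0,2\rho)})\geq c(\delta)$ for all $\rho$, hence $e_\infty(t_*)[\{y_0\}]\geq c(\delta)$. Since $t_*\in[C_1,C_2]$ and $t_*\neq0$, this contradicts the non-reconcentration property of Lemma \ref{lemmapropmeasure}.

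The delicate point is the lemma that $L^6$ non-smallness of a weakly null sequence bounded in $\dot H^1$ forces concentration of $|\nabla v_n|^2$ at a point. I would prove it by the refined Sobolev inequality with Littlewood--Paley pieces, or directly by Lions' concentration-compactness lemma (the defect measures satisfy $\nu\le C\mu^{3}$ on atoms, so $\nu$ is purely atomic). Near the boundary, one works with the zero extension of $v_n$ to $\R^3$.
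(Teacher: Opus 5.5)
Your proposal is correct and follows essentially the same route as the paper. Properties (1) and (2) come from the escape and finite-speed properties of $e_\infty$ in Lemma \ref{lemmapropmeasure}, and property (3) comes from Lions' concentration-compactness lemma combined with the absence of atoms in $e_\infty(t)$ for $t\neq 0$. You also make explicit some steps the paper leaves implicit: uniformity in time via the equi-Lipschitz bound on $t\mapsto\int\phi\, e_n(t)$, the local Rellich argument for $v_n\nabla\chi$, and tightness via matching total masses.
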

\begin{proof}
Property \ref{enumescapewave} is a direct consequence of Property \ref{enumescape} of Lemma \ref{lemmapropmeasure}, indeed
$$
\Vert \chi v_n(t)  \Vert_{\mathcal H} \lesssim \Vert \nabla \chi v_n(t) \Vert_{L^2} + \Vert  \chi \nabla v_n(t) \Vert_{L^2} + \Vert  \chi \partial_t v_n(t) \Vert_{L^2},
$$
but $v_n(t) \rightarrow 0$ in $L^2$ and 
$$
\Vert  \chi \nabla v_n(t) \Vert^2_{L^2} + \Vert  \chi \partial^2_t v_n(t) \Vert_{L^2} = \int \chi e_n(t) \rightarrow \int \chi e_\infty(t) = 0
$$
for $t \geq T_{\rm esc}$ by  Property \ref{enumescape} of Lemma \ref{lemmapropmeasure}.

Property \ref{finitespeedwave} is a direct consequence of the finite speed of propagation (Property \ref{finitespeed} of Lemma \ref{lemmapropmeasure}). 

For Property \ref{enumnonconwave}, we prove that for any sequence $t_n \rightarrow t\in [C_1,C_2]$, we have $\nor{v_n(t_n)}{L^6}\rightarrow0$. We will use the principle of concentration-compactness of P. L. Lions \cite[Lemma I.1]{L:85}. We can assume that $t_n\rightarrow t$ and therefore, the weak limit $\nu$ of $|\nabla v_n(t_n)|^2$ verifies $\nu \leq e_\infty(t)$. The previous Property implies the second assumption of  \cite[Lemma I.1]{L:85}. So, we can conclude using Property \ref{enumnoncon} of Lemma \ref{lemmapropmeasure}  that $\nu(\left\{x_0\right\})=0$ for any $t\neq 0$ and any $x_0\in \R^3$.
\end{proof}

\subsection{Escape from the obstacle}
The main purpose of this Subsection is to prove the following Proposition. Roughly speaking, it says that after a long enough time, the solution has escaped and its behavior is as in the Euclidian case:
\begin{prop}[Escape with the obstacle]
\label{propescapedom}
Let $\chi\in C^{\infty}_0(\R^3)$ such that $\chi(x)=1$ on a neighborhood of $\Omega^c$ (i.e. near the obstacle). Let $\vec v_n = \vec\varphi^\Omega _{\mathcal{O},n}$ be a linear concentrating profile associated with a non-trapped data (Definition \ref{ass:nontrapdata}). For any $\e>0$, there exists $C>0$, such that 
$$
 \limsup_{n \rightarrow +\infty} \left\|\chi(x) \vec{v_n} \right\|_{L^\infty([C,+\infty[,\mathcal{H})}\leq \e.
$$
Moreover, if $\vec w_n$ is the Euclidian solution of
\bneq
\partial_t^2 w_n - \Delta w_n &=&0, \quad (t,x)\in \R\times \R^3\\
w_n(C)&=&v_n(C),
\eneq 
then, we have
$$
\limsup_{n \rightarrow +\infty} \nor{\vec v_n-\vec w_n}{L^{\infty}([C,+\infty[,\mathcal{H})} \leq \e.
$$

 \end{prop}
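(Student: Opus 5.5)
The plan is to combine the escape property of the concentrating wave (Lemma \ref{lemmapropconcentrwave}, Property \ref{enumescapewave}) at one large but fixed time $C$ with an exterior-domain energy argument that keeps the energy from coming back near the obstacle.

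\textbf{Choice of the escape time.} Let $\chi$ be supported in $B(0,R_1)$. Since the data is non-trapped (Definition \ref{ass:nontrapdata}), for any $R\ge R_1$ Property \ref{enumescapewave} of Lemma \ref{lemmapropconcentrwave} gives
\[
\nor{\tilde\chi \vec v_n(C)}{\Hu}\to 0 \quad\text{for } C\ge T_{\rm esc}(R),
\]
where $\tilde\chi\in C_0^\infty(B(0,R))$ equals $1$ on $B(0,R-1)$. By Property \ref{finitespeedwave}, $\vec v_n(C)$ is, up to $o(1)$ in $\Hu$, supported in a fixed ball $B(0,\rho)$.

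\textbf{Comparison with the free solution.} Set $\vec g_n:=(1-\tilde\chi)\vec v_n(C)$. Then $\vec v_n(C)=\vec g_n+o(1)$ in $\Hu$, and $\vec g_n$ is supported in the annulus $\{R-1\le|x|\le\rho\}$, which lies away from the obstacle. Let $w_n$ be the free solution with data $\vec v_n(C)$ at time $C$, and $\tilde w_n$ the free solution with data $\vec g_n$. By energy conservation, $\sup_t\nor{\vec w_n-\vec{\tilde w}_n}{\Hu(\R^3)}=o(1)$. The key claim is that $\tilde w_n$ essentially never returns to $B(0,R_1)$ for $t\ge C$. We then compare $v_n$ with $\tilde w_n$ exactly as in the proof of Lemma \ref{lem:assfree} for profiles going to infinity. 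Write $z_n=v_n-(1-\beta)\tilde w_n$ with $\beta=1$ near $\Omega^c$ and $\operatorname{supp}\beta\subset B(0,R_1)$. By Duhamel and energy estimates, it suffices to bound
\[
\nor{[\Delta,\beta]\tilde w_n}{L^1([C,\infty),L^2)}+\nor{\beta\vec{\tilde w}_n}{L^\infty([C,\infty),\Hu)}.
\]
The same comparison gives the first statement, since $\chi\vec v_n\approx\chi(1-\beta)\vec{\tilde w}_n+\chi\vec z_n$ and we would show $\chi\vec{\tilde w}_n$ is small.

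\textbf{Main obstacle: the outgoing property of $\tilde w_n$.} We need $\tilde w_n$ small in $B(0,R_1)$, uniformly in $t\ge C$ and in $L^1_tL^2$ norm for the commutator term. We cannot simply rely on energy being outside; it must be \emph{outgoing}. I would obtain this from the measure. The data $\vec g_n$ concentrates at scale $h_n$, and its semiclassical measure is $\mu_C$ restricted to $|x|\ge R-1$. Property \eqref{eq:mes_echap} with Proposition \ref{propmeasure}(3), applied at times $C+s$ for all $s\ge 0$, shows that the rays in its support stay outside $B(0,R)$ in the future. Taking $R$ very large compared with $R_1$ and $\rho$, a straight ray starting in $\{|x|\ge R-1\}$ that avoids $B(0,R)$ for $s\in[0,2\rho]$ must point outward, at least up to a set of small measure. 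The free evolution of an outgoing semiclassical datum then satisfies $|x|\ge R-1+(t-C)/2$ roughly, so the energy of $\tilde w_n$ in $B(0,R_1)$ is $o(1)+\e$.

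For the $L^1L^2$ commutator term I expect a problem, because smallness in $L^\infty L^2$ is not integrable in time. I would instead not cut with $\beta$ at all. Rather, I would apply the free strong Huygens principle to an approximation of $\vec g_n$ by a datum that is $h_n$-localized in frequency and microlocally outgoing. Dispersive decay of such waves near the origin is $O(h_n^{1/2}t^{-1})$-type in $L^2$ of a ball, or, more robustly, can be controlled by an Egorov/measure argument on the interval $[C,C+T]$ combined with the radiation-field decay of Lemma \ref{lem:assfree} for later times. Making this last step rigorous is where I anticipate most of the work.
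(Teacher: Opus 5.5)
Your proposal leaves open exactly the step that carries the content of the proposition. That step is showing that for $t\ge C$ the wave never interacts with the obstacle again, including the bound on $\nor{[\Delta,\beta]\tilde w_n}{L^1([C,\infty),L^2)}$.

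The route you sketch for it does not work as stated. You propose taking $R$ large compared with $\rho$, but necessarily $\rho\ge R-1$. Indeed, for a nontrivial profile, $\vec v_n(C)$ carries non-vanishing energy and is, up to $o(1)$ in $\Hu$, supported in $\{R-1\le |x|\le \rho\}$. So the claim that rays ``must point outward'' is circular. It is also false as a geometric claim, since incoming rays with large impact parameter avoid $B(0,R)$ as well. The alternatives you list (frequency-localized outgoing approximations, dispersive decay, Egorov plus radiation fields) are only named, not carried out.

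You also use Property \ref{enumescapewave} of Lemma \ref{lemmapropconcentrwave} only at the single time $C$. In fact it holds uniformly on every bounded interval $[T_{\rm esc}(R),C']$. That uniformity is what already encodes, at the level of the Dirichlet solution, that the energy does not come back.

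The missing idea, which is the paper's, is that no outgoing analysis of the free wave is needed. The proof has two steps.

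\textbf{Large times.} $\vec v_n(T_{\rm esc})$ (or your $\vec g_n$) is, up to $o(1)$ in $\Hu$, supported in a fixed ball. So the strong Huygens principle (Lemma \ref{lmeuHuygens}) gives an $n$-independent $A$ such that its free evolution vanishes identically near the obstacle and near $\operatorname{supp}\chi$ for $t-T_{\rm esc}\ge A$. That free evolution is therefore also a Dirichlet solution from time $T_{\rm esc}+A$ on. Energy conservation then propagates the error at that time to all later times.

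\textbf{Bounded times.} The comparison is thus reduced to the bounded interval $I=[T_{\rm esc},T_{\rm esc}+A]$. There, Lemma \ref{solproche} lets you put the cutoff on the Dirichlet solution, i.e.\ control the error by $\nor{\widetilde\chi \vec v_n}{L^1(I,H^1)}$. This tends to $0$ by the escape property on bounded intervals and H\"older in time (Proposition \ref{propescapedsc}).

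The $L^\infty$-versus-$L^1$ difficulty you raise disappears because the interval is bounded. Your $z_n$ construction also becomes unnecessary: Lemma \ref{solproche} is exactly that Duhamel argument, with the cutoff on the Dirichlet side instead of the free side.
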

 \begin{remark}
One can take $C=T_{\rm esc}(R)$ (where $\operatorname{supp} \chi \subset B(0,R)$) given by Assumption \ref{ass:nontrapdata}. 
\end{remark}
The idea of the proof will be to distinguish two time periods on which we have to prove that the Euclidian solution and the solution on $\Omega$ are close:
\begin{itemize}
\item During \emph{semiclassical times} $[T_{\rm esc}(R), C]$, the semiclassical description given by the measure allows to show that the solution is away from the obstacle and propagates freeely, as in the Euclidian case. This is the object of Proposition \ref{propescapedsc}, which mainly relies on the non-trapping property and its consequences in term of measures developed in the previous subsection.  
\item In \emph{classical times} $[C, +\infty[$, we use Huygens principle, stated as Lemma \ref{lmeuHuygens} below, to prove that the Euclidian solution with same initial data at time $T_{\rm esc}(R)$ is far from the obstacle in a time $[C, +\infty[$ with $C$ large and is therefore close to be a solution with Dirichlet condition during the time $[C, +\infty[$. Since, we have already shown in the first semiclassical step that the Euclidian and Dirichlet solutions are close on $[T_{\rm esc}(R), C]$, the two solutions are actually close on  $[T_{\rm esc}(R),+\infty[$.
\end{itemize}
We first show the following lemma, which shows that if the solution is localized far from the obstacle, then the Dirichlet and Euclidian solutions are close.
\begin{lem}
\label{solproche}
Let $\chi\in C^{\infty}_0(\R^3)$ such that $\chi(x)=1$ on a neighborhood of $\Omega^c$ and $\widetilde{\chi}\in C^{\infty}_0(\R^3)$ such that $\widetilde{\chi}(x)=1$ on the support of $\nabla \chi$.
Let $\vec u_0\in \mathcal{H}$. Denote $u=\SF \vec u_0$ and $v=\SO \vec u_0$. Then, we have 
$$
\Vert \vec u-\vec v\Vert_{L^{\infty}(I,\mathcal{H})}\lesssim \nor{\chi(x)\vec{u_{0}}}{\Hu}+\min \left(\nor{\chi(x)\vec v}{L^1(I,H^1)},\nor{\widetilde{\chi}(x)\vec u}{L^1(I,H^1)}\right).
$$
with some constant independent on the interval $I$.
\end{lem}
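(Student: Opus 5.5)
We compare both $u$ and $v$ with the cut-off function $(1-\chi)u$ (respectively $(1-\chi)v$), which lives away from the obstacle and hence makes sense both in $\Omega$ and in $\R^3$. This gives two symmetric arguments, one for each term in the minimum. The only tools are Duhamel's formula and the energy estimate, which hold with constants independent of $I$. We may assume $0\in I$, or replace $\vec u_0$ by the data at a point of $I$; the first term is understood at the initial time.

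\textbf{First bound, with $\widetilde\chi \vec u$.} Set $w:=(1-\chi)u$. Since $\chi=1$ near $\Omega^c$, $w$ vanishes near $\partial\Omega$, so it is an admissible Dirichlet function on $\Omega$. It solves
\[
\Box w = [\Delta,\chi]u = 2\nabla\chi\cdot\nabla u + (\Delta\chi) u .
\]
The right-hand side is supported where $\widetilde\chi=1$, so its $L^2$ norm is at most $C\Vert \widetilde\chi u\Vert_{H^1}$. Then $v-w$ solves the Dirichlet wave equation in $\Omega$ with data $\chi\vec u_0$ (restricted to $\Omega$) and source $-[\Delta,\chi]u$. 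The energy estimate \eqref{eq:apriori_bdd_control_energy} gives
\[
\Vert \vec v-\vec w\Vert_{L^\infty(I,\mathcal H(\Omega))}\lesssim \Vert\chi \vec u_0\Vert_{\mathcal H}+\Vert \widetilde\chi u\Vert_{L^1(I,H^1)} .
\]
The same reasoning in $\R^3$, applied to $\chi u$ (data $\chi\vec u_0$, source $[\Delta,\chi]u$), gives the same bound for $\Vert \vec u-\vec w\Vert_{L^\infty(I,\mathcal H(\R^3))}$. Here $\vec u-\vec v$ is measured in $\mathcal H(\R^3)$, with $v$ extended by zero. Adding the two bounds gives the estimate with $\widetilde\chi\vec u$. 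We also have $\Vert\chi\vec u_0\Vert_{\mathcal H}\lesssim$ the stated term, since $\chi \vec u_0$ is estimated by the $H^1$ norm on the support of $\chi$. Because of the $\dot H^1$ setting, a lower-order term $\nabla\chi\, u_0$ appears; I interpret $\nor{\chi\vec u_0}{\Hu}$ as containing it.

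\textbf{Second bound, with $\chi\vec v$.} Now set $\tilde w:=(1-\chi)v$, extended by zero. It is a function on $\R^3$ with
\[
\Box\tilde w=[\Delta,\chi]v=2\nabla\chi\cdot\nabla v+(\Delta\chi)v .
\]
This source is bounded in $L^2$ by $\Vert \chi v\Vert_{H^1}$, after enlarging $\chi$ slightly. Indeed $\nabla\chi$ is supported in the region where a slightly larger cutoff equals one, and we take the cutoff in the norm to be that one; renaming, this is harmless. The same two-step Duhamel comparison then applies. In $\R^3$, $u-\tilde w$ has data $\chi\vec u_0$ and source $-[\Delta,\chi]v$. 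In $\Omega$, $v-\tilde w=\chi v$ is directly bounded in $L^\infty\mathcal H$.

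\textbf{Main obstacle.} The second bound needs $\chi v$ in $L^\infty(I,\mathcal H)$, but the hypothesis only controls it in $L^1(I,H^1)$. The fix is to write $\chi v$ itself through Duhamel: it solves the Dirichlet wave equation with data $\chi\vec u_0$ and source $-[\Delta,\chi]v$, whose $L^1L^2$ norm is controlled by $\Vert\chi v\Vert_{L^1H^1}$ (with the enlarged cutoff). The energy estimate then converts the $L^1H^1$ control into the required $L^\infty\mathcal H$ control. Keeping track of the cutoff enlargements consistently between $\chi$ and $\widetilde\chi$ is the only delicate bookkeeping. All constants come from energy estimates and are independent of $I$.
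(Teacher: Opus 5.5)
Your argument is correct and is essentially the paper's. The paper multiplies $v$ (or $u$) by $1-\chi$ and notes that this function solves the same inhomogeneous equation, with source $F=(0,2\nabla\chi\cdot\nabla v+\Delta\chi\, v)$, both in $\R^3$ and in $\Omega$; subtracting the two Duhamel formulas gives $[\SF(t)-\SO(t)]\vec u_0=[\SF(t)-\SO(t)]\chi\vec u_0-\int_0^t[\SF(t-s)-\SO(t-s)]F(s)\,ds$, which is your triangle inequality through $\tilde w$ written in one line, so your ``main obstacle'' is just the same energy estimate as in your first bound. Like you, the paper's proof actually yields $\Vert\widetilde\chi v\Vert_{L^1(I,H^1)}$ rather than $\Vert\chi\vec v\Vert_{L^1(I,H^1)}$ in the second term, which is the version used in Proposition~\ref{propescapedsc}; also, the data term comes out exactly as $\Vert\chi\vec u_0\Vert_{\mathcal H}$, so no reinterpretation of it is needed.
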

\begin{proof}
 Let $w=(1-\chi) v$. It is solution in the distributional sense of 
\begin{eqnarray*}
\left\lbrace
\begin{array}{rcl}
\partial_t^2 w-\Delta w&=&f\quad \textnormal{on}\quad \R\times \R^3\\
(w,\partial_t w)(0)&=&(1-\chi(x))\vec u_{0}
\end{array}
\right.
\end{eqnarray*}
with $f=2\nabla \chi(x)\cdot \nabla v+\Delta \chi(x) v$. We denote $F=(0,f)$.
Therefore, we have $w(t)= \SF(t) (1-\chi(x))\vec u_{0}+ \int_{0}^{t} \SF(t-s)F(s)ds$. Moreover, since $w(t,x)=0$ near $\Omega^{c}$, the same formula is true with the Dirichlet semi group. So, we obtain the formula 
$$
\left[ \SF(t)- \SO(t)\right]\vec u_{0}=\left[ \SF(t)- \SO(t)\right]\chi \vec u_{0}-\int_{0}^{t} \left[ \SF(t-s)- \SO(t-s)\right]F(s)ds.
$$
This gives the result with $\nor{\widetilde{\chi}(x)v}{L^1(I,H^1)}$ after noticing that $\operatorname{supp}f \subset \operatorname{supp}\nabla \chi \subset \{ \widetilde \chi = 1\}$. The same proof with $s=(1-\chi) u$ gives the result, with right hand side $\nor{\widetilde{\chi}(x)u}{L^1(I,H^1)}$ instead of $\nor{\widetilde{\chi}(x)v}{L^1(I,H^1)}$.
\end{proof}
\begin{prop}[Escape in large time]
\label{propescapedsc}
Let $\chi\in  C^{\infty}_0(B(0,R))$ such that $\chi(x)=1$ on a neighborhood of $\Omega^c$ (near the obstacle). Let $\vec v_n = \SO(\cdot) \vec\varphi^\Omega _{\mathcal{O},n}$ be a linear concentrating profile with $\vec \varphi$ a non-trapped data (Definition \ref{ass:nontrapdata}) and $T_{\rm esc}(R)$ as in Definition \ref{ass:nontrapdata}. For any $C>0$, we have 
$$
 \left\|\chi(x)\vec v_n\right\|_{L^\infty([T_{\rm esc},C],\Hu)}\tend{n}{+\infty} 0.
$$
Moreover, if $\vec w_n(t)=\SF(t-T_{\rm esc})\vec v_n(T_{\rm esc})$ is the euclidian solution of
\bneq
\partial_t^{2} w_n - \Delta w_n &=&0, \quad (t,x)\in \R\times \R^3\\
\vec w_n(T_{\rm esc})&=&\vec v_n(T_{\rm esc}),
\eneq 
then, we have
\begin{eqnarray}
\label{closeeucl}
\nor{\vec v_n-\vec w_n}{L^{\infty}([T_{\rm esc},C],\mathcal{H})}\tend{n}{+\infty}  0.
\end{eqnarray}
 \end{prop}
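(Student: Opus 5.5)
The first claim is a direct consequence of Property \ref{enumescapewave} of Lemma \ref{lemmapropconcentrwave}. Since $\chi\in C^\infty_0(B(0,R))$ and $\vec\varphi$ is non-trapped, that property gives, for every $C\geq T_{\rm esc}(R)$,
$$
\nor{\chi v_n}{L^\infty([T_{\rm esc}(R),C],\Hu)}\to 0 .
$$
The same statement for the time derivative component follows in the same way, because $\int\chi^2 e_n(t)\to\int\chi^2 e_\infty(t)=0$ for $t\geq T_{\rm esc}(R)$ by Property \ref{enumescape} of Lemma \ref{lemmapropmeasure}. If $C<T_{\rm esc}(R)$ there is nothing to prove. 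I would also record the local $L^2$ decay $\nor{\chi v_n}{L^\infty([T_{\rm esc},C],L^2)}\to 0$. It holds because $v_n$ concentrates at scale $h_n\to 0$: the $L^2$ norm is $O(h_n)$ times the energy in the rescaled variables, and it is uniform on compact time intervals by equicontinuity. Consequently $\nor{\chi v_n}{L^\infty([T_{\rm esc},C],H^1)}\to 0$.

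For the second claim I would apply Lemma \ref{solproche} on $I=[T_{\rm esc},C]$, with the initial time shifted to $T_{\rm esc}$. The lemma is time-translation invariant, and its constant is independent of $I$. We take $\vec u_0=\vec v_n(T_{\rm esc})$, so that $u=\vec w_n$ is the Euclidean solution and $v=\SO(\cdot-T_{\rm esc})\vec v_n(T_{\rm esc})=\vec v_n$. The key point is to keep everything inside $B(0,R)$, so that the escape time $T_{\rm esc}(R)$ suffices and we never need $T_{\rm esc}$ at a larger radius. To this end I would not use $\chi$ itself as the cutoff in Lemma \ref{solproche}. Instead, choose $\chi_1\in C^\infty_0(\R^3)$ with $\chi_1=1$ near $\Omega^c$ and $\operatorname{supp}\chi_1\subset\{\chi=1\}^\circ$. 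This is possible because $\chi=1$ on a neighbourhood of the compact set $\Omega^c$. Then $\widetilde\chi:=\chi$ is admissible in Lemma \ref{solproche}, since $\chi=1$ on $\operatorname{supp}\nabla\chi_1$ and $\chi\in C_0^\infty(B(0,R))$.

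Choosing in the minimum the term involving the Dirichlet solution $v$, the lemma gives
$$
\nor{\vec v_n-\vec w_n}{L^\infty([T_{\rm esc},C],\Hu)}\lesssim \nor{\chi_1\vec v_n(T_{\rm esc})}{\Hu}+\nor{\chi v_n}{L^1([T_{\rm esc},C],H^1)}.
$$
(Its proof only uses that the forcing $2\nabla\chi_1\cdot\nabla v+\Delta\chi_1 v$ is supported where $\chi=1$.) We bound the two terms separately.

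\begin{itemize}
\item For the first term, write $\chi_1=\chi_1\chi$. Multiplication by $\chi_1$ is bounded from $H^1\times L^2$ into $\Hu$. Hence this term is controlled by $\nor{\chi\vec v_n(T_{\rm esc})}{\Hu}+\nor{\chi v_n(T_{\rm esc})}{L^2}$, which tends to $0$ by the first step.
\item For the second term, we have $\nor{\chi v_n}{L^1([T_{\rm esc},C],H^1)}\leq (C-T_{\rm esc})\nor{\chi v_n}{L^\infty([T_{\rm esc},C],H^1)}\to 0$, again by the first step, since $C$ is fixed.
\end{itemize}

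Together these give \eqref{closeeucl}. The only delicate point is the choice of nested cutoffs $\chi_1\prec\chi$. It is what makes the cutoff $\widetilde\chi$ demanded by Lemma \ref{solproche} equal to the original $\chi$ rather than a cutoff supported in a larger ball. The other point to check is that the lower-order $L^2$ contributions vanish, which uses the concentration $h_n\to0$.
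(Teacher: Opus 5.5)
Your argument is correct and is essentially the paper's: the first claim is Property \ref{enumescapewave} of Lemma \ref{lemmapropconcentrwave}, and the second follows from Lemma \ref{solproche} (with the $L^1H^1$ term taken on the cutoff equal to one on the support of the forcing, as you note), H\"older in time, and local $L^2$ decay. Two small remarks. First, the nested cutoff $\chi_1\prec\chi$ is harmless but not needed: the paper keeps $\chi$ as the cutoff and picks $\widetilde\chi\in C^\infty_0(\Omega\cap B(0,R))$ equal to $1$ on the compact set $\operatorname{supp}\nabla\chi\subset B(0,R)$, which already keeps everything inside $B(0,R)$. Second, the energy does not control the $L^2$ norm, so ``$O(h_n)$ times the energy'' is not a valid justification of the local $L^2$ decay; a clean argument is that $\vec v_n(t)=\SO(t)P_\Omega T_{\OO,n}\vec\varphi\rightharpoonup 0$ in $\Hu$ for each fixed $t$ (since $T_{\OO,n}\vec\varphi\rightharpoonup 0$ and $\SO$ is unitary), hence $\chi v_n(t)\to 0$ in $L^2$ by Rellich, and this is uniform on $[T_{\rm esc},C]$ because $t\mapsto\chi v_n(t)$ is equi-Lipschitz in $L^2$.
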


\begin{proof}[Proof of Proposition \ref{propescapedsc}]
Let $\e >0$ and  $\widetilde{\chi}\in C^{\infty}_0(\Omega\cap B(0,R))$ such that $\widetilde{\chi}=1$ on the support of $\nabla\chi$ (in particular, it is supported away from the boundary).
Using Property \ref{enumescapewave} of Lemma \ref{lemmapropconcentrwave}, we get that for any $C\geq T_{\rm esc}(R)$,
$$
\nor{\widetilde{\chi}(x)\vec v_n}{L^{\infty}([T_{\rm esc},C],\Hu)}\rightarrow 0.
$$
H\"older inequality in time gives
$$
\nor{\widetilde{\chi}(x)\vec{v_n}}{L^{1}([T_{\rm esc},C],\Hu)}\rightarrow 0,
$$
and we conclude using Lemma \ref{solproche} and the strong convergence of the $L^2$ norm: indeed, the initial data $ \vec\varphi^\Omega _{\mathcal{O},n}$ goes to zero in $L^2 \times H^{-1}(\Omega)$, and the $L^2 \times H^{-1}(\Omega)$ norm is conserved.
\end{proof}

We will also need the following Euclidian result.

\begin{lem}[Huygens principle]
\label{lmeuHuygens}
Assume $\vec u$ compactly supported in $B(0,R)$. Then, $\SF(t)\vec u$ is supported in $\{|x|\geq t-R\}$ for any $t\geq R$.

In particular, for $\chi, \widetilde{\chi}\in C^{\infty}_{c}(\R^{3})$, then, there exists $C>0$ and $A>0$ so that we have $$\nor{\chi \SF \vec u_{0}}{L^{\infty}(\R\setminus [-A,A],\mathcal{H})}\leq C \nor{(1-\widetilde{\chi})u_0}{\mathcal{H}}$$
and 
$$
\forall |t| \geq A, \hspace{0.3cm} \chi \SF(t)\widetilde{\chi} \vec u_0 = 0
$$
for any $\vec u_0\in \mathcal{H}$.
\end{lem}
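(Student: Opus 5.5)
The first assertion is the strong Huygens principle in $\mathbb R^3$. I would use Kirchhoff's formula: for data $\vec u=(u^0,u^1)$,
\[
\SF(t)\vec u(x)=\partial_t\Big(t\,\mathrm{avg}_{|y-x|=t}u^0\Big)+t\,\mathrm{avg}_{|y-x|=t}u^1 .
\]
This shows that $\SF(t)\vec u(x)$ depends only on the data on the sphere $\{|y-x|=|t|\}$. Kirchhoff's formula holds for smooth compactly supported data. For general $\vec u\in\mathcal H$ supported in $B(0,R)$, I would approximate by mollified data supported in $B(0,R+\delta)$ and pass to the limit using energy conservation, then let $\delta\to0$.

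Now fix $t\ge R$ and a point $x$ with $|x|<t-R$. Every $y$ with $|y-x|=t$ satisfies $|y|\ge t-|x|>R$, so the sphere misses the support and $\SF(t)\vec u(x)=0$. The same holds for $\partial_t\SF(t)\vec u$, since it is a solution with data $(u^1,\Delta u^0)$, still supported in $B(0,R)$. Hence $\SF(t)\vec u$ is supported in $\{|x|\ge t-R\}$. For negative times the same argument, or time reversal, gives support in $\{|x|\ge |t|-R\}$ for $|t|\ge R$.

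For the second part, let $\chi$ be supported in $B(0,R_1)$ and $\widetilde\chi$ supported in $B(0,R_2)$. Set $A:=R_1+R_2+1$. For $|t|\ge A$, the first part gives $\operatorname{supp}\SF(t)\widetilde\chi\vec u_0\subset\{|x|\ge |t|-R_2\}\subset\{|x|>R_1\}$. Multiplying by $\chi$ therefore kills it, so $\chi\SF(t)\widetilde\chi\vec u_0=0$. Here $\widetilde\chi\vec u_0$ is understood as $(\widetilde\chi u^0_0,\widetilde\chi u^1_0)$, which lies in $\mathcal H$ because $\widetilde\chi u_0^0\in H^1$ by Hardy's inequality. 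Then I write $\vec u_0=\widetilde\chi\vec u_0+(1-\widetilde\chi)\vec u_0$, and for $|t|\ge A$ only the second piece survives:
\[
\chi\SF(t)\vec u_0=\chi\SF(t)(1-\widetilde\chi)\vec u_0 .
\]
Multiplication by $\chi$ is bounded on $\mathcal H$, since $\|\nabla(\chi f)\|_{L^2}\lesssim\|\nabla f\|_{L^2}+\||x|^{-1}f\|_{L^2}\lesssim\|\nabla f\|_{L^2}$ by Hardy. The group $\SF$ is an isometry of $\mathcal H$. Together these give
\[
\|\chi\SF(t)\vec u_0\|_{\mathcal H}\le C\|(1-\widetilde\chi)\vec u_0\|_{\mathcal H},
\]
uniformly for $|t|\ge A$.

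The only mildly delicate point is the density step. It needs the support of the data to be preserved up to $\delta$ under approximation, and it needs weak or strong limits in $\mathcal H$ to preserve the support condition; both are routine.
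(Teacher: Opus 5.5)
Your proposal is correct and follows the paper's route: split $\vec u_0=\widetilde\chi\vec u_0+(1-\widetilde\chi)\vec u_0$, use the strong Huygens principle to kill $\chi\SF(t)\widetilde\chi\vec u_0$ for $|t|\ge A$, and bound the remaining piece by boundedness of multiplication by $\chi$ together with energy conservation. The paper simply calls the first statement classical in odd dimension; you additionally prove it (Kirchhoff's formula plus a density argument) and use Hardy's inequality to justify that $\chi\cdot$ and $\widetilde\chi\cdot$ act boundedly on $\dot H^1(\R^3)$, details the paper leaves implicit.
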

\begin{proof}
The first statement is classical in odd dimension. We decompose $\vec u_0=\widetilde{\chi} \vec u_0+(1-\widetilde{\chi})\vec u_0$. The Huygens principle gives $A$ only depending on $\chi$, and $\widetilde{\chi}$ so that $\chi \SF(t)\widetilde{\chi} \vec u_0=0$ for $|t|\geq A$. Then, we write $\nor{\chi \SF(t)(1-\widetilde\chi) \vec u_0}{\mathcal{H}}\leq C\nor{ \SF(t)(1-\widetilde{\chi}) \vec u_0}{\mathcal{H}}\leq C\nor{ (1-\widetilde{\chi}) \vec u_0}{\mathcal{H}}$ uniformly for $t\in \R$.
\end{proof}
\begin{proof}[Proof of Proposition \ref{propescapedom}]
Let $R>0$ so that $\chi\in C^{\infty}(B(0,R))$. Let $T_{\rm esc}(R)$ given by Assumption \ref{ass:nontrapdata}. Let $\widetilde{\chi}$ given by Item \ref{finitespeedwave} of Lemma \ref{lemmapropconcentrwave} so that
 \begin{equation} \label{approx_supp_vn}
\nor{(1-\widetilde{\chi})v_n(T_{\rm esc})}{\mathcal{H}}\to 0.
\end{equation}

Fix $A$ as in Lemma \ref{lmeuHuygens} depending on $\chi$ and $\widetilde{\chi}$. Thanks to Proposition \ref{propescapedsc}, the conclusion holds on the interval $[T_{\rm esc},{\widetilde{C}}]$ with ${\widetilde{C}}=T_{\rm esc}+A$
\begin{equation}
\label{linsmcl}
\nor{\vec v_n-\SF(t-T_{\rm esc})\vec v_n(T_{\rm esc})}{L^{\infty}([T_{\rm esc},\widetilde{C}],\mathcal{H})}\leq \e.
\end{equation}
Now, by \eqref{approx_supp_vn}
$$
\Vert \widetilde \chi \vec v_n(T_{\rm esc})  - \vec v_n(T_{\rm esc}) \Vert_{\mathcal H} \leq \epsilon,
$$
hence by conservation of energy, for any $t$
\begin{align}
&\Vert \SO(t-T_{\rm esc})\widetilde \chi \vec v_n(T_{\rm esc}) - \vec v_n(t)\Vert_{ \mathcal H} \nonumber \\
& = \Vert \SO(t-T_{\rm esc})\widetilde \chi \vec v_n(T_{\rm esc}) - \SO(t-T_{\rm esc}) \vec v_n(T_{\rm esc})\Vert_{ \mathcal H} \nonumber \\
&= 
\Vert \widetilde \chi \vec v_n(T_{\rm esc}) -  \vec v_n(T_{\rm esc})\Vert_{ \mathcal H} \leq \epsilon. \label{eq:esc_o_supp_exact}
\end{align}
But, on the other hand, by Lemma \ref{lmeuHuygens} 
$$
\forall t \geq A, \hspace{0.3cm}
 \chi \SF(t)\widetilde \chi \vec v_n(T_{\rm esc}) = 0,
$$
from which it follows by Lemma \ref{solproche} that
$$
\forall t \geq A, \hspace{0.3cm} \SF(t)\widetilde \chi \vec v_n(T_{\rm esc})= \SO(t)\widetilde \chi \vec v_n(T_{\rm esc}),
$$
and after time change of variable
\begin{equation} \label{eq:esc_o_finite_speed}
\forall t \geq \widetilde{C}, \hspace{0.3cm} \SF(t-T_{\rm esc})\widetilde \chi \vec v_n(T_{\rm esc})= \SO(t-T_{\rm esc})\widetilde \chi \vec v_n(T_{\rm esc}).
\end{equation}
The above (\ref{eq:esc_o_finite_speed}) together with (\ref{eq:esc_o_supp_exact}) gives
$$
\Vert  \vec v_n - \SF(t-T_{\rm esc})\widetilde \chi \vec v_n(T_{\rm esc}) \Vert_{L^\infty([C, + \infty), \mathcal H)} \leq \epsilon.
$$
Finally, by conservation of energy 
for any $t$
\begin{align*}
&\Vert \SF(t-T_{\rm esc})\widetilde \chi \vec v_n(T_{\rm esc}) - \SF(t-T_{\rm esc}) \vec v_n(T_{\rm esc})\Vert_{ \mathcal H} \nonumber \\
&\leq 
\Vert \SF(t-T_{\rm esc})\widetilde \chi \vec v_n(T_{\rm esc}) - \SF(t-T_{\rm esc}) \vec v_n(T_{\rm esc})\Vert_{ \mathcal H(\mathbb R^3)} 
 \\
&= 
\Vert \widetilde \chi \vec v_n(T_{\rm esc}) -  \vec v_n(T_{\rm esc})\Vert_{ \mathcal H} \leq \epsilon. \label{eq:esc_o_supp_exact}
\end{align*}
The result follows {with $C=T_{\rm esc}$ by combining the previous estimates}.

\end{proof}

\subsection{Proof of the non-concentration Proposition \ref{propnoncon_loc}}

We will prove that for any sequence of times $t_n\geq C_n h_n$, we have $\nor{v_n(t_n)}{L^6}\rightarrow 0$. We distinguish three cases (up to extraction):
\begin{enumerate}
\item $t_n \rightarrow +\infty$ (escape at infinity)
\item $t_n\rightarrow c$ with $c\neq 0$ (semiclassical propagation)
\item  $t_n\rightarrow 0$ and $t_n\geq C_n h_n$ (locally Euclidian case)
\end{enumerate}
Each case will be studied in a separate subsubsection. Recall that, as shown in Lemma \ref{lm:enoughnonconc}, it is  enough to consider a non-trapped data (Definition \ref{ass:nontrapdata}).

\subsubsection{Escape at infinity: $t_n \rightarrow +\infty$}
Let $w_n$ be the Euclidian solution described in the Proposition \ref{propescapedom}. Using Sobolev embedding and (\ref{closeeucl}), it is enough to prove that $\nor{w_n(t_n)}{L^6}\rightarrow 0$. But, we also have by Property \ref{finitespeedwave} of Lemma \ref{lemmapropconcentrwave} that there exists $\chi\in C^{\infty}_0(\R^3)$ (related to the constant $C$ of Proposition \ref{propescapedom}) such that $w_n(C)=v_n(C)+\petito{1}_{\Hu}=\chi(x)v_n(C)+\petito{1}_{\Hu}=\chi(x)w_n(C)+\petito{1}_{\Hu}$.

The result is then a consequence of the following Euclidian Lemma applied to $z_n(\cdot):=w_n(\cdot-C)$. In the following, an $h_n$--oscillating sequence, $h_n \to 0$ is defined in \cite[Definition 2.2.1]{GG} and the equivalence with the usual Fourier oscillation is proven in \cite[Proposition 2.2.4]{GG}.
\begin{lem} 
Assume that $\vec f_n$ is bounded in $\mathcal{H}(\R^3)$ and $h_n$--oscillating and satisfies $\vec f_n=\chi(x)\vec f_n+\petito{1}_{\mathcal{H}(\R^3)}$ for some $\chi\in C^{\infty}_0(\R^3,[0,1])$. Denote $\vec u_n:=\SF \vec f_n$. Then, if ${t_n}\rightarrow +\infty$, we have $\nor{u_n(t_n)}{L^6}\rightarrow 0$.
\end{lem}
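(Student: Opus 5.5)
We reduce to data that are exactly compactly supported and then use the dispersive decay of the free wave equation in $\mathbb R^3$. Since $\vec u_n=\SF\vec f_n$ and energy is conserved, $\sup_t\nor{\SF(t)(\vec f_n-\chi\vec f_n)}{\mathcal H}\to 0$. By Sobolev embedding, it therefore suffices to prove the statement for $\vec g_n:=\chi\vec f_n$, which is bounded in $\mathcal H(\R^3)$, supported in a fixed ball $B(0,R_0)$, and still $h_n$-oscillating.

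Next we cut $\vec g_n$ in frequency. Let $\e>0$. Because $\vec g_n$ is $h_n$-oscillating and compact at infinity in frequency, there is $R$ such that $\limsup_n$ of the $\mathcal H$-norm of the part of $\vec g_n$ with $|\xi|\ge R/h_n$ is at most $\e$ (\cite[Proposition 2.2.4]{GG}). There is no constraint at low frequencies. We split $\vec g_n=\vec g_n^{\le}+\vec g_n^{>}$ with a smooth Fourier multiplier $\psi(h_n D)$. The high part contributes at most $C\e$ to $\nor{\cdot}{L^\infty L^6}$ by energy conservation and Sobolev embedding. The multiplier destroys exact compact support, but we can multiply again by a slightly larger cutoff $\widetilde\chi$: the commutator error is $O(h_n)$ in $\mathcal H$ for the dominant oscillating part, and the low-frequency tails of a Schwartz kernel decay away from $B(0,R_0)$. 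It is simpler, though, to avoid support and use only the bound $\nor{g_n}{L^2}+\nor{\partial_t g}{H^{-1}}$, as explained next.

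The key step is a uniform decay estimate for $\SF(t)\vec g_n$ in $L^6$ as $t\to\infty$, uniformly in $n$. Both a gain of $h_n$ and a loss of $h_n^{-1}$ would be bad, so we rely on the compact support instead of on frequency localization. For data supported in $B(0,R_0)$, the Kirchhoff formula gives for $t\ge 2R_0$
\begin{equation*}
|u(t,x)|\lesssim \frac1t\int_{|y-x|=t}\big(|g^1|+|\nabla g^0|\big)\,d\sigma(y)+\frac1{t^2}\int_{|y-x|=t}|g^0|\,d\sigma(y).
\end{equation*}
We plan to avoid pointwise estimates and to use conservation of energy together with the support. By the strong Huygens principle, $u_n(t)$ is supported in the shell $\{t-R_0\le |x|\le t+R_0\}$. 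In this shell we use the radiation field representation already used in \S\ref{subsec:to_infinity}: $r u_n(t,r\omega)$ is essentially a profile $F_n(r-t,\omega)$, and the Morawetz/conformal energy identity gives $\nor{\partial_\omega u_n(t)}{L^2}\lesssim t^{-1}\big(\nor{\vec g_n}{\mathcal H}+\nor{|x|\nabla g_n}{L^2}+\cdots\big)$. More robustly, we use the conformal (Morawetz $K_0$) energy, which is conserved and bounded by $C(R_0)\nor{\vec g_n}{\mathcal H}^2$ for data supported in $B(0,R_0)$. It controls $\int (t+|x|)^2|\underline L u|^2+(t-|x|)^2|Lu|^2+|\nabla_{\!\not\, }u|^2(t^2+|x|^2)$. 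In the shell, $t\sim|x|$, so the angular derivatives and the good derivative $\underline L u=(\partial_t+\partial_r)u$ are $O(t^{-1})$ in $L^2$, uniformly in $n$. Only $\partial_r u$ remains of size $O(1)$.

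To bound the $L^6$ norm, we use the Sobolev inequality on the shell in polar coordinates. On each sphere, the angular Sobolev embedding $H^1(\mathbb S^2)\subset L^p$ and the one-dimensional radial embedding give an anisotropic Gagliardo--Nirenberg estimate. Writing $w=ru$, we get a bound of the form
\begin{equation*}
\nor{u(t)}{L^6(\text{shell})}^6\lesssim t^{-4}\sup_r\nor{w}{L^2_\omega}^{\,2}\cdots,
\end{equation*}
which leads to $\nor{u_n(t)}{L^6}\lesssim t^{-a}C(R_0,\sup_n\nor{\vec g_n}{\mathcal H})$ for some $a>0$. Heuristically, $u$ is concentrated in a spherical shell of radius $t$ and is smooth in the angular directions at scale $1$ on the sphere of radius $t$. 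The $L^6$ norm is then controlled by one-dimensional radial Sobolev embedding ($\dot H^1(\R)\not\subset L^\infty$, but $L^2\cap\dot H^1$ is) times a $t^{-2/3}$ gain from spreading over area $t^2$. The radial $L^2$ control in the shell is $\nor{u}{L^2}\lesssim\nor{g^0}{L^2}+t\nor{g^1}{H^{-1}}$, which is not bounded. This is the main obstacle. We will handle it with the $h_n$-oscillation plus Hardy inequality $\nor{u/(|x|)}{L^2}\lesssim\nor{\nabla u}{L^2}$, which gives $\nor{u}{L^2(\text{shell})}\lesssim t\nor{\vec g}{\mathcal H}$. This suffices because the weight $r^{-1}$ appears. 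Combining these bounds yields $\nor{u_n(t_n)}{L^6}\lesssim \e+t_n^{-a}\to C\e$, and letting $\e\to0$ concludes.
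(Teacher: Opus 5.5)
Your strategy is sound and differs from the paper's: reduce to $\vec g_n=\chi\vec f_n$, use strong Huygens to confine $u_n(t)$ to the shell, use the $K_0$ conformal energy to gain $t^{-1}$ on angular derivatives, and conclude with Sobolev in polar variables. However, the last step, as written, does not close.

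\textbf{The gap.} The displayed estimate is left with dots. Your proposed treatment of the $L^2$ term would also fail. Write $v(r,\omega):=u_n(t,r\omega)$ on $[t-R_0,t+R_0]\times\mathbb{S}^2$. Your Hardy bound $\|u_n(t)\|_{L^2(\text{shell})}\lesssim t$ only gives $\int|v|^2\,dr\,d\omega\lesssim 1$. With that information, any Sobolev inequality on the product yields at best $\|v\|_{L^6(dr\,d\omega)}\lesssim 1$, i.e.\ $\|u_n(t)\|_{L^6}\lesssim t^{1/3}$: a function constant in $(r,\omega)$ satisfies all the bounds you list.

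The ``obstacle'' is in any case spurious. For data supported in $B(0,R_0)$ one has
\[
\|u(t)\|_{L^2}\le\|g^0\|_{L^2}+\|g^1\|_{\dot H^{-1}}\lesssim R_0\|\vec g\|_{\mathcal H},
\]
since $L^{6/5}\subset\dot H^{-1}$; there is no factor $t$.

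\textbf{The fix.} Use what Huygens already gives you: $v$ vanishes at $r=t\pm R_0$. The Dirichlet Sobolev inequality on this product, whose constant depends only on $R_0$, gives
\[
\|v\|_{L^6(dr\,d\omega)}\le C(R_0)\big(\|\partial_r v\|_{L^2(dr\,d\omega)}+\|\nabla_\omega v\|_{L^2(dr\,d\omega)}\big)\lesssim_{R_0} t^{-1}.
\]
The two terms are controlled as follows.
\begin{itemize}
\item From the energy: $\int|\partial_r v|^2\,dr\,d\omega\le (t-R_0)^{-2}\|\nabla u_n(t)\|_{L^2}^2$.
\item From the $K_0$ energy: $\int|\nabla_\omega v|^2\,dr\,d\omega=\int|r^{-1}\nabla_\omega u_n|^2\,dx\lesssim R_0^2\,t^{-2}\|\vec g_n\|_{\mathcal H}^2$.
\end{itemize}
Hence $\|u_n(t)\|_{L^6}\le (t+R_0)^{1/3}\|v\|_{L^6(dr\,d\omega)}\le C(R_0)\,t^{-2/3}$, uniformly in $n$.

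\textbf{Comparison with the paper.} The paper applies the Bahouri--G\'erard profile decomposition, with all scales equal to $h_n$ thanks to the oscillation. It then shows, via Friedlander's radiation field and the volume bound of Lemma~\ref{lm:volC}, that profiles with $t_n^j\to\pm\infty$ cannot keep energy in the support of $\chi$. Finally, it uses that each remaining profile is a fixed free wave evaluated at times $(t_n-t_n^j)/h_n\to+\infty$, where its $L^6$ norm tends to zero. That argument is qualitative and stays inside the profile machinery used throughout the paper.

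Your repaired argument is elementary and quantitative. It gives
\[
\|u_n(t)\|_{L^6}\le C(R_0)\,t^{-2/3}\sup_n\|\vec f_n\|_{\mathcal H}+C\|\vec f_n-\chi\vec f_n\|_{\mathcal H}
\]
for $t\ge 2R_0$. It needs neither the $h_n$-oscillation hypothesis nor any frequency localization, so your second paragraph (the cut-off $\psi(h_nD)$) can simply be removed.
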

\begin{proof}
 We use the profile decomposition for the wave equation in $\mathbb R^3$, due to Bahouri-G\'erard \cite[Lemma 3.6]{BahouriGerard99}. Since $f_n$ is $h_n$--oscillating, we can easily impose (up to a fix dilation of the profile) that all the profiles have the same $h_n^j=h_n$. So, for any $l\in \N$, we have the following decomposition, up to a subsequence,
$$
u_n(t,x)=\sum_{j=1}^l \frac{1}{\sqrt{h_n}}V^j\left(\frac{t-t_n^j}{h_n},\frac{x-x_n^j}{h_n}\right)+w_n^l(t,x),
$$
with 
$$V^j=\SF \vec \psi^j, \hspace{0.3cm }\limsu{n}{\infty} \nor{w_n^l}{L^{\infty}(\R,L^6(\R^3))}\tend{l}{\infty}0.$$
In particular
\begin{equation} \label{eq:phiVj}
\forall j, \hspace{0.3cm} \chi(\cdot) \frac{1}{\sqrt{h_n}}\vec V^j\left(\frac{-t_n^j}{h_n},\frac{\cdot-x_n^j}{h_n}\right) = \frac{1}{\sqrt{h_n}}\vec V^j\left(\frac{-t_n^j}{h_n},\frac{\cdot-x_n^j}{h_n}\right) + o(1)_{\mathcal H},
\end{equation}
indeed, by orthogonality of the profiles and the support assumption, we have for any $l \geq 1$
\begin{equation*}
\Vert (1 - \chi) f_n \Vert^2_{\mathcal H(\R^3)}= \sum_{j=1}^l \Big\Vert \frac{1}{\sqrt{h_n}}(1-\chi(\cdot)) \vec V^j\left(\frac{-t_n^j}{h_n},\frac{\cdot-x_n^j}{h_n}\right) \Big\Vert^2_{\mathcal H(\R^3)} 
+ \Big\Vert  (1-\chi(\cdot)) \vec w_n^l(0,\cdot)\Big\Vert^2_{\mathcal H(\R^3)} + o(1)_{\mathcal H} 
 \to 0,
\end{equation*}
from which (\ref{eq:phiVj}) follows.

We now prove that we can select only the profiles such that 
$t_n^j$ is bounded.
Fix $j$ so that $t_n^j \rightarrow + \infty$.
We fix $\e>0$ and we select one radiation fields $F^j$ of Friedlander (see use \cite[Proposition 1.1]{CL} for the related statement) to be bounded and with compact support so that 
\begin{equation} \label{eq:profil_LaurentCote}
\Big\Vert\nabla V^j(t)-\frac{1}{|\cdot|}F^j\left(|\cdot|  + t, \frac{\cdot}{|\cdot|}\right)\frac{\cdot}{|\cdot|}\Big\Vert_{L^2(\R^3)}\leq \epsilon, \hspace{0.5cm}\forall t \leq -T_0.
\end{equation} 
Note that for a solution in the energy space, the Radiation field $F$ is usually in $L^2(\R\times \mathbb{S}^2)$ but an approximation allow to take it bounded and compactly supported up to a small loss in the energy space.
But
\begin{equation*}
\Big\Vert  {{h_n}}^{ - \frac 32} \chi(\cdot)\frac{1}{|\frac{\cdot-x_n^j}{h_n}|}F^j\left(\left|\frac{\cdot-x_n^j}{h_n}\right| - \frac{t_n^j}{h_n}, \frac{\frac{\cdot-x_n^j}{h_n}}{|\frac{\cdot-x_n^j}{h_n}|}\right)\Big\Vert_{L^2(\R^3)} 
= 
\Big\Vert  \chi(h_n \cdot + x_n^j)\frac{1}{|\cdot|}F^j\left(|\cdot| - \frac{t_n^j}{h_n}, \frac{\cdot}{|\cdot|}\right)\Big\Vert_{L^2(\R^3)}.
\end{equation*}
As $F^j$ and $\chi$ are compactly supported, there is $R_0, R_1 > 0$ so that, if the function integrated above is non zero, we have
$$
|y| \in \Big[\frac{t_n^j}{h_n} - R_0, \frac{t_n^j}{h_n} + R_0\Big] \text{ and } y \in B(\frac{-x_n^j}{h_n}, \frac{R_1}{h_n}).
$$
Thus, denoting
$$
K_n := B(\frac{-x_n^j}{h_n}, \frac{R_1}{h_n}) \cap \Big\{ |y| \in \big[\frac{t_n^j}{h_n} - R_0, \frac{t_n^j}{h_n} + R_0\big] \Big\},
$$
we have
\begin{equation} \label{eq:June11th}
\Big\Vert  {{h_n}}^{ - \frac 32} \chi(x)\frac{1}{|\frac{x-x_n^j}{h_n}|}F^j(|\frac{x-x_n^j}{h_n}| - \frac{t_n^j}{h_n}, \frac{\frac{x-x_n^j}{h_n}}{|\frac{x-x_n^j}{h_n}|})\Big\Vert_{L^2} \leq 
\Vert \frac{1}{|y|} \Vert_{L^2(K_n)}. 
\end{equation}
We then pass to a subsequence so that $\frac{|x_n^j|}{t_n^j} \to \ell  \in [0, +\infty]$.

\noindent \underline{$\triangleright$ First case: $\ell = 1$.} Since we have selected $j$ so that $t_n^j \rightarrow + \infty$, we have therefore $|x_n^j|\to +\infty$. We check that we can apply Lemma \ref{lm:volC} with $r=\frac{R_1}{h_n}$, $x_0=\frac{-x_n^j}{h_n}$, $R=R_0$, $t=\frac{t_n^j}{h_n}\to +\infty$ and $\epsilon=\max(R_1^{-1} |x_n^j|^{-1},R_0 h_n|x_n^j|^{-1})\lesssim |x_n^j|^{-1}\to 0$. Taking $n$ large enough so $\e\leq \e_0$ and $R_0\leq \epsilon_0 t$, so that the lemma applies, gives  
$$
|K_n| \lesssim  |x_n^j|^{-1}\left(\frac{t_n^j}{h_n}\right)^2\lesssim \frac{t_n^j}{h_n^2},
$$
where we have used the equivalence of $t_n^j$ and $|x_n^j|$. Now, (\ref{eq:June11th}) gives, using that $|y| \gtrsim \frac{t_n^j}{h_n}$ for $ y \in K_n$
$$
\nor{ {{h_n}}^{ - \frac 32} \chi(x)\frac{1}{|\frac{x-x_n^j}{h_n}|}F^j(|\frac{x-x_n^j}{h_n}| - \frac{t_n^j}{h_n}, \frac{\frac{x-x_n^j}{h_n}}{|\frac{x-x_n^j}{h_n}|})}{L^2(\R^3)} \lesssim 
 \frac{1}{\frac{t_n^j}{h_n}} \sqrt{\frac{t_n^j}{h_n^2}} = \frac{1}{(t_n^j)^{\frac 12}} \rightarrow 0,
$$
from which, going back to (\ref{eq:profil_LaurentCote}) and (\ref{eq:phiVj}),
\begin{equation}
\label{eq:June11th2}
\Big\Vert\frac{1}{\sqrt{h_n}}\vec V^j\left(\frac{-t_n^j}{h_n},\frac{\cdot-x_n^j}{h_n}\right) \Big\Vert_{\dot H^1(\mathbb R^3)} \leq 2\epsilon,
\end{equation}
for $n$ big enough.

\noindent \underline{$\triangleright$ Second case: $\ell \not= 1$.}
 If $\ell \not= 1$, then $K_n = \emptyset$ for $n$ big enough, thus (\ref{eq:June11th2}) holds as well. Indeed, if $y \in K_n$, we have from the definition of this set
$$
\max\big( \frac{t^j_n}{h_n}-R_0, \frac{|x_n|}{h_n}-\frac{R_1}{h_n}\big) \leq |y| \leq \min \Big( \frac{t^j_n}{h_n}+R_0, \frac{|x_n|}{h_n}+\frac{R_1}{h_n}\Big),
$$
from which if
$$
\frac{t^j_n}{h_n} - R_0 \geq \frac{|x_n|}{h_n}+\frac{R_1}{h_n} \hspace{0.3cm} \text{ or } \hspace{0.3cm} \frac{|x_n|}{h_n}- \frac{R_1}{h_n} \geq \frac{t^j_n}{h_n}+R_0,
$$
there can be no element in $K_n$. But this is equivalent to
$$
\frac{|x_n|}{t^j_n} \leq 1 - R_0\frac{h_n}{t^j_n} - R_1 \frac{1}{t^j_n} \hspace{0.3cm} \text{ or }\hspace{0.3cm}\frac{|x_n|}{t^j_n} \geq 1 + R_0\frac{h_n}{t^j_n} + R_1 \frac{1}{t^j_n},
$$
which is verified for $n$ big enough if $\ell \not= 1$ (recall that $t^j_n \to + \infty$ and $h_n \to 0$).
Arguing in the same way, the analog of (\ref{eq:June11th2}) holds for the component $\chi \partial_t V^j$. It follows that
(using Strichartz estimates to obtain the $o(1)_{L^\infty L^6}$ from $o(1)_\Hu$)
$$
\vec u_n=\sum_{\substack{1\leq j \leq l \\ t_{n}^j \text{ bounded }}} \frac{1}{\sqrt{h_n}} \vec V^j\left(\frac{t-t_n^j}{h_n},\frac{x-x_n^j}{h_n}\right)+ \vec w_n^l(t,x) + o(1)_{L^\infty_t L^6_x}.
$$

For the remaining profiles, so that $t_n^j$ is bounded, we compute
$$
\nor{\frac{1}{\sqrt{h_n}} \vec V^j\left(\frac{t_n-t_n^j}{h_n},\frac{x-x_n^j}{h_n}\right)}{L^6_x} = 
\nor{ \vec V^j\left(\frac{t_n-t_n^j}{h_n}, y \right)}{L^6_y} \rightarrow 0,
$$
using the fact that, as $t_n^j$ is bounded and $t_n \rightarrow + \infty$, 
$\frac{t_n-t_n^j}{h_n} \rightarrow + \infty$, and as $V^j$ is a fixed solution of the linear wave equation, we have $\nor{V^j(t)}{L^6}\tend{t}{+\infty} 0$. This ends the proof.
\end{proof}

\subsubsection{Semiclassical propagation: $t_n \rightarrow c >0$}
This is the Item \ref{enumnonconwave} of Lemma \ref{lemmapropconcentrwave}.

\subsubsection{Locally Euclidan case: $t_n \rightarrow 0$ with $t_n \geq C_n h_n$}
By finite speed of propagation, this is easy if the concentration point $x_{\infty}$ is far from the boundary. In the case $x_{\infty}\in\partial \Omega$, the proof of this part is exactly the same as the proof of Case (b) of Proposition 2.3.1 of Gallagher-G\'erard \cite{GG} which is a local argument that is valid in any geometry with regular enough boundary. Therefore, we only give an idea of the proof. The argument, detailed page 27, consists roughly in the following.
\begin{itemize}
\item They work in normal coordinates close to the boundary and extend the solution by symmetry. 
\item They rescale the concentrating wave so that the time $t_n$ corresponds to time $1$ (taking care of the boundary conditions).
\item In this scaling, the new solution satisfies a wave-type equation with piecewise smooth metric converging to the flat metric. The associated Wigner measure satisfies the same transport equation as the flat metric: that is, the energy travels along straight lines. In addition, the measure at time $0$ is concentrated only at the point $0$ (corresponding to $x_{\infty}$ in  normal coordinates).  The same argument of concentration-compactness as in Item \ref{enumnonconwave} of Lemma \ref{lemmapropconcentrwave} then allows to conclude.  
\end{itemize}

\subsection{Comparison between linear and non-linear concentrating profiles and the Euclidian flow}
We first describe the local behavior of the profiles that are asymptotically euclidian in a range of time of the form $[-Ch_n, Ch_n]$. The linear version is
\begin{lem}
\label{lmapproxconc}
Assume that $\mathcal{O}=\{(h_k,0,x_k)_k\}$ is a concentrating scale-core and $\vec \varphi\in \mathcal H(\mathbb R^3)$. Let $v_k=\SO\vec \varphi_{\mathcal{O},k} $ be a linear concentrating profile. Let $u=S_{X_{\mathcal{O}}} P_{X_{\mathcal{O}}}\vec\varphi $ extended to $\R^{3}$ by zero. 
Then
 $$
 v_k(t,x)= \frac{1}{\sqrt{h_k}}u\left(\frac{t}{h_k}, \frac{x-x_k}{h_k}\right)+o(1),
 $$
  where the $o(1)$ is a sequence converging to zero in $L^{\infty}([-Ch_k,Ch_k],\mathcal{H}(\R^3))$ for any $C>0$.
\end{lem}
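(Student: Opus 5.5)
Rescale and compare in the fixed geometry. Set $\tilde v_k(s,y):=\sqrt{h_k}\,v_k(h_k s, h_k y+x_k)$. By \eqref{eq:conj_proj}, $\tilde v_k=S_{\Omega_k}P_{\Omega_k}\vec\varphi$ is the Dirichlet linear wave in the scaled domain $\Omega_k=\frac{\Omega-x_k}{h_k}$. Since $T_{\OO,k}$ is an isometry of $\mathcal H(\R^3)$ that commutes with extension by zero, the claim is equivalent to
$$
\sup_{|s|\le C}\big\|\vec{\tilde v}_k(s)-\vec u(s)\big\|_{\mathcal H(\R^3)}\to 0 ,
$$
both functions being extended by zero outside their domains. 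This is a finite time convergence statement for the wave flow in domains $\Omega_k\to X_{\OO}$ in the sense of Definition \ref{defconvergespace}, and it is essentially \cite[Lemma 2.3.3]{GG}, whose proof is local. I would reproduce that proof.

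\textbf{Step 1: data and reductions.} First I would show $P_{\Omega_k}\vec\varphi\to P_{X_{\OO}}\vec\varphi$ in $\mathcal H(\R^3)$ (Mosco type convergence of the closed subspaces $\dot H^1_0(\Omega_k)$):
\begin{itemize}
\item For the $\liminf$ inclusion, any $\psi\in C_c^\infty(X_{\OO})$ lies in $\dot H^1_0(\Omega_k)$ for $k$ large, by the first item of Definition \ref{defconvergespace}.
\item For the $\limsup$ inclusion, weak limits of elements of $\dot H^1_0(\Omega_k)$ vanish on $\overline{X_{\OO}}^c$ by the second item, hence lie in $\dot H^1_0(X_{\OO})$, because $X_{\OO}$ is either $\R^3$ or a half space.
\end{itemize}
These two facts give strong convergence of the projections. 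By energy conservation and density, I may then assume $\vec\varphi\in C_c^\infty$ and work with data $P_{X_{\OO}}\vec\varphi$ for both flows, at an $o(1)$ cost uniform in time.

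\textbf{Step 2: compactness and identification of the limit.} The family $\tilde v_k$ is bounded in $L^\infty([-C,C],\mathcal H)$, and its extension by zero satisfies $\partial_s\tilde v_k$ bounded. Up to a subsequence, $\tilde v_k\rightharpoonup w$ weakly-$*$. The limit $w$ vanishes outside $X_{\OO}$, lies in $\dot H^1_0(X_{\OO})$ for a.e.\ $s$ by the argument of Step 1, and solves the wave equation in $X_{\OO}$. To see this, test against $\psi\in C_c^\infty((-C,C)\times X_{\OO})$: for $k$ large $\operatorname{supp}\psi\subset(-C,C)\times\Omega_k$. Uniqueness of finite energy Dirichlet solutions in $X_{\OO}$ then forces $w=u$, so the whole sequence converges weakly.

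\textbf{Step 3: upgrade to strong uniform convergence.} This is the main obstacle. Energy conservation gives $\|\vec{\tilde v}_k(s)\|_{\mathcal H}=\|P_{\Omega_k}\vec\varphi\|\to\|P_{X_{\OO}}\vec\varphi\|=\|\vec u(s)\|$ for every $s$. Weak convergence at each fixed $s$ together with convergence of norms gives strong convergence pointwise in $s$. For this I need weak convergence at each $s$, not only a.e.; it follows from equicontinuity in $s$ in $L^2\times H^{-1}$. To make the convergence uniform on $[-C,C]$, I take $s_k\to s$ and write $\vec{\tilde v}_k(s_k)=\vec S_{\Omega_k}(s_k-s)\vec{\tilde v}_k(s)$, which is within $o(1)$ of $\vec S_{\Omega_k}(s_k-s)\vec u(s)$ by energy conservation. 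Next I must control $\vec S_{\Omega_k}(\delta)\vec u(s)-\vec u(s)$ uniformly in $k$ for small $\delta$. I would handle this by applying the same weak-to-strong argument to data $\vec u(s)$ replaced by a smooth approximation compactly supported in $X_{\OO}$, where the flows in $\Omega_k$ and $X_{\OO}$ agree up to times of order the distance to the boundary by finite speed of propagation.

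If the half-space case with moving boundary ($\alpha$ finite) makes this step delicate, the fallback is the symmetrization in normal coordinates used in \cite{GG}, already invoked in Proposition \ref{propmeasure}. That route reduces the problem to a wave equation on $\R^3$ with a Lipschitz metric converging to the flat one, where stability of the energy estimate gives the convergence directly.
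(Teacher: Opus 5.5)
Your proposal is correct and follows the paper's proof. The paper does the same rescaling, using $T_{\mathcal O^{-1},k}S_\Omega T_{\mathcal O,k}=S_{\Omega_k}$ and $T_{\mathcal O^{-1},k}P_\Omega T_{\mathcal O,k}=P_{\Omega_k}$. It then simply quotes \cite[Lemma 2.1.2]{GG} for $P_{\Omega_k}\vec\varphi\to P_{X_{\mathcal O}}\vec\varphi$ (your Step 1) and \cite[Proposition 2.1.3]{GG} for $S_{\Omega_k}P_{\Omega_k}\vec\varphi\to S_{X_{\mathcal O}}P_{X_{\mathcal O}}\vec\varphi$ in $L^\infty([-C,C],\mathcal H(\R^3))$ (your Steps 2--3); these, rather than \cite[Lemma 2.3.3]{GG} (which concerns semiclassical times), are the relevant references.

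Your self-contained reproof of those two results is sound, and the symmetrization fallback is not needed. The weak convergence at each fixed $s$ is most simply checked against test functions in $C_c^\infty(X_{\mathcal O})$, where no boundary term appears.
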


\begin{proof}
This follows from Proposition 2.1.3 in \cite{GG}, with exactly the same proof since the argument is local close to the concentrating point. Indeed, one apply \cite[Proposition 2.1.3]{GG} together \cite[Lemma 2.1.2]{GG} with $(f_n^0, f_n^1) :=  P_{\Omega_n} \vec \varphi$. We get, denoting $X_{\mathcal O} := \Omega_\infty$
$$
S_{\Omega_n} P_{\Omega_n} \vec \varphi \rightarrow S_{\Omega_\infty} P_{\Omega_\infty}
\vec \varphi
$$
strongly in $L^\infty([-C, C], \mathcal H(\mathbb R^3))$, hence,
$$
T_{\mathcal O_n} S_{\Omega_n} P_{\Omega_n} \vec \varphi - T_{\mathcal O_n} S_{\Omega_\infty} P_{\Omega_\infty}
\vec \varphi \rightarrow 0
$$
strongly in $L^\infty([-Ch_n, Ch_n],  \mathcal H(\mathbb R^3))$
and, using the conjugation formulas
$$
T_{{\mathcal O}_n^{-1}} P_\Omega T_{\mathcal O_n} = P_{\Omega_n}, \hspace{0.3cm} 
T_{{\mathcal O}_n^{-1}} S_\Omega T_{\mathcal O_n} = S_{\Omega_n},
$$
this gives exactlty the result: indeed $T_{\mathcal O_n} S_{\Omega_n} P_{\Omega_n} \vec \varphi = T_{\mathcal O_n} S_{\Omega_n}T_{{\mathcal O}_n^{-1}} P_\Omega T_{\mathcal O_n} \vec \varphi = S_{\Omega}P_\Omega T_{\mathcal O_n} \vec \varphi = v_n$.
\end{proof}

Moreover, a similar result hold for the nonlinear equation as a consequence of the non-reconcentration Proposition \ref{propnoncon}.

\begin{prop}\label{prop:concprof}
Assume that $\mathcal{O}=({h_k},t_k,x_k)_k$ is a concentrating scale-core and $\vec \phi\in\mathcal H( \mathbb R^3)$. 

(i) For $k$ large enough (depending only on $\vec \phi$ and $\mathcal{O}$) 
there is a nonlinear solution $U_k\in L^5(\mathbb R, L^{10}(\Omega))$ of the equation \eqref{eq:NLWomega} with initial data $\vec \phi^\Omega _{\mathcal{O},k}$, and
\begin{equation}\label{ControlOnZNormForEP}
\Vert U_k\Vert_{L^5L^{10}}\lesssim_{\mathcal E_{\mathbb{R}^3}(\vec \phi)}1.
\end{equation}

(ii) There exists a Euclidean solution $U\in C(\mathbb{R}, \dot{H}^1_0(X_{\mathcal{O}}))$ of
\begin{equation}\label{EEq}
\left(\partial_t^{2}-\Delta_{X_{\mathcal{O}}}\right)U=U\vert U\vert^4
\end{equation}
with scattering data $\vec\phi^{\pm\infty} \in \mathcal H(X_{\mathcal O})$ defined as in  
\cite[(2.27) and Corollary 2.8]{BahouriGerard99} such that the following holds, up to a subsequence:
for any $\varepsilon>0$, there exists $T(\phi,\varepsilon)$ such that for all $T\ge T(\phi,\varepsilon)$ we have
\begin{equation}\label{ProxyEuclHyp}
\Vert U_k-\tilde{U}_k\Vert_{L^5 L^{10}(\{\vert t-t_k\vert\le T h_k\})}\le\varepsilon,
\end{equation}
for $k$ large enough, where
\begin{equation*}
\tilde{U}_k(t):=\lambda_k^{-\frac{1}{2}}U\left(\frac{t-t_k}{h_k},\frac{x-x_k}{h_k}\right).
\end{equation*}
In addition, up to a subsequence,
\begin{equation}\label{ScatEuclSol}
\Vert U_k(t)-\SO(t-t_{k}) T_{\mathcal{O},k}^{\Omega}\phi^{+\infty}\Vert_{L^\infty(t-t_k\geq Th_k, \Hu)}\le \varepsilon,
\end{equation}
and similarly with $\phi^{-\infty}$ in $L^\infty(t-t_k\leq - Th_k, \Hu)$, for $k$ large enough (depending on $\phi,\varepsilon,T$).
\end{prop}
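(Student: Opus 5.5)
The plan is to reduce to $t_k=0$ by time translation (the statement is translation-invariant in $t$). We then build $U_k$ by gluing three pieces: a rescaled limit nonlinear solution on the semiclassical window $|t|\le Th_k$, and linear $\Omega$-evolutions of its scattering states outside that window. The Perturbation Proposition \ref{lem:perturb} then shows that $U_k$ stays close to this approximate solution globally.

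\textbf{Step 1: the limit problem.} We take $U$ to be the solution of the defocusing critical equation on $X_{\mathcal O}$ with Dirichlet condition and data $P_{X_{\mathcal O}}\vec\phi$. Here $X_{\mathcal O}$ is $\R^3$ or a half-space. For the half-space we reflect oddly across the boundary plane: this yields a solution on $\R^3$, odd in the normal variable, with data in $\mathcal H(\R^3)$. Bahouri--G\'erard \cite{BahouriGerard99} then gives $U\in L^5L^{10}$ together with scattering states $\vec\phi^{\pm\infty}\in\mathcal H(X_{\mathcal O})$. Given $\varepsilon$, we choose $T$ so large that:
\begin{itemize}
\item $\|U\|_{L^5L^{10}(|t|\ge T)}$ is small;
\item $\vec U(\pm T)$ is $\varepsilon$-close in $\mathcal H$ to $\vec S_{X_{\mathcal O}}(\pm T)\vec\phi^{\pm\infty}$.
\end{itemize}

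\textbf{Step 2: the window $|t|\le Th_k$.} We compare $\tilde U_k$ (restricted to $\Omega$) with the solution of \eqref{eq:NLWomega} with data $\vec\phi^\Omega_{\mathcal O,k}$. By scaling and the conjugation \eqref{eq:conj_proj}, this amounts to comparing the equations on $\Omega_k$ and on $X_{\mathcal O}$ on the fixed time interval $[-T,T]$. Lemma \ref{lmapproxconc}, built on \cite[Prop.~2.1.3]{GG}, gives the linear convergence $S_{\Omega_k}P_{\Omega_k}\vec\psi\to S_{X_{\mathcal O}}P_{X_{\mathcal O}}\vec\psi$ in $L^\infty([-T,T],\mathcal H)$. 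We upgrade this to $L^5L^{10}([-T,T])$ by interpolating with the $(r,s)$ Strichartz bound, exactly as in Lemma \ref{lem:asfree_reduc}. The linear convergence is then transferred to the nonlinear flow on the finite interval by a Duhamel/Gronwall argument in the spirit of Lemma \ref{lem:assfree_nl}. The main point is that the nonlinear source $U^5\mathbb 1_{|t|\le T}$ can be approximated in $L^1L^2$ by functions to which the linear convergence applies uniformly. This gives \eqref{ProxyEuclHyp}.

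\textbf{Step 3: outside the window.} For $t\ge Th_k$ we define the approximate solution $\SO(t)T^\Omega_{\mathcal O,k}\vec\phi^{+\infty}$, a linear concentrating profile evaluated at times $\ge Th_k$. By Step 2 and the choice of $T$, its data at $t=Th_k$ is $O(\varepsilon)$-close in $\mathcal H$ to $\vec U_k(Th_k)$. It remains to show that its $L^5L^{10}([Th_k,\infty))$ norm is small. This is the main obstacle, and it is where Proposition \ref{propnoncon_loc} enters. That proposition gives $L^\infty L^6$ decay on $|t|\ge C_kh_k$; to turn it into $L^5L^{10}$ smallness we interpolate between $L^\infty L^6$ and the $(r,s)$ Strichartz norm, which is uniformly bounded by Assumption \ref{ass:strichartz}.

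There is a subtlety: Proposition \ref{propnoncon_loc} is stated for $C_k\to\infty$, whereas we need a fixed large $T$. We handle this by a diagonal argument, splitting $[Th_k,C_kh_k]$ from $[C_kh_k,\infty)$:
\begin{itemize}
\item on $[Th_k,C_kh_k]$ with $C_k\to\infty$ slowly, Lemma \ref{lmapproxconc} (extended to slowly growing windows by diagonal extraction) reduces us to the smallness of $\|S_{X_{\mathcal O}}\vec\phi^{+\infty}\|_{L^5L^{10}(t\ge T)}$, which holds for $T$ large;
\item on $[C_kh_k,\infty)$ the interpolation above applies.
\end{itemize}

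\textbf{Conclusion.} Once the approximate solution has small Strichartz norm on $|t|\ge Th_k$, Proposition \ref{lem:perturb} (applied forward from $Th_k$, backward from $-Th_k$, with $M$ controlled by $\|U\|_{L^5L^{10}}$) shows that $U_k$ exists globally with \eqref{ControlOnZNormForEP}. The estimate \eqref{ScatEuclSol} follows from energy estimates on the Duhamel term, since the nonlinear part is bounded by $\|U_k\|_{L^5L^{10}(t\ge Th_k)}^5$, which is small.
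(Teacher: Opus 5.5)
Your reduction to $t_k=0$ ``by time translation'' does not hold, and this leaves a genuine gap. The datum $\vec\phi^\Omega_{\mathcal O,k}=\SO(-t_k)T^\Omega_{\mathcal O,k}\vec\phi$ is prescribed at time $0$, and it is the \emph{linear} evolution of the concentrated datum. Translating time by $t_k$ only moves the Cauchy time to $-t_k$; it does not give $\vec U_k(t_k)=T^\Omega_{\mathcal O,k}\vec\phi$.

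If $t_k/h_k\to\tau$ is finite, this is harmless: by Lemma \ref{lmapproxconc} you may replace $\vec\phi$ by $S_{X_{\mathcal O}}(-\tau)P_{X_{\mathcal O}}\vec\phi$ and $t_k$ by $0$. But if $t_k/h_k\to\pm\infty$, your Step 1 produces the wrong limit object. This is the typical situation for concentrating profiles in Section \ref{sec:cc}, and it is exactly the case the paper treats separately. For instance, if $t_k/h_k\to+\infty$, then $t=0$ lies in the region $t-t_k\le -Th_k$ for large $k$. So the $\phi^{-\infty}$ version of \eqref{ScatEuclSol}, evaluated at $t=0$, forces $\vec\phi^{-\infty}=P_{X_{\mathcal O}}\vec\phi$. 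Hence $U$ must be the solution on $X_{\mathcal O}$ whose scattering state at $-\infty$ is $P_{X_{\mathcal O}}\vec\phi$ (a wave-operator construction), not the solution with Cauchy data $P_{X_{\mathcal O}}\vec\phi$ at time $0$. With your choice of $U$, \eqref{ProxyEuclHyp} fails in general as soon as the nonlinearity is not negligible.

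What is missing is the matching before the window. You must show that $U_k$, started at $t=0$, stays close to the linear profile $\SO(t-t_k)T^\Omega_{\mathcal O,k}\vec\phi$ on $(-\infty,t_k-Th_k]$. You then hand over to $\tilde U_k$ at $t_k-Th_k$, using $\vec U(-T)\approx\vec S_{X_{\mathcal O}}(-T)P_{X_{\mathcal O}}\vec\phi$. Your own tools suffice for this:
\begin{itemize}
\item Proposition \ref{propnoncon_loc} controls both sides of the concentration time, so the interpolation and diagonal argument of your Step 3 gives smallness of that linear profile in $L^5L^{10}(\{t-t_k\le -Th_k\})$.
\item You can then run the perturbation argument of Proposition \ref{lem:perturb} successively on the three time intervals, starting at $t=0$, where the data coincide exactly.
\end{itemize}
For $t_k=0$ your argument is sound, and it is somewhat more explicit than the paper's sketch. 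You obtain Strichartz smallness outside the window by interpolating the $L^\infty L^6$ non-concentration with the $(r,s)$ bound of Assumption \ref{ass:strichartz}, where the paper invokes an adaptation of G\'erard's linearization theorem. You also prove the window estimate by a Duhamel/Gronwall argument rather than citing \cite{GG}.
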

\begin{proof}
The proof is exactly the same as in \cite{GG} once the non-reconcentration property of  Proposition \ref{propnoncon_loc} is at hand:  (\ref{ProxyEuclHyp}) is the estimate \cite[(1.10), p.38]{GG}, and (\ref{ScatEuclSol}) corresponds to \cite[(1.9)-(1.11) p. 39]{GG}. Therefore, we only only sketch the argument for the convenience of the reader.

In the case $t_{k}=0$:
\begin{itemize}
\item With a local argument analogous to Lemma \ref{lmapproxconc} for the nonlinear equation, we can prove that a nonlinear solution with a concentrating profile as initial data stays close to the solution of  \eqref{EEq} on each interval $[-Ch_{k},Ch_{k}]$ with $C$ as large as desired: this leads to \eqref{ProxyEuclHyp}.
\item The non-reconcentration property in $L^{\infty}L^6$ of Proposition \ref{propnoncon} that shows that after a time $Ch_{k}$ with large $C$, the solution with a concentrating initial data do not reconcentrate, together with a straightforward adaptation of the linearization theorem of \cite{G:96} to the case of an open domain, gives \eqref{ScatEuclSol}.
\end{itemize}

In the cases where $\frac{t_{k}}{h_{k}}\to\pm\infty$, the proof if slightly more involved. If for instance $\frac{t_{k}}{h_{k}}\to+\infty$, we construct the nonlinear solution $U$ on $X_{\mathcal{O}}$ that has for scattering data at $-\infty$ (see \cite[(2.27) and Corollary 2.8]{BahouriGerard99}) the profile $P_{X_{\mathcal{O}}}\vec \phi\in\mathcal H(X_{\mathcal{O}})$. More precisely, defining $v:=S_{X_{\mathcal{O}}}P_{X_{\mathcal{O}}}\vec \phi$, we have $\nor{\vec v(t)-\vec U(t)}{\mathcal{H}(X_{\mathcal{O})}}\underset{t\to -\infty}{\longrightarrow}0$.

We define the nonlinear solutions $U_{k}^{\rm app}:=\NLSO(t-t_{k}) \frac{1}{h_{k}}U\left(0,\frac{x-x_{k}}{h_{k}} \right)$. The ``flat solution''  $\frac{1}{h_{k}}U\left(\frac{t-t_{k}}{h_{k}},\frac{x-x_{k}}{h_{k}} \right)$  is a good approximation of $U_{k}^{\rm app}$ on some intervals $[t_{k}-Ch_{n},t_{k}+Ch_{k}]$ with $C$ as large as desired. If $C$ is large enough, $U(-C)$ is close to $v(-C)$. We consider the linear solution  $v_{k}^{\rm app}:=\SO(t-t_{k}) \frac{1}{h_{k}}\vec \phi \left(\frac{x-x_{k}}{h_{k}} \right)$. By Lemma \ref{lmapproxconc}, the ``flat solution''  $\frac{1}{h_{k}}v\left(\frac{t-t_{k}}{h_{k}},\frac{x-x_{k}}{h_{k}} \right)$  is a good approximation of $v_{k}^{\rm app}$ on some intervals $[t_{k}-Ch_{k},t_{k}+Ch_{k}]$ with $C$ as large as desired.

In particular, $v_{k}^{\rm app}(t_{k}-Ch_{k})$ is close to $U_{k}^{\rm app}(t_{k}-Ch_{k})$ for $C$ large enough.

Now, since we know that $\Vert v_{k}^{\rm app}\Vert_{L^{\infty}((-\infty,t_{k}-Ch_{k}],L^{6})}\leq \varepsilon$ for large $C$,  a straightforward adaptation of the linearization theorem of \cite{G:96} to the case of an open domain, gives $\Vert v_{k}^{\rm app}-U_{k}^{\rm app}\Vert_{L^{\infty}((-\infty,t_{k}-Ch_{k}],\mathcal{H})}\lesssim \varepsilon$.

For the asymptotic behavior of $U_{k}^{\rm app}$ after the concentration, we define $w :=S_{X_{\mathcal O}}\vec \phi^{+\infty}$, the linear solution so that $\nor{\vec w(t)-\vec U(t)}{\mathcal{H}(X_{\mathcal{O}})}\underset{t\to +\infty}{\longrightarrow}0$ given by the nonlinear scattering. We define similarly $w_{k}^{\rm app}:=\SO(t-t_{k}) \frac{1}{h_{k}}\vec \phi^{+\infty} \left(\frac{x-x_{k}}{h_{k}} \right)$ and get in the same way $\Vert w_{k}^{\rm app}-U_{k}^{\rm app}\Vert_{L^{\infty}([t_{k}Ch_{k}+\infty),\mathcal{H})}\lesssim \varepsilon$.

By construction, $U_{k}^{\rm app}$ satisfies \eqref{ScatEuclSol} (with $U_k$ replaced with $U_{k}^{\rm app}$) with $\vec \phi^{-\infty}=P_{X_{\mathcal{O}}}\vec \phi$ (on the interval $t-t_k\leq - T\lambda_k$) and $\vec\phi^{+\infty}$ defined above (on the interval $t-t_k\geq  T\lambda_k$); and \eqref{ProxyEuclHyp} (with $U_k$ replaced with $U_{k}^{\rm app}$). Moreover, we have $U_{k}^{\rm app}(0)=v_{k}^{\rm app}(0)+o(1)=\SO(-t_{k})\phi_{\mathcal{O},k}+o(1)$. In particular, $U_{k}$ can be globally defined and $U_{k}=U_{k}^{\rm app}+o(1)$ in $L^{\infty}(\R,\mathcal{H})$, and \eqref{ScatEuclSol}-\eqref{ProxyEuclHyp} follow.
\end{proof}

\section{Linear profile decomposition} \label{sec:lin_prof}

\begin{thm}[Linear profile decomposition] \label{th:lindec}
Assume that $\Omega$ has a smooth, compact boundary and verifies Assumptions \ref{ass:nonreco}, \ref{ass:weaktrap}, \ref{ass:strichartz}.
Let $\vec \varphi_n$ be a bounded sequence in $\mathcal{H}(\Omega)$. Then, there exist a family of profiles $\vec \psi^{(j)}$, and of orthogonal scale cores $\mathcal O ^{(j)}$ so that, up to a subsequence, for any $J\geq 0$
\begin{equation}\label{eq:lpd}
\vec \varphi_n = \sum_{j=1}^J \vec \psi^{(j)} _{\Omega, \mathcal{O}^{(j)},n} + w_n^{(J)},
\end{equation}
where the reminder satisfies the global-in-time decay
\begin{equation}\label{eq:lpd_rem}
\lim_{J \rightarrow \infty} \limsup_{n\rightarrow \infty} \; \Vert \SO(\cdot) \vec w_n^{\ell} \Vert_{L^\infty(\mathbb R, L^6(\Omega))} + \Vert \SO(\cdot) \vec w_n^{J}  \Vert_{L^5(\mathbb R, L^{10}(\Omega))} \; = 0.
\end{equation}
Moreover, the expansion verifies the following Pythagorean expansion: for any $J \geq 0$, as $n\rightarrow \infty$
\begin{equation}\label{eq:lpd_pyt}
\Vert \vec \varphi_n \Vert^2_{\mathcal H(\Omega)} = \sum_{j=1}^J \Vert \vec \psi^{(j)} _{\Omega, \mathcal{O}^{(j)},n} \Vert^2_{\mathcal H(\Omega)} + \Vert \vec w_n^{(J)}\Vert^2_{\mathcal H(\Omega)} + o_n(1),
\end{equation}
and its $L^6$ version
\begin{equation}\label{eq:lpd_L6}
\Vert \vec \varphi_n \Vert^6_{L^6(\Omega)} = \sum_{j=1}^J \Vert\vec \psi^{(j)} _{\Omega, \mathcal{O}^{(j)},n}\Vert^6_{L^6(\Omega)} + \Vert w_n^{(J)}\Vert^6_{ L^6(\Omega)} + o_n(1).
\end{equation}
\end{thm}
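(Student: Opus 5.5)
We follow the Bahouri--Gérard scheme: extract profiles one at a time by a weak-limit procedure, then prove the smallness of the remainder through a refined Sobolev-type inequality, transferred to $\Omega$ using the results of Sections \ref{sec:ass_prof} and \ref{sec:conc_prof}.

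\textbf{Extraction step.} Given a bounded sequence $\vec r_n$ in $\mathcal H(\Omega)$ (initially $\vec r_n=\vec \varphi_n$), we consider the set of weak limits
\[
\vec\psi=\mathrm{w\text{-}lim}_n\, T_{\mathcal O^{-1},n}\,\vec \SO(t_n)\vec r_n
\]
in $\mathcal H(\mathbb R^3)$, taken over all scale cores $\mathcal O=(\lambda_n,t_n,x_n)$, where $\vec r_n$ is extended by zero. Let $\eta(\vec r_n)$ be the supremum of $\|\vec\psi\|_{\mathcal H}$ over this set. We pick a core $\mathcal O^{(1)}$ and a profile $\vec\psi^{(1)}$ realizing at least $\eta/2$. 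We then set $\vec w_n^{(1)}=\vec r_n-\vec\psi^{(1)}_{\Omega,\mathcal O^{(1)},n}$.

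By \eqref{eq:conj_proj}, $T_{\mathcal O^{-1},n}\vec\SO(t_n)\vec w^{(1)}_n\rightharpoonup \vec\psi-P_{X_{\mathcal O}}\vec\psi$. This limit vanishes once we note that the weak limit lies in $\mathcal H(X_{\mathcal O})$. We check this with Definition \ref{defconvergespace}, using Lemma \ref{lm:projectdilat} when $\lambda_n\to\infty$ or $|x_n|\to\infty$, and the convergence $P_{\Omega_n}\to P_{X_{\mathcal O}}$ from \cite{GG} in the concentrating case. The Pythagorean expansion \eqref{eq:lpd_pyt} then follows from the Hilbert identity for weak limits. Iterating produces cores $\mathcal O^{(j)}$.

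Orthogonality of the cores is shown by contradiction. If two cores were equivalent, then up to a subsequence $T_{\mathcal O^{(k)-1},n}\vec\SO(t^k_n)$ composed with the inverse of the $j$-th operator converges strongly to a fixed unitary operator. This would force a nonzero weak limit of the remainder in the $k$-th frame, which has already been killed. The strong convergence uses the asymptotic descriptions: Lemma \ref{lem:assfree} in the free and dilating regimes, and Lemma \ref{lmapproxconc} together with Proposition \ref{propescapedom} in the concentrating regime. The Pythagorean sum bounds $\sum_j\|\vec\psi^{(j)}\|^2$, so $\eta(\vec w^{(J)}_n)\to0$ as $J\to\infty$.

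\textbf{Smallness of the remainder.} This is the main step. We must show that $\eta(\vec r_n)\to0$ implies $\limsup_n\|\SO\vec r_n\|_{L^\infty L^6}\to 0$; the $L^5L^{10}$ bound then follows by interpolation with the second Strichartz pair of Assumption \ref{ass:strichartz}, exactly as in Lemma \ref{lem:asfree_reduc}.

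First try: choose $t_n$ with $\|\SO(t_n)\vec r_n\|_{L^6}\geq \frac12\sup_t\|\SO(t)\vec r_n\|_{L^6}$ and set $g_n=\SO(t_n)r_n$, extended by zero to $\mathbb R^3$. The function $g_n$ is bounded in $\dot H^1(\mathbb R^3)$. Apply the refined Sobolev inequality $\|g\|_{L^6}\lesssim\|g\|_{\dot H^1}^{1/3}\sup_k\|\Delta_k g\|_{\dot B^{1/2}_{\infty,\infty}}$-type bound (Gérard/Bahouri–Gérard). If $\|g_n\|_{L^6}$ stays large, this yields a scale $\lambda_n$ and a point $x_n$ at which a rescaled Littlewood–Paley piece is large. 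Hence $T_{\mathcal O^{-1},n}\vec g_n$, with $\mathcal O=(\lambda_n,t_n,x_n)$, has a nonzero weak limit, contradicting $\eta=0$.

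The delicate point is that $g_n$ is only known to lie in $\dot H^1_0(\Omega)$, and the frequency localization is Euclidean. Since the weak-limit characterization only pairs against fixed test functions, and $\dot H^1_0(\Omega)\subset\dot H^1(\mathbb R^3)$ isometrically, this should go through. One must still check that the extracted core, when $x_n$ approaches $\partial\Omega$ at scale $\lambda_n$, gives a nonzero element after projecting onto $\mathcal H(X_{\mathcal O})$, which uses the limit domain description.

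\textbf{$L^6$ expansion.} For \eqref{eq:lpd_L6}, it suffices to prove pairwise asymptotic $L^6$ orthogonality of the profiles at $t=0$ and smallness of the cross terms with $w_n^{(J)}$. For profiles with $|t_n|/\lambda_n\to\infty$, the $L^6$ norm at time $0$ tends to zero. This follows from Proposition \ref{propnoncon_loc} in the concentrating case and from Lemma \ref{lem:assfree} with free dispersion otherwise. For profiles with $t_n/\lambda_n$ bounded, orthogonality of the cores gives disjointness in scale or space, and the standard Brezis–Lieb argument concludes. I expect the remainder-smallness step near the boundary to be the main obstacle.
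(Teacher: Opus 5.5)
\textbf{Gap: orthogonality of the cores.} Your argument covers only the comparison of \emph{equivalent} frames, which is the paper's Lemma \ref{EquivFrames}. Since equivalent cores have $|t^{1}_n-t^{2}_n|/\lambda_n$ bounded, that comparison needs only Lemma \ref{lmapproxconc} on $[-C\lambda_n,C\lambda_n]$; Proposition \ref{propescapedom} plays no role there.

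The step ``a nonzero weak limit of the remainder in the $k$-th frame, which has already been killed'' is not justified. What was killed is the weak limit of $\vec w^{(k)}_n$ in frame $k$. The sequence you actually have is $\vec w^{(J-1)}_n=\vec w^{(k)}_n-\sum_{l=k+1}^{J-1}\vec\psi^{(l)}_{\Omega,\mathcal O^{(l)},n}$. However you organize the induction, you need the following fact: a profile attached to a core orthogonal to $\mathcal O^{(k)}$ has zero weak limit in frame $k$, i.e. $T_{(\mathcal O^{(k)})^{-1},n}S_\Omega(t^{(k)}_n)\vec\psi^{(l)}_{\Omega,\mathcal O^{(l)},n}\rightharpoonup 0$. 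This is Lemma \ref{lm:orth_weak}.

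In $\mathbb R^3$ this fact is elementary, but in $\Omega$ it is exactly where the geometry enters the linear decomposition. Take a concentrating profile observed at times with $|t^{(k)}_n-t^{(l)}_n|/\lambda^{(l)}_n\to\infty$. After reflections it could refocus elsewhere and create a nonzero weak limit in an orthogonal frame; this is what happens on $\mathbb S^3$, where the lemma fails. The paper excludes it as follows:
\begin{itemize}
\item Proposition \ref{propnoncon_loc} (hence Assumptions \ref{ass:nonreco} and \ref{ass:weaktrap}) kills the $\dot H^1$ part of the weak limit.
\item A time-translation argument shows that the leftover limit $(0,f)$ is such that $S_{X_{\mathcal O}}(s)(0,f)$ has vanishing first component for every $s$, hence $f=0$.
\item Compact cores with $|t_n|\to\infty$ are handled by Lemma \ref{lem:lin_scat}.
\end{itemize}
None of this appears in your proposal. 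So orthogonality of the cores, which is part of the statement and is essential later for decoupling the nonlinear profiles, is not established.

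\textbf{The same gap in your $L^6$ expansion.} Decoupling pairwise, and against $w^{(J)}_n$ in every frame $j\le J$, requires $w^{(J)}_n$ to be weakly null in each earlier frame; that is again Lemma \ref{lm:orth_weak}. The paper avoids this by proving only the one-step identity $\|w^{(J-1)}_n\|^6_{L^6}=\|\psi^{(J)}_{\Omega,\mathcal O^{(J)},n}\|^6_{L^6}+\|w^{(J)}_n\|^6_{L^6}+o(1)$. That identity uses only the weak vanishing of $w^{(J)}_n$ in frame $J$, plus non-concentration when $|t_n|/\lambda_n\to\infty$. For compact cores that decay comes from Lemma \ref{lem:lin_scat}, not from Lemma \ref{lem:assfree}.

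\textbf{What is correct.} Your Pythagorean argument via the Hilbert identity works and is shorter than the paper's case analysis; it uses that $P_\Omega$ is an orthogonal projection and $T^*_{\mathcal O,n}=T_{\mathcal O^{-1},n}$. Your remainder step is the paper's Lemma \ref{lem:ellpd}, and it is not the main obstacle. The zero extension lies in $\dot H^1(\mathbb R^3)$, and any weak limit automatically lies in $\mathcal H(X_{\mathcal O})$.

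One correction there: the refined Sobolev inequality is $\|g\|_{L^6}\lesssim\|g\|^{1/3}_{\dot H^1}\|g\|^{2/3}_{\dot B^{-1/2}_{\infty,\infty}}$. It gives the quantitative bound $\limsup_n\|S_\Omega(\cdot)\vec w_n\|_{L^\infty L^6}\lesssim\eta(\vec w)^{2/3}$. You need this form, since you only know $\eta(\vec w^{(J)})\to0$, not $\eta=0$.
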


One of the main tool to prove the above is the following non concentration property. It is a direct consequence of the comparison results of \S\ref{sec:ass_prof} in the cases $\lambda_n \to +\infty$ or $x_n \to \infty$, and it is the main result of \S\ref{sec:conc_prof} in the case where $\lambda_n \to 0$, $x_n \to x_0$.  

\begin{prop}[Non concentration]
\label{propnoncon}
Let $(C_n)_{n\geq 1}$ be an arbitrary sequence converging to $+\infty$, and $v_n := \varphi_{\Omega,\mathcal{O},n}$ for an arbitrary scale-core $\mathcal{O} = \{ (\lambda_n)_{n\geq 1}, 0, (x_n)_{n\geq 1}  \}$.
Then, we have for $I_n=\R \setminus [-C_n \lambda_n,C_n \lambda_n]$, up to a subsequence
$$
\nor{v_n}{L^{\infty}(I_n,L^6(\Omega))}\rightarrow 0.
$$
\end{prop}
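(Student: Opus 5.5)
Up to a subsequence, I split $\mathcal O$ into the three regimes of \S\ref{subsec:profnot}: (a) $\lambda_n\to+\infty$ or $|x_n|\to\infty$; (b) $\lambda_n\to 0$ with $x_n\to x_0\in\overline\Omega$; (c) $\lambda_n\to\lambda_0\in(0,\infty)$ with $x_n\to x_0$. Case (b) is exactly Proposition \ref{propnoncon_loc}, so only (a) and (c) need an argument.

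\textbf{Case (a).} I apply Lemma \ref{lem:assfree} with $f=0$ and $t_n=0$. Set $v^{\rm free}_n:=\lambda_n^{-1/2}V(t/\lambda_n,(x-x_n)/\lambda_n)$ with $V=\SF\vec\varphi$. The lemma gives $\|v_n-v^{\rm free}_n\|_{L^\infty(\R,L^6(\Omega))}\to0$, so it suffices to bound $v^{\rm free}_n$. By scaling invariance of $L^6$,
$$\sup_{|t|\ge C_n\lambda_n}\|v^{\rm free}_n(t)\|_{L^6}=\sup_{|s|\ge C_n}\|V(s)\|_{L^6(\R^3)}.$$
This tends to zero because $\|V(s)\|_{L^6}\to0$ as $|s|\to\infty$ for a fixed free wave. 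To see this, approximate the data by Schwartz functions with compactly supported Fourier transform away from zero, use the dispersive decay $\|V(s)\|_{L^\infty}\lesssim |s|^{-1}$ and interpolate with the $L^2$ bound on $V$; for general data, use energy conservation and density.

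\textbf{Case (c).} Here $\lambda_n$ is bounded below, so $I_n\subset\{|t|\ge C_n\lambda_0/2\}$ and hence $I_n$ lies outside any fixed interval. Since $T_{\mathcal O,n}\vec\varphi\to T_\infty\vec\varphi$ in $\mathcal H(\R^3)$ by continuity of dilations and translations, and $P_\Omega$ and $\SO$ are contractions in energy, energy conservation and Sobolev embedding reduce the claim to showing $\|\SO(t)\vec\psi\|_{L^6(\Omega)}\to0$ as $|t|\to\infty$ for a fixed $\vec\psi\in\mathcal H(\Omega)$. For this I use the linear scattering Lemma \ref{lem:lin_scat}: $\SO(t)\vec\psi=\SF(t)\vec\psi'_\pm+o(1)_{\mathcal H}$. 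Combined with Sobolev embedding and the free decay from case (a), this gives the decay.

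The only delicate point is the proof of free $L^6$ decay in case (a); it is classical, and the real difficulty (case (b)) has already been handled in Proposition \ref{propnoncon_loc}.
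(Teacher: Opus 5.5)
Your proposal is correct and follows the paper's proof. Both use the same three-regime split: Proposition \ref{propnoncon_loc} for concentrating cores with convergent centers, Lemma \ref{lem:assfree} with $f=0$ plus scaling and free $L^6$ decay when $\lambda_n\to\infty$ or $|x_n|\to\infty$, and Lemma \ref{lem:lin_scat} plus Sobolev embedding when $\lambda_n\to\lambda_0>0$. Your dispersive-estimate proof of the free $L^6$ decay and your explicit reduction to a fixed datum in the last regime only fill in details the paper leaves implicit.
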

\begin{proof}
In the case where $\lambda_n \to 0$ and $x_n \to x_0 \in \overline \Omega$, this is Proposition \ref{propnoncon_loc}. In the case where
$\lambda_n \to \infty$ or $x_n \to \infty$, this is a consequence of Lemma \ref{lem:assfree} with $f_n=0$, together with Sobolev embedding and the similar result for the wave equation in $\mathbb R^3$. In the case where $\lambda_n \to \lambda_0 \neq 0$ and 
 $x_n \to x_0 \in \overline \Omega$, this comes from the fact that $\Vert u(t) \Vert_{L^6} \to 0$ as $t \to \pm \infty$ for any $u$ solution to the linear wave equation in $\Omega$ (this comes from example from linear scattering Lemma \ref{lem:lin_scat}, together with Sobolev embedding and the analog result for the linear wave equation in $\mathbb R^3$).
\end{proof}

We will also need the following Lemma that was already proved here in some cases (Lemma~\ref{lm:projectdilat}).

\begin{lem}\label{lm:projectgeneral}
Up to a subsequence, we always have $\Omega_n \to  X_{\mathcal O}$ with one limit described in Section~\ref{subsec:profnot}. Then, for any $\vec \varphi\in \mathcal{H}(\mathbb R^3)$, we have 
\begin{align*}
& \Big\Vert P_{\Omega_n}\vec \varphi - P_{X_{\mathcal O}}\vec \varphi \Big\Vert_{\mathcal{H}(\mathbb R^3)} 
\underset{n\to +\infty}{\longrightarrow}  0,\\
&T_{\mathcal{O}^{-1},n} P_{\Omega}T_{\mathcal{O},n}\vec \varphi= P_{X_{\mathcal O}}\vec \varphi+o_{\mathcal{H}(\mathbb R^3)} (1),\\
&\vec \varphi_{\Omega, \mathcal{\mathcal{O}},n}=(P_{X_{\mathcal O}}\vec \varphi)_{\Omega, \mathcal{\mathcal{O}},n}+o_{\mathcal{H}(\R^3)}(1).
\end{align*}
\end{lem}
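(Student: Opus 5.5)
The plan is to reduce the three claims to the first one, and then prove the first one case by case according to the asymptotic regime of the core $\mathcal O$.

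\textbf{Reduction to the first claim.} The second claim follows from the first by the conjugation identity \eqref{eq:conj_proj}, $T_{\mathcal{O}^{-1},n} P_{\Omega}T_{\mathcal{O},n}= P_{\Omega_n}$. For the third, we write $\vec \varphi_{\Omega,\mathcal O,n}=\SO(-t_n)P_\Omega T_{\mathcal O,n}\vec\varphi$ and replace $\vec\varphi$ by $P_{X_{\mathcal O}}\vec\varphi$. Since $P_{X_{\mathcal O}}$ is a projection, we have
$$P_\Omega T_{\mathcal O,n}P_{X_{\mathcal O}}\vec\varphi = T_{\mathcal O,n}P_{\Omega_n}P_{X_{\mathcal O}}\vec\varphi.$$
By the first claim applied to $P_{X_{\mathcal O}}\vec\varphi$, this equals
$$T_{\mathcal O,n}P_{X_{\mathcal O}}\vec\varphi+o(1) = P_\Omega T_{\mathcal O,n}\vec\varphi+o(1).$$
Because $T_{\mathcal O,n}$ is an isometry and $\SO(-t_n)$ preserves the $\mathcal H$ norm, the third claim follows. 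The existence of $X_{\mathcal O}$ up to a subsequence is obtained by extracting so that $\lambda_n$ and $x_n$ each converge in $[0,\infty]$ (respectively in $\overline{\R^3}$), and in the boundary case also $\phi(x_n)/h_n$. This yields the cases listed in \S\ref{subsec:profnot}.

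\textbf{Easy regimes.} When $\lambda_n\to\infty$ or $|x_n|\to\infty$, the first claim is exactly Lemma~\ref{lm:projectdilat}, with $X_{\mathcal O}=\R^3$. When $\lambda_n\to\lambda_0>0$ and $x_n\to x_0$, the domains $\Omega_n$ are smooth perturbations of $X_{\mathcal O}=(\Omega-x_0)/\lambda_0$. By density and contractivity of the projections, it suffices to treat $\vec\varphi$ in a dense subclass.

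For the $L^2$ component, $\mathbb 1_{\Omega_n}\to\mathbb 1_{X_{\mathcal O}}$ almost everywhere, and dominated convergence gives the limit.

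For the $\dot H^1$ component, I use the Mosco-type characterization of convergence of projections onto closed subspaces $H_n=\dot H^1_0(\Omega_n)$ and $H=\dot H^1_0(X_{\mathcal O})$. It suffices to check two properties:
\begin{itemize}
\item[(a)] every $g\in C^\infty_c(X_{\mathcal O})$ lies in $H_n$ for $n$ large;
\item[(b)] weak limits of $g_n\in H_n$ lie in $H$.
\end{itemize}
Property (a) follows from the first item of Definition~\ref{defconvergespace}. Property (b) follows from the second item: a weak limit vanishes on $\overline{X_{\mathcal O}}^c$. Since $\partial X_{\mathcal O}$ is smooth (or a point, or a half-space), a function in $\dot H^1(\R^3)$ vanishing a.e.\ outside $\overline{X_{\mathcal O}}$ belongs to $\dot H^1_0(X_{\mathcal O})$. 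This uses the Lipschitz boundary, and Lemma~\ref{lem:H1moins_point} in the point case.

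With (a) and (b), for $f=\mathcal P_{X_{\mathcal O}}\varphi^0$ we get $\mathcal P_{\Omega_n}f\to f$. Moreover, $\mathcal P_{\Omega_n}(\varphi^0-f)$ converges weakly to an element of $H$ orthogonal to $H$, hence to $0$. Its norm also tends to $0$, because
$$\|\mathcal P_{\Omega_n}(\varphi^0-f)\|^2=\langle \varphi^0-f,\mathcal P_{\Omega_n}(\varphi^0-f)\rangle\to0.$$

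\textbf{Main obstacle: concentrating cores.} The remaining case is $\lambda_n\to0$ with $x_n\to x_\infty\in\partial\Omega$, where $X_{\mathcal O}$ is a half-space or all of $\R^3$ (if $\alpha=\pm\infty$). The same abstract argument applies, but verifying (a) and (b) requires the geometric convergence $\Omega_n\to X_{\mathcal O}$ of Definition~\ref{defconvergespace}. This is done in boundary-normal coordinates: $\Omega_n=\{y:\phi(x_n+h_n y)>0\}$, and a Taylor expansion gives
$$\phi(x_n+h_ny)/h_n=\phi(x_n)/h_n+\nabla\phi(x_n)\cdot y+O(h_n|y|^2).$$
This converges locally uniformly to $\alpha+\vec n\cdot y$, which gives both inclusions on compact sets away from the limit hyperplane. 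This is precisely the setting of \cite{GG}, whose Lemma~2.1.2 covers it, so I would cite that argument.
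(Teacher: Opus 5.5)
Your proposal is correct and follows the same skeleton as the paper's proof. Claims two and three are reduced to claim one through \eqref{eq:conj_proj}. The regimes $\lambda_n\to\infty$ or $|x_n|\to\infty$ are handled by Lemma~\ref{lm:projectdilat}, and the concentrating regime is delegated to \cite[Lemma 2.1.2]{GG}. Your derivation of the third claim is slightly cleaner than the paper's one-line version, because it keeps every term inside $\mathcal H(\Omega)$ before applying $\SO(-t_n)$. One attribution is off: the identity $P_\Omega T_{\mathcal O,n}P_{X_{\mathcal O}}\vec\varphi=T_{\mathcal O,n}P_{\Omega_n}P_{X_{\mathcal O}}\vec\varphi$ is \eqref{eq:conj_proj} itself. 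Idempotence of $P_{X_{\mathcal O}}$ is only used in the step after it.

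The genuine difference is in the regime $\lambda_n\to\lambda_0>0$, $x_n\to x_0$, where you and the paper split the work in opposite ways.

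\textbf{What the paper does.} It proves the set convergence $\Omega_n\to X_{\mathcal O}=(\Omega-x_0)/\lambda_0$ of \S\ref{subsec:profnot} explicitly. A compact $K\subset X_{\mathcal O}$ has positive distance to $\Theta_0$, and $\Theta_n\to\Theta_0$. A compact subset of $\operatorname{Int}\Theta_0$ is covered by finitely many balls that stay inside $\Theta_n$. It then invokes \cite[Lemma 2.1.2]{GG} for the convergence of the projections, remarking that its proof only uses convergence of the domains.

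\textbf{What you do.} You take the set convergence for granted (``smooth perturbations'') and prove the projection convergence yourself by the Mosco-type argument (a)--(b). That argument is sound, but you should state why the weak limit of $\mathcal P_{\Omega_n}(\varphi^0-f)$ is orthogonal to $\dot H^1_0(X_{\mathcal O})$. It follows from $\langle\mathcal P_{\Omega_n}(\varphi^0-f),h\rangle=\langle\varphi^0-f,\mathcal P_{\Omega_n}h\rangle\to\langle\varphi^0-f,h\rangle=0$, using your first step for $h$.

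\textbf{What each buys.} Your route makes explicit what the paper's remark hides. Besides set convergence, one needs that functions in $\dot H^1(\R^3)$ vanishing a.e.\ off $\overline{X_{\mathcal O}}$ lie in $\dot H^1_0(X_{\mathcal O})$, and that $|\partial X_{\mathcal O}|=0$; both require some regularity of the limit boundary. Conversely, the lemma asserts the set convergence, so you should add the short verification the paper gives. It is immediate, since $x\mapsto(x-x_n)/\lambda_n$ converges locally uniformly to $x\mapsto(x-x_0)/\lambda_0$.

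Two small slips. First, $\alpha=-\infty$ gives $X_{\mathcal O}=\emptyset$, not $\R^3$; your argument still goes through, with limit $0$. Second, the concentrating case with $x_\infty\in\Omega$, where $X_{\mathcal O}=\R^3$, should be listed alongside the boundary case; it is trivial, and similarly $x_\infty$ in the interior of the obstacle gives $\emptyset$.
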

\begin{proof}
The second statement is a direct consequence of \eqref{eq:conj_proj} and the first statement. For the third statement, we use formula~\eqref{EProf} and \eqref{eq:conj_proj} to write $
\vec \varphi _{\Omega, \mathcal{\mathcal{O}},n}:=\SO(-t_n)P_{\Omega} T_{\mathcal{O},n}\vec \varphi= \SO(-t_n)T_{\mathcal{O},n}P_{\Omega_n} \vec \varphi$. So the third statement comes again as a consequence of the first one. 

We now prove the first statement. The cases  $\lambda_n \to +\infty$ and $|x_n|\to +\infty$ are treated in Lemma~\ref{lm:projectdilat} where $P_{X_{\mathcal O}}=\operatorname{Id}$. In the case $\lambda_n\to 0$ and $x_n$ convergent, it was proved in \cite[Lemma 2.1.2]{GG} using that \cite[Proposition 2.1.1]{GG} implies $\Omega_n \to X_{\OO}$ with the definition of appropriate definition of $X_{\OO}$ in Section~\ref{subsec:profnot}. The only remaining case is $\lambda_n\to \lambda_0>0$ and $x_n\to x_{0}$. We prove that $\Omega_n \to X_{\OO}=\frac{\Omega-x_{0}}{\lambda_0}$ in the sense given in Section~\ref{subsec:profnot}. This will concludes thanks to \cite[Lemma 2.1.2]{GG}, noticing that the proof only involves the convergence of domain. We now come to the proof of $\Omega_n \to X_{\OO}$. Let $K$ be a compact subset of $\frac{\Omega-x_{0}}{\lambda_0}=\R^3 \setminus \Theta_0$ where $\Theta_0=\frac{\Theta-x_{0}}{\lambda_0}$. In particular, $\operatorname{dist}(K,\Theta_0)>0$. We easily check that $\operatorname{dist}(\Theta_n,\Theta_0)\to 0$, so we can take $n$ large enough so that $\operatorname{dist}(K,\Theta_n)\geq \operatorname{dist}(K,\Theta_0) - \operatorname{dist}(\Theta_n,\Theta_0) \geq \frac 12 \operatorname{dist}(K,\Theta_0) > 0$, thus $K \subset \mathbb R^3 \backslash \Theta_n = \Omega_n$. On the other hand, if $K$ is a compact subset included in $\overline{\frac{\Omega-x_{0}}{\lambda_0}}^c = \operatorname{Int}\Theta_0$, $K \subset \cup_{1=1}^N B(x_j, \epsilon)$, where $x_j \in K$, and for $\epsilon > 0$ small enough, $B(x_j, \epsilon) \subset \Theta_0$ for all $j$. The fact that $\lambda_n \to \lambda_0$ and $x_n \to x_0$ shows that $B(x_j, \frac\epsilon 2)\subset \Theta_n$ for $n$ large enough, and thus $K \subset \operatorname{Int}\Theta_n = \overline{\frac{\Omega-x_{n}}{\lambda_n}}^c$. This ends the proof.
 \end{proof}

With the above at hand, Theorem \ref{th:lindec} will be a consequence of the three following Lemma.

\begin{lem} \label{lem:ellpd}
Let $(f_n)_{n\in \N}$ be a bounded sequence in $\dot H_0^1(\Omega)$ such that for all scale core $\mathcal O$ (still denoting by $f_n$ its extension by zero to $\dot H^1(\mathbb R^3)$),
$$
T_{\mathcal O^{-1}, n} f_n \rightharpoonup 0 \text{ in } \dot H^1 (\mathbb R^3).
$$
Then, up to a subsequence,
$$
\Vert f_n \Vert_{L^6(\Omega)} \rightarrow 0.
$$
\end{lem}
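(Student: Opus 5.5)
This is a purely elliptic statement about $\dot H^1(\R^3)\hookrightarrow L^6$. The plan is to extend $f_n$ by zero, so that it is a bounded sequence in $\dot H^1(\R^3)$ with $\Vert f_n\Vert_{L^6(\Omega)}=\Vert f_n\Vert_{L^6(\R^3)}$. The hypothesis on all scale cores is exactly the hypothesis of the improved Sobolev / concentration–compactness lemma of G\'erard (refined Sobolev inequality with Besov norm), as used in \cite{BahouriGerard99}.

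The first step is to invoke the refined Sobolev inequality
$$\Vert f\Vert_{L^6(\R^3)}\lesssim \Vert f\Vert_{\dot H^1}^{1/3}\,\Vert f\Vert_{\dot B^{1}_{2,\infty}}^{2/3},$$
where $\Vert f\Vert_{\dot B^1_{2,\infty}}=\sup_k 2^{k}\Vert \Delta_k f\Vert_{L^2}$ in terms of Littlewood–Paley pieces. Since $\Vert f_n\Vert_{\dot H^1}$ is bounded, it then suffices to show $\sup_k 2^k\Vert\Delta_k f_n\Vert_{L^2}\to0$. Even this is not directly implied, so I will instead use the slightly stronger variant with $\dot B^{1/2}_{\infty,\infty}$ on the right, or equivalently reduce to showing $\sup_{k}2^{-k/2}\Vert\Delta_k f_n\Vert_{L^\infty}\to0$. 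That variant reads $\Vert f\Vert_{L^6}\lesssim\Vert f\Vert_{\dot H^1}^{1/3}\Vert f\Vert_{\dot B^{-1/2}_{\infty,\infty}}^{2/3}$ for the scale-invariant homogeneous norm at the level of $f$, $\Vert f\Vert_{\dot B^{-1/2}_{\infty,\infty}}=\sup_k 2^{-k/2}\Vert\Delta_k f\Vert_{L^\infty}$.

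The second step argues by contradiction. If $\Vert f_n\Vert_{L^6}\not\to0$ along a subsequence, then there exist $\delta>0$, integers $k_n$ and points $x_n$ with $2^{-k_n/2}|\Delta_{k_n}f_n(x_n)|\geq\delta$. Set $\lambda_n=2^{-k_n}$ and take the scale core $\mathcal O=\{(\lambda_n,0,x_n)\}$. Writing $g_n:=T_{\mathcal O^{-1},n}f_n=\lambda_n^{1/2}f_n(\lambda_n\cdot+x_n)$, one gets by scaling
$$2^{-k_n/2}\Delta_{k_n}f_n(x_n)=(\Delta_0 g_n)(0)=\langle g_n,\check\psi\rangle_{L^2},$$
where $\check\psi$ is a fixed Schwartz function whose Fourier transform is supported in an annulus. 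This pairing is a continuous linear functional on $\dot H^1(\R^3)$, namely $\langle \nabla g_n,\nabla(-\Delta)^{-1}\check\psi\rangle$, with $(-\Delta)^{-1}\check\psi\in\dot H^1$ because of the annular support. Since $g_n\rightharpoonup0$ in $\dot H^1$ by hypothesis, this quantity tends to $0$, which contradicts $\geq\delta$.

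The main point to check is the refined Sobolev inequality in the $\dot B^{-1/2}_{\infty,\infty}$ form. It is classical (G\'erard–Meyer–Oru, or Bahouri–G\'erard's Lemma) and I will cite it rather than prove it. If a self-contained argument is wanted, it follows from the level-set/Chebyshev argument: split $f=\sum_{2^k\le A}\Delta_k f+\sum_{2^k>A}\Delta_k f$ with $A$ chosen depending on the level $\alpha$, bound the low part in $L^\infty$ by $\Vert f\Vert_{\dot B^{-1/2}_{\infty,\infty}}A^{1/2}$, and bound the high part in $L^2$ via $\dot H^1$. Integrating $\alpha^5|\{|f|>\alpha\}|$ then gives the inequality. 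A minor point is the extension by zero: $\dot H^1_0(\Omega)\subset\dot H^1(\R^3)$ isometrically, so no projection issues arise, and the subsequence extraction is harmless.
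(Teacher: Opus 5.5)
Your proof is correct, but it takes a different route from the paper.

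The paper invokes G\'erard's elliptic profile decomposition \cite{Gerard98} as a black box. Each profile $\psi^{(j)}$ is by construction the weak limit of $T_{(\mathcal O^{(j)})^{-1},n} f_n$ in $\dot H^1(\R^3)$. The hypothesis therefore forces all profiles to vanish, so $f_n=w_{n,J}$ for every $J$ and $\limsup_n \Vert f_n\Vert_{L^6}=0$.

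You instead use only the refined Sobolev inequality $\Vert f\Vert_{L^6}\lesssim \Vert f\Vert_{\dot H^1}^{1/3}\Vert f\Vert_{\dot B^{-1/2}_{\infty,\infty}}^{2/3}$ of G\'erard--Meyer--Oru, which is the standard ingredient behind the extraction step in such decompositions. You then carry out a single extraction by hand. A near-maximizing pair $(k_n,x_n)$ for the Besov norm gives the core $\{(2^{-k_n},0,x_n)\}$. The scaling identity $2^{-k_n/2}\Delta_{k_n}f_n(x_n)=\langle g_n,\check\psi(-\cdot)\rangle$ then turns the hypothesis $g_n\rightharpoonup 0$ into the contradiction.

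Your approach is more elementary and essentially self-contained; your level-set sketch of the inequality is the standard correct one. It also shows directly that no extraction is needed. The paper's argument is two lines once the decomposition is granted, and it fits the profile language used throughout the paper.

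A few cosmetic points:
\begin{itemize}
\item Drop the detour through $\dot B^{1}_{2,\infty}$, and fix the typo ``$\dot B^{1/2}_{\infty,\infty}$''; the relevant space is $\dot B^{-1/2}_{\infty,\infty}$, as you then write.
\item Continuity of $g\mapsto\langle g,\check\psi\rangle$ on $\dot H^1(\R^3)$ already follows from $\dot H^1(\R^3)\hookrightarrow L^6$ and $\check\psi\in L^{6/5}$, so the annular support is not needed there.
\item If you choose $(k_n,x_n)$ only along a subsequence, extend the core arbitrarily to the remaining indices before applying the hypothesis; this is harmless.
\end{itemize}
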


\begin{proof}
We apply the elliptic profile decomposition in $\mathbb R^3$ of \cite{Gerard98} to $f_n$: there exists orthogonal scale cores $(\mathcal O^{(j)})_{j \geq 1}$ so that, up to a subsequence, for any $J \geq 1$
$$
f_n = \sum_{j=1}^J \frac {1}{\lambda_{j,n}^{\frac 12}} \psi^{(j)}\left( \frac{\cdot - x_{n, j}}{\lambda_{j,n}} \right) + w_{n, J}, \hspace{0.3cm} \lim_{J\rightarrow + \infty} \limsup_{n \rightarrow +\infty} \Vert w_{n, J} \Vert_{L^6} = 0,
$$
where, by construction, for any $j$
$$
T_{\mathcal (O^{(j)})^{-1}, n} f_n \rightharpoonup \psi^{(j)} \text{ in }\dot H^{1}(\mathbb R^3),
$$
hence $\psi^{(j)} = 0$ for all $j$ and the result follows.
\end{proof}

\begin{lem} 
\label{EquivFrames} 
If $\mathcal{O}^{1}$ and $\mathcal{O}^{2}$ are equivalent frames, then there exists an isometry $Q:\mathcal{H}(X_{\OO^{2}})\to\mathcal{H}(X_{\OO^{1}})$ such that for any profile $\vec \psi_{\Omega,\mathcal{O}^{2},n}$, with $\vec \psi \in \mathcal{H}(X_{\OO^{2}})$\footnote{If $\vec \psi \in \mathcal{H}(\R^3)$, Lemma~\ref{lm:projectgeneral} shows that the same result holds with $Q$ replaced by $Q\circ P_{X_{\OO^{2}}}$}, up to a subsequence there holds that
\begin{equation}\label{eq:fr_equiv}
\limsup_{n\to+\infty}
\nor{(\vec{Q\psi})_{\Omega,\mathcal{O}^{1},n}-\vec\psi_{\Omega,\mathcal{O}^{2},n}}{\mathcal{H}(\Omega)}=0.
\end{equation}
If moreover, $\lambda_n^1=\lambda_n^2$, $\lambda_n^1=\lambda_n^2$, $\lambda_n^i\to 0$, $x_n^i\to x_{\infty}$ with $t_n^1=t_n^2+\tau \lambda_n^1$, then $Q=S_{X_{\OO^{2}}}(\tau)$.
\end{lem}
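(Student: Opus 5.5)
Equivalence of the frames means that, after passing to a subsequence, the three quantities
$$
\frac{\lambda^1_n}{\lambda^2_n}\to\lambda_*\in(0,\infty),\qquad \frac{t^1_n-t^2_n}{\lambda^1_n}\to\tau,\qquad \frac{x^2_n-x^1_n}{\lambda^1_n}\to y
$$
all have finite limits. The plan is to rewrite $\vec\psi_{\Omega,\mathcal O^2,n}$ in the frame $\mathcal O^1$ by conjugating through $T_{\mathcal O^1,n}$, and then pass to the limit inside the rescaled domains. Since every operator involved ($S_\Omega$, $T_{\mathcal O,n}$, $P_\Omega$) is bounded uniformly in $n$, a density argument reduces us to $\vec\psi$ smooth and compactly supported in $X_{\mathcal O^2}$. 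The candidate isometry is
$$
Q:=S_{X_{\mathcal O^1}}(\tau)\circ D,
$$
where $D$ is the fixed rescaling and translation $D\vec\psi(x)=\big(\lambda_*^{-1/2}\psi^0(\tfrac{x-y}{\lambda_*}),\,\lambda_*^{-3/2}\psi^1(\tfrac{x-y}{\lambda_*})\big)$. We will check that $D$ maps $X_{\mathcal O^2}$ onto $X_{\mathcal O^1}$: the limit domain of $\Omega^2_n=(\Omega-x^2_n)/\lambda^2_n$ is the image of the limit of $\Omega^1_n$ under the affine map. With this, $D$ is a unitary map $\mathcal H(X_{\mathcal O^2})\to\mathcal H(X_{\mathcal O^1})$, and so is the flow $S_{X_{\mathcal O^1}}(\tau)$, which gives the isometry property.

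Write $T_{\mathcal O^2,n}=T_{\mathcal O^1,n}D_n$, where $D_n$ is the analogous rescaling with parameters $\lambda^2_n/\lambda^1_n$ and $(x^2_n-x^1_n)/\lambda^1_n$. Then $D_n\vec\psi\to D\vec\psi$ in $\mathcal H(\R^3)$ by continuity of dilations and translations. Using the conjugation identities $T_{\mathcal O^{-1},n}P_\Omega T_{\mathcal O,n}=P_{\Omega_n}$ and $T_{\mathcal O^{-1},n}S_\Omega(s)T_{\mathcal O,n}=S_{\Omega_n}(s/\lambda_n)$, we get
$$
\vec\psi_{\Omega,\mathcal O^2,n}=S_\Omega(-t^2_n)T_{\mathcal O^1,n}P_{\Omega^1_n}D_n\vec\psi
=S_\Omega(-t^1_n)\,T_{\mathcal O^1,n}\,S_{\Omega^1_n}\!\Big(\tfrac{t^1_n-t^2_n}{\lambda^1_n}\Big)P_{\Omega^1_n}D\vec\psi+o(1).
$$
Because $S_\Omega(-t^1_n)T_{\mathcal O^1,n}$ is an isometry, it suffices to show
$$
S_{\Omega^1_n}(\tau_n)P_{\Omega^1_n}D\vec\psi\to S_{X_{\mathcal O^1}}(\tau)\,D\vec\psi\quad\text{in }\mathcal H(\R^3),\qquad \tau_n\to\tau .
$$
Indeed, this limit equals $T_{\mathcal O^1,n}^{-1}$ of $Q\vec\psi$ up to the harmless projection $P_\Omega$ (Lemma \ref{lm:projectgeneral}).

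The main step is therefore a continuity statement for the linear flows in converging domains over a bounded time $|\tau_n|\le C$, which we treat case by case on the frame $\mathcal O^1$.
\begin{itemize}
\item If $\lambda^1_n\to0$ with $x^1_n$ convergent, this is exactly \cite[Proposition 2.1.3]{GG}, already invoked in the proof of Lemma \ref{lmapproxconc}. That argument is local and gives uniform convergence on $[-C,C]$.
\item If $\lambda^1_n\to\infty$ or $|x^1_n|\to\infty$, then $X_{\mathcal O^1}=\R^3$, and Lemma \ref{lem:assfree} (with $f=0$), rescaled, gives uniform-in-time closeness of $S_{\Omega^1_n}P_{\Omega^1_n}$ and $S_{\R^3}$.
\item If $\lambda^1_n\to\lambda_0>0$ and $x^1_n\to x_0$, then $\Omega^1_n$ is a small smooth perturbation of the fixed exterior domain $X_{\mathcal O^1}$. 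We plan to pull back by diffeomorphisms $\Phi_n\to\mathrm{Id}$ in $C^\infty$ that are supported near the obstacle and map $X_{\mathcal O^1}$ onto $\Omega^1_n$. The equation becomes a wave equation on a fixed domain with coefficients converging smoothly to the flat ones, and energy estimates on $[-C,C]$ give the convergence. Alternatively, one can reuse the domain convergence shown in Lemma \ref{lm:projectgeneral} together with a weak-limit and energy-conservation argument (weak convergence plus convergence of norms implies strong convergence).
\end{itemize}
This last case is the one I expect to require the most care. The weak-limit argument seems the cleanest route: bounded energy yields a weak limit which solves the wave equation on $X_{\mathcal O^1}$ with Dirichlet condition, by the domain convergence of Definition \ref{defconvergespace}; norm conservation then upgrades weak convergence to strong convergence, and time-equicontinuity makes it uniform in $|\tau_n|\le C$.

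Finally, in the special case $\lambda^1_n=\lambda^2_n\to0$, $x^1_n=x^2_n$ (so $D=\mathrm{Id}$) and $t^1_n=t^2_n+\tau\lambda^1_n$, the construction above gives $Q=S_{X_{\mathcal O^1}}(\tau)$, with $X_{\mathcal O^1}=X_{\mathcal O^2}$, as claimed.
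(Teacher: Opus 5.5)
Your proposal is correct and is essentially the paper's proof. Through the conjugation identities you reduce to convergence of the rescaled Dirichlet flows $S_{\Omega^1_n}(\tau_n)P_{\Omega^1_n}$ on the converging domains over bounded times, and you handle the regimes with Lemma~\ref{lem:assfree} and \cite[Proposition 2.1.3]{GG} as the paper does. There are only two differences. You change frame once for all regimes instead of case by case. In the fixed-scale regime you justify the convergence yourself, where the paper simply asserts that Gallagher--G\'erard's proposition still applies; your weak-limit plus energy-conservation argument works, and it is even simpler to note that $\Omega^1_n=\frac{\lambda_0}{\lambda^1_n}X_{\mathcal O^1}+\frac{x_0-x^1_n}{\lambda^1_n}$ is an exact dilate--translate of $X_{\mathcal O^1}$. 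One small slip: $D_n$ dilates by $\lambda^2_n/\lambda^1_n$, so with your convention $\lambda_*=\lim\lambda^1_n/\lambda^2_n$ the limiting map $D$ must dilate by $1/\lambda_*$, not by $\lambda_*$.
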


\begin{proof}
$\;$
\subsection*{First case: for $j=1,2$,
$\lambda_{j,n} \rightarrow \infty$ or  $|x_{j,n}|\rightarrow \infty$}
Then, thanks to Lemma \ref{lem:assfree}, it suffices to prove the same result for solutions of the equation without obstacle, for which the result is obtained with $Q$ a composition of translation, dilation and the application of flow of the wave equation in $\R^3$ in finite time. 

\subsection*{Second case: for $j=1,2$, $C\leq \lambda_{j, n}\leq C^{-1}$ , $| x_{j, n}| \leq C$} We select a subsequence so that $\lambda_{j, n}$ and $x_{j, n}$ converge. This provides an isometry $T_0$ from $\mathcal{H}(X_{\OO^{1}})$ to $\mathcal{H}(X_{\OO^{2}})$ as a composition  of translation, dilation (with similar scaling as $T_{\mathcal{O},k}$), so that 
\begin{equation} \label{eq:iso1}
T_{(\mathcal{O}^2)^{-1},n}\circ T_{\mathcal{O}^1,n}\psi =T_0 \psi+o_{\mathcal{H}(\R^3)}(1).
\end{equation}
Up to a subsequence, we assume that $t_n^2-t_n^1\to t_0$. We choose 
$$
Q := T_0^{-1}\circ S_{X_{\OO^{2}}}(-t_0).$$  We compute, using that $\SO(t)$ is unitary on $\mathcal{H}(\Omega)$ and $T_{(\mathcal{O}^2)^{-1},n}$ from $\mathcal{H}(\Omega)$ to $\mathcal{H}(\Omega_n^2)$
\begin{align*}
 &   \nor{(\vec{Q\psi})_{\Omega,\mathcal{O}^{1},n}-\vec\psi_{\Omega,\mathcal{O}^{2},n}}{\mathcal{H}(\Omega)} \\
 & \quad = \nor{\SO (-t_n^1)P_{\Omega}T_{\mathcal{O}^1,n} T_0^{-1} S_{X_{\OO^{2}}}(-t_0)\vec  \psi-\SO (-t_n^2)P_{\Omega}T_{\mathcal{O}^2,n}\vec  \psi}{\mathcal{H}(\Omega)}\\
    & \quad = \nor{P_{\Omega}T_{\mathcal{O}^1,n} T_0^{-1}\circ S_{X_{\OO^{2}}}(-t_0)\vec  \psi-\SO (t_n^1-t_n^2)P_{\Omega}T_{\mathcal{O}^2,n}\vec  \psi}{\mathcal{H}(\Omega)}\\
    & \quad = \nor{P_{\Omega^2_n}T_{(\mathcal{O}^2)^{-1},n}T_{\mathcal{O}^1,n} T_0^{-1}S_{X_{\OO^{2}}}(-t_0)\vec  \psi-S_{\Omega_n^{2}} (t_n^1-t_n^2)P_{\Omega^2_n}\vec  \psi}{\mathcal{H}(\Omega_n^2)}.
\end{align*}
Combining  \cite[Proposition 2.1.3.]{GG}, which also applies here, and Lemma~\ref{lm:projectgeneral} gives 
\begin{align*}
S_{\Omega_n^{2}} (t_n^1-t_n^2)P_{\Omega^2_n}\vec  \psi&=S_{X_{\OO^{2}}}(-t_0)P_{X_{\OO^{2}}}\vec \psi+o_{\mathcal{H}(\R^3)}(1) \\ & =S_{X_{\OO^{2}}}(-t_0)\vec \psi+o_{\mathcal{H}(\R^3)}(1) \\
& = P_{\Omega^2_n} S_{X_{\OO^{2}}}(-t_0)\vec \psi+o_{\mathcal{H}(\R^3)}(1),
\end{align*}
where in the last line the projection comes for free as the left hand side is in $\mathcal H(\Omega_n^2)$.
In addition, by (\ref{eq:iso1}), 
\begin{align*}
T_{(\mathcal{O}^2)^{-1},n}T_{\mathcal{O}^1,n}T_0^{-1} S_{X_{\OO^{2}}}(-t_0)\vec  \psi & =   T_{0} T_0^{-1} S_{X_{\OO^{2}}}(-t_0)\vec  \psi+o_{\mathcal{H}(\R^3)}(1) \\
& =S_{X_{\OO^{2}}}(-t_0)\vec  \psi+o_{\mathcal{H}(\R^3)}(1).
\end{align*}
So, combining the three identities above, we finally obtain
 \begin{align*}
 &   \nor{(\vec{Q\psi})_{\Omega,\mathcal{O}^{1},n}-\vec\psi_{\Omega,\mathcal{O}^{2},n}}{\mathcal{H}(\Omega)}= \nor{P_{\Omega^2_n}S_{X_{\OO^{2}}}(-t_0)\vec  \psi-P_{\Omega^2_n}S_{X_{\OO^{(2)}}}(-t_0)\vec \psi}{\mathcal{H}(\Omega_n^2)} +o(1)=o(1).
\end{align*}

\subsection*{Third case: for $j=1,2$, $\lambda_{j,n} \rightarrow 0$ and $|x_{j,n}|\leq C$}

By assumption, up to a subsequence,
$$
t_{1,n} = t_{2,n} + \lambda_{1,n} \tau_n, \hspace{0.3cm} x_{1,n} =  x_{2,n} + \lambda_{1,n} \xi_n, \hspace{0.3cm} \lambda_{1,n} = \lambda_{2,n} \mu_n,
$$
with $\tau_n \rightarrow \tau \in \mathbb R$, $\xi_n \rightarrow \xi \in \mathbb R^3$, $\mu_n \rightarrow \mu >0$. Now, let
$$
\vec u_n(t) : = T_{(\mathcal O^{1})^{-1}, n} S_{\Omega}(\lambda_{1,n}t) P_\Omega T_{\mathcal O^{2}, n} \vec \psi,
$$
so that $\vec u_n(\tau_n)=T_{(\mathcal O^{1})^{-1}, n} S_{\Omega}(t_{1,n})\vec\psi_{\Omega,\mathcal{O}^{2},n}$ and \eqref{eq:fr_equiv} is equivalent to 
$$
\nor{P_{\Omega_n^1}\vec{Q\psi}-u_n(\tau_n)}{\mathcal{H}(\Omega_n^1)}\to 0.$$
If additionally, $\vec{Q\psi}\in \mathcal{H}(X_{\OO^{1}})$ as expected, this is equivalent to proving that $u_n(\tau_n)\to \vec{Q\psi}$ thanks to Lemma~\ref{lm:projectgeneral}. Observe that 
$$
u_n(t,x) = \lambda_{1,n}^{\frac 12} \tilde u_n(\lambda_{1,n} t,\lambda_{1,n}x+x_{1,n} ),$$
where $\tilde u_n(t,x) :=  S_{\Omega}(t) P_\Omega T_{\mathcal O^2, n} \vec \psi$, hence $u_n$ is the solution to
$$
\begin{cases}
\partial^2_t u_n - \Delta u_n = 0 &\text{ in }\Omega_n^1, \\
u_n =0  &\text{ on }\partial\Omega_n^1, \\
u_n(t=0) = T_{{(\mathcal O^{1})}^{-1}, n} P_\Omega T_{\mathcal O^{2}, n} \vec \psi. 
\end{cases}
$$
Let now $v_n(t,x) := (\frac{\lambda_{2,n}}{\lambda_{1,n}})^{\frac 12} u_n(\frac{\lambda_{2,n}}{\lambda_{1,n}} t,\frac{\lambda_{2,n}}{\lambda_{1,n}} x +\frac{x_{2, n}-x_{1,n}}{\lambda_{1,n}} )$. It is solution to
$$
\begin{cases}
\partial^2_t v_n - \Delta v_n = 0 &\text{ in }\Omega_n^2, \\
v_n =0  &\text{ on }\partial\Omega_n^2, \\
v_n(t=0) = T_{{(\mathcal O^{2})}^{-1}, n} P_\Omega T_{\mathcal O^{2}, n} \vec \psi. 
\end{cases}
$$
Observe that thanks to Lemma~\ref{lm:projectgeneral} 
$$
T_{{(\mathcal O^2)}^{-1}, n} P_\Omega T_{\mathcal O^2, n} \vec \psi = P_{\Omega_n^2} \vec \psi
\rightarrow P_{X_{\mathcal O ^{(2)}}} \vec \psi,
$$
in $\mathcal H(\mathbb R^3)$ and hence, by \cite[Proposition 2.1.3]{GG}, $\vec v_n$ converges in $L^\infty_{\rm loc} \mathcal H(\mathbb R^3)$ to $S_{X_{\OO^{2}}}(t) P_{X_{\OO^{2}}} \vec \psi$. As $\tau_n$ is bounded, it follows taking $t=\tau_n$ that
$$
\big((\frac{\lambda_{2,n}}{\lambda_{1,n}})^{\frac 12},(\frac{\lambda_{2,n}}{\lambda_{1,n}})^{\frac 32} \big) \vec u_n(\frac{\lambda_{2,n}}{\lambda_{1,n}} \tau_n,\frac{\lambda_{2,n}}{\lambda_{1,n}} x +\frac{x_{2, n}-x_{1,n}}{\lambda_{1,n}} ) \rightarrow S_{X_{\OO^{2}}}(\tau) P_{X_{\OO^{2}}} \vec \psi
$$
in $\mathcal H(\mathbb R^3)$, and hence
$$
\vec u_n(\tau_n) \rightarrow D_{\mu,\xi} S_{X_{\OO^{2}}}(\tau) P_{X_{\OO^{2}}} \vec \psi, \hspace{0.3cm} D_{\mu,\xi}\vec\varphi := (\mu^{-\frac 1 2}, \mu^{-\frac 3 2}) \varphi(\cdot \mu^{-1} - \xi).
$$
The result follows with $Q :=D_{\mu,\xi}\circ S_{X_{\OO^{2}}}(\tau)$. This gives also the last statement.
\end{proof}

\begin{lem}\label{lm:orth_weak}
Assume that some cores $\mathcal O^{1}$ and $\mathcal O^{2}$ are orthogonal and $\vec \psi \in \mathcal{H}(\R^3)$, then
\begin{equation*}
   T_{ {\mathcal (O^{2})}^{-1}, n}  \SO(t^{2}_n )   \vec \psi_{\Omega, \mathcal{O}^{1},n}\rightharpoonup 0.
\end{equation*}
\end{lem}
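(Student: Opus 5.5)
Unwinding the definitions, the sequence to study is
\[
\vec g_n := T_{(\mathcal O^{2})^{-1},n}\,\SO(t^2_n-t^1_n)\,P_\Omega T_{\mathcal O^1,n}\vec\psi ,
\]
which is bounded in $\mathcal H(\mathbb R^3)$ (extended by zero). It therefore suffices to show $\langle \vec g_n,\vec\chi\rangle\to 0$ for a dense set of test functions, say $\vec\chi\in C^\infty_c(\mathbb R^3)^2$. By density in $\vec\psi$, and because all operators involved are uniformly bounded, we may also take $\vec\psi\in C^\infty_c$. The argument proceeds case by case, following the asymptotic regimes of the two cores, exactly as in the proof of Lemma~\ref{EquivFrames}. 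Orthogonality leaves three possibilities up to a subsequence: $\lambda^1_n/\lambda^2_n\to 0$ or $\infty$; or the ratio stays bounded while $|t^1_n-t^2_n|/\lambda^1_n\to\infty$; or the ratio and the time ratio stay bounded while $|x^1_n-x^2_n|/\lambda^1_n\to\infty$.

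\textbf{Reduction to the free flow where possible.} Suppose first that $\mathcal O^1$ is non-concentrating, i.e.\ $\lambda^1_n\to\infty$ or $|x^1_n|\to\infty$. Lemma~\ref{lem:assfree} with $f=0$ lets us replace $\SO(t)P_\Omega T_{\mathcal O^1,n}\vec\psi$ by $\SF(t)T_{\mathcal O^1,n}\vec\psi$ up to $o(1)$ uniformly in $t$ in $\mathcal H$. Then $\vec g_n$ is, up to $o(1)$, $T_{(\mathcal O^2)^{-1},n}\SF(t^2_n-t^1_n)T_{\mathcal O^1,n}\vec\psi$. Since $T_{(\mathcal O^2)^{-1},n}$ acts by translation and dilation and $\SF$ commutes with these up to rescaling of time, this equals $\SF(s_n)D_n\vec\psi$, where $D_n$ is the rescaling by $\lambda^1_n/\lambda^2_n$ and the translation by $(x^1_n-x^2_n)/\lambda^2_n$, and $s_n=(t^2_n-t^1_n)/\lambda^2_n$. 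Orthogonality means precisely that these parameters diverge, and the classical free-space result (Bahouri--G\'erard) gives weak convergence to $0$. The fully free case, where $\mathcal O^2$ is also non-concentrating, is included in this.

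\textbf{Mixed and localized cases.} When $\mathcal O^1$ has bounded nonzero scale and bounded center, use Lemma~\ref{lem:lin_scat}: $\SO(t)P_\Omega T_{\mathcal O^1,n}\vec\psi$ converges, up to a subsequence, to a fixed solution $\SO(t)\vec\psi_0$ (by Lemma~\ref{lm:projectgeneral}). If $t^1_n-t^2_n$ is bounded, orthogonality forces $\lambda^2_n\to0$ or $\infty$ or $|x^2_n|\to\infty$. In that situation $T_{(\mathcal O^2)^{-1},n}$ applied to a fixed function tends weakly to $0$, which is the elliptic fact used in Lemma~\ref{lem:ellpd}. If instead $|t^1_n-t^2_n|\to\infty$, we replace $\SO$ by $\SF(t)\vec\varphi'_\pm$ via linear scattering and conclude as in the free case.

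\textbf{Concentrating cores.} The main obstacle is when $\mathcal O^1$ concentrates, $\lambda^1_n\to 0$ with $x^1_n\to x_\infty$, since the free flow cannot be invoked near the obstacle. If $|t^2_n-t^1_n|\le C\lambda^1_n$, Lemma~\ref{lmapproxconc} describes $\SO(t)P_\Omega T_{\mathcal O^1,n}\vec\psi$ as the rescaled $S_{X_{\mathcal O^1}}$ solution on $[-C\lambda^1_n,C\lambda^1_n]$. We are then reduced to $T_{(\mathcal O^2)^{-1},n}T_{\mathcal O^1,n}$ applied to a fixed function, and this tends weakly to $0$ because the scales or centers are orthogonal. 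If $|t^2_n-t^1_n|/\lambda^1_n\to\infty$, Proposition~\ref{propnoncon_loc} gives $\|\SO(t^2_n-t^1_n)P_\Omega T_{\mathcal O^1,n}\vec\psi\|_{L^6}\to0$. Hence any weak limit $\vec g=(g^0,g^1)$ of $\vec g_n$ has $g^0=0$, since $T_{(\mathcal O^2)^{-1},n}$ preserves $L^6$ and the $L^6$ norm is weakly lower semicontinuous. The $L^2$ component $g^1$ still has to be handled. For this I would apply the same argument to the time-shifted sequence $T_{(\mathcal O^2)^{-1},n}\SO(t^2_n-t^1_n+\lambda^2_n s)P_\Omega T_{\mathcal O^1,n}\vec\psi$ with $s$ in a small interval. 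Its first component is again $0$ in the limit, and the limit is $S(s)\vec g$ for the limiting flow on $X_{\mathcal O^2}$ (this uses the convergence of the rescaled flows from \cite[Proposition 2.1.3]{GG} and Lemma~\ref{lm:projectgeneral}). Differentiating in $s$ then gives $g^1=\partial_s(\text{first component})|_{s=0}=0$.

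One subtlety remains: the time-shift argument also needs $|t^2_n-t^1_n+\lambda^2_n s|\ge C_n\lambda^1_n$. When $\lambda^2_n\gg\lambda^1_n$ I would instead use the escape description, Proposition~\ref{propescapedom}, to reduce to the free flow. I expect this bookkeeping of regimes, rather than any single estimate, to be the delicate part.
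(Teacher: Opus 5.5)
Your proposal follows the paper's proof essentially case by case, with the same tools in each regime:
\begin{itemize}
\item Lemma~\ref{lem:assfree} when $\mathcal O^1$ is non-concentrating;
\item linear scattering (Lemma~\ref{lem:lin_scat}) when $\mathcal O^1$ has bounded non-zero scale and bounded centre;
\item Lemma~\ref{lmapproxconc} when $\mathcal O^1$ concentrates and $(t^2_n-t^1_n)/\lambda^1_n$ stays bounded;
\item otherwise, Proposition~\ref{propnoncon_loc} to kill the first component of the weak limit, then a time-shift argument showing that the solution with data $(0,g^1)$ has vanishing first component, hence $g^1=0$.
\end{itemize}

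The only thing to correct is your last paragraph. The ``subtlety'' is not an obstruction, and you should drop the appeal to Proposition~\ref{propescapedom}. That proposition only describes the flow at times $t\geq C$ with $C$ fixed, so it would not cover, e.g., $t^2_n-t^1_n\to 0$. To see the condition holds anyway, set $\sigma_n:=(t^2_n-t^1_n)/\lambda^2_n$; then
\[
\frac{|t^2_n-t^1_n+\lambda^2_n s|}{\lambda^1_n}=\Big|\frac{t^2_n-t^1_n}{\lambda^1_n}+s\,\frac{\lambda^2_n}{\lambda^1_n}\Big|=\frac{\lambda^2_n}{\lambda^1_n}\,|\sigma_n+s| .
\]
After extraction there are two regimes.
\begin{itemize}
\item If $\lambda^2_n/\lambda^1_n$ is bounded, the middle expression tends to $+\infty$ for every $s$.
\item If $\lambda^2_n/\lambda^1_n\to\infty$ and $\sigma_n\to\sigma\in[-\infty,+\infty]$, the right-hand side tends to $+\infty$ for every $s\neq-\sigma$, and $s=0$ is covered by hypothesis.
\end{itemize}
So in all regimes the condition holds on a neighbourhood of $s=0$, which is all your differentiation argument uses. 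For each such $s$, apply Proposition~\ref{propnoncon_loc} with $C_n(s):=|t^2_n-t^1_n+\lambda^2_n s|/(2\lambda^1_n)$.

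Your normalisation of the shift by $\lambda^2_n s$ is also the right one. It makes the shifted core equivalent to $\mathcal O^2$, with limiting operator the flow on $X_{\mathcal O^2}$ at time $s$. The paper writes the shift as $s\lambda^1_n$ while taking $Q_s=S_{X_{\mathcal O^2}}(s)$ from Lemma~\ref{EquivFrames}, which agrees with your choice when $\lambda^1_n=\lambda^2_n$. For non-comparable scales, your choice is the one that actually makes Lemma~\ref{EquivFrames} applicable.
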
 
\begin{proof}We denote $\vec V_n:= T_{ {\mathcal (O^{2})}^{-1}, n}  \SO(t^{2}_n )   \vec \psi_{\Omega, \mathcal{O}^{1},n}$ the function we consider. We will denote $\vec V_n = (V_n, V'_n)$ its two components.

\textbf{First case: $\lambda_n^1\to +\infty$ or $|x_n^{1}|\to +\infty$.}
In this case, Lemma~\ref{lem:assfree} and the unitarity of $T_{ {\mathcal (O^{2})}^{-1}, n}$ gives 
\begin{align}
\label{e:dilatweakinfty}
  \nonumber   \vec V_n= &  T_{ {(O^{2})}^{-1}, n}  \SF(t^{2}_n -t^{1}_n )T_{ {\OO^{1}}, n}\vec \psi+o_{\mathcal{H}(\R^3)}(1)\\
     =&T_{ {(\OO^{2})}^{-1}, n}T_{ {\OO^{1}}, n}  \SF\left(\frac{t^{2}_n -t^{1}_n }{\lambda_n^1}\right)\vec \psi+o_{\mathcal{H}(\R^3)}(1).
\end{align}
We are left to the case on $\R^3$, which is known.

\textbf{Second case: up to a subsequence, $\lambda_n^1\to 0$, $x_n^{1}\to x_{\infty}$, and $\left|\frac{t^{2}_n-t^{1}_n}{\lambda_n^1}\right| \to +\infty$.}
This is inspired by \cite[Lemma 3.7.]{GG}, see also \cite[Lemma 5.2.10]{L:11}. By unitarity of $T_{\OO, n}$ on $L^6(\R^3)$, we have 
$$
\nor{V_n}{L^6(\R^3)}=\nor{\SO \left(t^{2}_n-t^{1}_n \right)P_{\Omega}T_{ {\OO^{1}}, n}\vec \psi }{L^6(\R^3)} \to 0
$$
by Proposition~\ref{propnoncon_loc}. So, in particular, the first component of $T_{ { (\OO^{2})}^{-1}, n}  \SO(t^{2}_n )   \vec \psi_{\Omega, \OO^{1},n}$ converges weakly to zero, that is, there exists $f\in L^2(\R^3)$ so that for every $\vec \varphi\in \mathcal{H}(\R^3)$
\begin{equation}
\label{e:weakconv0f}
    \left<T_{ { (\OO^{2})}^{-1}, n}  \SO(t^{2}_n )   \vec \psi_{\Omega, \OO^{1},n},\vec \varphi\right>_{\mathcal{H}(\R^3)}\underset{n\to +\infty}{\longrightarrow}    \left< (0,f),\vec\varphi\right>_{\mathcal{H}(\R^3)}.
\end{equation}

To conclude, we will show that $f=0$. To this end, let $s\in \mathbb R$ a parameter and consider instead $\widetilde{t}_n^{2} := t^{2}_n+s \lambda_n^1$. Defining the associated cores $\widetilde{\OO}^{2}$, which satisfies the same assumptions as $\OO^2$, we obtain similarly, for another $\widetilde{f}_s\in L^2(\R^3)$,
\begin{equation}\label{e:weakconv0ftilde}
    \left<T_{ { (\OO^{2})}^{-1}, n}  \SO(\widetilde{t}_n^{2})   \vec \psi_{\Omega, \OO^{1},n},\vec \varphi\right>_{\mathcal{H}(\R^3)}\underset{n\to +\infty}{\longrightarrow}      \left<(0,\widetilde{f}_s), \vec \varphi\right>_{\mathcal{H}(\R^3)}.
\end{equation}
But, we have, using successively Lemma~\ref{lm:projectgeneral} and~\ref{EquivFrames} where $Q_s=S_{X_{\OO^2}}(s)$,
\begin{align*}
  &  \left<T_{ { (\OO^{2})}^{-1}, n}  \SO(\widetilde{t}_n^{2})   \vec \psi_{\Omega, \OO^{1},n},\vec \varphi\right>_{\mathcal{H}(\R^3)}=  \left< \SO(\widetilde{t}_n^{2})   \vec \psi_{\Omega, \OO^{1},n},    T_{ { \OO^{2}}, n} \vec \varphi\right>_{\mathcal{H}(\R^3)}\\
& =  \left< \SO(\widetilde{t}_n^{2})   \vec \psi_{\Omega, \OO^{1},n},  P_{\Omega}  T_{ { \OO^{2}}, n} \vec \varphi\right>_{\mathcal{H}(\Omega)}= \left<   \vec \psi_{\Omega, \OO^{1},n},  \SO(-\widetilde{t}_n^{2}) P_{\Omega}  T_{ { \OO^{2}}, n} \vec \varphi\right>_{\mathcal{H}(\Omega)} \\ 
& =\left<   \vec \psi_{\Omega, \OO^{1},n},   \vec \varphi_{\Omega,\widetilde{\OO}^{2},n}\right>_{\mathcal{H}(\Omega)}
 =\left<   \vec \psi_{\Omega, \OO^{1},n},   (P_{X_{\OO^2}}\vec \varphi)_{\Omega,\widetilde{\OO}^{2},n}\right>_{\mathcal{H}(\Omega)}+o(1)  \\
 &=\left<   \vec \psi_{\Omega, \OO^{1},n},   (Q_sP_{X_{\OO^2}}\vec \varphi)_{\Omega,\OO^{2},n}\right>_{\mathcal{H}(\Omega)}+o(1)
\\
& = \left<T_{ { (\OO^{2})}^{-1}, n}  \SO(t_n^{2})   \vec \psi_{\Omega, \OO^{1},n},Q_sP_{X_{\OO^2}}\vec \varphi\right>_{\mathcal{H}(\R^3)}+o(1) \\
&=  \left< (0,f),Q_s P_{X_{\OO^2}}\vec \varphi\right>_{\mathcal{H}(\R^3)}+o(1).
\end{align*}
where we have used \eqref{e:weakconv0f} with $Q_sP_{X_{\OO^2}}\vec \varphi$ as test function. Combining this with \eqref{e:weakconv0ftilde} gives 
$$
\forall s\in \mathbb R, \quad \forall \vec \varphi \in \mathcal{H}(X_{\OO^2}), \quad \left< (0,f),Q_s P_{X_{\OO^2}}\vec \varphi\right>_{\mathcal{H}(\R^3)}=\left<(0,\widetilde{f}_s), \vec \varphi\right>_{\mathcal{H}(\R^3)}.
$$ 
This implies $(0,f)\in \mathcal{H}(X_{\OO^2})$ and for any $s\in \R$,  $\vec \varphi \in \mathcal{H}(X_{\OO^2})$, 
\begin{equation*}
    \left<(0,\widetilde{f}_s), \vec \varphi\right>_{\mathcal{H}(X_{\OO^2})}=\left<  (0,f),Q_s P_{X_{\OO^2}}\vec \varphi\right>_{\mathcal{H}(X_{\OO^2})}=\left<  S_{X_{\OO^2}}(s)(0,f), \vec \varphi\right>_{\mathcal{H}(X_{\OO^2})},
\end{equation*}
from which we obtain 
$$
(0,\widetilde{f}_s) = S_{X_{\OO^2}}(s)(0,f). 
$$ 
This implies that $f=0$ since the only solution of the wave equation with the first component equal to $0$ is the zero solution. This ends the proof in this case. 

\textbf{Third case: up to a subsequence, $\lambda_n\to 0$, $x_n^1\to x_{\infty}$ and $\frac{t^{2}_n-t^{1}_n}{\lambda_n^1} \to s_{\infty}$}.
In this case, Lemma~\ref{lmapproxconc} gives  
$$
\SO \left(t^{2}_n-t^{1}_n \right)P_{\Omega}T_{ {\OO^{1}}, n}\vec \psi=T_{ {\OO^{1}}, n} S_{X_{\mathcal{O}}}(s_{\infty}) P_{X_{\mathcal{O}}} \psi+o_{\mathcal{H}(\R^3)}(1).
$$
In particular, 
$$
\vec V_n= T_{ {(\OO^{2})}^{-1}, n}  T_{ {\OO^{1}}, n} S_{X_{\mathcal{O}}}(s_{\infty}) P_{X_{\OO}} \psi+o_{\mathcal{H}(\R^3)}(1),
$$
which converges weakly to zero thanks to the case on $X_{\OO}$ which follows from the case of $\R^3$ since we have $\ln \left(\frac{\lambda_k^1}{\lambda^2_k} + \frac{|x_k^1-x^2_k|}{\lambda_k^1} \right)\to +\infty$.

\textbf{Fourth case: up to a subsequence,
$\lambda_n^1\to \lambda_0>0$ and $x_n^{1}\to x_{\infty}$.} We first notice that $ P_{\Omega}T_{ {\OO^{1}}, n}\vec \psi$ converges strongly to one fixed function $\vec\Psi\in \mathcal{H}(\R^3)$.

In the sub-case where $t^{2}_n-t^{1}_n\to s_{\infty}$, 
$$
\vec V_n= T_{ {(\OO^{2})}^{-1}, n}\SO \left(s_{\infty}\right) \vec \Psi+o_{\mathcal{H}(\R^3)}(1),
$$
which converges weakly to zero since by orthogonality, we have either $\lambda_n^2\to +\infty$ or $\lambda_n^2\to 0$ or $|x_n^{2}|\to +\infty$. 

In the last remaining sub-case where up to a subsequence, $t^{2}_n-t^{1}_n\to \pm\infty$, by Lemma~\ref{lem:lin_scat} there exists $\vec \Psi^{\pm} \in \mathcal H(\mathbb R^2)$ so that $\SO(t)\vec \Psi - \SF(t) \vec \Psi^{\pm} \to 0$ as $t \to \pm \infty$ in $\mathcal H(\mathbb R^3)$, and therefore
$$
\vec V_n 
= T_{ {(\OO^{2})}^{-1}, n}\SF \left(t^{2}_n-t^{1}_n \right) \Psi^{\pm}+o_{\mathcal{H}(\R^3)}(1).
$$
The result in $\R^3$ holds and gives the result.
\end{proof} 

\begin{remark}
Note that the conclusion of the previous Lemma is false if the non focusing Assumption~\ref{ass:nonreco} is not satisfied as it happens, for instance, on the sphere, see \cite{L:11}.    
\end{remark}

We can now prove the linear profile decomposition.

\begin{proof}[Proof of Theorem \ref{th:lindec}]
$\;$
\subsection*{Setup of the induction procedure}

For $\underline{\vec \varphi} = (\vec \varphi_n)_{n\geq 1}$ a bounded sequence in $\mathcal H$,
we denote $\Lambda (\underline{\vec \varphi})$ the set of all couples $(\vec \psi, \mathcal O)$ where $ \vec \psi \in \dot H^1 \times L^2 (\mathbb R^3)$ and $\mathcal O$ is a scale core so that, up to a subsequence
$$
T_{\mathcal O ^{-1}, n} \SO(t_n)\vec \varphi_n \rightharpoonup \vec \psi \hspace{0.3cm} \text{ in }\dot H^1 \times L^2 (\mathbb R^3),
$$
and we let
$$
\eta(\underline{\vec \varphi}) := \sup_{(\vec \psi, \mathcal O) \in \Lambda (\underline{\vec \varphi})} \Vert P_{X_{\mathcal O}} \vec \psi \Vert_{\mathcal H(X_{\mathcal O})}.
$$
Arguing inductively on $J \geq 0$, we will construct an extraction of $(\vec \varphi_n)_{n\geq 1}$, orthogonal cores
$\mathcal O^{(j)}$, profiles $\vec \psi^{(j)}$, and remainder $\vec w_{n}^{(j)}$ 
so that (\ref{eq:lpd}), (\ref{eq:lpd_pyt}), (\ref{eq:lpd_L6}) hold, and in addition 
\begin{equation} \label{eq:lpd:decstep}
\forall 1\leq j \leq J, \hspace{0.5cm} \eta( (\vec w_n^{(j-1)})_{n\geq 1} ) \leq 2 \Vert P_{X_{\mathcal O^{(j)}}} \vec \psi^{(j)} \Vert_{\mathcal H(X_{\mathcal O^{(j)}})},
\end{equation}
and
\begin{equation} \label{eq:lpd:decweak}
\forall 1\leq j \leq J, \hspace{0.5cm} \vec r_n^{(j)} := T_{ {\mathcal (O^{(j)})}^{-1}, n} \SO(t^{(j)}_n) \vec w_n^{(j)} \rightharpoonup \vec 0 \hspace{0.3cm} \text{ in }\mathcal{H}(\mathbb R^3).
\end{equation}
Observe that, taking
$$
\vec w_n^{(0)} := \vec \varphi_n,
$$
the decomposition (\ref{eq:lpd}), (\ref{eq:lpd_pyt}), (\ref{eq:lpd_L6}), with (\ref{eq:lpd:decstep}) and (\ref{eq:lpd:decweak}) holds at rank $J=0$. Therefore, let $J\geq 1$, assume that  we have a decomposition (\ref{eq:lpd}), (\ref{eq:lpd_pyt}), (\ref{eq:lpd_L6}), with (\ref{eq:lpd:decstep}) and (\ref{eq:lpd:decweak}) at rank $J-1$, and let us construct it at rank $J$.

 If $\eta((\vec w_n^{(J-1)})_{n\geq 1}) = 0$, then we take $\vec \psi^{(J)} := \vec 0$ and we are done. Otherwise, there exist $\vec \psi^{(J)} \in \mathcal{H} (\mathbb R^3)$ and a scale core $\mathcal O^{(J)} = \{ (t^{(J)}_n)_{n\geq 1}, (\lambda^{(J)}_n)_{n\geq 1}, (x^{(J)}_n)_{n\geq 1} \}$ such that, up to a subsequence,
\begin{equation} \label{eq:lpd:wl}
T_{ {\mathcal (\OO^{(J)})}^{-1}, n} \SO(t^{(J)}_n) \vec w_n^{(J-1)} \rightharpoonup \vec \psi^{(J)} \hspace{0.3cm} \text{ in }\mathcal{H}(\mathbb R^3), 
\end{equation}
and so that (\ref{eq:lpd:decstep}) holds.
We take
\begin{equation} \label{eq:lpd:fr}
w_n^{(J)} := \vec w_n^{(J-1)} - \vec \psi^{(J)} _{\Omega, \mathcal{O}^{(J)},n},
\end{equation}
so that (\ref{eq:lpd}) and (\ref{eq:lpd:decweak}) hold at step $J$. Let us detail why  (\ref{eq:lpd:decweak}) holds. We have to show that 
\begin{align}
    \label{verifweak}
    T_{ { (\OO^{(J)})}^{-1}, n} \SO(t^{(J)}_n) \vec \psi^{(J)} _{\Omega, \OO^{(J)},n}\rightharpoonup \psi^{(J)}  \hspace{0.3cm} \text{ in }\mathcal{H} (\mathbb R^3).
\end{align}
Observe that, using Lemma~\ref{lm:projectgeneral} to obtain the convergence in the last line,
\begin{align*}
T_{ {\mathcal (\OO^{(J)})}^{-1}, n} \SO(t^{(J)}_n) \vec \psi^{(J)} _{\Omega, \mathcal{O}^{(J)},n} &=T_{ { (\OO^{(J)})}^{-1}, n} \SO(t^{(J)}_n) \SO(-t^{(J)}_n) P_{\Omega}T_{\OO^{(J)},n}\vec \psi^{(J)} \\
& =T_{ {(\OO^{(J)})}^{-1}, n}P_{\Omega}T_{\OO^{(J)},n}\vec \psi^{(J)}=P_{\Omega^{(J)}_n}\vec \psi^{(J)} \\
& \to P_{X^{(J)}_{\OO}}\vec \psi^{(J)},
\end{align*}
Therefore, it only remains to show that $P_{X^{(J)}_{\OO}}\vec \psi^{(J)}=\vec \psi^{(J)}$. But since $\SO(t^{(J)}_n)\vec w_n^{(J-1)}\in \mathcal H(\Omega)$,  we have $T_{ {(\OO^{(J)})}^{-1}, n} \SO(t^{(J)}_n) \vec w_n^{(J-1)}\in \mathcal H(\Omega_n^{(J)})$.  In particular,  \eqref{eq:lpd:wl} implies that $\vec \psi^{(J)}$ is a weak limit of functions in $\mathcal H(\Omega_n^{(J)})$.  It implies that $\operatorname{Supp} (\vec \psi^{(J)})\subset X_{\OO^{(J)}}$ and therefore $P_{X^{(J)}_{\OO}}\vec \psi^{(J)}=\vec \psi^{(J)}$, see again Lemma~\ref{lm:projectgeneral}.

We now separate the cases to check (\ref{eq:lpd_pyt}) and (\ref{eq:lpd_L6}). 
In the sequel, we will denote
$$
\mathcal O = \{ (t_n)_{n\geq 1}, (\lambda_n)_{n\geq 1}, (x_n)_{n\geq 1} \} := \mathcal O^{(J)}
$$
for conciseness.

\subsection*{The Pythagorean expansion (\ref{eq:lpd_pyt})} Observe that, by the induction assumption, using (\ref{eq:lpd_pyt}) at rank $J-1$, it suffices to show that
\begin{equation} \label{eq:lpd_pytJ}
\Vert \vec w_n^{(J-1)} \Vert^2_{\mathcal H} = \Vert \vec w_n^{(J)} \Vert^2_{\mathcal H} + \Vert \vec \psi^{(J)} _{\Omega, \mathcal{O},n} \Vert^2_{\mathcal H} + o(1)
\end{equation}
in order to show (\ref{eq:lpd_pyt}) at rank $J$. We separate cases.

\subsubsection*{First case: $\lambda_n \rightarrow \bar \lambda \in (0, +\infty)$ and $x_n \rightarrow \bar x \in \mathbb R^3$ (up to a subsequence).}
Let $T$ be the ($n$-independent) rescaling associated with $\bar \lambda$ and $\bar x$, that is, with the notations of \S \ref{subsec:profnot},
$$
T  := T_{\{(\bar \lambda)_{n\geq 1}, (\bar x)_{n\geq 1} \}, n}.
$$
Observe that, by (\ref{eq:lpd:wl})  and using that $T_{ {\OO^{(J)}}, n} \vec \psi^{(J)} $ converges strongly to $T \vec \psi^{(J)}$, we get 
$$
\SO(t_n)\vec w_n^{(J-1)} \rightharpoonup  T \vec \psi^{(J)} \hspace{0.3cm} \text{ in }\mathcal{H}(\R^3),
$$
and hence
\begin{equation} \label{eq:lpd:wltw}
\SO(t_n)\vec w_n^{(J-1)} \rightharpoonup  P_{\Omega}T \vec \psi^{(J)} \hspace{0.3cm} \text{ in }\mathcal{H} (\Omega).
\end{equation} 
On the other hand, \eqref{eq:lpd:fr} and $T_{ {\OO}, n} \vec \psi^{(J)}=T \vec \psi^{(J)}+o_{\mathcal{H}(\Omega)}(1) $ give 
\begin{equation} \label{eq:lpd:eqT}
\langle \vec \psi^{(J)} _{\Omega, \mathcal{O},n}, \vec w_n^{(J)}  \rangle_{\mathcal{H} (\Omega)} 
= \langle \SO(-t_n) P_\Omega T  \vec \psi^{(J)},  \vec w_n^{(J-1)} - \SO(-t_n) P_\Omega T \vec \psi^{(J)} \rangle_{\dot H ^1 \times L^2 (\Omega)} + o(1).
\end{equation}
But, using (\ref{eq:lpd:wltw})
\begin{equation*}
\langle \SO(-t_n) P_\Omega T  \vec \psi^{(J)},  \vec w_n^{(J-1)} - \SO(-t_n) P_\Omega T \vec \psi^{(J)} \rangle_{\mathcal{H} (\Omega)}  
= \langle P_\Omega T  \vec \psi^{(J)},  \SO(t_n) \vec w_n^{(J-1)} -  P_\Omega T \vec \psi^{(J)} \rangle_{\mathcal{H} (\Omega)} \rightarrow 0.
\end{equation*}
The above combined with (\ref{eq:lpd:eqT}) gives (\ref{eq:lpd_pytJ}).

\subsubsection*{Second case: $\lambda_n \rightarrow \infty$ or $x_n \rightarrow \infty$ (up to a subsequence)}
Observe that
\begin{align*} 
&\langle \vec \psi^{(J)} _{\Omega, \mathcal{O},n}, \vec w_n^{(J)}  \rangle_{\mathcal{H}(\Omega)} \\
&\hspace{1cm}= \langle \SO(-t_n) P_\Omega T_{\mathcal O, n}  \vec \psi^{(J)},  \vec w_n^{(J-1)} - \SO(-t_n) P_\Omega T_{\mathcal O, n} \vec \psi^{(J)} \rangle_{\mathcal{H} (\Omega)} 
\\
&\hspace{1cm}= \langle  P_\Omega T_{\mathcal O, n}  \vec \psi^{(J)},  \SO(t_n) \vec w_n^{(J-1)} -   P_\Omega T_{\mathcal O, n} \vec \psi^{(J)} \rangle_{\mathcal{H} (\Omega)} \\
&\hspace{1cm}= \langle  T_{\mathcal O, n}  \vec \psi^{(J)},  \SO(t_n) \vec w_n^{(J-1)} -   T_{\mathcal O, n} \vec \psi^{(J)} \rangle_{\mathcal{H} (\R^3)} + o(1),
\end{align*}
where we used Lemma~\ref{lm:projectgeneral} to obtain the last line and get $P_\Omega T_{\mathcal O, n} \vec \psi^{(J)}= T_{\mathcal O, n} P_{\Omega_n}\vec \psi^{(J)}=T_{\mathcal O, n} \vec \psi^{(J)}+o_{\mathcal{H}}(1)$. Continuing this chain of equalities, we get 
\begin{align*} 
&\langle \vec \psi^{(J)} _{\Omega, \mathcal{O},n}, \vec w_n^{(J)}  \rangle_{\mathcal{H} (\Omega)}
= \langle   \vec \psi^{(J)},   T_{\mathcal O^{-1}, n} \SO(t_n) \vec w_n^{(J-1)} -   \vec \psi^{(J)} \rangle_{\mathcal{H} (\mathbb R^3)} + o(1) \rightarrow 0,
\end{align*}
thanks to \eqref{eq:lpd:wl}, and (\ref{eq:lpd_pytJ}) follows.

\subsubsection*{Third case: $\lambda_n \rightarrow 0$ (up to a subsequence)}

Using Lemma~\ref{lm:projectgeneral} and then \eqref{eq:lpd:wl}, we get 
\begin{align*}&
\langle \vec \psi^{(J)} _{\Omega, \mathcal{O},n}, \vec w_n^{(J-1)}  \rangle_{\mathcal{H}(\Omega)}= \langle \vec \SO(-t_n) P_{\Omega} T_{\mathcal O, n} \psi^{(J)}, \vec w_n^{(J-1)} \rangle_{\mathcal{H} (\Omega)} \\
&= \langle P_{\Omega_n}\vec \psi^{(J)}, T_{\mathcal O^{-1}, n}S_{\Omega}(t_n)\vec w_n^{(J-1)}  \rangle_{\mathcal{H} (\Omega_n)} =\langle P_{\Omega_\infty} \vec \psi^{(J)}, T_{\mathcal O^{-1}, n}S_{\Omega}(t_n)\vec w_n^{(J-1)}  \rangle_{\mathcal{H} (\R^3)} + o(1)\\
&= \Vert P_{\Omega_\infty} \vec \psi^{(J)} \Vert^2_{\mathcal{H}(\Omega_\infty)} + o(1) = \Vert  \vec \psi^{(J)}_{\Omega, \mathcal O, n} \Vert^2_{\mathcal{H}(\Omega)} + o(1),
\end{align*}
from which (\ref{eq:lpd_pytJ}) follows.

\subsection*{The $L^6$ expansion (\ref{eq:lpd_L6})} In the same way, by the induction assumption, using (\ref{eq:lpd_L6}) at rank $J-1$, it suffices to show that
\begin{equation} \label{eq:lpd_L6J}
\Vert  w_n^{(J-1)} \Vert^6_{L^6} = \Vert \vec w_n^{(J)} \Vert^6_{L^6} + \Vert  \psi^{(J)} _{\Omega, \mathcal{O},n} \Vert^6_{L^6} + o(1)
\end{equation}
in order to show (\ref{eq:lpd_L6}) at rank $J$. We separate cases again.

\subsubsection*{First case: $\lambda_n \rightarrow \bar \lambda \in (0, +\infty)$ and $x_n \rightarrow \bar x \in \mathbb R^3$ (up to a subsequence).}

If, up to a subsequence, $t_n \rightarrow \bar t \in \mathbb R$, we can assume that $(t_n, \lambda_n, x_n) = (0, 1, 0)$ and $(\bar t, \bar \lambda, \bar x) = (0, 1, 0)$ by replacing the limiting profile $\vec \psi^{(J)}$ by $\SO(-\bar t) P_{\Omega}T \vec \psi^{(J)}$ with the 
previous notations, and we have, 
\begin{equation} \label{eq:lpd:wltwbis}
\vec w_n^{(J)} \rightharpoonup 0 \hspace{0.3cm} \text{ in } \mathcal H.
\end{equation} 
Now, observe that
$$
||z+w|^6-|z|^6-|w|^6| \lesssim |z||w|(|z|^4+|w|^4),
$$
hence, denoting
$$
f_n :=\Big| \int | w_n^{(J-1)} |^6 - |\psi^{(J)} _{\Omega, \mathcal{O},n}|^6 - |w_n^{(J)}|^6 \Big|,
$$
we have
$$
f_n \lesssim \int |\psi^{(J)} _{\Omega, \mathcal{O},n}||w_n^{(J)}|g_n, \hspace{0.5cm} g_n := |\psi^{(J)} _{\Omega, \mathcal{O},n}|^4+|w_n^{(J)}|^4.
$$
But, using (\ref{eq:lpd:fr}), Sobolev embedding and conservation of energy,
\begin{multline*}
\Vert g_n \Vert^{\frac 3 2}_{L^{\frac 3 2}} \lesssim \Vert\psi^{(J)} _{\Omega, \mathcal{O},n} \Vert^6_{L^{6}} + \Vert w_n^{(J)} \Vert^6_{L^{6}} \lesssim \Vert\psi^{(J)} _{\Omega, \mathcal{O},n} \Vert^6_{L^{6}} + \Vert w_n^{(J-1)} \Vert^6_{L^{6}} 
 \lesssim \Vert\psi^{(J)} _{\Omega, \mathcal{O},n} \Vert^6_{L^{6}} + 1 \\
  \lesssim \Vert \mathcal P_\Omega T_{\mathcal O, n} \psi^{(J)} \Vert^6_{\mathcal H} + 1   \lesssim \Vert T_{\mathcal O, n} \psi^{(J)} \Vert^6_{\mathcal H(\mathbb R^3)} + 1 = 
  \Vert \psi^{(J)} \Vert^6_{\mathcal H(\mathbb R^3)} + 1,
\end{multline*}
hence, by H\"older inequality 
$$
f_n  \lesssim \int |\psi^{(J)} _{\Omega, \mathcal{O},n}|^3|w_n^{(J)}|^3
= 
\int |P_{\Omega} \vec \psi^{(J)}|^3|w_n^{(J)}|^3 \rightarrow 0,
$$
up to a subsequence, from which  (\ref{eq:lpd_L6J}) follows: indeed, $|w^{(J)}_n|^3$ is bounded in $L^2$ so have a weakly converging subsequence in $L^2$,
and from (\ref{eq:lpd:wltwbis}) together with Rellich theorem, after another extraction it converges strongly to zero in $L^{\frac 4 3}(K)$ for any compact $K$; hence, by uniqueness of the limit in the sense of distributions, $|w^{(J)}_n|^3 \rightharpoonup 0$ in $L^2$.
In the case where $t_n \rightarrow \infty$, we can still assume that $(\lambda_n, x_n) = (1, 0)$ and $(\bar \lambda, \bar x) = (1, 0)$ by replacing the limiting profile $\vec \psi^{(J)}$ by $T \vec \psi^{(J)}$. Then, observe that
$$
\Vert \psi^{(J)} _{\Omega, \mathcal{O},n} \Vert_{L^6} = \Vert \SO(t_n) P_\Omega \vec  \psi^{(J)} \Vert_{L^6} \rightarrow 0,
$$
by linear scattering (Lemma \ref{lem:lin_scat} together with an approximation argument) and because the analog is true for the linear flow in $\mathbb R^3$; and hence  (\ref{eq:lpd_L6}) follows as well.

\subsubsection*{Second case: $\lambda_n \rightarrow \infty$ or $x_n \rightarrow \infty$ (up to a subsequence)}

We first assume that, up to a subsequence, $\frac {t_n}{\lambda_n}$ has a finite limit $\tau$. Then, we can assume that $t_n = 0$ by replacing $\psi^{(J)}$ by $ \SF(- \tau) \vec \psi^{(J)}$: indeed, using Lemma \ref{lem:assfree}, we have in $\mathcal H(\Omega)$
\begin{align*}
\vec{\psi}_{\Omega, \mathcal O, n}^{(J)}&=\SO(-t_n) P_\Omega T_{\mathcal O, n}\vec \psi^{(J)} = \SF(-t_n) T_{\mathcal O, n}\vec \psi^{(J)} + o(1) \\
&= T_{\mathcal O, n} \SF(-\frac{t_n}{\lambda_n}) \vec \psi^{(J)} + o(1) = T_{\mathcal O, n} \SF(-\tau) \vec \psi^{(J)} + o(1) \\
&= \SO(0)P_\Omega T_{\mathcal O, n} \SF(-\tau) \vec \psi^{(J)} + o(1).
\end{align*}
Then, having replaced $t_n$ by $0$ and $\psi^{(J)}$ by $ \SF(- \tau) \vec \psi^{(J)}$
$$
\vec \psi^{(J)}_{\mathcal O, n} = P_\Omega T_{\mathcal O, n} \vec \psi^{(J)} = T_{\mathcal O, n} \vec \psi^{(J)} + o(1),
$$
using Lemma~\ref{lm:projectgeneral} and that  $\vec \psi^{(J)}\in \mathcal{H}(X_{\OO})$ so $\vec \psi^{(J)}=P_{X_{\OO}}\vec \psi^{(J)}$. It follows that, arguing as previously
$$
f_n \lesssim o(1) + \int |T_{\mathcal O, n} \psi^{(J)}|^3|w_n^{(J)}|^3 = 
\int | \psi^{(J)}|^3|T_{\mathcal O^{-1}, n} w_n^{(J)}|^3 \rightarrow 0,
$$
from which  (\ref{eq:lpd_L6J}) follows.

In the case where $\frac{t_n}{\lambda_n} \rightarrow \infty$, observe that
$$
\Vert \psi^{(J)} _{\Omega, \mathcal{O},n} \Vert_{L^6} = \Vert \SO(t_n) P_\Omega   T_{\mathcal O, n} \vec \psi^{(J)} \Vert_{L^6} = \Vert \SF(t_n) T_{\mathcal O, n} \vec \psi^{(J)} \Vert_{L^6} + o(1),
$$
where
$$
\Vert \SF(t_n) T_{\mathcal O, n} \vec \psi^{(J)} \Vert_{L^6}  =
\Vert T_{\mathcal O, n} \SF(\frac{t_n}{\lambda_n}) \vec \psi^{(J)} \Vert_{L^6} =
\Vert \SF(\frac{t_n}{\lambda_n}) \vec \psi^{(J)} \Vert_{L^6} 
\rightarrow 0,
$$
and  (\ref{eq:lpd_L6J}) follows as well.

\subsubsection*{Third case: $\lambda_n \rightarrow 0$ (up to a subsequence)}

Again, let us assume first that, up to a subsequence, $\frac {t_n}{\lambda_n}$ has a finite limit $\tau$. By Lemma \ref{lmapproxconc}, there exists an isometry $Q$ of $\mathbb R^3$ so that
$$
\vec \psi^{(J)}_{\mathcal O, n} = T_{O, n} \big( S_{X_O}(-\frac {t_n}{\lambda_n})\vec \psi^{(J)}\big) \circ Q + o(1) =
T_{O, n} \big( S_{X_O}(-\tau)\vec \psi^{(J)} \big) \circ Q + o_{\mathcal{H}}(1),
$$
hence, once again, we can assume that $t_n = 0$ by replacing $\psi^{(J)}$ by $\big( S_{X_O}(-\tau)\vec \psi^{(J)} \big) \circ Q$. Arguing in the same way as previously, we then get
$$
f_n \lesssim o(1) +
\int | \psi^{(J)}|^3|T_{\mathcal O^{-1}, n} w_n^{(J)}|^3 \rightarrow 0,
$$
and  (\ref{eq:lpd_L6}) follows.
In the case where $\frac {t_n}{\lambda_n} \rightarrow \infty$, 
thanks to Proposition \ref{propnoncon} we have
$$
\Vert \psi^{(J)} _{\Omega, \mathcal{O},n}\Vert_{L^6} \rightarrow 0,
$$
and  (\ref{eq:lpd_L6J}) follows once again.

\subsection*{Orthogonality of scale cores}
We now show the orthogonality of cores up to $j=J$. To this end, let $0\leq j \leq J-1$, and let us show that $\mathcal O^{(J)}$ 
and $\mathcal O^{(j)}$ are orthogonal. We work by decreasing iteration and assume that we have proved that  the cores $\mathcal O^{(J)}$ 
and $\mathcal O^{(l)}$ are orthogonal for $j+1\leq l\leq J-1$.
An iteration of \eqref{eq:lpd:fr} gives $$
w_n^{(j)} = \sum_{k=j+1}^{J-1} \vec \psi^{(l)} _{\Omega, \mathcal{O}^{(l)},n} + w_n^{(J-1)}.$$ 
In particular, combined with the definition of $\vec r_n^{(j)}$ in   \eqref{eq:lpd:decweak}, this gives the identity
\begin{align*}
  &  T_{ {\mathcal (O^{(J)})}^{-1}, n}  \SO(t^{(J)}_n - t^{(j)}_n) T_{ {\mathcal (O^{(j)})}, n} \vec r_n^{(j)}=T_{ {\mathcal (O^{(J)})}^{-1}, n}  \SO(t^{(J)}_n )  \vec w_n^{(j)} \\
    =&T_{ {\mathcal (O^{(J)})}^{-1}, n}  \SO(t^{(J)}_n )  \left(\sum_{k=j+1}^{J-1} \vec \psi^{(l)} _{\Omega, \mathcal{O}^{(l)},n} + w_n^{(J-1)}\right).
\end{align*}
Since by iteration, the cores $\mathcal O^{(J)}$ 
and $\mathcal O^{(l)}$ are orthogonal, Lemma~\ref{lm:orth_weak} provides 
$$ \forall j+1\leq l\leq J-1, \quad T_{ {\mathcal (O^{(J)})}^{-1}, n}  \SO(t^{(J)}_n )   \vec \psi^{(l)} _{\Omega, \mathcal{O}^{(l)},n}\rightharpoonup 0.$$ 
In particular, the application of \eqref{eq:lpd:wl} gives
$$
  T_{ {\mathcal (O^{(J)})}^{-1}, n}  \SO(t^{(J)}_n - t^{(j)}_n) T_{ {\mathcal (O^{(j)})}, n} \vec r_n^{(j)} \rightharpoonup \vec \psi^{(J)} \neq \vec 0.
$$
By Lemma \ref{EquivFrames}, this implies that $\mathcal O^{(J)}$ 
and $\mathcal O^{(j)}$ are orthogonal: indeed, if they are equivalent, using successively that $T_{ {\mathcal (O^{(j)})}, n} \vec r_n^{(j)}\in \mathcal{H}(\Omega)$ so that $T_{ {\mathcal (O^{(j)})}, n} \vec r_n^{(j)}\in \mathcal{H}(\Omega)=P_{\Omega}T_{ {\mathcal (O^{(j)})}, n} \vec r_n^{(j)}\in \mathcal{H}(\Omega)$, then Lemma \ref{EquivFrames} assuming equivalence of the frames,  and (\ref{eq:lpd:decweak}), we get for any test function $\vec \rho\in \mathcal{H}(\R^3)$
\begin{align*}
&\langle T_{ {\mathcal (O^{(J)})}^{-1}, n}  \SO(t^{(J)}_n - t^{(j)}_n)T_{ {\mathcal (O^{(j)})}, n} \vec r_n^{(j)}, \vec \rho \rangle \\
&\hspace{0.5cm}= 
\langle\SO(-t^{(j)}_n)T_{ {\mathcal (O^{(j)})}, n} \vec r_n^{(j)}, \SO(-t^{(J)}_n){P_{\Omega}}T_{ {\mathcal (O^{(J)})}, n}  \vec \rho \rangle\\ 
&\hspace{0.5cm} = 
\langle\SO(-t^{(j)}_n)T_{ {\mathcal (O^{(j)})}, n} \vec r_n^{(j)}, \SO(-t^{(j)}_n){P_{\Omega}}T_{ {\mathcal (O^{(j)})}, n}   {Q P_{X_{\OO^{J}}}} \vec \rho \rangle + o (1) \\
&\hspace{0.5cm}= \langle \vec r_n^{(j)}, P_{\Omega_n^j} {Q} \vec \rho \rangle + o (1) \rightarrow 0,=
\langle \vec r_n^{(j)},  {Q P_{X_{\OO^{J}}}} \vec \rho \rangle + o (1) \rightarrow 0,
\end{align*}
where we have used Lemma~\ref{lm:projectgeneral} in the last line. This is a contradiction with $\psi^{(J)} \neq \vec 0$.
\subsection*{Conclusion of the induction procedure}
Hence, we constructed 
orthogonal cores
$\mathcal O^{(j)}$, profiles $\vec \psi^{(j)}$, and for any $J\geq 0$ an extraction $\vec \varphi_{n_k^J}$ of $\vec \varphi_n$, with $(n_k^J)_k \subset (n_k^{J-1})_k$, so that (\ref{eq:lpd}), (\ref{eq:lpd_pyt}), and (\ref{eq:lpd_L6}) hold, with in addition 
the decay property (\ref{eq:lpd:decstep}). By a diagonal argument, we obtain an extraction of $\vec \varphi_n$ so that (\ref{eq:lpd}), (\ref{eq:lpd_pyt}), (\ref{eq:lpd_L6}), (\ref{eq:lpd:decstep}) hold for any $J$, and we are left with verifying the decay of the remainder (\ref{eq:lpd_rem}).

\subsection*{Decay of the remainder (\ref{eq:lpd_rem}).}
This will be a consequence of (\ref{eq:lpd:decstep}) together with Lemma \ref{lem:ellpd}.
To obtain (\ref{eq:lpd_rem}), it suffices to show that
\begin{equation} \label{eq:lpd:suffdecay}
\lim_{J \rightarrow \infty} \limsup_{n\rightarrow \infty} \; \Vert \SO(\cdot) \vec w_n^{J} \Vert_{L^\infty(\mathbb R, L^6)}  = 0.
\end{equation}
Indeed, by Hölder inequality, Strichartz estimates of Lemma \ref{ass:strichartz}, and finally using the fact that, by  (\ref{eq:lpd_pyt}), for any $J$ we have $\limsup \Vert \vec w_n^{J} \Vert_{\mathcal H} \lesssim \limsup \Vert \vec \varphi_n \Vert_{\mathcal H} \lesssim 1$
\begin{align*}
\Vert \SO(\cdot) \vec w_n^{J} \Vert_{L^5(\mathbb R, L^{10})} &\leq
\Vert \SO(\cdot) \vec w_n^{J} \Vert_{L^\infty(\mathbb R, L^{6})} ^{\theta} \Vert \SO(\cdot) \vec w_n^{J} \Vert_{L^{r}(\mathbb R, L^{s})} ^{1-\theta} \\
&\lesssim \Vert \SO(\cdot) \vec w_n^{J} \Vert_{L^\infty(\mathbb R, L^{6})} ^{\theta} \Vert \vec w_n^{J} \Vert_{\mathcal H} ^{1-\theta} \lesssim \Vert \SO(\cdot) \vec w_n^{J} \Vert_{L^\infty(\mathbb R, L^{6})} ^{\theta}.
\end{align*}
with $\theta= \frac{3(s-10)}{5(s-6)}>0$  since $s>10$. Let us therefore show (\ref{eq:lpd:suffdecay}). Observe that, by conservation of energy
and change of variable
$$
\Vert \vec  \psi^{(j)}_{\Omega, \mathcal{O}^{(j)},n} \Vert^2_{\mathcal H(\Omega)} 
= \Vert P_{\Omega}T_{\mathcal{O}^{(j)},n} \vec \psi^{(j)}\Vert^2_{\mathcal H(\Omega)} 
= \Vert T_{(\mathcal{O}^{(j)})^{-1},n} P_{\Omega}T_{\mathcal{O}^{(j)},n} \vec \psi^{(j)}\Vert^2_{\mathcal H(\Omega_n^j)}
$$
where $T_{(\mathcal{O}^{(j)})^{-1},n} P_{\Omega}T_{\mathcal{O}^{(j)},n} \vec \psi^{(j)} 
= P_{\Omega_n^j} \vec \psi^{(j)} \rightarrow P_{{X_{{\mathcal O}^{(j)}}}}\vec \psi^{(j)}$ in $\mathcal H(\mathbb R^3)$ by Lemma~\ref{lm:projectgeneral}, hence
$$
\Vert \vec  \psi^{(j)}_{\Omega, \mathcal{O}^{(j)},n} \Vert^2_{\mathcal H(\Omega)} \rightarrow \Vert P_{\mathcal{O}^{(j)}} \vec \psi^{(j)} \Vert_{\mathcal H(X_{\mathcal{O}^{(j)}})},
$$
and, by the Pythagorean expansion (\ref{eq:lpd_pyt}), the serie 
$
\sum_{j=1}^\infty \Vert P_{\mathcal{O}^{(j)}} \vec \psi^{(j)} \Vert_{\mathcal H(X_{\mathcal{O}^{(j)}})}
$
 converges. It follows that its general term goes to zero, and hence, as $J \rightarrow \infty $,
\begin{equation} \label{eq:lpd:decetaw}
\eta((w_n^J)_n) \rightarrow 0.
\end{equation}
Now, if (\ref{eq:lpd:suffdecay}) fails, by a diagonal argument, there exists $\epsilon>0$, a sequence $(t_k)$ and subsequences $(n_k)_k$
and $(J_k)$ so that
$$
\Vert \SO(t_k) w_{n_k}^{J_k} \Vert_{L^6(\Omega)} \geq \epsilon.
$$
By Lemma \ref{lem:ellpd}, it follows that there exists a scale core $\mathcal O = \{(t_k)_k, (x_k)_k, (\lambda_k)_k\}$ and $\vec \psi \in \mathcal H(\mathbb R^3) \neq \vec 0$, so that, up to a subsequence
$$
T_{\mathcal O^{-1}, k} \SO(t_k) w_{n_k}^{J_k} \rightharpoonup \vec \psi  \hspace{0.3cm} \text{ in }\mathcal{H} (\mathbb R^3).
$$ 
But 
$$
T_{\mathcal O^{-1}, k} \SO(t_k) w_{n_k}^{J_k} = \big(T_{\mathcal{O}^{-1},k} P_{\Omega}T_{\mathcal{O},k} \big) T_{\mathcal O^{-1}, k} \SO(t_k) w_{n_k}^{J_k},
$$
and as, in addition, $T_{\mathcal{O}^{-1},k} P_{\Omega}T_{\mathcal{O},k} \vec \psi \rightarrow P_{X_{\mathcal O}} \vec \psi$ by Lemma~\ref{lm:projectgeneral} again, 
we deduce that $\vec \psi = P_{X_{\mathcal O}} \vec \psi \in \mathcal H(X_{\OO})$. The fact that $\vec \psi \neq \vec 0$ is then a contradiction with the definition of $\eta$ and (\ref{eq:lpd:decetaw}).

\end{proof}

\section{Construction of a compact flow solution} \label{sec:cc}

Let $E_c(\Omega)$ be defined as
\begin{equation}
E_c(\Omega) := \sup \Big\{ E>0 \; \text{ s.t. } \;\exists C(E)>0, \; \EO(\vec \varphi)<E \implies  \Vert \NLSO(\cdot)\vec \varphi \Vert_{L^5(\mathbb R, L^{10}(\Omega))} \leq C(E) \Big\}.
\end{equation}
By the small-data theory, $E_c >0$. The goal of this section is to show the following Theorem. Observe in particular that it immediately implies Theorem \ref{th:const_intro}.

\begin{thm}[Construction of a compact-flow solution.] \label{th:const}
Assume that $\Omega$ has a smooth, compact boundary and verifies Assumptions \ref{ass:nonreco}, \ref{ass:weaktrap}, \ref{ass:strichartz}. Then, if $E_c(\Omega)<+\infty$, there exists $\vec \varphi_c \in \mathcal H(\Omega)$, $\vec \varphi_c \neq \vec 0$ with $\EO(\vec \varphi_c)=E_c$, so that the nonlinear flow
$ \big\{ \NLSO(t)\vec \varphi_c, \; t\in \mathbb R \big\} $
is relatively compact in $\mathcal H(\Omega)$ and $\NLSO(\cdot)\vec \varphi_c \notin L^5(\mathbb R, L^{10})$. 
\end{thm}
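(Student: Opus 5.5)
We follow the Kenig--Merle scheme, with Theorem \ref{th:lindec} as the compactness tool and Lemmas \ref{lem:assfree_nl} and Proposition \ref{prop:concprof} describing the nonlinear profiles. Since $E_c<\infty$, we pick data $\vec\varphi_n\in\mathcal H(\Omega)$ with $\EO(\vec\varphi_n)\to E_c$ and $\Vert\NLSO(\cdot)\vec\varphi_n\Vert_{L^5(\R,L^{10})}\to\infty$. Translating in time, we may assume that the $L^5L^{10}$ norm is large on both $(-\infty,0]$ and $[0,\infty)$. We apply Theorem \ref{th:lindec} to $(\vec\varphi_n)$, and by the Pythagorean expansions \eqref{eq:lpd_pyt} and \eqref{eq:lpd_L6} we get
\[
\EO(\vec\varphi_n)=\sum_{j\le J}\EO(\vec\psi^{(j)}_{\Omega,\mathcal O^{(j)},n})+\EO(\vec w_n^{(J)})+o(1).
\]

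Each profile gets a nonlinear counterpart $U^{(j)}_n$.
\begin{itemize}
\item If $\lambda^{(j)}_n\to\infty$ or $|x^{(j)}_n|\to\infty$, we take the Euclidean nonlinear solution with the appropriate scattering data (using the global scattering and bounds in $\R^3$ of \cite{BahouriGerard99}). Lemma \ref{lem:assfree_nl} then shows that $U^{(j)}_n$ is a rescaled free solution up to $o(1)$, with uniformly bounded $L^5L^{10}$ norm.
\item If $\lambda_n\to0$ with $x_n$ bounded, we use Proposition \ref{prop:concprof}, which again gives a uniform bound.
\item If $\lambda_n\to\lambda_0>0$ and $x_n\to x_0$, the profile is a fixed $\Omega$-solution translated in time. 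If $t_n\to\pm\infty$ we use the wave operators \eqref{eq:pert_scat_Uj}; if $t_n$ is bounded we use $\NLSO$ itself.
\end{itemize}

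Next comes the usual dichotomy. Suppose first that every profile has energy $<E_c$, or that there are at least two nonzero profiles. Then each $U^{(j)}_n$ has $L^5L^{10}$ norm bounded by $C(E)$. For the bounded-core type this follows from the definition of $E_c$; for the other types it follows from the Euclidean result and Lemma \ref{lem:assfree_nl} / Proposition \ref{prop:concprof}. We then form the approximate solution
\[
\tilde u_n=\sum_{j\le J}U^{(j)}_n+\SO(\cdot)\vec w_n^{(J)}.
\]
The energy expansion, together with small-data theory for all but finitely many profiles, bounds $\Vert\tilde u_n\Vert_{L^5L^{10}}$ uniformly in $J$. The error $e_n=\tilde u_n^5-\sum_j(U_n^{(j)})^5$ has small $L^1L^2$ norm for $J$ large and then $n$ large. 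This uses orthogonality of the cores, in the sense that products of distinct $U^{(j)}_n$ vanish in $L^{5/4}L^{5/2}$, together with the remainder decay \eqref{eq:lpd_rem}. Proposition \ref{lem:perturb} then bounds $\Vert\NLSO\vec\varphi_n\Vert_{L^5L^{10}}$, which is a contradiction.

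Hence there is exactly one profile, its energy equals $E_c$, and $\vec w_n^{(1)}\to0$ in $\mathcal H$. This profile cannot be of dilating, escaping, or concentrating type, since those nonlinear profiles scatter with uniform bounds. So $\lambda_n\to\lambda_0$ and $x_n\to x_0$, and WLOG $\mathcal O=(1,t_n,0)$. If $t_n\to\pm\infty$, the wave-operator solution has finite $L^5L^{10}$ norm on the relevant half-line, which contradicts the largeness on both sides. Therefore $t_n$ is bounded, and $\vec\varphi_n\to\vec\varphi_c$ strongly, where $\NLSO\vec\varphi_c$ has energy $E_c$ and infinite Strichartz norm (by Proposition \ref{lem:perturb}, since otherwise $\vec\varphi_n$ would have bounded norms).

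For compactness of the flow, take any sequence $\tau_n$ and apply the same argument to $\NLSO(\tau_n)\vec\varphi_c$. This sequence has energy $E_c$ and infinite norm on both sides of $0$ (this is where we use that the solution blows up in the Strichartz sense in both time directions, which we may arrange by choosing the critical element appropriately and redoing the argument). We obtain a strongly convergent subsequence, hence relative compactness.

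The main obstacle is the orthogonality of the nonlinear profiles in $L^5L^{10}$ when the cores involve the obstacle. In particular, we must control cross terms between a concentrating profile and a bounded-core profile, and between a profile near the trapped region and an escaping one. We would first reduce to compactly supported approximants in rescaled space-time, using the asymptotic descriptions of Lemmas \ref{lem:assfree_nl} and Proposition \ref{prop:concprof}, and then invoke the divergence of the orthogonality parameter. For concentrating profiles we would additionally use Proposition \ref{propnoncon} to handle the times away from $[t_n-Th_n,t_n+Th_n]$.
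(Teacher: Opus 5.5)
Your proposal is correct and follows the paper's proof essentially step by step. The steps match: a minimizing sequence normalized to blow up on both half-lines, the linear profile decomposition, nonlinear profiles built from Lemma \ref{lem:assfree_nl}, Proposition \ref{prop:concprof} and the wave operators, the perturbative contradiction when there is more than one profile (with cross terms handled by compactly supported approximants and Proposition \ref{propnoncon}, as in Lemma \ref{lem:nl_prof_pert}), and the elimination of non-compact cores and of $t_n\to\pm\infty$. The differences are cosmetic: you put $\SO(\cdot)\vec w_n^{(J)}$ into the approximate solution instead of into the data mismatch of Proposition \ref{lem:perturb}. Also, the two-sided blow-up of $\NLSO(\cdot)\vec\varphi_c$ that you invoke follows directly from perturbation on each half-line and \eqref{eq:symL5L10infty}, so there is no need to re-choose the critical element; this is exactly what justifies reapplying the argument to $\NLSO(\tau_n)\vec\varphi_c$.
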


\begin{proof}
If $E_{c}<+\infty$, let $\vec{\varphi}_{0}^{n}$ be a minimizing
sequence for $E_{c}(\Omega)$, in the sense that
\begin{equation}
\EO (\vec{\varphi}_{0}^{n})\geq  E_{c}(\Omega),\; \lim_{n\rightarrow\infty}\EO(\vec{\varphi}_{0}^{n})=E_{c}(\Omega),\; \Vert \NLSO(\cdot)\vec{\varphi}_{0}^n\Vert_{L^{5}L^{10}}\to \infty,\label{eq:suite_u0n}
\end{equation}
where by convention $\|u_n\|_{L^{5}\left(\R,L^{10}\right)}=+\infty$ if $u_n\notin L^{5}\left(\R,L^{10}\right)$. 
Let us define 
$$
u_n : =\NLSO(\cdot)\vec{\varphi}_0^n.
$$
Translating in time, if necessary, we may assume
\begin{equation}
 \label{eq:symL5L10infty}
\lim_{n\to\infty} \|u_n\|_{L^{5}\left((0,+\infty),L^{10}\right)}=\lim_{n\to\infty} \|u_n\|_{L^{5}\left((-\infty,0),L^{10}\right)}=+\infty,
 \end{equation} 
with a similar convention as we did on $\R$, \textit{i.e.}\ by convention $\|u_n\|_{L^{5}\left((-\infty,0),L^{10}\right)}=+\infty$ if $u_n\notin L^{5}\left((-\infty,0),L^{10}\right)$, and similarly for $L^{5}\left((0,\infty),L^{10}\right)$.

As $\vec{\varphi}_{0}^{n}$ is bounded in $\mathcal{H}(\Omega)$, we
can, up to a subsequence, decompose it into profiles according to Theorem
\ref{th:lindec}: 
\begin{equation}
\vec \varphi_0^{n} = \sum_{j=1}^J \vec \psi^{(j)} _{\Omega, \mathcal{O}^{(j)},n} + \vec w_n^{(J)}.
\end{equation}
To each profile $(\vec \psi^{(j)}, \mathcal{O}^{(j)}) =:(\vec \psi^{(j)}, (t_{j,n}, \lambda_{j,n}, x_{j, n}))$, we will associate a Dirichlet nonlinear profile $U^{j}$, and possibly a free nonlinear profile $V^{j}$, in the following way.
\begin{itemize}
\item \textbf{(compact)} If $\lambda_{j,n} = 1$ and $x_{j, n} = 0$ for any $n$: we will write $j\in J_{\rm comp}$. 
If $t_{j,n} = 0$, let ${U^{(j)}}$ be the only
solution of (\ref{eq:NLWomega}) in $\Omega$ with Cauchy data $\vec{\psi}^{(j)}$ at time zero. If $t_{j,n} \to \pm \infty$, let ${U^{(j)}}$ be the only
solution of (\ref{eq:NLWomega}) in $\Omega$ such that
$$
\left\Vert{\vec{U}^{(j)}}(-t)-\vec{\SO}(-t)\vec{\psi}^{(j)}\right\Vert_{\mathcal{H}(\Omega)} \to 0 \quad \text{as} \; t\to\pm \infty.
$$
Recall that the existence of this solution, for example in the case $t \to +\infty$, is obtained as the solution of the fixed point
$$
\vec U(t) = \vec{\SO}(t)\vec{\psi}^{(j)} - \int_t^{+\infty} {\vec\SO}(t-s)\big(0, U^5 (s)\big) \, ds,
$$
working in $L^5([T_0, +\infty),L^{10}(\Omega))$ for $T_0 > 0$ big enough, using Strichartz estimates from Assumption \ref{ass:strichartz} (recall in particular that these homogeneous estimates imply inhomogeneous ones with source in $L^1 L^2$ by Minkowski).
In these three cases,
\begin{equation}
\lim_{n\rightarrow\infty}\left\Vert{\vec{U}^{(j)}}({-t_{j,n}})-\vec{\SO}(-t_{j,n})\vec{\psi}^{(j)}\right\Vert_{\mathcal{H}(\Omega)}=0.\label{eq:def_Uj_comp}
\end{equation} 
We set
\begin{equation}
U_{n}^{j}(t):=U^{j}({t-t_{j,n}}).\label{eq:def_Vnj_comp}
\end{equation}
Notice that, if $-t_{j,n}\to \pm\infty$, $U^{j}\in L^{5}(\mathbb{R}_{\pm},L^{10}(\Omega))$ by construction.

\item \textbf{(asymptotically free)} If $\lambda_{j,n} \rightarrow \infty $ or $x_{j, n} \rightarrow \infty$: we will write $j \in J_{\rm diff}$. We have, by Lemma \ref{lem:assfree}

$$
\lim_{n\to\infty} \left\|\vec \psi^{(j)} _{\Omega, \mathcal{O}^{(j)},n} - \vec \psi^{(j)} _{\mathbb R^3, \mathcal{O}^{(j)},n}\right\|_{\mathcal H{(\Omega)}}=0.
$$
Furthermore, denoting by $V_L^j(t):=\SF(t){\psi}^{(j)}$,
$$
\SF(t - t_{j,n}) \psi^{(j)} _{\mathbb R^3, \mathcal{O}^{(j)},n} =\frac{1}{\lambda_{j,n}^{1/2}}V^j_L\left(\frac{t-t_{j,n}}{\lambda_{j,n}},\frac{x-x_{j,n}}{\lambda_{j,n}}  \right).
$$
We define the \emph{free} nonlinear profile $V^j$ as the unique solution of the critical nonlinear wave equation in $\mathbb R^3$ such that 
\begin{equation} \label{eq:def_Uj_diff}
\lim_{n\to \pm\infty} \left\|\vec{V}^{j}(-t_{j,n}/\lambda_{j,n})-\vec{V}_L^{j}(-t_{j,n}/\lambda_{j,n})\right\|_{\mathcal{H} (\R^3})=0.
\end{equation}
This is possible distinguishing the cases $-t_{j,n}/\lambda_{j,n}$ convergent and using the Cauchy theory or $-t_{j,n}/\lambda_{j,n}\to -\infty$ using the scattering theory on $\R^3$ (see Proposition \ref{prop:perturb_scat} for instance).
{ Furthermore, we set
\begin{equation}
\label{eq:def_Vnj_diff}
{V}_{n}^{j}(t, x):=\frac{1}{\lambda_{j,n}^{1/2}}V^{j}\Big(\frac{t-t_{j,n}}{\lambda_{j,n}}, \frac{x-x_{j,n}}{\lambda_{j,n}}\Big),
\end{equation}
and we then define the associated family of Dirichlet nonlinear profiles as
\begin{equation}
U_{n}^{j}(t):=\NLSO(t-t_n^j)P_{\Omega}\left(\vec{V}_n^j(t_n^j)\right).
\label{eq:def_Vnj_diff}
\end{equation} 
Observe that, as a solution
of a defocusing nonlinear wave equation in $\mathbb{R}^{3}$, for
which the scattering is well known, we have $V^j\in L^{5}L^{10}(\mathbb{R}^{3})$.
Furthermore, by construction, from Lemma \ref{lem:assfree_nl}
$$
\forall j\in J_{\text {diff}},\hspace{1em}\sup_n \Vert U_n^j\Vert_{L^{5}(\mathbb{R},L^{10}(\Omega))} < \infty,
$$
and 
\begin{equation}
\forall j\in J_{\text{diff}},\hspace{1em}\sup_{t}\left\|\vec{V}_{n}^{j}(t)-\vec{U}_{n}^{j}(t)\right\|_{\mathcal H (\Omega)}+\Vert V_{n}^{j}-U_{n}^{j}\Vert_{L_{t}^{5}L_{x}^{10}}\underset{n\to\infty}{\longrightarrow}0.\label{eq:comp_U_V}
\end{equation} }

\item \textbf{(concentrating)}  If $\lambda_{j,n} \rightarrow 0$ and $x_{j,n}$ is bounded: we write $j \in J_{\rm conc}$. In this case,
 let $U^{j}_n$ be given by Proposition \ref{prop:concprof} (applied to the profile $ \psi^{(j)} _{\Omega, \mathcal{O}^{(j)},n}$). In addition, we define
 \begin{equation} \label{eq:conc_v}
 v_n^{j, \pm}(t) := \SO(t-t_{j,n}) T_{\mathcal{O}^{(j)},n}^{\Omega}\phi_j^{\pm\infty},
\end{equation}
where $\phi_j^{\pm\infty}$ are given by Proposition \ref{prop:concprof} as well.
\end{itemize}

Let us assume by contradiction that the decomposition
has strictly more than one non trivial profile, i.e
\begin{equation}
J>1.\label{eq:more_than_one_profile}
\end{equation}
Then, by the Pythagorean expansion and
 its $L^{6}$ version 
\[
\forall j\in J_{\text{comp}},\hspace{1em}\limsup_{n\rightarrow\infty}\;\EO\left(\SO(-t_{j,n})\vec{\psi}^{j}\right)<E_{c}.
\]
Hence, $\EO(U^j)<E_C$,
and ${U^{j}}\in L^{5}L^{10}(\Omega)$ by the definition of the critical
energy. Summing up, we have 
\begin{equation} \label{eq:tout_est_diffusif}
\begin{cases}
  \forall j, & \sup_n \Vert U^{j}_n \Vert_{{L^5}L^{10}} < +\infty, \\
\forall j \in J_{\rm comp}, & {U^{j}}\in L^{5}L^{10}(\Omega), \\
 \forall j \in J_{\rm diff}, & V^j \in L^{5}L^{10}(\mathbb R^3).
\end{cases}
\end{equation} 
The Theorem will follow from the following nonlinear
profile decomposition. 
\begin{prop}
\label{prop:non_lin_prof}We have
\begin{align}
\forall J,\ u_{n}(t)  &=\sum_{1\leq j \leq J} U_n^j(t) + R_{n}^{J}(t) \label{eq:dec_uj} \\
&= \sum_{\substack{j\in J_{\rm comp} \cup J_{\rm conc}\\1\leq j\leq J}} U_n^j(t)+ \sum_{\substack{j\in J_{\rm diff}\\1\leq j\leq J}} V_n^j(t) + \tilde R_{n}^{J}(t), \nonumber
\end{align}
where
\begin{equation*}
\lim_{J\rightarrow\infty}\limsup_{n\rightarrow\infty}\Vert R_{n}^{J}\Vert_{L^{5}L^{10}}= \lim_{J\rightarrow\infty}\limsup_{n\rightarrow\infty}\Vert \tilde R_{n}^{J}\Vert_{L^{5}L^{10}}=0.
\end{equation*}
\end{prop}
In order to prove the above, let
\begin{equation}
{\breve u}_{n}^{J}:=\sum_{j=1}^{J}U_{n}^{j}.
\end{equation}
Observe that $\breve u_{n}^{J}$ is solution in $\Omega$ of the following nonlinear wave
equation with Dirichlet boundary conditions:
\begin{equation}
(\partial_{t}^{2}-\Delta_{N}){\breve u}_{n}^{J}+({\breve u}_{n}^{J})^{5}=e_{n}^{J},\hspace{1em}\text{with }e_{n}^{J}:=({\breve u}_{n}^{J})^{5}-\sum_{j=1}^{J}(U_{n}^{j})^{5}.\label{eq:eq_vnj}
\end{equation}
Proposition \ref{prop:non_lin_prof} will come perturbatively from the following.
\begin{lem} \label{lem:nl_prof_pert}
We have
\begin{equation} \label{eq:orthNL}
    \forall j\neq k, \quad |U_{n}^{j}|^{4}|U_{n}^{k}| \to 0 \text{ in }L^1L^2,
\end{equation}
thus in particular
\begin{equation}
\lim_{J\to\infty}\limsup_{n\rightarrow\infty}\Vert e_{n}^{J}\Vert_{L^{1}L^{2}}=0,\label{eq:decay_en_J}.
\end{equation}
In addition,
\begin{equation}
\vec {\breve u}_{n\restriction t = 0}^J
=\vec{u}_{n\restriction t = 0}+\vec{\alpha}_{n}^{J};\hspace{1em}\ \lim_{J\to\infty}\limsup_{n\to\infty}\Vert \SO(\cdot) \vec \alpha_{n}^{J}\Vert_{L^5L^{10}}=0.\label{eq:v_n_data}
\end{equation}
\end{lem}

\begin{proof}
\textbf{(A) The $L^1 L^2$ decay (\ref{eq:decay_en_J}).}

We will first show (\ref{eq:decay_en_J}). Observe that

\begin{equation}
|e_{n}^{J}| \lesssim_{J}\sum_{1\leq j\neq k\leq J}|U_{n}^{j}|^{4}|U_{n}^{k}|. \label{eq:L1L2dec_reduc}
\end{equation}
We will treat the three above terms separately, beginning with the mixed terms $|U_{n}^{j}|^{4}|U_{n}^{k}|$. In order to do so, we have essentially six cases to consider. We highlight that the first three cases, corresponding to $(j,k)\in J_{\rm diff}^2 \cup J_{\rm diff} \times J_{\rm comp}\cup J_{\rm comp}^2$, were already treated in \cite{DL} (in the radial case, but this part of the proof holds in the same way).

\textbf{(A.1) The mixed term $|U_{n}^{j}|^{4}|U_{n}^{k}|$ for $j,k\in J_{\text{diff}}$.} 

We first show
\begin{equation} \label{eq:A1}
\left\Vert|U_{n}^{j}|^{4}|U_{n}^{k}|\right\Vert_{L^{1}L^{2}}\longrightarrow0,\ \text{for }j,k\in J_{\text{diff}}.
\end{equation}
The proof is the same as in \cite{DL}, we reproduce it for completeness.
Note that
$$
|U_{n}^{j}|^{4}|U_{n}^{k}|\leq|V_{n}^{j}|^{4}|V_{n}^{k}|+|U_{n}^{j}|^{4}|V_{n}^{k}-U_{n}^{k}|+|V_{n}^{j}||V_{n}^{k}-U_{n}^{k}|^{4},
$$
thus, by Hölder inequality
\begin{equation}
\left\Vert|U_{n}^{j}|^{4}|U_{n}^{k}|\right\Vert_{L^{1}L^{2}}\leq\left\Vert|V_{n}^{j}|^{4}|V_{n}^{k}|\right\Vert_{L^{1}L^{2}}+\left\Vert U_{n}^{j}\right\Vert_{L^{5}L^{10}}^{4}\left\Vert V_{n}^{k}-U_{n}^{k}\right\Vert_{L^{5}L^{10}}
+\left\Vert V_{n}^{j}\right\Vert_{L^{5}L^{10}}\left\Vert V_{n}^{k}-U_{n}^{k}\right\Vert_{L^{5}L^{10}}^{4}.\label{eq:ineq_Vnjk_jk_diff}
\end{equation}
On the one hand, as $V_{n}^{j}$ and $V_{n}^{k}$ are rescaled solutions
of the defocusing critical nonlinear wave equation in $\mathbb{R}^{3}$ associated with orthogonal parameters,
we have, as $n$ goes to infinity (see for example \cite{BahouriGerard99})
\begin{equation}
\left\Vert|V_{n}^{j}|^{4}|V_{n}^{k}|\right\Vert_{L^{1}L^{2}}\longrightarrow0.\label{eq:ineq_Vnjk_jk_diff_1}
\end{equation}
On the other hand, as
$$
\sup_{n}\ \left\Vert U_{n}^{j}\right\Vert_{L^{5}L^{10}}+\left\Vert V_{n}^{j}\right\Vert_{L^{5}L^{10}}<\infty,
$$
it follows from (\ref{eq:comp_U_V}) that
\begin{equation}
\left\Vert U_{n}^{j}\right\Vert_{L^{5}L^{10}}^{4}\left\Vert V_{n}^{k}-U_{n}^{k}\right\Vert_{L^{5}L^{10}}+\left\Vert V_{n}^{j}\right\Vert_{L^{5}L^{10}}\left\Vert V_{n}^{k}-U_{n}^{k}\right\Vert_{L^{5}L^{10}}^{4}\longrightarrow0\label{eq:ineq_Vnjk_jk_diff_2}
\end{equation}
as $n$ goes to infinity, and thus (\ref{eq:ineq_Vnjk_jk_diff}) combined
with (\ref{eq:ineq_Vnjk_jk_diff_1}) and (\ref{eq:ineq_Vnjk_jk_diff_2})
gives (\ref{eq:A1}).

\textbf{(A.2)  The mixed term $|U_{n}^{j}|^{4}|U_{n}^{k}|$ for $j\in J_{\rm comp}, k \in J_{\rm diff}$.} 

We now show
\begin{equation} \label{eq:A2}
\left\Vert|U_{n}^{j}|^{4}|U_{n}^{k}|\right\Vert_{L^{1}L^{2}}\longrightarrow0,\ \text{for }j\in J_{\rm comp}, k\in J_{\rm diff}.
\end{equation}
We follow \cite{DL} once more. Similarly as before,
\begin{equation}
\left\Vert|U_{n}^{j}|^{4}|U_{n}^{k}|\right\Vert_{L^{1}L^{2}}\leq
\left\Vert|U_{n}^{j}|^{4}|V_{n}^{k}|\right\Vert_{L^{1}L^{2}}+\left\Vert U_{n}^{j}\right\Vert_{L^{5}L^{10}}^4\left\Vert V_{n}^{k}-U_{n}^{k}\right\Vert_{L^{5}L^{10}}.\label{eq:Vnj_Vnk_compdiff_1}
\end{equation}
On the one hand, we already saw that for $k\in J_{\text{diff}}$
\begin{equation}
\left\Vert U_{n}^{j}\right\Vert_{L^{5}L^{10}}^4\left\Vert V_{n}^{k}-U_{n}^{k}\right\Vert_{L^{5}L^{10}}\underset{n\to\infty}{\longrightarrow}0.\label{eq:Vnj_Vnk_compdiff_2}
\end{equation}
On the other hand, by Hölder inequality and change of variables
\begin{multline*}
\left\Vert|U_{n}^{j}|^{4}|V_{n}^{k}|\right\Vert_{L^{1}L^{2}}\leq\left\Vert U_{n}^{j}\right\Vert_{L^{5}L^{10}}^{3}\left\Vert V_{n}^{k}U_{n}^{j}\right\Vert_{L^{5/2}L^{5}}\\
=\left\Vert U^{j}\right\Vert_{L^{5}L^{10}}^{3}\frac{1}{\sqrt{\lambda_{k,n}}}\Big(\int\Big(\int_{\Omega}U^{j}(s,x)^{5}V^{k}\Big(\frac{s+t_{j,n}-t_{k,n}}{\lambda_{k,n}},\frac{x-x_{k,n}}{\lambda_{k,n}}\Big)^{5}\ dx\Big)^{1/2}ds\Big)^{2/5}.
\end{multline*}
As the above expression is uniformly continuous in $(U^j, V^{k})\in L^{5}L^{10}$,
we can assume that both are continuous and compactly supported. Then, if $\lambda_{n,k}$ is bounded and $|x_{n,k}|\rightarrow+\infty$, the above vanishes for $n$ big enough. On the other hand, if $\lambda_{k,n}\rightarrow +\infty$, we get
\begin{equation}
\left\Vert|U_{n}^{j}|^{4}|V_{n}^{k}|\right\Vert_{L^{1}L^{2}}\lesssim\frac{1}{\sqrt{\lambda_{k,n}}}\longrightarrow0,\label{eq:Vnj_Vnk_compdiff3}
\end{equation}
and thus by (\ref{eq:Vnj_Vnk_compdiff_1}), (\ref{eq:Vnj_Vnk_compdiff_2})
and (\ref{eq:Vnj_Vnk_compdiff3}), (\ref{eq:A2}) follows.

\textbf{(A.3) The mixed term $|U_{n}^{j}|^{4}|U_{n}^{k}|$ for $j, k \in J_{\rm comp}$.} 

Let us show
\begin{equation} \label{eq:A3}
\left\Vert|U_{n}^{j}|^{4}|U_{n}^{k}|\right\Vert_{L^{1}L^{2}}\longrightarrow0,\ \text{for }j,k\in J_{\rm comp}.
\end{equation}
Observe that
\begin{equation*}
\left\Vert|U_{n}^{j}|^{4}|U_{n}^{k}|\right\Vert_{L^{1}L^{2}}
=\int\Big(\int_{\Omega}U^{j}\big({t-t_{j,n}},{x}\big)^{8}U^{k}\big({t-t_{k,n}},{x}\big)^{2}\ dx\Big)^{1/2}dt.\label{eq:Vnj_Vnk_compcomp_1}
\end{equation*}
Hence, by change of variable $s=t-t_{j,n}$ 
\[
\left\Vert|U_{n}^{j}|^{4}|U_{n}^{k}|\right\Vert_{L^{1}L^{2}}=\int\Big(\int_{\Omega}U^{j}\big({s},{x}\big)^{8}U^{k}\big({s+t_{j,n}-t_{k,n}},{x}\big)^{2}\ dx\Big)^{1/2}ds.
\]
As this expression is uniformly continuous in $(U^{j},U^{k})\in L^{5}L^{10}$,
we may assume that both are continuous and compactly supported. But
for such functions, the above expression vanishes for $n$ large enough
because of the orthogonality of the parameters
$$
|t_{j,n}-t_{k,n}|\longrightarrow+\infty,
$$
and therefore (\ref{eq:A3}) holds. 

\textbf{(A.4)  The mixed term $|U_{n}^{j}|^{4}|U_{n}^{k}|$ for $j\in J_{\rm diff}, k \in J_{\rm conc}$.} 

We now show
\begin{equation} \label{eq:A4}
\left\Vert|U_{n}^{j}|^{4}|U_{n}^{k}|\right\Vert_{L^{1}L^{2}}\longrightarrow0,\ \text{for }j\in J_{\rm diff}, k\in J_{\rm conc}.
\end{equation}
In order to do so, we use again
$$
\left\Vert|U_{n}^{j}|^{4}|U_{n}^{k}|\right\Vert_{L^{1}L^{2}}\leq\left\Vert|V_{n}^{j}|^{4}|U_{n}^{k}|\right\Vert_{L^{1}L^{2}}+\left\Vert U_{n}^{k}\right\Vert_{L^{5}L^{10}}\left\Vert V_{n}^{j}-U_{n}^{j}\right\Vert^4_{L^{5}L^{10}},
$$
where we already saw that for $j\in J_{\text{diff}}$
$$
\left\Vert U_{n}^{k}\right\Vert_{L^{5}L^{10}}^4\left\Vert V_{n}^{j}-U_{n}^{j}\right\Vert^4_{L^{5}L^{10}}\underset{n\to\infty}{\longrightarrow}0,
$$
and hence, in order to show (\ref{eq:A4}), it remains to show that
\begin{equation}
\left\Vert|V_{n}^{j}|^4U_{n}^{k}\right\Vert_{L^{1}L^{2}} \longrightarrow 0. \label{eq:A4_0}
\end{equation}
In order to do so, let $U^k$, $\widetilde U_n^k$, $\phi^{\pm \infty}_k$ , and for $\epsilon >0$ arbitrary, $T=T_k>0$ be given by Proposition \ref{prop:concprof} (applied to the profile $\vec \psi^{(k)} _{\Omega, \mathcal{O}^{(k)},n}$, recall that $k$ is here fixed).
On the one hand, recalling the definition of $v_n^{k, \pm}$ (\ref{eq:conc_v}), by Hölder inequality, the triangle inequality, and Proposition \ref{prop:concprof}, we have for $n$ big enough
\begin{align}
\left\Vert|V_{n}^{j}|^{4}U_{n}^{k}\right\Vert_{L^{1}(| t - t_{k,n}| \geq T \lambda_{k,n}, L^{2})}
&\leq
 \left\Vert V_{n}^{j}\right\Vert_{L^{5}L^{10}}^4\left\Vert  U_n^k\right\Vert_{L^{5}(| t - t_{k,n}| \geq T \lambda_{k,n}, L^{10})} \nonumber \\
&\leq
\left\Vert V_{n}^{j}\right\Vert_{L^{5}L^{10}(\mathbb R^3)}^4\left\Vert  U_n^k - v_n^{k, +}\right\Vert_{L^{5}( t - t_{k,n} \geq T \lambda_{k,n}, L^{10})} \nonumber \\
&\hspace{0.5cm}+ \Vert V^j \Vert_{L^{5}L^{10}(\mathbb R^3)}^4 \left\Vert  v_n^{k,+}\right\Vert_{L^{5}(| t - t_{k,n}| \geq T \lambda_{k,n}, L^{10})} \nonumber \\
&\hspace{0.5cm}+
\left\Vert V_{n}^{j}\right\Vert_{L^{5}L^{10}(\mathbb R^3)}^4\left\Vert  U_n^k - v_n^{k, -}\right\Vert_{L^{5}( t - t_{k,n} \leq - T \lambda_{k,n}, L^{10})} \nonumber \\
&\hspace{0.5cm}+ \Vert V^j \Vert_{L^{5}L^{10}(\mathbb R^3)}^4 \left\Vert  v_n^{k,-}\right\Vert_{L^{5}(| t - t_{k,n}| \geq T \lambda_{k,n}, L^{10})} \nonumber \\
&\leq 4 \Vert V^j \Vert_{L^{5}L^{10}(\mathbb R^3)}^4 \epsilon \label{eq:A4_1},
\end{align}
where we used Proposition \ref{propnoncon} together with Hölder inequality and Strichartz estimates to control the Strichartz norm of $v_n^{k, \pm}$ outside the concentration times.
On the other hand, still by Proposition \ref{prop:concprof},
\begin{align*}
 &\left\Vert|V_{n}^{j}|^{4}U_{n}^{k}\right\Vert_{L^{1}(| t - t_{k,n}|\leq T \lambda_{k,n}, L^{2})}  \\
&\hspace{1cm}\leq \left\Vert|V_{n}^{j}|^{4}\tilde U_n^k\right\Vert_{L^{1}L^{2}}+\left\Vert V_{n}^{j}\right\Vert_{L^{5}L^{10}}^4\left\Vert  U_n^k-\tilde U_n^k\right\Vert_{L^{5}(| t - t_{k,n}| \leq T \lambda_{k,n}, L^{10})} \nonumber \\
&\hspace{1cm} \leq \left\Vert|V_{n}^{j}|^{4}\tilde U_n^k\right\Vert_{L^{1}L^{2}}+\Vert V^j \Vert_{L^{5}L^{10}(\mathbb R^3)}^4 \epsilon.
\end{align*}
Combining the above with (\ref{eq:A4_1}), in order to show (\ref{eq:A4_0}) and hence to conclude, it only remains to show that, as $n\rightarrow \infty$
\begin{equation} \label{eq:A4_3}
\left\Vert|V_{n}^{j}|^{4} \tilde U_n^k\right\Vert_{L^{1} L^{2}} \longrightarrow 0.
\end{equation}
This follows by the orthogonality of the parameters. Indeed, by Hölder inequality,
\begin{multline*}
\left\Vert|V_{n}^{j}|^4 \tilde U_n^k \right\Vert_{L^{1}L^{2}}\leq\left\Vert  V_{n}^{j}\right\Vert_{L^{5}L^{10}}^{3}\left\Vert V_{n}^{j} \tilde U_n^k \right\Vert_{L^{5/2}L^{5}} 
= \Vert V^j \Vert^3_{L^5 L^{10}} \frac{1}{\sqrt{\lambda_{k,n} \lambda_{j,n}}} \\ \times \Bigg( \int \Big( \int_\Omega \Big| U^k \big(\frac{t-t_{n,k}}{\lambda_{k,n}},\frac{x-x_{n,k}}{\lambda_{k,n}}) V^j \big(\frac{t-t_{n,j}}{\lambda_{j,n}},\frac{x-x_{n,j}}{\lambda_{j,n}})\Big|^5 dx \Big)^{\frac 1 2} dt \Bigg)^{\frac 2 5},
\end{multline*}
and, by change of variable, we get the two inequalities
\begin{equation*}
\left\Vert V_{n}^{j} \tilde U_n^k \right\Vert_{L^{5/2}L^{5}} 
\leq 
\begin{cases}
\sqrt{\frac{\lambda_{k,n}}{ \lambda_{j,n}}}  \Bigg( \int \Big( \int_{} \Big| U^k \big(\tau, y) V^j \big(\frac{\tau \lambda_{k,n} + t_{k,n} -t_{n,j}}{\lambda_{j,n}},\frac{y \lambda_{k,n} + t_{k,n}-x_{n,j}}{\lambda_{j,n}})\Big|^5 dy \Big)^{\frac 1 2} d\tau \Bigg)^{\frac 2 5}, \\
\sqrt{\frac{\lambda_{j,n}}{ \lambda_{k,n}}}  \Bigg( \int \Big( \int_{} \Big| V^j \big(\tau, y) U^k \big(\frac{\tau \lambda_{j,n} + t_{j,n} -t_{n,k}}{\lambda_{k,n}},\frac{y \lambda_{j,n} + t_{j,n}-x_{n,k}}{\lambda_{k,n}})\Big|^5 dy \Big)^{\frac 1 2} d\tau \Bigg)^{\frac 2 5}.
\end{cases}
\end{equation*}
Observing that the above expressions are uniformly continuous in $(U^{j},V^{k})\in L^{5}L^{10}$,
and assuming that both are continuous and compactly supported, each regime of parameter orthogonality gives the desired result, and (\ref{eq:A4_3}) follows.
This finishes the proof of (\ref{eq:A4}).

\textbf{(A.5)  The mixed term $|U_{n}^{j}|^{4}|U_{n}^{k}|$ for $j\in J_{\rm comp}, k \in J_{\rm conc}$.} 

Let us show
\begin{equation} \label{eq:A5}
\left\Vert|U_{n}^{j}|^{4}|U_{n}^{k}|\right\Vert_{L^{1}L^{2}}\longrightarrow0,\ \text{for }j\in J_{\rm comp}, k\in J_{\rm conc}.
\end{equation}
This case is essentially a simplified version of the previous one (A.4).
Again, let $U^k$, $\tilde U_n^k$, and for $\epsilon >0$ arbitrary, $T=T_k>0$ be given by Proposition \ref{prop:concprof}.
On the one hand, by H\"older inequality, triangle inequality, and Proposition \ref{prop:concprof}, we have for $n$ big enough
\begin{align*}
&\left\Vert|U_{n}^{j}|^{4}U_{n}^{k}\right\Vert_{L^{1}(| t - t_{k,n}| \geq T \lambda_{j,n}, L^{2})} \\
&\hspace{1cm}\leq
\left\Vert U_{n}^{j}\right\Vert_{L^{5}L^{10}}^4\left\Vert  U_n^k\right\Vert_{L^{5}(| t - t_{k,n}| \geq T \lambda_{k,n}, L^{10})} \\
&\hspace{1cm}\leq \Vert U^j \Vert_{L^{5}L^{10}(\mathbb R^3)}^4 \Big( \left\Vert  U_n^k - v_n^{k,+}\right\Vert_{L^{5}( t - t_{k,n} \geq T \lambda_{k,n}, L^{10})} + \left\Vert v_n^{k,+}\right\Vert_{L^{5}(| t - t_{k,n}| \geq T \lambda_{k,n}, L^{10})} \Big) \\
&\hspace{1.3cm}+ \Vert U^j \Vert_{L^{5}L^{10}(\mathbb R^3)}^4 \Big( \left\Vert  U_n^k - v_n^{k,-}\right\Vert_{L^{5}( t - t_{k,n} \leq -T \lambda_{k,n}, L^{10})} + \left\Vert v_n^{k,-}\right\Vert_{L^{5}(| t - t_{k,n}| \geq T \lambda_{k,n}, L^{10})} \Big) \\
&\hspace{1cm}\leq 4 \Vert U^j \Vert_{L^{5}L^{10}(\mathbb R^3)}^4 \epsilon,
\end{align*}
where we again used Proposition \ref{propnoncon} to control the $v_n^{k, \pm}$ terms outside of concentration times. On the other hand, in the same way,
\begin{align*}
&\left\Vert|U_{n}^{j}|^{4}U_{n}^{k}\right\Vert_{L^{1}(| t - t_{k,n}| \leq T \lambda_{k,n}, L^{2})}
\\
&\hspace{1cm}\leq
\left\Vert|U_{n}^{j}|^{4}\tilde U_n^k\right\Vert_{L^{1}L^{2}}+\left\Vert U_{n}^{j}\right\Vert_{L^{5}L^{10}}^4\left\Vert  U_n^k-\tilde U_n^k\right\Vert_{L^{5}(| t - t_{k,n}| \leq T \lambda_{k,n}, L^{10})} \\
&\hspace{1cm}\leq \left\Vert|U_{n}^{j}|^{4}\tilde U_n^k\right\Vert_{L^{1}L^{2}}+\Vert U^j \Vert_{L^{5}L^{10}(\mathbb R^3)}^4 \epsilon.
\end{align*}
It therefore only remains to show that, as $n\rightarrow \infty$
$$
\left\Vert|U_{n}^{j}|^{4}\tilde U_n^k\right\Vert_{L^{1}L^{2}} \longrightarrow 0.
$$
This follows exactly as in the end of (A.4) and (\ref{eq:A5}) follows.

\textbf{(A.6) The mixed term $|U_{n}^{j}|^{4}|U_{n}^{k}|$ for $j, k\in J_{\text{conc}}$.}
 
Finally, we show
\begin{equation} \label{eq:A6}
\left\Vert|U_{n}^{j}|^{4}|U_{n}^{k}|\right\Vert_{L^{1}L^{2}}\longrightarrow0,\ \text{for }j,k\in J_{\rm conc}.
\end{equation}
In the same spirit as for (A.4) and (A.5), let $U^k$, $U^j$, $\tilde U_n^k$, $\tilde U_n^j$,  and for $\epsilon >0$ arbitrary, $T_k, T_j>0$ be given by Proposition \ref{prop:concprof}. We let $T := \max(T_k, T_j).$
On the one hand, by H\"older inequality, for $n$ big enough
\begin{align} \label{eq:A6_1}
&\left\Vert|U_{n}^{j}|^{4}U_{n}^{k}\right\Vert_{L^{1}(| t - t_{k,n}| \geq T \lambda_{k,n}, L^{2})} \nonumber \\
&\hspace{1cm} \leq 
\left\Vert U_{n}^{j}\right\Vert_{L^{5}L^{10}}^4\left\Vert  U_n^k\right\Vert_{L^{5}(| t - t_{k,n}| \geq T \lambda_{k,n}, L^{10})} \nonumber \\
&\hspace{1cm}\leq \sup_n \Vert U_n^j \Vert_{L^{5}L^{10}(\mathbb R^3)}^4 \Big ( \left\Vert   U_n^k - v_n^{k,+}\right\Vert_{L^{5}( t - t_{k,n} \geq T \lambda_{k,n}, L^{10})} \nonumber + \left\Vert   v_n^{k,+}\right\Vert_{L^{5}(| t - t_{k,n}| \geq T \lambda_{k,n}, L^{10})} \nonumber \\
& \hspace{2cm} + \left\Vert   U_n^k - v_n^{k,-}\right\Vert_{L^{5}( t - t_{k,n} \leq -T \lambda_{k,n}, L^{10})} + \left\Vert   v_n^{k,-}\right\Vert_{L^{5}(| t - t_{k,n}| \geq T \lambda_{k,n}, L^{10})} \Big)\nonumber \\
&\hspace{1cm} \lesssim_j \epsilon,
\end{align}
where we again used Proposition \ref{propnoncon} to control the $v_n^{k,\pm}$ terms outside of concentration times. In the same way
\begin{equation} \label{eq:A6_2}
\left\Vert|U_{n}^{j}|^{4}U_{n}^{k}\right\Vert_{L^{1}(| t - t_{j,n}| \geq T \lambda_{j,n}, L^{2})} \lesssim_k \epsilon.
\end{equation}
On the other hand, denoting
$$
I_{j,k}^n := \big\{ | t - t_{j,n}| \leq T \lambda_{j,n} \big\} \cap \{ | t - t_{k,n}| \leq T \lambda_{k,n} \big\},
$$
we have by the triangle inequality together with the inequality $(a+b)^4 \lesssim a^4 + b^4$ for $a,b>0$ 
\begin{align*}
\left\Vert|U_{n}^{j}|^{4}|U_{n}^{k}|\right\Vert_{L^{1}(I_{j,k}^n, L^{2})}
\lesssim& \left\Vert|\tilde U_{n}^{j}|^{4}\tilde U_{n}^{k}\right\Vert_{L^{1}L^{2}}
+ \left\Vert|\tilde U_{n}^{j} -  U_n^j \big|^{4}|\tilde U_{n}^{k} -  U_n^k|\right\Vert_{L^{1}(I_{j,k}^n, L^{2})} \\
&+ \left\Vert|\tilde U_{n}^{j} -  U_n^j \big|^{4}  U_n^k \right\Vert_{L^{1}(I_{j,k}^n, L^{2})} 
+ \left\Vert | U_n^j|^4 |\tilde U_{n}^{k} -  U_n^k \big| \right\Vert_{L^{1}(I_{j,k}^n, L^{2})} 
\end{align*}
and hence by Hölder inequality and Proposition \ref{prop:concprof},
$$
\left\Vert|U_{n}^{j}|^{4}|U_{n}^{k}|\right\Vert_{L^{1}(I_{j,k}^n, L^{2})}
\lesssim  \left\Vert|\tilde U_{n}^{j}|^{4}\tilde U_{n}^{k}\right\Vert_{L^{1}L^{2}}
+ \epsilon.
$$
Therefore, combining the above with (\ref{eq:A6_1}) and (\ref{eq:A6_2}) it only remains to show that
$$
\left\Vert|\tilde U_{n}^{j}|^{4}\tilde U_{n}^{k}\right\Vert_{L^{1}L^{2}}
\longrightarrow 0.
$$
This follows exactly as in the end of the case (A.4). We conclude that (\ref{eq:A6}) holds.

\textbf{(A.7) Conclusion: the $L^1 L^2$ decay (\ref{eq:decay_en_J}).}

We have shown that
$$
\begin{cases}
\left\Vert|U_{n}^{j}|^{4}U_{n}^{k}\right\Vert_{L^{1}L^{2}}\longrightarrow0, \\
\forall (j,k)\in J_{\rm diff}^2 \cup J_{\rm diff} \times J_{\rm comp}\cup J_{\rm comp}^2 \cup J_{\rm diff} \times J_{\rm conc} \cup J_{\rm comp} \times J_{\rm conc}  \cup J_{\rm conc}^2.
\end{cases}
$$
The remaining, almost symmetrical cases $(j,k) \in J_{\rm comp} \times J_{\rm diff} \cup
J_{\rm conc} \times J_{\rm diff} \cup  J_{\rm conc} \times J_{\rm comp}$ are obtained in the exact same way. Hence 
$$
\left\Vert|U_{n}^{j}|^{4}U_{n}^{k}\right\Vert_{L^{1}L^{2}}\longrightarrow0, \hspace{0.5cm}
\forall (j,k),
$$
and it follows that
\begin{equation}
\forall J,\ \left\Vert\sum_{1\leq j\neq k\leq J}|U_{n}^{j}|^{4}|U_{n}^{k}|\right\Vert_{L^{1}L^{2}}\longrightarrow0.\label{eq:enj_dec_mix}
\end{equation}
Combining (\ref{eq:enj_dec_mix}) 
with (\ref{eq:L1L2dec_reduc}), we obtain the $L^{1}L^{2}$ decay of the error term $e_{n}^{J}$, that is (\ref{eq:decay_en_J}).

\textbf{(B) The data approximation (\ref{eq:v_n_data})}

Let us now show (\ref{eq:v_n_data}). Observe that
\begin{align*}
&{\breve u}_{n}(0) - u_n(0) = \sum_{1\leq j \leq J} \vec U_n^j(0) - \vec \varphi_0^{n}  \\
&\hspace{0.5cm}= \sum_{\substack{1\leq j \leq J \\ j \in J_{\rm diff}}} \Big(\vec{U}_n^j(0) - \vec \psi^{(j)} _{\Omega, \mathcal{O}^{(j)},n} \Big)
+\sum_{\substack{1\leq j \leq J \\ j \in J_{\rm comp}}} \Big( \vec U^{j}({-t_{j,n}}) - \vec \psi^{(j)} _{\Omega, \mathcal{O}^{(j)},n} \Big)
- \vec w_n^{(J)},
\end{align*}
where we used the fact that, by definition, for $j \in J_{\rm conc}$, we have $\vec{U}_n^j(0)= \vec \psi^{(j)}_{\Omega, \mathcal{O}^{(j)},n}$.
The decay of $\SO(\cdot) \vec w_n^{(J)}$ in $L^5(\mathbb R, L^{10}(\Omega))$ comes directly from the decay of the reminder in the linear profile decomposition. On the other hand, for any $j\in J_{\rm diff}$, by 
(\ref{eq:comp_U_V}) and
(\ref{eq:def_Uj_diff}), in $\dot H^1(\Omega)$, as $n$ goes to infinity
\begin{align*}
\vec{U}_n^j(0) &= \vec V_n^j(0) + o(1) = \frac{1}{\lambda_{j,n}^{1/2}}V^{j}\Big(\frac{-t_{j,n}}{\lambda_{j,n}}, \frac{\cdot-x_{j,n}}{\lambda_{j,n}}\Big) + o(1) \\
 &=  \frac{1}{\lambda_{j,n}^{1/2}}V_L^{j}\Big(\frac{-t_{j,n}}{\lambda_{j,n}}, \frac{\cdot-x_{j,n}}{\lambda_{j,n}}\Big) + o(1) \\
 &=  \psi^{(j)} _{\mathbb R^3, \mathcal{O}^{(j)},n} + o(1)
 =  \psi^{(j)} _{\Omega, \mathcal{O}^{(j)},n} + o(1),
\end{align*}
were we used Lemma \ref{lem:assfree} on the last line. The time derivative component is handled in the same way, and we obtain, in $\mathcal H(\Omega)$,
$$
\vec{V}_n^j(0) - \vec \psi^{(j)} _{\Omega, \mathcal{O}^{(j)},n} \rightarrow 0.
$$
Furthermore, from (\ref{eq:def_Uj_comp}), we have for $j\in J_{\rm comp}$, in $\mathcal H(\Omega)$
$$
\vec U^{j}({-t_{j,n}}) - \vec \psi^{(j)} _{\Omega, \mathcal{O}^{(j)},n}\rightarrow 0.
$$
The estimate (\ref{eq:v_n_data}) follows from global Strichartz estimates. This ends the proof of the Lemma.
\end{proof}
We also need some uniform bound of the approximations ${\breve u}_{n}^{J}$
\begin{lem} \label{lem:bddUnif} 
    There exists a uniform $M>0$ so that for every $J\in \N$,
\begin{equation*}
\limsup_{n\rightarrow\infty}\nor{{\breve u}_{n}^{J}}{L^{5}L^{10}} \leq M.
\end{equation*}
\end{lem}
\begin{proof}
By (\ref{eq:orthNL}),
$$
\lim_{J \to \infty} \lim_{n\to \infty}\Vert {\breve u}_{n}^{J} \Vert^5_{L^5L^{10}} = \lim_{J \to \infty} \lim_{n\to \infty}\sum_{j=1}^J \Vert U_n^j \Vert^5_{L^5L^{10}},
$$
hence in particular, 
$$
\lim_{n\to \infty}\Vert {\breve u}_{n}^{J} \Vert^5_{L^5L^{10}} \leq C + \lim_{n\to \infty}\sum_{j=1}^J \Vert U_n^j \Vert^5_{L^5L^{10}}.
$$
By the small data theory \eqref{e:strigchglob} in Proposition \ref{prop:perturb_scat}, there exists $\epsilon_0 > 0$ so that
$$
E(\vec \varphi) \leq \epsilon_0 \implies 
\Vert  \NLSO(\cdot)\vec \varphi \Vert_{L^5L^{10}} \leq C_{\epsilon_0} \Vert \vec \varphi \Vert_{\mathcal H}.
$$
Let $J_0 \geq 0$ big enough so that
$$
j \geq J_0 \implies \Vert \vec \psi^{(j)}_{\Omega, \mathcal O^{(j)}, n}\Vert_{\mathcal H} \leq \epsilon_0.
$$
Using the fact that, as shown in the proof of Lemma \ref{lem:nl_prof_pert}, part (B),
$$
\forall j, \quad \Vert U_n^j(\tau_{j,n}) \Vert_{\mathcal H}  = \Vert \vec \psi^{(j)}_{\Omega, \mathcal O^{(j)}, n}\Vert_{\mathcal H} + o_n(1), \quad \tau_{j,n} := 
\begin{cases}
0 &\text{ if } j \in J_{\rm diff} \cup J_{\rm conc}, \\
-t_{j,n} &\text{ if } j \in J_{\rm comp},
\end{cases}
$$
we can now write, using \eqref{eq:tout_est_diffusif} for $j< J_0$ and the previous estimate otherwise,
\begin{align*}
\lim_{n\to \infty}\Vert {\breve u}_{n}^{J} \Vert^5_{L^5L^{10}} 
&\leq C_{J_0} + C_{\epsilon_0}\lim_{n\to \infty}\sum_{j=J_0}^J \Vert \vec \psi^{(j)}_{\Omega, \mathcal O^{(j)}, n} \Vert^5_{\mathcal H}  \\
&  \lesssim 1 + \lim_{n\to \infty}\sum_{j=J_0}^J \Vert \vec \psi^{(j)}_{\Omega, \mathcal O^{(j)}, n} \Vert^2_{\mathcal H},
\end{align*}
where we used the injection $\ell^{5} \hookrightarrow \ell^2$, and the result follows thanks to the orthogonality of the  linear profiles (\ref{eq:lpd_pyt}).
\end{proof}

The proof of the nonlinear profile decomposition follows:
\begin{proof}[Proof of Proposition \ref{prop:non_lin_prof}]
By Lemma \ref{lem:nl_prof_pert} and Lemma \ref{lem:bddUnif}, 
the perturbative result of Proposition \ref{lem:perturb} gives
$$
u_{n}=\breve u_{n}^{J}+{R}_{n}^{J},
$$
with
$$
\lim_{J\rightarrow\infty}\limsup_{n\rightarrow\infty}\Vert{R}_{n}^{J}\Vert_{L^{5}L^{10}}=0.
$$
Finally, (\ref{eq:comp_U_V}) enables us to replace all the $U_{n}^{j}$ by
$V_{n}^{j}$ for $j \in J_{\text{diff}}$ in the definition of $\breve u_{n}^{J}$.
\end{proof}

We are now in position to end the proof of the Theorem. Indeed, by
the nonlinear profiles decomposition Proposition \ref{prop:non_lin_prof}, together with (\ref{eq:tout_est_diffusif}), $u_{n}$ is in $L^{5}L^{10}$ 
with a uniform bound in $n$ for $n$ large enough, and the definition of the minimizing sequence (\ref{eq:suite_u0n}) is
contradicted. Therefore the assumption $J>1$ (\ref{eq:more_than_one_profile})
cannot hold, that is, $J=1$: there is only one non-trivial profile
in the decomposition (\ref{eq:suite_u0n}):
\begin{equation}
\label{dev_u0n}
\vec{\varphi}_{0}^{n}=\vec \psi _{\Omega, \mathcal{O},n}+\vec{w}_{n},\hspace{1em}\Vert \SO(\cdot)\vec{w}_{n}\Vert_{L^{5}L^{10}}\longrightarrow0  
\end{equation} 
Let us show that it is the fully-compact one: $t_{1,n}=0$,
$\lambda_{1,n}=1$, $x_{1,n} = 0$;
that is $1\in J_{\rm comp}$ with $t_{1,n}=0$.

As noticed before,
as the scattering in the free space $\mathbb{R}^{3}$ is well known,
we have $V^j\in L^{5}L^{10}$ for any $j\in J_{\text{diff}}$. Therefore, if $1\in J_{\text{diff}}$,
the same proof as before yields
the decomposition: 
\begin{equation}
u_{n}(t) =\frac{1}{\lambda_{1,n}^{1/2}}V^{1}\Big(\frac{t-t_{1,n}}{\lambda_{1,n}},\frac{\cdot}{\lambda_{1,n}}\Big)+R_{n}(t)\label{eq:dec_uj-1}
\end{equation}
with 
\begin{equation}
\label{decRn}
\limsup_{n\rightarrow\infty}\Vert R_{n}\Vert_{L^{5}L^{10}}=0,
\end{equation}
proving that $u_n\in L^5L^{10}$, a contradiction. 
Similarly, if $1 \in J_{\rm conc}$, the same proof as before gives thanks to Proposition \ref{prop:concprof}
$$
u_n (t) = U_n(t) +R_{n}(t)
$$
with
$$
\sup_n \Vert U_n \Vert_{L^5 L^{10}}<\infty, \hspace{0.3cm} \limsup_{n\rightarrow\infty}\Vert R_{n}\Vert_{L^{5}L^{10}}=0,
$$
proving again that $u_n\in L^5L^{10}$, a contradiction. Thus $1\in J_{\rm comp}$.

It remains to eliminate the case $t_{1,n}\rightarrow\pm\infty$.
Let us for example assume, by contradiction, that $t_{1,n}\rightarrow+\infty$.
This implies
$$\lim_{n\to\infty}\left\|\SO(\cdot-t_{1,n})\vec{\psi}^1\right\|_{L^5\left((-\infty,0)L^{10}\right)}=0,$$
and we obtain, by the small data well-posedness theory, that for large $n$, $u_n\in L^{5}((-\infty,0), L^{10})$ with 
$$\lim_{n\to\infty} \|u_n\|_{L^5((-\infty,0),L^{10})}=0,$$
contradicting (\ref{eq:symL5L10infty}). 
The case $t_{1,n}\rightarrow-\infty$
is eliminated in the same way.

Therefore, $\vec{\varphi}_{0}^{n}$ writes:
\begin{equation} \label{eq:dec_crit_sol}
\vec{\varphi}_{0}^{n}=\vec{\psi}^{1}+\vec{w}_{n},\hspace{1em}\Vert \SO (\cdot)\vec{w}_{n}\Vert_{L^{5}L^{10}}\rightarrow0.
\end{equation}
with $\vec{\psi}^{1} \in \mathcal H(\Omega)$. By the Pythagorean expansion and its $L^{6}$ version, $\mathscr{E}(\vec{\psi}^{1})\leq E_{c}$,
and therefore
$$
\mathscr{E}(\vec{\psi}^{1})=E_{c}
$$
otherwise, by the definition of $E_{c}$,
$u_{n}$ scatters. This implies, by the Pythagorean expansion again,
$$
\Vert\vec{w}_{n}\Vert_{\mathcal{H}(\Omega)}\rightarrow0,
$$
that is $\vec{u}_{0}^{n} \rightarrow \vec{\psi}^{1}$ strongly in $\mathcal H (\Omega)$.
We take $\vec{\varphi}_{c}$ to be this profile:
$$
\vec{\varphi}_{c}:=\vec{\psi}^{1}.
$$
By the conservation of energy, we have $\mathscr{E}(\NLSO(t)\vec{\varphi}_{c})=E_{c}$
for any $t$, and the same argument applied to
$\NLSO(t_{n})\vec{\varphi}_{c}$
for any sequence $(t_{n})_{n\geq1}\in\mathbb{R}^{\mathbb{N}}$ shows
that the nonlinear flow $\big\{ t\in\mathbb{R},\;\NLSO(t)\vec{\varphi}_{c}\big\} $
has a compact closure in $\mathcal{H}(\Omega)$. Indeed this sequence satisfies the same assumptions 
as $\vec{\varphi}^0_n$, namely \eqref{eq:suite_u0n} at the beginning of the proof, and will therefore have a convergent
subsequence in $\mathcal{H}(\Omega)$ as well. Finally, observe that $\mathscr{E}(\vec{\varphi}_{c})=E_{c}>0$
insures in particular that $\vec{\varphi}_{c} \neq \vec{0}$; and
we have $\NLSO(\cdot)\vec \varphi_c \notin L^5(\mathbb R, L^{10}(\Omega))$ otherwise  the non-scattering property of the minimizing sequence (\ref{eq:suite_u0n}) is
contradicted using (\ref{eq:dec_crit_sol}) together with the perturbative result of Proposition \ref{prop:non_lin_prof} as before.

\end{proof}

\section{Rigidity outside two strictly convex obstacles} \label{sec:rigi}
The purpose of this section is to show
the following rigidity property in the exterior of two strictly convex obstacles 
verifying Assumption \ref{ass:normal}. 

\begin{thm}[Rigidity]
\label{thm:rig}
Let $\Theta_{1},\Theta_{2}$ be two smooth, strictly convex subsets of $\mathbb{R}^{3}$ with compact boundary verifying Assumption \ref{ass:normal},
and $\Omega:=\mathbb{R}^{3}\backslash(\Theta_{1}\cup\Theta_{2})$.
There is no non-trivial solution $u$ of (\ref{eq:NLWomega})  such that $u$ does not scatter and the flow $\left\{ (u(t),\partial_{t}u(t)),\ t\geq0\right\} $
is relatively compact in $\dot{H}^{1}\times L^{2}$.
\end{thm}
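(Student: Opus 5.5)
Arguing by contradiction, I take a non-trivial solution $u$ whose forward flow $K:=\{\vec u(t),\ t\geq 0\}$ is relatively compact in $\mathcal H(\Omega)$, and I aim to show that the local nonlinear energy decays in mean, which forces scattering by the criterion recalled in the introduction (Lemma \ref{lem:contrenerg}). Compactness gives uniform smallness at spatial infinity: for every $\epsilon>0$ there is $R_\epsilon$ with $\sup_{t\geq 0}\int_{|x|\geq R_\epsilon} |\nabla u|^2+|\partial_t u|^2+|u|^6\leq \epsilon$. Relative compactness in $L^2\times L^1$ of the energy densities also yields uniform equi-integrability: for every $\epsilon$ there is $\delta$ such that $\int_A |\nabla u|^2+|\partial_tu|^2+|u|^6\leq\epsilon$ for every measurable $A$ with $|A|\leq\delta$ and every $t\geq0$. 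In particular the energy in a $\delta$-tubular neighbourhood $\mathcal N_\delta$ of the trapped segment joining $\Theta_1$ and $\Theta_2$ is uniformly small.

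I then build an almost-Morawetz multiplier, following \cite{D2pot}. Using Assumption \ref{ass:normal}, I place the trapped segment on the $x_1$-axis and take $\psi$ smooth and convex away from $\mathcal N_\delta$, with $\partial_\nu\psi\leq 0$ on $\partial\Omega$ (the sign making the boundary term $-\frac12\int_{\partial\Omega}|\partial_\nu u|^2\partial_\nu\psi$ favourable), and $\psi=|x|$ for $|x|$ large. Concretely, I glue $|x-p_1|$ and $|x-p_2|$ for interior points $p_i\in\Theta_i$ on the axis, using Assumption \ref{ass:normal} and strict convexity to check $\nabla\psi\cdot\nu\leq 0$ on the boundary. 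The function $\psi$ should fail to have $D^2\psi\geq c>0$ and $-\Delta^2\psi\geq 0$ only inside $\mathcal N_\delta$, where the bad contributions are bounded by $C_\delta\int_{\mathcal N_\delta}(|\nabla u|^2+|\partial_t u|^2+u^2/\delta^2)$. Since $\psi$ grows linearly, I localize it with a cutoff at scale $R_\epsilon$ and control the error terms through the smallness at infinity.

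Plugging $\nabla\psi\cdot\nabla u+\frac12\Delta\psi\,u$ into the identity \eqref{eq:Mo_basic}, integrating over $[0,T]$, and using the uniform bound on the boundary terms at times $0$ and $T$ gives
$$\int_0^T\int_{B(0,R)}|u|^6+|\nabla u|^2\,dx\,dt\lesssim E+T\big(\epsilon_\delta+\epsilon_R\big).$$
Dividing by $T$ and letting $T\to\infty$, then $\delta\to0$ and $R\to\infty$, shows that the mean local energy tends to zero. Lemma \ref{lem:contrenerg} then yields scattering, contradicting the hypothesis.

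The main obstacle is the construction of $\psi$. It must have the right boundary sign on both strictly convex obstacles while being convex with a nonpositive bilaplacian everywhere except in $\mathcal N_\delta$, and the lower-order term $u^2/\delta^2$ on $\mathcal N_\delta$ must be controlled. For the latter, I would first try Hardy or Sobolev together with equi-integrability: the inequality $\int_{\mathcal N_\delta}u^2\leq |\mathcal N_\delta|^{2/3}\|u\|_{L^6(\mathcal N_\delta)}^2$ gives a bound of order $\delta^{2}\cdot\delta^{-2}\cdot o(1)$, which is enough.
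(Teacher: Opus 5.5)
There is a genuine gap. Your architecture matches the paper: the momentum identity with an almost-Morawetz weight, averaged local-energy decay, compactness on a small-measure neighbourhood of the trapped ray, then Lemma~\ref{lem:contrenerg}. But the step you defer, the construction of the weight, is the whole content of the rigidity, and the candidate you sketch does not work.

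\textbf{The candidate weight.} There are two readings of your construction, and both fail.

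\emph{The sum $|x-p_1|+|x-p_2|$ with $p_i\in\Theta_i$.} It is convex with $\Delta^2\le0$, but the boundary sign fails for general strictly convex obstacles. Take two mirror-image thin ellipsoids with semi-axes $(h,r,r)$, $h\ll r$, facing each other along $e_1$; Assumption~\ref{ass:normal} holds. Pick a point of the face of $\Theta_1$ looking at $\Theta_2$, at transverse distance $\sim r$ from the axis and on the side opposite to $p_1$. There $-n\approx e_1$, so $\frac{x-p_1}{|x-p_1|}\cdot(-n)=O(h/r)$. Meanwhile $\frac{x-p_2}{|x-p_2|}\cdot(-n)$ is bounded above by a negative constant, so $\nabla\psi\cdot(-n)<0$.

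\emph{A glue of $|x-p_1|$ near $\Theta_1$ to $|x-p_2|$ near $\Theta_2$ with a partition of unity $\eta$.} The Hessian acquires $\nabla\eta\otimes\nabla g+\nabla g\otimes\nabla\eta+g\,D^2\eta$, with $g:=|x-p_1|-|x-p_2|$, and $\Delta^2\psi$ gets similar terms. These are unsigned on the whole transition region. That region must separate the two obstacles, so it is not contained in a thin tube $\mathcal N_\delta$.

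\textbf{Absorbing the bad terms.} Even granting a weight whose defects live in $\mathcal N_\delta$, your absorption does not close. The wrong-sign terms carry constants $C_\delta$ (and $\delta^{-2}$) that blow up as $\delta\to0$. Compactness only gives $\sup_{t\ge0}\int_{\mathcal N_\delta}|\nabla u|^2\to0$, with no rate. Your H\"older bound also uses the scaling of a ball: $\mathcal N_\delta$ is a tube, so $|\mathcal N_\delta|\sim\delta^2$ and $\delta^{-2}|\mathcal N_\delta|^{2/3}\sim\delta^{-2/3}$, not $O(1)$.

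\textbf{What the paper does instead.} It takes $\chi=|x-c|+|x+c|$ with $c=(c_1,0,0)$ and $c_1$ large, so the foci lie on the axis far \emph{outside} the obstacles.
\begin{itemize}
\item $\chi$ is globally convex and $\Delta^2\chi=-8\pi(\delta_c+\delta_{-c})\le0$. No term of \eqref{eq:mor} has the wrong sign, and no spatial cutoff is needed, since $\nabla\chi$ is bounded and Hardy controls $\Delta\chi\,u$.
\item The only defect is degeneracy of $D^2\chi$ on the axis, quantified as $D^2\chi\gtrsim\alpha$ on $S(\alpha)=\Omega\cap B(0,A)\cap\{b^2\ge\alpha\}$. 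The complement has small measure and is handled by compactness with constant one. Choosing $\alpha=T^{-1/2}$ closes the argument.
\item The boundary sign is Lemma~\ref{lem:poids}. By strict convexity and Assumption~\ref{ass:normal}, in the plane $\mathrm{span}(x,e_1)$ the inward normal line from each boundary point $x$ meets the axis at $q=x+t_0(x)n(x)$, with $t_0>0$ depending continuously on $x$. Taking $c_1$ large puts every such $q$ in the segment $(-c,c)$, where $\nabla\chi=0$. Monotonicity of $\nabla\chi$ along $[x,q]$ then gives $\nabla\chi(x)\cdot(-n(x))\ge0$.
\end{itemize}
This last step is exactly where far-away foci, rather than interior ones, are essential.
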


Observe that, put together with Theorem \ref{th:const_intro} shown in the previous section (recalling that Assumptions \ref{ass:nonreco} and \ref{ass:weaktrap} are verified in \S \ref{sec:verifA} and Assumption \ref{ass:strichartz} is the main result of  \cite{DStrW}), this shows Theorem \ref{th:main}.

Our main tool will be the following momentum identity, which was first
introduced by Morawetz in a similar form to show some decay properties
of the linear wave equation. The normal at the boundary, denoted $\vec n$ (or $n$ where there is no ambiguity) is outgoing for $\Omega$, that is pointing inside the obstacle.
\begin{lem}
\label{lem:moment}Let $u$ be a solution of (\ref{eq:NLWomega}) in a
domain $\Omega$ of $\mathbb{R}^{3}$ and
$\chi\in C^{\text{\ensuremath{\infty}}}(\Omega,\mathbb{R})$ . Then
we have
\begin{multline}
\Bigg[\int_{\Omega}-\partial_{t}u\nabla u\nabla\chi-\frac{1}{2}\Delta\chi u\partial_{t}u \Bigg]_0^t=\int_0^t\int_{\Omega}(D^{2}\chi\nabla u,\nabla u)-\frac{1}{4}\int_0^t\int_{\Omega}u^{2}\Delta^{2}\chi\\
+\frac{1}{3}\int_0^t\int_{\Omega}|u|^{6}\Delta\chi-\frac{1}{2}\int_0^t\int_{\partial\Omega}|\partial_{n}u|^{2}\partial_{n}\chi.\label{eq:mor}
\end{multline}
\end{lem}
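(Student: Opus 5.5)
The plan is the classical multiplier computation: multiply the equation $\partial_t^2 u-\Delta u+u^5=0$ by $M u:=\nabla\chi\cdot\nabla u+\frac12\Delta\chi\, u$, integrate over $\Omega$, and identify each piece as a time derivative plus bulk terms plus a boundary term. I will argue first for smooth data with compact support (so that $u$ is smooth up to $\partial\Omega$, vanishes on $\partial\Omega$, and all integrations by parts are justified, $\chi$ being smooth with enough decay or compact support), and conclude by density using the continuity of the flow in $\mathcal H$ and the local Strichartz bounds; for the boundary term, Lemma \ref{lem:apriori_bdd} (hidden regularity) ensures that $\partial_n u\in L^2_{loc}(\mathbb R\times\partial\Omega)$ depends continuously on the data, so it passes to the limit.

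The time-derivative part: $\int\partial_t^2u\, Mu=\frac{d}{dt}\int\partial_tu\, Mu-\int\partial_tu\, M\partial_tu$. Since $\int \partial_t u\,\nabla\chi\cdot\nabla\partial_t u=-\frac12\int|\partial_tu|^2\Delta\chi$ (no boundary term because $\partial_tu=0$ on $\partial\Omega$), the last integral vanishes, leaving $\frac{d}{dt}\int(\partial_tu\nabla u\cdot\nabla\chi+\frac12\Delta\chi\,u\partial_tu)$. Next, the Laplacian part $-\int\Delta u\,Mu$. For $-\int\Delta u\,\nabla\chi\cdot\nabla u$, integrate by parts once: $\int\nabla u\cdot\nabla(\nabla\chi\cdot\nabla u)$ minus a boundary term $\int_{\partial\Omega}\partial_n u\,(\nabla\chi\cdot\nabla u)$. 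The bulk term equals $\int(D^2\chi\nabla u,\nabla u)-\frac12\int|\nabla u|^2\Delta\chi+\frac12\int_{\partial\Omega}|\nabla u|^2\partial_n\chi$. Because $u=0$ on $\partial\Omega$, $\nabla u=(\partial_nu)\vec n$ there, so the two boundary contributions combine to $-\frac12\int_{\partial\Omega}|\partial_nu|^2\partial_n\chi$, with signs to be tracked against the outward-for-$\Omega$ convention fixed in the statement (normal pointing into the obstacle). For $-\frac12\int\Delta u\,\Delta\chi\, u$, two integrations by parts (boundary terms vanish since $u=0$) give $\frac12\int|\nabla u|^2\Delta\chi-\frac14\int u^2\Delta^2\chi$, which cancels the $-\frac12\int|\nabla u|^2\Delta\chi$ above. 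Finally the nonlinear part: $\int u^5\nabla\chi\cdot\nabla u=-\frac16\int u^6\Delta\chi$ and $\frac12\int u^6\Delta\chi$ sum to $\frac13\int|u|^6\Delta\chi$.

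Collecting, $\frac{d}{dt}\int(\partial_tu\nabla u\cdot\nabla\chi+\frac12\Delta\chi\,u\partial_tu)$ equals minus the sum of the bulk and boundary terms; moving signs (i.e. writing the left bracket with $-\partial_tu\nabla u\cdot\nabla\chi-\frac12\Delta\chi u\partial_tu$) and integrating from $0$ to $t$ yields \eqref{eq:mor}. The only genuinely delicate step is the bookkeeping of the boundary terms and their sign with respect to the orientation of $\vec n$; everything else is routine. The density argument is the other place needing care, and I would handle it exactly via the a priori boundary control of Lemma \ref{lem:apriori_bdd} together with Strichartz continuity of $u^5$ in $L^1_{loc}L^2$.
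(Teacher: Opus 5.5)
Your proposal is correct and follows the same route as the paper. The paper's proof is the one-line remark that the identity comes from standard integrations by parts justified by an approximation argument; your multiplier $\nabla\chi\cdot\nabla u+\frac12\Delta\chi\,u$ and your sign bookkeeping with $\vec n$ outward to $\Omega$ reproduce \eqref{eq:mor} exactly, and the extra details you supply — using Lemma \ref{lem:apriori_bdd} on the difference of solutions to pass to the limit in the boundary term, and requiring $\chi$ to be smooth up to $\partial\Omega$ with suitable decay — are precisely what that remark leaves implicit.
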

\begin{proof}
The identity can be shown by standard integrations by parts justified
by an approximation argument. 
\end{proof}

\subsection{A scattering criterion}

The scattering in $\mathbb{R}^{3}$ was shown by \cite{BahouriShatah98}.
Their proof still holds in the case of a domain with boundaries if
we are able to control the boundary term arising in their computations, as shown in the following lemma.
\begin{lem}
\label{lem:bordBS}Let $u$ be a solution of (\ref{eq:NLWomega}) in a
domain $\Omega$ of $\mathbb{R}^{3}$ with compact boundary and verifying Assumption \ref{ass:strichartz}. If
\begin{equation} \label{eq:dec_bdd_proof}
\frac{1}{T}\int_{0}^{T}\int_{\partial\Omega}|\partial_{n}u|^{2}d\sigma dt\ \to 0,
\end{equation}
as $T$ goes to infinity, then $u$ scatters in $\mathcal{H}$.
\end{lem}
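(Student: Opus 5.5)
The plan is to redo the Bahouri--Shatah argument on truncated backward light cones and keep the boundary contribution, which the hypothesis \eqref{eq:dec_bdd_proof} makes negligible. First I reduce scattering to a decay statement. By Proposition~\ref{prop:perturb_scat} it is enough to show $u\in L^5([0,+\infty),L^{10})$. With the Strichartz estimates of Assumption~\ref{ass:strichartz}, a Bahouri--G\'erard bootstrap then reduces this to $\int_\Omega |u(t)|^6\,dx\to 0$ as $t\to+\infty$. The bootstrap runs as follows. Split $[T,\infty)$ into finitely many intervals. On each interval, write the Duhamel integral from time $T$ in the form $\SO(\cdot-T)\vec u(T)$ plus a nonlinear term. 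The linear part is small in $L^5L^{10}$ for $T$ large because it is asymptotically a free wave (Lemma~\ref{lem:lin_scat}). The nonlinear term is bounded by $\|u^5\|_{L^1L^2}$, which is estimated by interpolating between $L^\infty L^6$ (small) and the Strichartz norm. An energy-flux argument takes the place of the pointwise conical estimates.

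Next I prove the $L^6$ decay. By finite speed of propagation and energy conservation, it suffices to show that the potential energy in $\{|x|\le t-A\}$ (resp.\ $\{|x|\le \epsilon t\}$) tends to zero, where $A$ is chosen so that $\Omega^c\subset B(0,A)$. The part near the light cone is controlled by the energy flux through the cone. Since the flux is monotone and bounded, it tends to zero, exactly as in \cite{BahouriShatah98}. For the interior part I apply the Morawetz identity of Lemma~\ref{lem:moment}, or its cone-localized space-time version, with the multiplier $\chi(x)=|x|$ (smoothed near $0$ and inside the obstacle region), and integrate over $[0,T]$. Then $D^2\chi\ge0$, $\Delta\chi=2/|x|>0$ and $-\Delta^2\chi$ is a positive delta at the origin, so the interior terms have a good sign, up to a compactly supported error where $\chi$ is modified. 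The boundary term is $-\tfrac12\int_0^T\!\int_{\partial\Omega}|\partial_n u|^2\partial_n\chi$, which is bounded by $C\int_0^T\!\int_{\partial\Omega}|\partial_nu|^2=o(T)$ by \eqref{eq:dec_bdd_proof}. The bracketed time-boundary terms are $O(1)$ by energy conservation and Hardy's inequality. This gives
\[
\frac1T\int_0^T\int_\Omega \frac{|u|^6}{|x|}\,dx\,dt\to0 ,
\]
and similarly the localized gradient term decays in mean.

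Finally I pass from decay in mean to decay at every time. Here I would use the Bahouri--Shatah identity with the vector field $t\partial_t+x\cdot\nabla$ on the truncated cone $\{|x|\le t\}$, $t\in[T_0,T]$, outside $B(0,A)$. That identity gives
\[
\int_{|x|<t}|u(t)|^6 \lesssim \frac{1}{t}\Big(\text{flux}+\int_0^t\!\int_{\partial\Omega}|\partial_n u|^2\,d\sigma\,ds\Big)+o(1).
\]
The boundary term appears once more, with a $\tfrac1t$ prefactor coming from the weight $t$ of the conformal multiplier, and it vanishes again by \eqref{eq:dec_bdd_proof}. The cutoff errors near the obstacle are handled by the averaged estimate of the previous step together with a pigeonhole choice of times. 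I expect the main obstacle to be this last step: making the multiplier computation honest near $\partial\Omega$, where $x\cdot n$ has no sign, and checking that every boundary contribution appears only in the averaged form controlled by the hypothesis. As a fallback I would localize with a cutoff equal to $1$ near the obstacles and absorb the commutator terms through the local energy decay in mean given by the Morawetz step.
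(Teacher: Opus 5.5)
Your third step is the paper's argument: the Bahouri--Shatah identity on the truncated cone $\{|x|\le t,\ \epsilon T\le t\le T\}$, keeping the boundary term, dividing by $T$, then using \eqref{eq:dec_bdd_proof}. Two of the surrounding pieces would fail as written.

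\textbf{The reduction to $L^6$ decay.} You claim that $S_\Omega(\cdot-T)\vec u(T)$ is small in $L^5([T,\infty),L^{10})$ for large $T$ by Lemma~\ref{lem:lin_scat}. This is circular. That lemma is about a fixed datum, whereas $\vec u(T)$ moves with $T$, and smallness of this linear evolution is essentially the scattering you want to prove. It is also not needed, since the paper only uses that the linear part is bounded by $CE$. Lemma~\ref{lem:L1L2_} gives $\|u\|_{L^5((T,S),L^{10})}+\|u\|_{L^r((T,S),L^s)}\le C(E+\|u\|^5_{L^5((T,S),L^{10})})$. Interpolate $L^5L^{10}$ between $L^\infty L^6$, where $\|u(t)\|_{L^6}\le\epsilon$ for $t\ge T$, and $L^rL^s$, with $\theta=\frac{3(s-10)}{5(s-6)}>0$; this is where $s>10$ enters. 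The nonlinear term is then bounded by $\epsilon^{5\theta}\|u\|_{L^rL^s}^{2s/(s-6)}$. The exponent exceeds $1$ and the coefficient is small, so a continuity argument in $S$ closes without splitting into intervals.

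\textbf{The obstacle.} You flag this as the main difficulty, but it is immediate, and the cutoff you propose would break the argument. Integrate the identity over the truncated cone intersected with $\mathbb R\times\Omega$. On $[\epsilon T,T]\times\partial\Omega$ the Dirichlet condition gives $u=\partial_t u=0$ and $\nabla u=(\partial_n u)\vec n$. So the $t\partial_t u$ and $u$ parts of the multiplier vanish there, and the only new term is
$\mathcal B=-\frac12\int_{\epsilon T}^{T}\int_{\partial\Omega}(x\cdot\vec n)|\partial_n u|^2\,d\sigma\,dt$, with no factor $t$. It satisfies $|\mathcal B|\le\frac12\sup_{\partial\Omega}|x|\int_0^T\int_{\partial\Omega}|\partial_nu|^2=o(T)$, whatever the sign of $x\cdot\vec n$.

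If you instead cut off or excise a neighbourhood of the obstacle, the commutator or lateral-flux errors carry the weight $t$ of the $t\partial_t$ component. After dividing by $T$ they are of size $\int_{\epsilon T}^{T}\int_{B(0,A)}(\cdots)\,dx\,dt$, which decay in mean does not make small. A pigeonhole choice of times does not help, because the $L^6$ bound is needed at every large time.

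Once the boundary is treated directly, your Morawetz step is unnecessary. It also overclaims: with $\chi=|x|$, $D^2\chi=|x|^{-1}(\mathrm{Id}-xx^{T}/|x|^2)$ controls only the angular part of $\nabla u$, not the full localized gradient.
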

\begin{proof} \label{eq:decayL6__}
The scattering classically follows from the decay estimate
\begin{equation} \label{eq:decayL6__}
    \int |u(x,t)|^6 \,dx \to 0 \text{ as } t \to \infty,
\end{equation}
together with Assumption \ref{ass:strichartz}. Indeed, if (\ref{eq:decayL6__}) holds, $\epsilon > 0$ been given, there exists $T>0$ large enough so that $\Vert u(t) \Vert_{L^6} \leq \epsilon$ for any $t\geq T$. Then by Assumption \ref{ass:strichartz} (and Lemma \ref{lem:L1L2_}), for any $S> T$
$$
\Vert u \Vert_{L^5((T,S), L^{10})} + \Vert u \Vert_{L^r((T,S), L^{s})} \leq C(E + \Vert u^5 \Vert_{L^1((T,S), L^2)}) = C(E + \Vert u \Vert^5_{L^5((T,S), L^{10})}),
$$
where
$$
\Vert u \Vert_{L^5((T,S), L^{10})} \leq
\Vert u \Vert_{L^\infty((T,S), L^{6})} ^{\theta} \Vert u  \Vert_{L^{r}((T,S), L^{s})} ^{1-\theta} \leq \epsilon^\theta \Vert u  \Vert_{L^{r}((T,S), L^{s})} ^{1-\theta},
$$
with $\theta= \frac{3(s-10)}{5(s-6)}>0$  since $s>10$, thus 
$$
\Vert u \Vert_{L^5((T,S), L^{10})} + \Vert u \Vert_{L^r((T,S), L^{s})} \leq C(E + \epsilon^{5\theta} \Vert u  \Vert_{L^{r}((T,S), L^{s})} ^{5-5\theta}),
$$
from which $u \in L^5(\mathbb R_+, L^{10}(\Omega))$ thanks to a continuity argument, and the scattering follows by Proposition \ref{prop:perturb_scat}.

The decay estimate (\ref{eq:decayL6__}) when $\Omega = \mathbb R^3$ is due to Bahouri and Shatah \cite{BahouriShatah98}. When $\Omega^c$ is star-shaped, this is due to \cite{MR2566711} by remarking that the boundary term arising in Bahouri and Shatah computations has the right sign. Without geometrical assumption on $\Omega^c$ , this boundary term decays to zero as soon as (\ref{eq:dec_bdd_proof}) holds.
More precisely, in the exact same way as \cite{BahouriShatah98} and \cite[Proof of Lemma 4.2]{MR2566711}, using  a flux identity and the time-translation invariance of the equation it suffices to show that
$$
\int_{|x| \leq T} |u(x,T)|^6 \, dx \to 0
$$
as $T \to \infty$.
Following \cite[Proof of Lemma 4.2]{MR2566711}, integrating Bahouri-Shatah space-time divergence identity over the truncated light-cone $\{ |x|\leq t, \; T_1 \leq t \leq T_2\}$ gives
$$
0 \geq \rm{I} + \rm{II} + \rm{III} + \mathcal B,
$$
where $\rm{I}, \rm{II}, \rm{III}$ are the exact same terms as in \cite[Proof of Lemma 4.2]{MR2566711} and $\mathcal B$ is the boundary term, which is not signed anymore and is given by
$$
\mathcal B = - \frac 12 \int_{T_1}^{T_2} \int_{\partial \Omega} \vec n \cdot x \ (\partial_n u)^2 \, d\sigma.
$$
The rest of the proof consists in taking $T_1 := \epsilon T$, $T_2 := T$ with $\epsilon>0$ small enough to control the integrals on the bottom of the truncated cone $\{(x,\epsilon T), \; |x| \leq \epsilon T\}$ as well as the fluxes through the mantel, and getting the control of the $L^6$ norm on $\{(x, T), \; |x| \leq  T\}$ from the top of the truncated cone. Following \cite[Proof of Lemma 4.2]{MR2566711} verbatim but keeping the boundary term $\mathcal B$ this gives, after dividing by $T$, for $T \gg 1$ big enough 
$$
\int_{|x| \leq T} |u(x,T)|^6 \, dx \leq \epsilon C(E) + \frac 1T |\mathcal B| \leq \epsilon C(E) + \frac {1}{2T} \sup_{x\in \partial \Omega} |x|\int_0 ^T \int_{\partial \Omega} |\partial_n u|^2 \, d\sigma,
$$
and the result follows from (\ref{eq:dec_bdd_proof}).
\end{proof}
Note that the trace of the normal derivative is not an easy object
to deal with, because this trace is a priori not defined in $L^{2}(\partial\Omega)$
for elements of $\dot{H}^{1}(\Omega)$. Moreover, even if we can define
it for almost every $u(t)$ when $u$ is a solution of (NLW) (see for instance in \cite{LLT:86} the classical hidden regularity for the linear equation) because
of the particular structure of the equation, the application 
\[
u\in\dot{H}^{1}\cap\left\{ \text{value in time \ensuremath{t} of solutions of NLW}\right\} \mapsto\partial_{n}u\in L^{2}(\partial\Omega)
\]
 is not continuous. 

For this reason, we prefer to deal with the following criterion, which
involves only the local energy of the equation, and which we deduce
from the previous one using the momentum identity (\ref{eq:mor}):
\begin{lem}
\label{lem:contrenerg}Let $u$ be a solution to (\ref{eq:NLWomega}) in a domain $\Omega$ of $\mathbb{R}^{3}$ with compact boundary. There exists $A>0$, $B(0,A)\supset\partial\Omega$,
such that, if
\begin{equation}
\frac{1}{T}\int_{0}^{T}\int_{\Omega\cap B(0,A)}|\nabla u(x,t)|^{2}+|u(x,t)|^{6}\ dxdt\longrightarrow0,\label{eq:contrenerg}
\end{equation}
as $T$ goes to infinity, then $u$ scatter in  $\mathcal{H}$.
\end{lem}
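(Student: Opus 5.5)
We reduce Lemma \ref{lem:contrenerg} to Lemma \ref{lem:bordBS}: under \eqref{eq:contrenerg} we show
\[
\frac1T\int_0^T\int_{\partial\Omega}|\partial_n u|^2\,d\sigma\,dt\to0 .
\]
The tool is the momentum identity \eqref{eq:mor} of Lemma \ref{lem:moment}, with a multiplier chosen so that its boundary term controls $|\partial_n u|^2$. As in the proof of Lemma \ref{lem:apriori_bdd}, take $A_0$ with $\partial\Omega\subset B(0,A_0)$. Fix $\chi\in C^\infty_c(B(0,A_0+1))$ with $\nabla\chi=-\vec n$ on $\partial\Omega$, so that $\partial_n\chi=-1$ there. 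Such a $\chi$ exists: the boundary is a compact smooth hypersurface, so we may take $\chi=-d$ near $\partial\Omega$, with $d$ the signed distance to $\partial\Omega$ oriented so that $\nabla d=\vec n$, and then cut off. Set $A:=A_0+1$.

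Plugging this $\chi$ into \eqref{eq:mor} on $[0,T]$, the boundary term becomes $+\frac12\int_0^T\int_{\partial\Omega}|\partial_n u|^2$. Hence
\begin{align*}
\frac12\int_0^T\int_{\partial\Omega}|\partial_n u|^2
\le{}& \Big|\Big[\int_\Omega \partial_t u\nabla u\cdot\nabla\chi+\frac12\Delta\chi\, u\,\partial_t u\Big]_0^T\Big|
+C\int_0^T\int_{\Omega\cap B(0,A)}|\nabla u|^2\\
&+C\int_0^T\int_{\Omega\cap B(0,A)}|u|^2+C\int_0^T\int_{\Omega\cap B(0,A)}|u|^6 .
\end{align*}
We bound each term in turn.

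\begin{itemize}
\item \emph{Bracket term.} It is bounded independently of $T$ by $C\,\EO(\vec u)$, using Cauchy--Schwarz, Hardy or Sobolev (for $u\in L^6$ on a compact set, hence $L^2$ there), and conservation of energy. Divided by $T$, it tends to $0$.
\item \emph{Gradient and $u^6$ terms.} Divided by $T$, these tend to $0$ directly by \eqref{eq:contrenerg}.
\item \emph{Lower-order term} $\int_{B(0,A)}|u|^2$. Hölder on the bounded set gives $\int_{B(0,A)}|u|^2\le C_A\big(\int_{B(0,A)}|u|^6\big)^{1/3}$. Jensen (concavity of $s\mapsto s^{1/3}$) in time then yields
\[
\frac1T\int_0^T\Big(\int_{B(0,A)}|u|^6\Big)^{1/3}dt\le\Big(\frac1T\int_0^T\int_{B(0,A)}|u|^6\Big)^{1/3}\to0 .
\]
\end{itemize}

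Combining these bounds gives \eqref{eq:dec_bdd_proof}, and Lemma \ref{lem:bordBS} gives scattering. Strictly, Lemma \ref{lem:bordBS} also needs Assumption \ref{ass:strichartz}; we assume this is implicitly in force, as in the present setting.

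We expect the main obstacle to be justifying \eqref{eq:mor} with the boundary term for finite-energy nonlinear solutions, since $\partial_n u$ is only defined via hidden regularity. We handle this by the approximation argument alluded to in Lemma \ref{lem:moment}: use smooth data, for which the identity holds and all terms are continuous in the energy norm except the boundary one. The boundary term is then defined as the $L^2$ limit, using Lemma \ref{lem:apriori_bdd} applied with source $f=-u^5\in L^1_{\rm loc}L^2$ on $[0,T]$. Everything is a local-in-time statement on $[0,T]$, so no global Strichartz bound on $u$ is needed for this step.
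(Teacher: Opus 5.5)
Your proposal is correct and follows the paper's proof essentially step for step. The paper also uses the multiplier $\chi$ with $\nabla\chi=-\vec n$ on $\partial\Omega$ in the momentum identity \eqref{eq:mor}, bounds the bracket term by the energy, and treats the $|u|^2$ term by H\"older in space followed by H\"older in time (your Jensen step) before invoking Lemma \ref{lem:bordBS}; your remark that this last step implicitly requires Assumption \ref{ass:strichartz}, which the statement omits, is also accurate.
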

\begin{proof}
Let $\chi\in C_{0}^{\infty}(\mathbb{R}^{3},\mathbb{R})$ be such that
$\nabla\chi=-n$ on $\partial\Omega$, supported in $B(0,A)$. Suppose
that
\[
\frac{1}{T}\int_{0}^{T}\int_{\Omega\cap B(0,A)}|\nabla u(x,t)|^{2}+|u(x,t)|^{6}\ dxdt\longrightarrow0
\]
as $T$ goes to infinity. We use  Lemma \lemref{moment} with the weight
$\chi$ to get:
\begin{equation*}
\partial_{t}\left(\int_{\Omega}-\partial_{t}u\nabla u\nabla\chi-\frac{1}{2}\Delta\chi u\partial_{t}u\right)=\int_{\Omega}(D^{2}\chi\nabla u,\nabla u)-\frac{1}{4}\int_{\Omega}u^{2}\Delta^{2}\chi
+\int_{\Omega}|u|^{6}\Delta\chi+\frac{1}{2}\int_{\partial\Omega}|\partial_{n}u|^{2}d\sigma.
\end{equation*}
Integrating in time we get
\begin{equation*}
\int_{0}^{T}\int_{\partial\Omega}|\partial_{n}u|^{2}d\sigma dt\lesssim\int_{\Omega\cap B(0,A)}|\partial_{t}u\nabla u|+|u\partial_{t}u|+\int_{0}^{T}\int_{\Omega\cap B(0,A)}|u|^{6}+|u|^{2}+|\nabla u|^{2},
\end{equation*}
and using Minkowsky inequality,
\begin{align*}
\int_{0}^{T}\int_{\partial\Omega}|\partial_{n}u|^{2}d\sigma dt&\lesssim\left(\int_{\Omega}|\partial_{t}u|^{2}\right)^{\frac{1}{2}}\left(\int_{\Omega}|\nabla u|^{2}\right)^{\frac{1}{2}}+A^{\frac{1}{3}}\left(\int_{\Omega}|\partial_{t}u|^{2}\right)^{\frac{1}{2}}\left(\int_{\Omega}|u|^{6}\right)^{\frac{1}{6}}\\
&\quad+\int_{0}^{T}\int_{\Omega\cap B(0,A)}\left(|u|^{6}+|\nabla u|^{2}\right)+A^{\frac{2}{3}}\int_{0}^{T}\left(\int_{\Omega\cap B(0,A)}|u|^{6}\right)^{\frac{1}{3}}\\
&\lesssim_{A}C(E)+\int_{0}^{T}\int_{\Omega\cap B(0,A)}(|u|^{6}+|\nabla u|^{2})+T^{\frac{2}{3}}\left(\int_{0}^{T}\int_{\Omega\cap B(0,A)}|u|^{6}\right)^{\frac{1}{3}}.
\end{align*}
 Thus
\begin{equation*}
\frac{1}{T}\int_{0}^{T}\int_{\partial\Omega}|\partial_{n}u|^{2}d\sigma dt\lesssim_{A}\frac{C(E)}{T}+\frac{1}{T}\int_{0}^{T}\int_{\Omega\cap B(0,A)}(|u|^{6}+|\nabla u|^{2})
+\left(\frac{1}{T}\int_{0}^{T}\int_{\Omega\cap B(0,A)}|u|^{6}\right)^{\frac{1}{3}}\longrightarrow0
\end{equation*}
as $T\rightarrow\infty$ and by  Lemma \lemref{bordBS} we conclude that
$u$ scatters in $\dot{H}^{1}$.
\end{proof}

\subsection{Proof of Theorem \thmref{rig}}

In order to prove Theorem \thmref{rig}, we will show that the previous scattering
criterion is verified using a carefully chosen weight.

In the following Lemma, we recall that $n$
is the normal oriented toward the interior of $\Theta_1\cup \Theta_2$, and we choose coordinates such that the trapped ray
$\mathcal{R}$ is a segment of the line $\left\{ x_{2}=x_{3}=0\right\}$.
 Remark that a version of this Lemma, adapted to potentials instead of obstacles, originates from the first author's work \cite{D2pot}.

\begin{lem}
\label{lem:poids}
Let $\Theta_{1},\Theta_{2}$ be two smooth, strictly convex subsets of $\mathbb{R}^{3}$ with compact boundary verifying Assumption \ref{ass:normal}.
Let $c_1>0$ and $c:=(c_{1},0,0)$. Denote
$$
\chi(x) := |x-c| + |x+c|.
$$
Then, for any $c_1>0$ fixed big enough,
\[
\nabla \chi(x)\cdot(-n)(x) \geq 0, \quad \forall x\in\partial(\Theta_{1}\cup\Theta_{2}).
\]
\end{lem}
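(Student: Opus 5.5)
The plan is an exact expansion of $\nabla\chi$ in the large parameter $c_1$, followed by a convexity argument that uses Assumption \ref{ass:normal} to fix the sign of the leading term. Write $x=(x_1,x')$ with $x'\in\mathbb R^2$, and let $\nu=-n=(\nu_1,\nu')$ be the outward unit normal to $\Theta_1\cup\Theta_2$. Then
$$
\nabla\chi(x)=\frac{x-c}{|x-c|}+\frac{x+c}{|x+c|}.
$$
The transverse component of this vector is exactly
$$
x'\Big(\frac{1}{|x-c|}+\frac{1}{|x+c|}\Big),
$$
which is a positive multiple of $x'$, of size $\sim 2/c_1$. The first component is
$$
\frac{x_1-c_1}{|x-c|}+\frac{x_1+c_1}{|x+c|}.
$$
Taylor expansion, uniform for $x$ in the compact set $\partial(\Theta_1\cup\Theta_2)$, shows that it is $O(|x'|^2/c_1^2)$, with leading term $\frac{|x'|^2}{2}\big((c_1-x_1)^{-2}-(c_1+x_1)^{-2}\big)$. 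It therefore suffices to prove
$$
\Big(\frac{1}{|x-c|}+\frac{1}{|x+c|}\Big)\, x'\cdot\nu'(x)\;\geq\; C\frac{|x'|^2}{c_1^2}\quad\text{on }\partial\Theta_i,
$$
and for this it is enough to have a uniform bound $x'\cdot\nu'\ge\kappa|x'|^2$ with $\kappa>0$ and then take $c_1$ large, since the left side is at least $\kappa|x'|^2/c_1$.

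\textbf{Qualitative sign via convexity.} Fix $i$. By Assumption \ref{ass:normal}, the line $\{x'=0\}$ meets $\partial\Theta_i$ at two points $(a,0)$ and $(b,0)$, with $a<b$, and the normals there are $\mp e_1$. By strict convexity each normal direction is attained at exactly one boundary point, so these two points are the minimum and maximum of $x_1$ over $\Theta_i$. Hence $x_1\in[a,b]$ on $\Theta_i$, and the segment $[a,b]\times\{0\}$ lies in $\Theta_i$. Convexity gives $(y-x)\cdot\nu(x)\le0$ for every $y\in\Theta_i$ and $x\in\partial\Theta_i$. Taking $y=(p,0)$ with $p\in[a,b]$ yields
$$
x'\cdot\nu'\ \ge\ (p-x_1)\nu_1 .
$$
Choosing $p=b$ if $\nu_1\ge0$ and $p=a$ otherwise makes the right side nonnegative, so $x'\cdot\nu'\ge0$.

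\textbf{Quantitative version, the main obstacle.} I expect the bound $x'\cdot\nu'\ge\kappa|x'|^2$ to be the hardest step. Away from the two axis points, $|x'|$ is bounded below on compact pieces of the boundary. There I would use strict convexity through the uniform positivity of the curvatures, for instance by comparing $\Theta_i$ with an inscribed ball tangent at $x$, to get $x'\cdot\nu'>0$ strictly, and then conclude by compactness. Near $(b,0)$ (and symmetrically near $(a,0)$), write $\partial\Theta_i$ as a graph $x_1=b-g(x')$, where $g(0)=0$, $\nabla g(0)=0$ by normality, and $D^2g\ge\kappa_0>0$ by strict convexity. Then $\nu$ is proportional to $(1,\nabla g(x'))$, so $x'\cdot\nu'\approx x'\cdot\nabla g(x')\ge\kappa_0|x'|^2$ by convexity of $g$ with $\nabla g(0)=0$. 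Combining the two regions gives $\kappa$ uniformly on both obstacles, and choosing $c_1$ large enough that the first-component error $O(|x'|^2/c_1^2)$ is dominated by $\kappa|x'|^2/c_1$ finishes the proof.
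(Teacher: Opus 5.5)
Your proof is correct, but it takes a different route from the paper's.

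\textbf{The paper's route.} The paper never expands $\nabla\chi$. It uses two facts: $\chi$ is convex, so $\nabla\chi$ is monotone, and $\nabla\chi$ vanishes identically on the open segment $(-c,c)$ of the axis. In dimension two, Assumption \ref{ass:normal} and strict convexity show that the inward normal line from each boundary point $x$ meets the axis at $x+t_0(x)n(x)$ with $t_0(x)>0$. This $t_0$ depends continuously on $x$, extended by the radius of curvature at the two axis points. For $c_1$ large, every such foot lies in $(-c,c)$, and monotonicity between $x$ and its foot gives $\nabla\chi(x)\cdot(-n(x))\ge 0$ at once. The three-dimensional case is then reduced to the planar one by slicing with $\operatorname{span}(x,e_1)$, which contains $\nabla\chi(x)$, followed by a compactness argument over the slices.

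\textbf{Your route.} You split $\nabla\chi$ into an exact transverse part $x'\,(|x-c|^{-1}+|x+c|^{-1})$ and an axial part of size $O(|x'|^2c_1^{-2})$ (in fact $O(|x'|^2c_1^{-3})$). This replaces both the monotonicity trick and the slicing. You work directly in $\mathbb R^3$ and reduce everything to the single inequality $x'\cdot\nu'\ge\kappa|x'|^2$. Its near-axis part (graph with $\nabla g(0)=0$ and $D^2g\ge\kappa_0$) is the quantitative counterpart of the paper's boundedness of $t_0$ via the curvature radius.

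\textbf{Comparison.} Your version is a self-contained three-dimensional proof with an explicit threshold $c_1\gtrsim C/\kappa$. It even gives the strict bound $\nabla\chi\cdot(-n)\gtrsim |x'|^2/c_1$ off the axis. It also avoids the somewhat delicate uniformity over slices, since the plane $\operatorname{span}(x,e_1)$ degenerates on the axis. The paper's version gains a conceptual explanation of why this weight works: convexity of $\chi$ plus $\nabla\chi=0$ on a segment containing all normal feet. It also shows why $c_1$ must grow with the radii of curvature.

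\textbf{One small correction.} The strict positivity $x'\cdot\nu'>0$ off the axis does not come from comparison with an inscribed ball. It follows from your own inequality with $y=(p,0)$ once you use that strict convexity makes the supporting plane at $x$ meet $\Theta_i$ only at $x$. Then $(y-x)\cdot\nu(x)<0$ for every $y\in\Theta_i$ with $y\neq x$.
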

\begin{proof}
We first do the proof in dimension $2$, as it makes the main idea clearer, and we then give the full three-dimensional argument. 

Thus, assume first that $\Theta_1, \Theta_2 \subset \mathbb R^2$ with 
$\mathcal{R}$ a segment of the line $\left\{ x_{2}=0\right\} = \mathbb Re_1$. 
By strict convexity and Assumption \ref{ass:normal}, for any $x\in\big(\partial \Theta_1 \cup \partial \Theta_2\big) \backslash \mathbb R e_1$, $n(x)$ is not colinear to $\mathbb R e_1$. Indeed, let $w$ be so that (for example) $-n(w) = e_1$. Then, the tangent to $\Theta_1$ in $w$ is carried by $e_2$, so by convexity, $\Theta_1 \subset \{ x_1 \leq w \cdot e_1\}$. It follows that 
the functions $x \in \partial \Theta_1 \mapsto x\cdot e_1 \in \mathbb R$ has a maximum at $w$.
By strict convexity, such maximal points are unique, so $w \in \mathbb Re_1$. It follows that, for any $x\in\big(\partial \Theta_1 \cup \partial \Theta_2\big) \backslash \mathbb R e_1$ there is a (unique) $t_0(x){>} 0$, depending continuously on $x$, so that
$$
x+t_0(x)n(x) \in \mathbb Re_1.
$$
We extend $t_0(x)$ continuously to the whole $\partial \Theta_1 \cup \partial \Theta_2$, still verifying the above property, by setting, for $x\in\big(\partial \Theta_1 \cup \partial \Theta_2\big) \cap \mathbb R e_1$, $t_0(x) := \kappa(x) > 0$, where $\kappa(x)$ is the radius of curvature of $\partial \Theta_1 \cup \partial \Theta_2$ at $x$.
\begin{figure}
\includegraphics[scale=1]{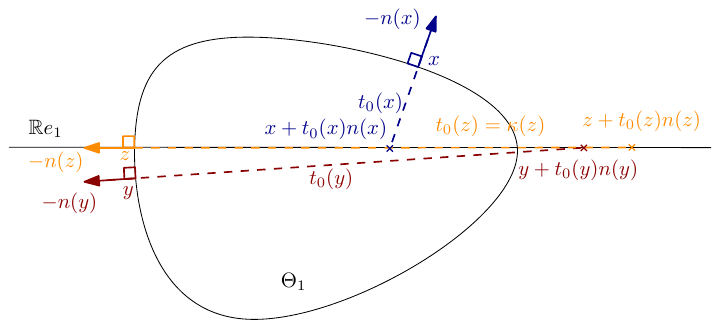}\caption{Illustration of the proof of Lemma \ref{lem:poids} in the two dimensional case: $t_0$ for a few points of $\partial\Theta_1$.} \
\end{figure}
Denote $(-c,c)$ the open segment
$$
(-c,c) := \big\{ (x_1, 0), \quad x_1 \in (-c_1, c_1) \big\}.
$$
As $x \mapsto t_0(x)$ is continuous, by compactness of $\partial \Theta_1 \cup \partial \Theta_2$ we can chose $c_1>0$ big enough so that, for any $x\in \partial \Theta_1 \cup \partial \Theta_2$,
$$
x+t_0(x)n(x) \in (-c, c).
$$
{We can also impose $x\notin \{-c,c\}$. }Now, observe that {$\nabla \chi (x)=\frac{x-c}{|x-c|}+\frac{x-c}{|x-c|}$, so that}
$$
\forall x \in (-c,c), \quad \nabla \chi (x) = 0.
$$
In addition, as $\chi$ is convex and $C^1$ outside $\{ -c, c \}$, we have for any $p, q$ so that $[p,q]\cap \{-c,c\}=\emptyset$
$$
(\nabla \chi(p) - \nabla \chi(q)) \cdot (p - q) \geq 0.
$$
If necessary, take $c_1$ larger so that $\{ -c, c \} \cap \partial (\Theta_1\cup\Theta_2) = \emptyset$.
Now, take $x\in\partial (\Theta_1 \cup \partial \Theta_2) = \emptyset$ and apply the above with $p=x$, $q=x+t_0(x)n(x)$. We get, as $\nabla \chi(q) = 0$,
$$
\nabla \chi(x) \cdot t_0(x)(-n)(x) \geq 0.
$$
This ends the proof in dimension two since $t_0>0$ .

We now go back to the three dimensional setting we are interested in. 
We will reduce to the two dimensional setting above.
Let $x\in \partial \Theta_1 \cup \partial \Theta_2$.
Denote by $\Pi_x$ be the plane generated by $e_1$ and $x$:
$$
\Pi_x := \operatorname{span}(x, e_1),
$$
and, for $j=1,2$
$$
\mathcal C_j := \Theta_j \cap \Pi_x.
$$
Observe that, as $x, -c, c \in \Pi_x$,
$$
\nabla \chi(x) \in \Pi_x.
$$
It follows that, denoting $\pi_x$ the orthogonal projection onto $\Pi_x$,
$$
(-n(x)) \cdot \nabla \chi(x) = (-\pi_x n(x)) \cdot \nabla \chi(x).
$$
On the other hand, by the two dimensional argument detailed above,  there is $C(x)>0$, depending continuously on $x$, so that for any $c_1 \geq C(x)$
$$
\nabla \chi(x) \cdot (-\nu(x)) \geq 0,
$$
where $\nu(x)$ is the inward-pointing normal to $\mathcal C_1 \cup \mathcal C_2$ in $\Pi_x$. As $\nu(x)$ is positively colinear to $\pi_x n(x)$, we conclude that
$$
\nabla \chi(x) \cdot (-n(x)) \geq 0,
$$
for any $c_1 \geq C >0$, with a uniform $C>0$ by compactness.
The proof is completed.
\end{proof}
\begin{remark}
 It is insightful to consider the case of the exterior of two balls: in this case, an explicit computation gives the result with the points $\pm c$ the center of each ball. This was remarked in \cite[Section 5.2]{DStrW} (with no scattering result at the time). Observe also in the above proof that $c_1$ needs to be fixed larger as the radii of curvature of the obstacles are bigger, hence when the trapped trajectories becomes less instable.
\end{remark}
We are now in position to prove the rigidity Theorem:
\begin{proof}[Proof of  Theorem \thmref{rig}.]
Let $u$ be a solution of (\ref{eq:NLWomega}) in $\Omega=\mathbb{R}^{3}\backslash\left(\Theta_{1}\cup\Theta_{2}\right)$
with a relatively compact flow $\left\{ u(t),\ t\geq0\right\} $ in
$\dot{H}^{1}$. We will show that $u$ scatters in $\dot{H}^{1}\times L^2$. Let $A$ be  given by Lemma~\ref{lem:contrenerg}. We set $c=(c_{1},0,0)$ with $c_{1}>0$, fixed big enough according to Lemma \ref{lem:poids}  and so that $\pm c\notin B(0,A)$, and choose the weight
\[
\chi(x):=|x+c|+|x-c|.
\]
so that 
\begin{equation}
    \label{e:forDeltachi}
    \Delta\chi=\frac{2}{|x-c|}+\frac{2}{|x+c|}.
\end{equation}
Observe that 
$$
\Delta^{2}\chi=-8\pi(\delta_{-c}+\delta_{c}) \leq 0,
$$
hence, Lemma \lemref{moment} together with an approximation argument  gives

\begin{equation}
\partial_{t}\left(\int_{\Omega}-\partial_{t}u\nabla u\nabla\chi-\frac{1}{2}\Delta\chi u\partial_{t}u\right)\geq \int_{\Omega}(D^{2}\chi\nabla u,\nabla u) 
+\frac{1}{3}\int_{\Omega}|u|^{6}\Delta\chi-\frac{1}{2}\int_{\partial\Omega}|\partial_{n}u|^{2}\nabla\chi\cdot n\ d\sigma.\label{eq:momsomme-1}
\end{equation}

On the other hand, according to Lemma \lemref{poids}, we have
\begin{equation}
\nabla\chi\cdot(-n) \geq 0 
\end{equation}
 Thus, combining (\ref{eq:momsomme-1}) and the above, we obtain the inequality:
\begin{equation}
\partial_{t}\left(-\int_{0}^{T}\partial_{t}u\nabla u\cdot\nabla\chi+\frac{1}{2}\Delta\chi u\partial_{t}u\right)\geq\frac{1}{3}\int_{\Omega}|u|^{6}\Delta\chi+\int_{\Omega}(D^{2}\chi\nabla u,\nabla u)
\label{eq:morcontr}
\end{equation}
Integrating this estimate and controling the left-hand side using
the Hardy inequality 
\[
\int_{\Omega}|f\Delta \chi|^{2}\lesssim \sum_{\pm}\int_{\Omega}\frac{|f|^{2}}{|x\pm c|^{2}}\lesssim\int_{\Omega}|\nabla f|^{2}\ \text{for }f\in\dot{H}_{0}^{1}(\Omega)
\]
and that $\nabla \chi$ is bounded, we get
\begin{equation}
\int_{0}^{T}\int_{\Omega}|u|^{6}\Delta\chi+(D^{2}\chi\nabla u,\nabla u)\ dxdt\lesssim E. 
\label{eq:eq1}
\end{equation}

From the one hand, from \eqref{e:forDeltachi}, 
\[
\Delta\chi(x)\gtrsim 1, \textnormal{ for }x\in B(0,A),
\]
 thus 
\[
\int_{\Omega\cap B(0,A)}|u|^{6}\lesssim \int_{\Omega\cap B(0,A)}|u|^{6}\Delta\chi\lesssim \int_{\Omega}|u|^{6}\Delta\chi,
\]
and therefore, by (\ref{eq:eq1}) and the non-negativity of $D^2\chi$ (since $\chi$ is convex)
\begin{equation}
\frac{1}{T}\int_{0}^{T}\int_{\Omega\cap B(0,A)}|u|^{6}\ dxdt\lesssim \frac{E}{T}. 
\label{eq:decayl6}
\end{equation}

Now, we would like to estimate the localised cinetic energy using
(\ref{eq:eq1}) again. We have
\[
D^{2}\chi(x)=\frac{1}{|x+c|}\left(\text{Id}-\frac{(x+c)(x+c)^{t}}{|x+c|^{2}}\right)+\frac{1}{|x-c|}\left(\text{Id}-\frac{(x-c)(x-c)^{t}}{|x-c|^{2}}\right).
\]
The operators corresponding to the matrices 
\[
\text{Id}-\frac{(x+c)(x+c)^{t}}{|x+c|^{2}},\text{ resp. }\text{Id}-\frac{(x-c)(x-c)^{t}}{|x-c|^{2}},
\]
are the orthogonal projections on the plane normal to $\frac{x+c}{|x+c|}$,
resp. to $\frac{x-c}{|x-c|}$. Thus, 
\begin{equation}
(D^{2}\chi\cdot\xi,\xi)=\left(\frac{1}{|x+c|}+\frac{1}{|x-c|}\right)|\xi|^{2}-\frac{1}{|x+c|}\left(\xi\cdot\frac{x+c}{|x+c|}\right)^{2}-\frac{1}{|x-c|}\left(\xi\cdot\frac{x-c}{|x-c|}\right)^{2}.\label{eq:d2xi1}
\end{equation}
We choose orthonormal coordinates (depending of $x$ and $c$) such that 
\[
\frac{x+c}{|x+c|}=(1,0,0),\ \frac{x-c}{|x-c|}=(a,b,0),
\]
for $a=a(x)=\frac{x+c}{|x+c|}\cdot \frac{x-c}{|x-c|}$ and $b=b(x)=\sqrt{1-a^2}$. Notice that $(a,b)=(\cos \theta,\sin\theta)$ where $\theta$ is the angle between $x-c$ and $x+c$. Then we have, if $\xi=\begin{pmatrix}\hat{\xi}_{1} & \hat{\xi}_{2} & \hat{\xi}_{3}\end{pmatrix}$
in this set of coordinates
\[
\frac{1}{|x+c|}\left(\xi\cdot\frac{x+c}{|x+c|}\right)^{2}+\frac{1}{|x-c|}\left(\xi\cdot\frac{x-c}{|x-c|}\right)^{2}=\begin{pmatrix}\hat{\xi}_{1} & \hat{\xi}_{2}\end{pmatrix}\begin{pmatrix}\frac{1}{|x+c|}+\frac{a^{2}}{|x-c|} & \frac{ab}{|x-c|}\\
\frac{ab}{|x-c|} & \frac{b^2}{|x-c|}
\end{pmatrix}\begin{pmatrix}\hat{\xi}_{1}\\
\hat{\xi}_{2}
\end{pmatrix}.
\]
The largest eigenvalue of this positive quadratic form in $\begin{pmatrix}\hat{\xi}_{1} & \hat{\xi}_{2}\end{pmatrix}$
writes
\[
\lambda_{2}=\frac{1}{2}\left(\frac{1}{|x-c|}+\frac{1}{|x+c|}+\sqrt{\left(\frac{1}{|x-c|}+\frac{1}{|x+c|}\right)^{2}-4\frac{b^{2}}{|x+c||x-c|}}\right),
\]
Therefore, since $\pm c\notin B(0,A)$, there exists $C=C(A,b)$ so that $\frac{1}{|x+c||x-c|}\geq C \left(\frac{1}{|x-c|}+\frac{1}{|x+c|}\right)^{2}$ for $x\in\Omega\cap B(0,A)$ and we have
\[
\lambda_{2}\leq \frac{1}{2} \left(\frac{1}{|x-c|}+\frac{1}{|x+c|}\right) \left(1+ \sqrt{1-4 C b^{2}}\right) ,
\]
and there exists another $C>0$ such that, we have, for $x\in\Omega\cap B(0,A)$
and $\alpha>0$ small enough
\begin{equation}
b^2\geq\alpha\implies\lambda_{2}\leq\left(\frac{1}{|x-c|}+\frac{1}{|x+c|}\right)(1-C\alpha).\label{eq:d2xi2}
\end{equation}

On the other hand
\[
\frac{1}{|x+c|}\left(\xi\cdot\frac{x}{|x+c|}\right)^{2}+\frac{1}{|x-c|}\left(\xi\cdot\frac{x-c}{|x-c|}\right)^{2}\leq\lambda_{2}|(\hat{\xi}_{1},\hat{\xi}_{2})|^{2}\leq\lambda_{2}|\xi|^{2},
\]
thus we get, combining this last inequality with (\ref{eq:d2xi1})
and (\ref{eq:d2xi2}), for $x\in\Omega\cap B(0,A)$
\begin{equation}
b^2\geq\alpha\implies(D^{2}\chi\cdot\xi,\xi)\gtrsim \alpha |\xi|^{2},\label{eq:contrang}
\end{equation}
and let us denote, for $\alpha\leq\alpha_{0}$ 
\[
S(\alpha)=\Omega\cap B(0,A)\cap\left\{ b^{2}(x)\geq\alpha\right\}.
\]
We have, on $S(\alpha)$, because of (\ref{eq:contrang})
\[
(D^{2}\chi\cdot\xi,\xi)\gtrsim{\alpha}|\xi|^{2}.
\]
Thus we get
\[
\int_{\Omega}(D^{2}\chi\nabla u,\nabla u)\geq\int_{S(\alpha)}(D^{2}\chi\nabla u,\nabla u)\gtrsim{\alpha}\int_{S(\alpha)}|\nabla u|^{2},
\]
and by (\ref{eq:eq1}) we obtain
\begin{equation}
\frac{1}{T}\int_{0}^{T}\int_{S(\alpha)}|\nabla u|^{2}\ dxdt\lesssim\frac{E}{\alpha T}.
\label{eq:decaygrad}
\end{equation}

\begin{figure}
\includegraphics[scale=0.45]{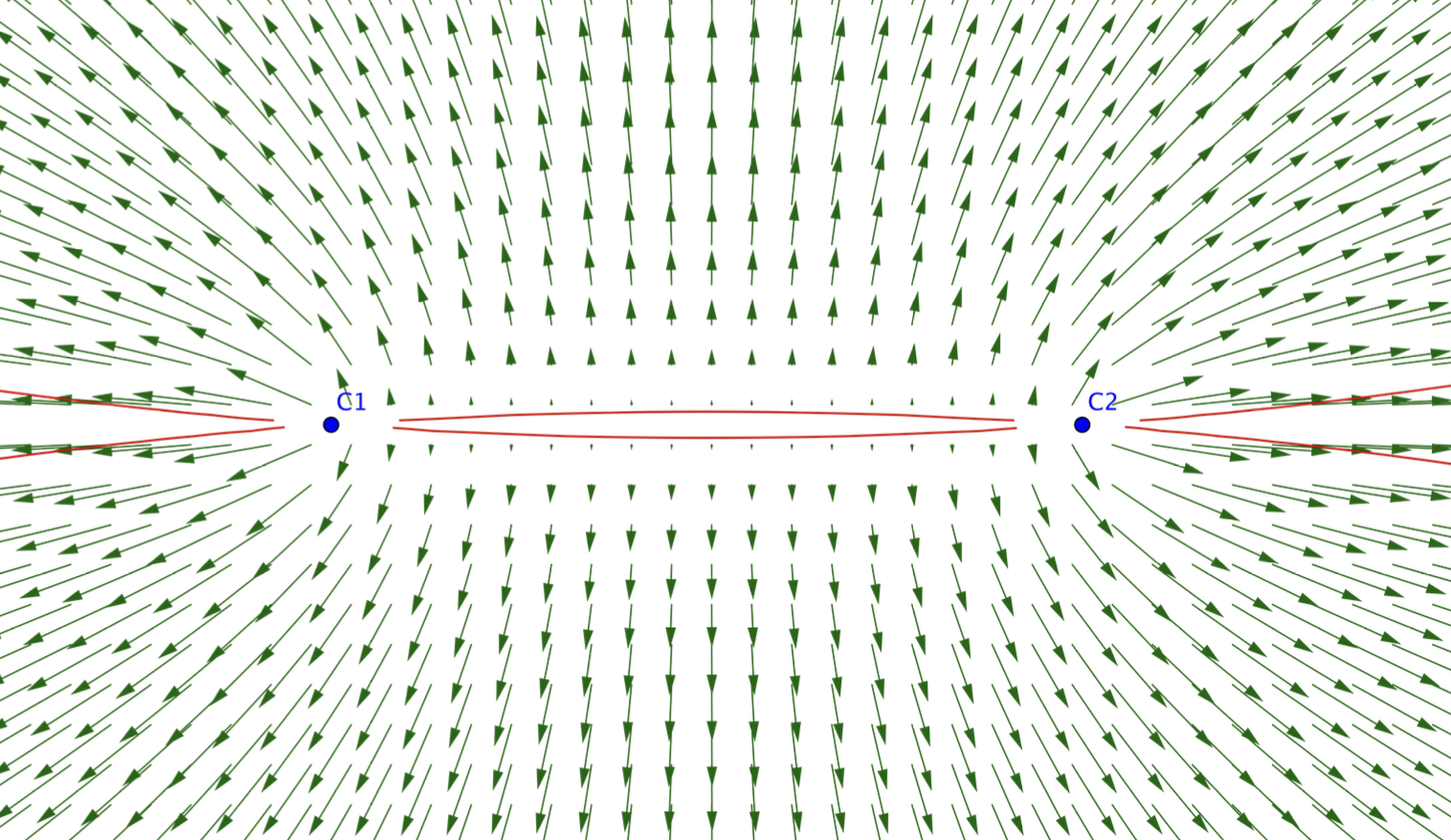}\caption{$\nabla\chi$ and $S(\alpha)$}
\end{figure}

From the other hand, because the flow $\left\{ u(t),\ t\geq0\right\} $
is relatively compact in $\dot{H}^{1}$, we have in a classical way
(see Lemma \lemref{relcompfam})
\[
\sup_{t\geq0}\int_{(\Omega\cap B(0,A))\backslash S(\alpha)}|\nabla u|^{2}(t,x)dx=\epsilon(|(\Omega\cap B(0,A))\backslash S(\alpha)|),
\]
where $\epsilon(h)\rightarrow0$ as $h\rightarrow0$ and is decreasing. And in particular
\begin{equation}
\frac{1}{T}\int_{0}^{T}\int_{(\Omega\cap B(0,A))\backslash S(\alpha(T))}|\nabla u|^{2}(t,x)dx=\epsilon(|(\Omega\cap B(0,A))\backslash S(\alpha)|).\label{eq:relcomp}
\end{equation}
Denote $m(\alpha):=|(\Omega\cap B(0,A))\backslash S(\alpha,c)|$. By the dominated convergence theorem, we have $m(\alpha)\underset{\alpha\to 0}{\longrightarrow}|(\Omega\cap B(0,A))\cap\left\{ b=0\right\}|$. We check that $b(x)=0$ implies $a(x)=\frac{x+c}{|x+c|}\cdot \frac{x-c}{|x-c|}=1$, which implies that $x-c$ is colinear to $x+c$, that is $x\in \R e_1$. In particular, $|(\Omega\cap B(0,A))\cap\left\{ b=0\right\}|=0$ and $m(\alpha)\underset{\alpha\to 0}{\longrightarrow}0$.

Collecting (\ref{eq:decayl6}), (\ref{eq:decaygrad}) and (\ref{eq:relcomp}) we get
\[
\frac{1}{T}\int_{0}^{T}\int_{\Omega\cap B(0,A)}|\nabla u(x,t)|^{2}+|u(x,t)|^{6}\ dxdt\lesssim \frac{E}{T}+\frac{E}{\alpha T}+\epsilon(m(\alpha)).
\]
We take (say) $\alpha:=T^{-1/2}$. Then all the
right hand terms go to zero as $T$ goes to infinity, and $u$ scatters
in $\dot{H}^{1} \times L^2$ by Lemma \lemref{contrenerg}.
\end{proof}

\section{Geometric facts} \label{sec:verifA}

The purpose of this section is to verify that the exterior of two strictly convex obstacles satisfies the geometrical Assumptions \ref{ass:nonreco} and \ref{ass:weaktrap}.

\subsection{Non reconcentration}

\begin{lem} \label{lem:rays_noreco}
Let $\Theta_{1}, \Theta_2$ be two smooth, strictly convex subsets of $\text{\ensuremath{\mathbb{R}}}^{n}$
and $\Omega:=\mathbb{R}^{n}\backslash(\Theta_{1}\cup\Theta_{2})$. Then, $\Omega$ satisfies Assumption \ref{ass:nonreco}.
\end{lem}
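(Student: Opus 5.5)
Fix $x, x_0\in\overline\Omega$ and $t\geq 0$, and set
$$
A := \{\xi\in\mathbb S^2,\ \gamma_t(x,\xi)=x_0\}.
$$
The goal is to show $|A|=0$. The approach exploits the fact that, outside two strictly convex bodies, generalized geodesics are simple: there are no gliding rays along concave boundary pieces, and every ray is a broken straight line with transversal reflections, except for tangential (grazing) contacts, which merely continue straight.

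First I would stratify directions by the number $k$ of reflections in $[0,t]$. Since consecutive reflections alternate between $\Theta_1$ and $\Theta_2$ (a straight segment cannot leave a convex body and hit it again), and since $d:=\operatorname{dist}(\Theta_1,\Theta_2)>0$, one has $k\leq t/d+1$. It therefore suffices to show that each $A_k := A\cap\{\text{exactly } k \text{ reflections}\}$ is null. The grazing directions at any step form a null set: the set of $\xi$ tangent to $\partial\Theta_j$ along a segment from a given point is a curve on $\mathbb S^2$. Inductively, the set of directions that are transversal at every step but graze at some step is contained in a countable union of smooth curves (images of tangency conditions under the smooth maps below), so it is null. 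I would then discard these directions.

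On the open set $G_k$ of directions with exactly $k$ transversal reflections and no grazing, the map $F:\xi\mapsto\gamma_t(x,\xi)$ is smooth, since reflection points depend smoothly on $\xi$ by the implicit function theorem. The key point is that $F$ cannot be constant on a set of positive measure. By Sard-type reasoning (more precisely, the coarea/real-analytic-free version): if $|F^{-1}(x_0)\cap G_k|>0$, then at a Lebesgue density point the differential $dF$ has rank $0$ in the sense that $F$ is locally constant on a positive-measure set, which forces $dF=0$ at density points. I would then show $dF_\xi\neq 0$ everywhere, in fact that $dF_\xi$ has rank $\geq1$. Consider the unit outgoing direction $\eta(\xi)$ after the last reflection and the arrival point. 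The wavefront $\{\gamma_t(x,\xi)\}$ is the reflected sphere, and by strict convexity each reflection off a convex obstacle is dispersing. Classical billiard theory says that the curvature operator of the front stays positive definite and finite after reflection (convex fronts remain convex, with curvature $\leq 1/s$). Hence the Jacobian of the front map is in fact invertible, with $|\det dF|\ne 0$, as for the free spherical front of radius $t$. Concretely, I would track the second fundamental form $B$ of the front. Between reflections it evolves by $B\mapsto B(I+sB)^{-1}$, which stays positive. At a reflection one has $B^+=B^-+2\cos\phi\,\Pi$, where $\Pi$ is the positive second fundamental form of $\partial\Theta_j$ projected; this also stays positive. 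Positivity of $B$ means that the front never focuses, so $F$ is a local diffeomorphism onto its image surface. Therefore $F^{-1}(x_0)\cap G_k$ is discrete, hence null.

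The main obstacle is the rigorous handling of the degenerate directions: grazing rays, rays hitting the boundary at time exactly $t$, and directions with $x\in\partial\Omega$ (where $j(x,\xi)$ identifies $\xi$ with its reflection). I would treat each of these as contained in finitely or countably many smooth submanifolds of $\mathbb S^2$ of dimension $\leq1$. For $x\in\partial\Omega$, the map $\xi\mapsto R\xi$ is measure-preserving, so the doubling only doubles the measure bound. Combining the finite stratification in $k$ with these null exceptional sets gives $|A|=0$.
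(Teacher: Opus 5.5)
Your route is genuinely different from the paper's. It is sound in outline, but one step is not justified as written.

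\textbf{How the paper argues.} The paper never differentiates. For a fixed reflection story $J$, it shows that $\xi\mapsto\Phi_{J,t}(x,\xi)$ is \emph{globally} injective, using the two-point quantity $(x_1^{(k)}-x_2^{(k)})\cdot(\xi_1^{(k)}-\xi_2^{(k)})$. This quantity has three properties:
\begin{itemize}
\item it grows by $(t_1^{(k)}+t_2^{(k)})(1-\xi_1^{(k)}\cdot\xi_2^{(k)})\ge0$ along free flight;
\item it does not decrease at a reflection on a common convex body, since $(y-z)\cdot n(y)\le0$ and $\xi\cdot n\ge0$;
\item it is $\le0$ if the two endpoints coincide.
\end{itemize}
Stories alternate between the two obstacles and have length $\lesssim t$, so there are finitely many. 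Hence the set in Assumption~\ref{ass:nonreco} is actually \emph{finite}.

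\textbf{Comparison.} This inequality is the finite-difference form of your infinitesimal non-focusing statement $\langle J,J'\rangle=\langle J,BJ\rangle>0$. The global version buys a lot. It needs no smoothness and no implicit function theorem. Grazing contacts are harmless, since they contribute $0$ because $\xi\cdot n=0$. Arrivals on $\partial\Omega$ at time $t$ need no special care. The conclusion is stronger, and the same computation is reused for the weak-trapping lemma. Your version connects to the classical front-curvature theory of dispersing billiards, but it only gives discreteness of $F^{-1}(x_0)\cap G_k$. So it stands or falls with the exceptional directions being null.

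\textbf{The gap: grazing directions.} You describe the grazing directions as images of tangency conditions. They are in fact a \emph{preimage}: the preimage of the tangency hypersurface in the space of lines, under the smooth map sending $\xi$ to the line after $m$ reflections. Such a preimage need not be null unless the map is transverse to that hypersurface.

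Transversality does hold, but it must come from your non-focusing property. At a tangency point $q$ reached at time $\tau_q>0$, after transversal reflections only, the Jacobi map $\delta\xi\mapsto J(\tau_q)\in\eta^\perp$ is injective, hence onto. So some variation displaces the line along $n(q)$. The signed distance from the line to the obstacle then has nonzero differential, and the first-grazing set at each step is a smooth curve.

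\textbf{The gap: arrivals on $\partial\Omega$ at time exactly $t$.} The first-variation formula $d\tau=\langle\eta^-,dq\rangle$ gives $d\tau\neq0$ away from normal incidence. Suppose normal incidence occurred on a set of positive measure. Differentiating $\eta^-=n\circ q$ at a density point would give $B=-S$ on $\eta^\perp$, where $S>0$ is the second fundamental form of the obstacle. This contradicts $B>0$.

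With these two additions your proof closes.

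\textbf{A remark on $t=0$.} Both arguments, and the statement itself, need $t>0$. For $t=0$ and $x_0=x$, the set in Assumption~\ref{ass:nonreco} is all of $\mathbb S^2$.
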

\begin{proof}
We call a finite sequence of elements of $\left\{ 1,2\right\} $,
$J=(i_{1},\dots,i_{n})$ with $i_{k}\in\left\{ 1,2\right\} $ and
$i_{k+1}\neq i_{k}$, a story of reflections. Let $\Phi_{J,t}(x,\xi)$
be, \textit{if it exists}, the point of $\bar{\Omega}$ obtained following
the story of reflections $J$ for a time $t$, starting from the point
$x$ and direction $\xi$ : the first reflection occurs on $\Theta_{i_{1}}$,
the second one on $\Theta_{i_{2}}$, and so on. 

We will show that, for any story of reflections $J$, any $x\in\bar{\Omega}$
and any $\xi_{1},\xi_{2}\in\mathbb{R}^{n}$, as soon as $\Phi_{J,t}(x,\xi_{1})$
and $\Phi_{J,t}(x,\xi_{2})$ exist
\begin{equation}
\Phi_{J,t}(x,\xi_{1})=\Phi_{J,t}(x,\xi_{2})\implies\xi_{1}=\xi_{2},\label{eq:nonrecfin}
\end{equation}
and the lemma will follow: indeed, if (\ref{eq:nonrecfin}) holds,
there is at most one direction permitting to go from $x$ to $x_{0}$
in time $t$ following a given story of reflections, and therefore,
because the size of stories connecting $x$ and $x_{0}$ in time $t$
is bounded by $c|t|$, $\left\{ \xi\in\mathbb{R}^{n}\ \text{s.t.}\ \Phi_{t}(x,\frac{\xi}{|\xi|})=x_{0}\right\} $
is a finite collection of half lines, thus a set of zero measure.

To this purpose, let $J=(i_{1},\cdots,i_{n})$ be a story of reflections,
$x\in\bar{\Omega}$, $\xi_{1},\xi_{2}\in\mathbb{S}^{n-1}$, suppose
that $\Phi_{J,t}(x,\xi_{1,2})$ exists and that
\begin{equation}
\Phi_{J,t}(x,\xi_{1})=\Phi_{J,t}(x,\xi_{2}).\label{eq:nonrecsupp}
\end{equation}
We denote, for $j=1,2$
\begin{align*}
\xi_{j}^{(0)} & =\xi_{j},\\
x_{j}^{(0)} & =x,
\end{align*}
for $0\leq k\leq n-1$,
\begin{align}
x_{j}^{(k+1)} & =x_{j}^{(k)}+t_{j}^{(k)}\xi_{j}^{(k)}\in\partial\Theta_{i_{k}},\label{eq:nonrecx}\\
\xi_{j}^{(k+1)} & =\xi_{j}^{(k)}-2(\xi_{j}^{(k)}\cdot \vec n(x_{j}^{(k)}))\vec n(x_{j}^{(k)}),\label{eq:nonrecxi}
\end{align}
the points and directions of the $k$-th reflection so that $\xi_{j}^{(k+1)}\cdot \vec n =\xi_{j}^{(k)}\cdot \vec n$. Note that , $\xi_{j}^{(k+1)} =\xi_{j}^{(k)}$ for tangential points. We define $t_{j}^{(n)}$
by
\[
t_{j}^{(n)}=t-(t_{j}^{(1)}+\cdots+t_{j}^{(n-1)}),
\]
in such a way that
\[
\Phi_{J,t}(x,\xi_{j})=x_{j}^{(n)}+t_{j}^{(n)}\xi_{j}^{(n)}.
\]
For convenience, we will denote $\vec n_{j}^{(k)}=\vec n(x_{j}^{(k)})$. On
the one hand, because of (\ref{eq:nonrecsupp}), we have
\begin{equation}
(x_{1}^{(n)}-x_{2}^{(n)})\cdot(\xi_{1}^{(n)}-\xi_{2}^{(n)})=(-t_{1}^{(n)}\xi_{1}^{(n)}+t_{2}^{(n)}\xi_{2}^{(n)})\cdot(\xi_{1}^{(n)}-\xi_{2}^{(n)})
=-(t_{1}^{(n)}+t_{2}^{(n)})(1-\xi_{1}^{(n)}\cdot\xi_{2}^{(n)})\leq0.\label{eq:nonrec0}
\end{equation}
by unitarity and Cauchy-Schwarz inequality.
On the other hand, note that, using (\ref{eq:nonrecxi})
\begin{multline}
(x_{1}^{(n)}-x_{2}^{(n)})\cdot(\xi_{1}^{(n)}-\xi_{2}^{(n)})=(x_{1}^{(n)}-x_{2}^{(n)})\cdot(\xi_{1}^{(n-1)}-\xi_{2}^{(n-1)})\\
-2(x_{1}^{(n)}-x_{2}^{(n)})\cdot\left((\xi_{1}^{(n-1)}\cdot \vec n_{1}^{(n-1)})\vec n_{1}^{(n-1)}-(\xi_{2}^{(n-1)}\cdot \vec n_{2}^{(n-1)})\vec n_{2}^{(n-1)}\right).\label{eq:nonrec1}
\end{multline}
But, for $y,z$ belonging to the boundary of a convex body $\mathcal{C}$,
we always have
\[
(y-z)\cdot \vec n(y)\leq0
\]
where $\vec n$ is oriented toward the interior of $\mathcal{C}$. In addition, we also have $\xi_{j}^{(n-1)}\cdot \vec n_{j}^{(n-1)} \geq 0$ because, by definition, $\xi_j^{(n-1)}$ points toward $\Theta_{j_{n}}$. Thus,
because $x_{1}^{(n)}$ and $x_{2}^{(n)}$ belong to the boundary of
the same obstacle, the second term in (\ref{eq:nonrec1}) is non-negative
and we get
\begin{equation}
(x_{1}^{(n)}-x_{2}^{(n)})\cdot(\xi_{1}^{(n)}-\xi_{2}^{(n)})\geq(x_{1}^{(n)}-x_{2}^{(n)})\cdot(\xi_{1}^{(n-1)}-\xi_{2}^{(n-1)}).\label{eq:nonrec2}
\end{equation}
Moreover, by (\ref{eq:nonrecx})
\begin{multline*}
(x_{1}^{(n)}-x_{2}^{(n)})\cdot(\xi_{1}^{(n-1)}-\xi_{2}^{(n-1)})=(x_{1}^{(n-1)}-x_{2}^{(n-1)})\cdot(\xi_{1}^{(n-1)}-\xi_{2}^{(n-1)})\\
+(t_{1}^{(n-1)}\xi_{1}^{(n-1)}-t_{2}^{(n-1)}\xi_{2}^{(n-1)})\cdot(\xi_{1}^{(n-1)}-\xi_{2}^{(n-1)})\\
=(x_{1}^{(n-1)}-x_{2}^{(n-1)})\cdot(\xi_{1}^{(n-1)}-\xi_{2}^{(n-1)})+(t_{1}^{(n-1)}+t_{2}^{(n-1)})(1-\xi_{1}^{(n-1)}\cdot\xi_{2}^{(n-1)}).
\end{multline*}
Therefore, combining this identity with (\ref{eq:nonrec2})
\begin{equation*}
(x_{1}^{(n)}-x_{2}^{(n)})\cdot(\xi_{1}^{(n)}-\xi_{2}^{(n)})\geq(x_{1}^{(n-1)}-x_{2}^{(n-1)})\cdot(\xi_{1}^{(n-1)}-\xi_{2}^{(n-1)})
+(t_{1}^{(n-1)}+t_{2}^{(n-1)})(1-\xi_{1}^{(n-1)}\cdot\xi_{2}^{(n-1)}).
\end{equation*}
And by induction we get
\[
(x_{1}^{(n)}-x_{2}^{(n)})\cdot(\xi_{1}^{(n)}-\xi_{2}^{(n)})\geq\sum_{k=0}^{n-1}(t_{1}^{(k)}+t_{2}^{(k)})(1-\xi_{1}^{(k)}\cdot\xi_{2}^{(k)}).
\]
Therefore, by (\ref{eq:nonrec0}), if $x\notin \partial \Omega$, as $t_{1}^{(0)}+t_{2}^{(0)} >0$, we conclude that  $\xi_1 = \xi_2$. In the case were $x \in \partial \Omega$, we obtain similarly $\xi^{(1)}_{1} = \xi^{(1)}_{2}$, and hence $\xi_1 = \xi_2$.

\end{proof}

\subsection{Weak trapping}

\begin{lem} \label{lem:finite_trap} 
Let $\Theta_1, \Theta_2$ be two smooth, strictly convex subsets of $\text{\ensuremath{\mathbb{R}}}^{n}$
and $\Omega:=\mathbb{R}^{n}\backslash(\Theta_{1}\cup\Theta_{2})$.
Then, every point on $\overline \Omega$ is only on the way of a finite number of
trajectories that are trapped either in the future or in the past. In particular, $\Omega$ satisfies Assumption \ref{ass:weaktrap}.
\end{lem}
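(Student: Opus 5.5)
Fix $x_0\in\overline\Omega$. The plan has three steps: first show that broken rays from $x_0$ trapped in the future are finite in number, then convert this finiteness into the sets $V_n$ required by Assumption \ref{ass:weaktrap} by a compactness argument on $\mathbb S^{n-1}$.

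\textbf{Step 1: reduction to the periodic ray.} Use the structure of the trapped set of two strictly convex bodies. Any trajectory that stays in a bounded region in the future must bounce between $\Theta_1$ and $\Theta_2$ infinitely often, alternating. Its story of reflections is therefore the infinite alternating sequence $(1,2,1,2,\dots)$ or $(2,1,2,1,\dots)$, after possibly one initial free segment. It cannot escape and come back, because outside the convex hull of $\Theta_1\cup\Theta_2$ straight lines go to infinity.

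\textbf{Step 2: uniqueness given the reflection story.} By the strict monotonicity computation from the proof of Lemma \ref{lem:rays_noreco}, the quantity
$$D_k:=(x_1^{(k)}-x_2^{(k)})\cdot(\xi_1^{(k)}-\xi_2^{(k)})$$
satisfies
$$D_k\geq \sum_{l<k}(t_1^{(l)}+t_2^{(l)})(1-\xi_1^{(l)}\cdot\xi_2^{(l)}).$$
Consider two distinct directions $\xi_1\neq\xi_2$ from $x_0$ with the same infinite story and both trapped. Then $1-\xi_1^{(l)}\cdot\xi_2^{(l)}$ stays bounded below by a fixed positive multiple of $|\xi_1-\xi_2|^2$. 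This is the main obstacle: I would show it by proving that reflection on a strictly convex surface never decreases the angle between two beams hitting the same obstacle. That is the usual dispersing (defocusing) property, and it can be extracted from the same convexity inequality $(y-z)\cdot \vec n(y)\leq 0$.

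Granting this, the times $t^{(l)}$ are bounded below by the distance $d(\Theta_1,\Theta_2)>0$, so $D_k\to+\infty$. But $|D_k|\leq 2|x_1^{(k)}-x_2^{(k)}|\leq 2\,\mathrm{diam}$ of the convex hull, because both trajectories stay on the obstacles. This contradiction shows that for each of the two infinite stories, and each choice of first obstacle, there is at most one trapped direction in the future. The same holds in the past by time reversal. Hence at most finitely many (at most $4$) directions from $x_0$ are trapped forward or backward.

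\textbf{Step 3: construction of $V_n$.} Let $F\subset\mathbb S^{n-1}$ be this finite set and take $V_n$ to be the closed $1/n$-neighbourhood of $F$. Then $V_n$ is non-increasing and $|V_n|\to0$. For fixed $R$ and $n$, argue by contradiction: suppose there are $\xi_k\notin V_n$ and $|t_k|\to\infty$ with $\gamma_{t_k}(x_0,\xi_k)\in B(0,R)$. Extract a subsequence with $\xi_k\to\xi\notin\operatorname{int}V_n$, so $\xi\notin F$.

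By continuity of the billiard flow away from glancing rays, the trajectory of $\xi$ must remain near the obstacles for arbitrarily long times. Glancing is harmless here: strict convexity gives transversal, nonglancing bounces except on a finite-time initial segment. The trajectory of $\xi$ then follows the same long alternating story. A limiting argument, using the contraction of Step 2 in the backward direction of the reflection maps, gives that $\xi$ itself is trapped, which contradicts $\xi\notin F$. This yields the time $T_{R,n}$.
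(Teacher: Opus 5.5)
The architecture of your proposal is sound: alternating reflection stories, a contradiction from the monotone quantity $D_k$, then compactness to build $V_n$. However, the lemma you propose for the key step of Step~2 is false for $n\ge 3$.

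\textbf{Why the lemma fails.} Reflection on a strictly convex surface need not preserve or increase the angle between two diverging beams. This also does not follow from $(y-z)\cdot\vec n(y)\le 0$: that inequality gives monotonicity of the mixed quantity $D_k$, not of $|\xi_1^{(k)}-\xi_2^{(k)}|$. Infinitesimally, $D$ is the quadratic form $\langle dy,B\,dy\rangle$ of the front curvature $B\ge 0$, whereas the angular spread is $|B\,dy|$. Reflection replaces $B$ by $RBR+2K$ for some $K\ge 0$ depending on curvature and incidence; the form increases, but the norm need not.

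For instance, take normal incidence on $\{z=-\tfrac12 x^{T}Ax\}$ with $A>0$. An incoming front with angular spread $M\,dx$ ($M\ge 0$) leaves with spread $(M+2A)\,dx$. Choose $M=\mathrm{diag}(1,\delta)$, $A=\tfrac14\begin{pmatrix}1&1\\1&1\end{pmatrix}+\eta I$, $dx=(1,-2)$, with $\delta,\eta>0$ small. Then $|M\,dx|^2\approx 1$ but $|(M+2A)\,dx|^2\approx\tfrac12$. Fronts become astigmatic after reflection, so your bound $1-\xi_1^{(l)}\cdot\xi_2^{(l)}\ge c|\xi_1-\xi_2|^2$ cannot be propagated reflection by reflection.

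\textbf{Repair.} Use $D_k$ itself.
\begin{itemize}
\item For $k\ge 1$ you have $D_k\ge D_1\ge (t_1^{(0)}+t_2^{(0)})(1-\xi_1\cdot\xi_2)>0$. If $x_0\in\partial\Omega$, start at the first reflection, as the paper does.
\item The points $x_1^{(k)},x_2^{(k)}$ lie on the same obstacle, so Cauchy--Schwarz gives $D_k\le L\,|\xi_1^{(k)}-\xi_2^{(k)}|$ with $L=\max_i\operatorname{diam}\Theta_i$.
\item Hence $1-\xi_1^{(k)}\cdot\xi_2^{(k)}\ge D_1^2/(2L^2)$, so each bounce increases $D_k$ by at least $d\,D_1^2/L^2$, where $d=\operatorname{dist}(\Theta_1,\Theta_2)$.
\item Thus $D_k\to\infty$, contradicting $|D_k|\le 2L$.
\end{itemize}

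\textbf{Comparison with the paper.} Once repaired, your argument genuinely differs from the paper's, which reaches the opposite contradiction. The paper identifies the $\omega$-limit sets of both trapped rays with $\mathcal R\times\{\pm e\}$. It then uses the common reflection story to get $\xi_1^{(k)}-\xi_2^{(k)}\to 0$, hence $D_k\to 0$, against $D_k\ge D_1>0$. Your repaired version needs no description of the trapped set.

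\textbf{Step~3.} The paper does not write this step out, and your compactness argument is the right one. The ``backward contraction'' is unnecessary, however: escape is an open condition. A ray outside a ball containing the convex hull and moving outward never returns, and this persists for nearby directions by continuity of the generalized flow. This yields a uniform $T_{R,n}$ on the compact set $\{\xi:\operatorname{dist}(\xi,F)\ge 1/n\}$.
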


\begin{proof}
Let $x \in \bar \Omega$ and $\xi_1, \xi_2 \in \mathbb{S}^{n-1}$. We will see that 
if both $(x, \xi_1)$ and $(x, \xi_2)$ are trapped in the future, then either $\xi_1 = \xi_2$ or the rays starting from $(x, \xi_1)$ and $(x, \xi_2)$ first reflect on different obstacles: this will show that there is at most two different trajectories from $x$ that are trapped in the future. 

Assume that $(x, \xi_1)$ and $(x, \xi_2)$ are trapped in the future and both first reflect on the same obstacle, and let us show that $\xi_1 = \xi_2$. By assumption, 
$$
\bigcap_{T\geq 0} \overline{\{\varphi_t(x,\xi_{1}), \; t\geq T \} } \; \text{ and } \;
\bigcap_{T\geq 0} \overline{\{\varphi_t(x,\xi_{2}), \; t\geq T \} } 
$$
are compact sets of $S^b \overline{\Omega}$. On the other hand, both of these sets are invariant by the flow of geometrical optics (in the future and in the past). The only such set is the trapped set $\mathcal R \times \{-e, e\}$, where $e\in \mathbb{S}^{n-1}$ is parallel to the trapped ray $\mathcal R$. Indeed, if $\xi$ is not colinear to $e$, then $|x(t) \cdot \xi(t)| \to +\infty $ 
for 
either $t \to + \infty$ or $t\to - \infty$,
where $\varphi_t(x, \xi) =: (x(t), \xi(t))$.  Hence 
$$
\bigcap_{T\geq 0} \overline{\{\varphi_t(x,\xi_{1}), \; t\geq T \} }
= \bigcap_{T\geq 0} \overline{\{\varphi_t(x,\xi_{2}), \; t\geq T \} } = \mathcal R \times \{-e, e\}.
$$
We now adopt the notations of the proof of Lemma \ref{lem:rays_noreco}. From any subsequence of $\xi_{j}^{(n)}$, we can extract a converging subsequence. Because of the above, the limit can be either $-e$ or $+e$. In addition, because the rays starting from $(x,\xi_{1})$ and $(x,\xi_{2})$ follow the same story of reflections, $\xi^{(n)}_1 \cdot e$ and  $\xi^{(n)}_2 \cdot e$ have the same sign for any $n$. Therefore, the difference $\xi^{(n)}_1 - \xi^{(n)}_2$ goes to zero. Hence, for $\epsilon > 0$ arbitrary, we can fix $n \geq 1$ large enough so that
$$
\big| (x_{1}^{(n)}-x_{2}^{(n)})\cdot(\xi_{1}^{(n)}-\xi_{2}^{(n)}) \big| \leq \epsilon.
$$
On the other hand, we saw in the proof of Lemma \ref{lem:rays_noreco} that
$$
(x_{1}^{(n)}-x_{2}^{(n)})\cdot(\xi_{1}^{(n)}-\xi_{2}^{(n)})\geq\sum_{k=0}^{n-1}(t_{1}^{(k)}+t_{2}^{(k)})(1-\xi_{1}^{(k)}\cdot\xi_{2}^{(k)}).
$$
In particular (recall that $\xi^{(0)}_{1,2} = \xi_{1,2}$),
$$
0 \leq  (t_{1}^{(0)}+t_{2}^{(0)})(1-\xi_{1} \cdot\xi_{2}) \leq (x_{1}^{(n)}-x_{2}^{(n)})\cdot(\xi_{1}^{(n)}-\xi_{2}^{(n)}) \leq \epsilon.
$$
Therefore, as $\epsilon>0$ is arbitrary, if $x\notin \partial \Omega$ and hence $t_{1}^{(0)}+t_{2}^{(0)} >0$, we conclude that  $\xi_1 = \xi_2$. In the case were $x \in \partial \Omega$, we obtain similarly $\xi^{(1)}_{1} = \xi^{(1)}_{2}$, and hence $\xi_1 = \xi_2$.

The same holds for trajectories trapped in the past, and the Lemma follows.
\end{proof}

\appendix

\section{Functional analysis}
We used the following property, which is certainly classical, see \cite[Lemma 17.2]{TartarBookinterp} for a closely related statement.
\begin{lem} \label{lem:H1moins_point}
$\dot H_0^1(\mathbb R^3 \backslash \{ 0 \}) = \dot H^1(\mathbb R^3)$.
\end{lem}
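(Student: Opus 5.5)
We need to show that $C_c^\infty(\mathbb R^3\setminus\{0\})$ is dense in $\dot H^1(\mathbb R^3)$ for the norm $\|\nabla f\|_{L^2}$. Since $C_c^\infty(\mathbb R^3)$ is dense in $\dot H^1(\mathbb R^3)$ by definition, it suffices to approximate a fixed $f\in C_c^\infty(\mathbb R^3)$ by functions vanishing near the origin. The inclusion $\dot H_0^1(\mathbb R^3\setminus\{0\})\subset \dot H^1(\mathbb R^3)$ is immediate: extension by zero is an isometry on $C_c^\infty(\mathbb R^3\setminus\{0\})$, and it passes to the completion.

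We use the fact that a point has zero $H^1$-capacity in dimension three. Fix $\theta\in C^\infty(\mathbb R^3,[0,1])$ with $\theta=0$ on $B(0,1)$ and $\theta=1$ outside $B(0,2)$. Set $\theta_\epsilon(x):=\theta(x/\epsilon)$ and $f_\epsilon:=\theta_\epsilon f\in C_c^\infty(\mathbb R^3\setminus\{0\})$. Then
$$\nabla(f-f_\epsilon)=(1-\theta_\epsilon)\nabla f-f\nabla\theta_\epsilon .$$
The first term is bounded by $\|\nabla f\|_{L^2(B(0,2\epsilon))}$, which tends to $0$. For the second term, $|\nabla\theta_\epsilon|\lesssim\epsilon^{-1}\mathbb 1_{B(0,2\epsilon)}$, so
$$\|f\nabla\theta_\epsilon\|_{L^2}^2\lesssim \|f\|_{L^\infty}^2\,\epsilon^{-2}\,|B(0,2\epsilon)|\lesssim \epsilon\to0 .$$
This is exactly where the dimension $3>2$ enters, and it is the only point needing care.

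Hence $f_\epsilon\to f$ in $\dot H^1$. This gives density, so the closures coincide: $\dot H_0^1(\mathbb R^3\setminus\{0\})=\dot H^1(\mathbb R^3)$.
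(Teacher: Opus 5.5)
Your proof is correct and is essentially the paper's argument: the paper also multiplies $f\in C^\infty_c(\mathbb R^3)$ by the cutoff $1-\chi(x/\epsilon)$ (your $\theta_\epsilon$) and bounds the two resulting terms by $(1+\epsilon^{-1})|B(0,2\epsilon)|^{1/2}\lesssim \epsilon^{1/2}$. The only addition on your side is that you spell out the easy inclusion via extension by zero, which the paper leaves implicit.
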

\begin{proof}
It suffices to show that any element of  $C^\infty_c(\mathbb R^3)$ can be approximated in $\dot H^1(\mathbb R^3)$ seminorm by elements of $ C^\infty_c(\mathbb R^3\backslash \{ 0 \})$.
In order to do so, let $f\in C^\infty_c(\mathbb R^3)$, and $\chi \in C^\infty_c(\mathbb R^3)$ be so that $\operatorname{supp}\chi\subset B(0,2)$ and $\chi = 1$ in $B(0,1)$. For any $\epsilon>0$, we set
$$
f_\epsilon(x) := f(x)(1-\chi(\frac{x}{\epsilon})).
$$
Observe that $f_\epsilon \in C^\infty_c(\mathbb R^3\backslash \{ 0 \})$, and
\begin{align*}
\Vert f - f_\epsilon \Vert_{\dot H^1} &\leq \Vert \nabla f \chi(\frac{x}{\epsilon}) \Vert_{L^2} + \epsilon^{-1}\Vert f \nabla \chi(\frac{x}{\epsilon}) \Vert_{L^2} \\
&\lesssim \Vert \nabla f \Vert_{L^2(B(0,2\epsilon))} + \epsilon^{-1} \Vert f \Vert_{L^2(B(0,2\epsilon))} \lesssim (1 + \epsilon^{-1})|B(0,2\epsilon)|^{1/2} \lesssim \epsilon^{1/2}.
\end{align*}
This finishes the proof.
\end{proof}

\begin{lem}
\label{lem:relcompfam}Let $\{u(t),\ t\geq0\}$ be a relatively compact
family of $\dot{H}^{1}$, and $S_{k}$ be a family of subsets of $\Omega$
such that the Lebesgue mesure of $S_{k}$ goes to zero as $k$ goes
to infinity. Then we have
\[
\sup_{t\geq0}\int_{S_{k}}|\nabla u(t,x)|^{2}dx\underset{k\to +\infty}{\longrightarrow}0.
\]

\end{lem}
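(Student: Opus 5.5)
The plan is to use the compactness of $K:=\overline{\{u(t),\ t\geq 0\}}$ in $\dot H^1$ to get a uniform approximation of every element by finitely many functions. We then exploit the absolute continuity of the integral $\int_S |\nabla f|^2$ for each fixed $f\in\dot H^1$, where $|\nabla f|^2\in L^1$. This is the standard "uniform integrability from compactness" argument, and it has three steps.

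First, fix $\epsilon>0$. By relative compactness, $K$ is totally bounded in $\dot H^1$, so there are finitely many $f_1,\dots,f_N\in\dot H^1$ such that for every $t\geq 0$ there is an index $i(t)$ with
$$\Vert \nabla u(t)-\nabla f_{i(t)}\Vert_{L^2}\leq \epsilon.$$
Second, for each fixed $i$, the function $|\nabla f_i|^2$ is in $L^1$. Absolute continuity of the Lebesgue integral then gives $\delta_i>0$ such that $|S|\leq\delta_i$ implies $\int_S|\nabla f_i|^2\leq\epsilon^2$. Set $\delta:=\min_i\delta_i>0$; this is where finiteness of the net is used. Since $|S_k|\to 0$, there is $k_0$ with $|S_k|\leq\delta$ for all $k\geq k_0$.

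Third, for $k\geq k_0$ and any $t\geq0$, the triangle inequality in $L^2(S_k)$ gives
$$\Big(\int_{S_k}|\nabla u(t)|^2\Big)^{1/2}\leq \Vert\nabla u(t)-\nabla f_{i(t)}\Vert_{L^2}+\Big(\int_{S_k}|\nabla f_{i(t)}|^2\Big)^{1/2}\leq 2\epsilon.$$
Taking the supremum over $t$ and then letting $\epsilon\to0$ concludes.

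There is no serious obstacle; the only point requiring care is making $\delta$ uniform in $t$, which the finite $\epsilon$-net handles. If one prefers a sequential argument, one can argue by contradiction with $t_k$ and extract a convergent subsequence $u(t_k)\to f$ in $\dot H^1$. Then $\int_{S_k}|\nabla u(t_k)|^2\leq 2\Vert\nabla(u(t_k)-f)\Vert^2_{L^2}+2\int_{S_k}|\nabla f|^2\to0$ by dominated convergence.
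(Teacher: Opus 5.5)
Your proof is correct and is essentially the paper's. The paper uses exactly your sequential variant: argue by contradiction, extract $u(t_k)\to u^\star$ in $\dot H^1$, and bound $\int_{S_{n_k}}|\nabla u(t_k)|^2\le 2\int_{S_{n_k}}|\nabla u^\star|^2+2\Vert\nabla(u^\star-u(t_k))\Vert_{L^2}^2$; your finite $\epsilon$-net argument is the direct, non-contradiction form of the same idea. One small precision: in the sequential variant, $\int_{S_k}|\nabla f|^2\to 0$ should be justified by absolute continuity of the integral, as in your main argument, rather than by plain dominated convergence, since $|S_k|\to0$ only gives $\mathbb{1}_{S_k}\to 0$ in measure and not necessarily almost everywhere.
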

\begin{proof}
If it is not the case, there exists a subsequence $S_{n_{k}}$, a
sequence of times $t_{k}$ and $\epsilon>0$ such that
\[
\forall k,\ \int_{S_{n_{k}}}|\nabla u(t_{k})|^{2}dx\geq\epsilon.
\]
Because $\{u(t)\ t\geq0\}$ is relatively compact in $\dot{H}^{1}$,
we can suppose, up to extract a subsequence, that,
\[
u(t_{k})\underset{k\to +\infty}{\longrightarrow} u^{\star}\in\dot{H}^{1}\text{ in }\dot{H}^{1}.
\]
We have
\[
\int_{S_{n_{k}}}|\nabla u(t_{k})|^{2}dx\leq 2\int_{S_{n_{k}}}|\nabla u^{\star}|^{2}dx+ 2\int_{\Omega}|\nabla(u^{\star}-u(t_{k}))|^{2}dx.
\]
Both terms are going to zero as $k$ goes to infinity and we obtain
a contradiction.
\end{proof}

\section{Reparametrization of the flow}
\begin{lem}\label{lmrepar}
Let $a<b$ and $X$ a Hausdorff topological space. Assume that for any $s\in \R$, there exists $\phi_{s}:[a,b] \times X\longrightarrow [a,b] \times X$  so that $(s,\tau,\rho)\in \R\times [a,b]\times X\mapsto \phi_{s}(\tau,\rho)$ is continuous and for any $c<d$, its restriction to $[c,d]\times [a,b]\times X$ is proper\footnote{that is, $f^{-1}(K)$ is compact for any compact $K$}. We assume moreover that $\phi$ leaves the first variable $\tau$ invariant. Let $\mu$ a Radon measure on $[a,b] \times X$ so that $\phi_{s}^{*}\mu=\mu$ for any $s\in \R$. Let $g:[a,b]\mapsto \R$ be a continuous function. Denote $\widetilde{\phi}_{s}(\tau,\rho)=\phi_{sg(\tau)}(\tau,\rho)$. Then, $\widetilde{\phi}_{s}^{*}\mu=\mu$ for any $s\in \R$.
\end{lem}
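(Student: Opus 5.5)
We reduce the statement to the invariance of $\mu$ under the original flow through a change of variables in the $s$ parameter, handled fiberwise in $\tau$. Since $\phi_s$ leaves $\tau$ invariant, we disintegrate $\mu$ with respect to its pushforward $\nu$ on $[a,b]$ under the projection $\pi_\tau$: $\mu=\int \mu_\tau\, d\nu(\tau)$, with $\mu_\tau$ Radon measures on $X$. Here $X$ being Hausdorff and $\mu$ Radon is what we need, possibly after restricting to $\sigma$-compact supports. The invariance $\phi_s^*\mu=\mu$ for all $s$ should then give $(\phi_s(\tau,\cdot))^*\mu_\tau=\mu_\tau$ for $\nu$-a.e.\ $\tau$, simultaneously for all $s$. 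Once this holds, $\widetilde\phi_s$ acts on the fiber $\tau$ as $\phi_{sg(\tau)}(\tau,\cdot)$, which preserves $\mu_\tau$, and integrating in $\nu$ gives the result.

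To avoid the heavy measure-theoretic disintegration, we instead work directly with test functions, which is the route we would actually write. Fix $f\in C_c([a,b]\times X)$. We need $\int f\circ\widetilde\phi_s\,d\mu=\int f\,d\mu$. First, $f\circ\widetilde\phi_s$ is continuous and compactly supported. Continuity follows from the joint continuity of $(s,\tau,\rho)\mapsto\phi_s(\tau,\rho)$ together with that of $g$. Compact support follows from properness on $[c,d]\times[a,b]\times X$ with $[c,d]\supset s\,g([a,b])$: the preimage of $\operatorname{supp} f$ is compact in $[c,d]\times[a,b]\times X$, and we project it.

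The key step is the following. For any $h\in C([a,b])$ and any fixed $\sigma$, the invariance under $\phi_\sigma$ together with the fact that $\phi_\sigma$ preserves $\tau$ gives $\int h(\tau)\,f\circ\phi_\sigma\,d\mu=\int h(\tau) f\,d\mu$. This is simply invariance applied to the test function $(h\circ\pi_\tau) f$. Hence invariance holds fiberwise in the weak sense. Next, approximate $g$ uniformly by step functions: partition $[a,b]$ into small intervals $I_k$, take a partition of unity $h_k$ subordinate to them, and set $\sigma_k=s\,g(\tau_k)$. Then
\[
\int f\circ\widetilde\phi_s\,d\mu \approx \sum_k\int h_k\, f\circ\phi_{\sigma_k}\,d\mu=\sum_k\int h_k f\,d\mu=\int f\,d\mu .
\]

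The main obstacle is justifying the approximation error. We need $f\circ\phi_{sg(\tau)}-f\circ\phi_{sg(\tau_k)}\to0$ uniformly on $\operatorname{supp}h_k\times X$ as the mesh goes to zero. This should follow from the uniform continuity of $(\sigma,\tau,\rho)\mapsto f(\phi_\sigma(\tau,\rho))$. That function is continuous, and by properness it is supported in a compact set $K$ for $\sigma\in[c,d]$, so it is uniformly continuous in $\sigma$, uniformly in $(\tau,\rho)$. This holds even though $X$ is only Hausdorff, by a standard compactness argument on $K$. Since all the integrands are then supported in a fixed compact set $\pi(K)$ of finite $\mu$-measure, the error tends to zero, which concludes the proof.
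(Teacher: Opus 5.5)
Your test-function argument is correct and is essentially the paper's proof: compact support of $f\circ\widetilde\phi_{s}$ via properness, a continuous partition of unity in $\tau$ with frozen times $s\,g(\tau_k)$, and invariance applied to $(h_k f)\circ\phi_{s g(\tau_k)}$ using that $\phi$ preserves $\tau$. The error is then controlled, as in the paper, by $\varepsilon$ times the $\mu$-measure of a fixed compact set, using uniform continuity on the compact preimage. The disintegration sketch in your first paragraph is unnecessary, and would need extra care to obtain invariance for all $s$ simultaneously, so you are right to discard it.
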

\begin{proof}
Let $m\in C^{0}_{c}([a,b] \times X)$, $s_{0}\in \R$ and $\varepsilon>0$. The application $ [a,b]^{2} \times X\ni(\tau',\tau,\rho)\mapsto m(\phi_{s_{0}g(\tau')}(\tau,\rho))\in [a,b] \times X$ is continuous. We claim that it is also compactly supported. Indeed, denote $K=\operatorname{supp}(m)$ which is compact. $g([a,b])$ is compact, so there exists $c<d$ so that $s_{0}g([a,b])\subset [c,d]$. Denote $K'=f^{-1}(K)$ where $f$ is defined from $[c,d]\times [a,b]\times X$ to $[a,b]\times X$ by $f(s,\tau,\rho)= \phi_{s}(\tau,\rho)$. So by assumption, $K'$ is compact in $[c,d]\times [a,b]\times X$. If $\Pi_{X}$ is the projection on $X$, continuous from $[c,d]\times [a,b]\times X$ to $X$, then $K_{X}':=\Pi_{X}(K')$ is compact. In particular, $\phi_{s_{0}g(\tau')}(\tau,\rho)\in K$ implies $(s_{0}g(\tau'),\tau,\rho)\in K'\subset [c,d]\times [a,b]\times K_{X}'$. The support of the application $[a,b]^{2} \times X\ni(\tau',\tau,\rho)\mapsto m(\phi_{s_{0}g(\tau')}(\tau,\rho))\in [a,b] \times X$ is therefore compact,proving the claim. In particular, there exists $\delta>0$ so that $|\tau_{1}-\tau_{2}|\leq \delta$ implies $\left|m(\phi_{s_{0}g(\tau_{1})}(\tau,\rho))-m(\phi_{s_{0}g(\tau_{2})}(\tau,\rho)) \right|\leq \varepsilon$ for all $(\tau,\rho)\in [a,b]\times X$. Let now $\chi_{i}\in C([a,b],[0,1])$ for $i\in I$, $I$ finite, so that $\operatorname{supp}(\chi_{i})\subset [\tau_{i}-\delta/2, \tau_{i}+\delta/2]$ for some $\tau_{i}\in [a,b]$ and $\sum_{i}\chi_{i}(\tau)=1$ for $\tau\in [a,b]$. We write
\begin{align*}
\left<\widetilde{\phi}_{s_{0}}^{*}\mu,m\right>=\left<\mu,m\circ \widetilde{\phi}_{s_{0}}\right>=\sum_{i\in I}\left<\mu,(\chi_{i}m)\circ \widetilde{\phi}_{s_{0}}\right>.
\end{align*}
We define 
$$
r_{i}(\tau,\rho):=m(\phi_{s_{0}g(\tau)}(\tau,\rho))-m(\phi_{s_{0}g(\tau_{i})}(\tau,\rho)),$$ 
so that 
$$
\left[(\chi_{i}m)\circ \widetilde{\phi}_{s_{0}}\right](\tau,\rho)=(\chi_{i}m)\circ \phi_{s_{0}g(\tau_{i})}(\tau,\rho)+\chi_{i}(\tau)r_{i}(\tau,\rho),
$$
where we have used that $\phi_{s}$ leaves $\tau$ invariant so that $(\chi_{i}\circ \widetilde{\phi}_{s_{0}})(\tau)=\chi_{i}(\tau)$. 
Using the invariance of $\mu$, we have $\left<\mu,(\chi_{i}m)\circ \phi_{s_{0}g(\tau_{i})}(\tau,\rho)\right>=\left<\mu,\chi_{i}m\right>$, so that after summing up $$
\left<\widetilde{\phi}_{s_{0}}^{*}\mu,m\right>=\left<\mu,m\right>+\sum_{i\in I}\left<\mu,\chi_{i}r_{i}\right>.$$
Moreover, since $|\tau_{i}-\tau|\leq \delta$ on the support of $\chi_{i}$, we have the estimate 
$$
\sum_{i\in I}|\chi_{i}r_{i}(\tau,\rho)|\leq \sum_{i\in I}\chi_{i}(\tau)\left|m(\phi_{s_{0}g(\tau)}(\tau,\rho))-m(\phi_{s_{0}g(\tau_{i})}(\tau,\rho)) \right|\leq \varepsilon \sum_{i\in I}\chi_{i}=\varepsilon.$$
In addition, with the previous notation, $\operatorname{supp}(r_{i})\subset [a,b]\times K_{X}'$ which is compact and therefore, $\mu([a,b]\times K_{X}')$ is finite and independent on $\varepsilon$. In particular, we have $\left|\sum_{i\in I}\left<\mu,\chi_{i}r_{i}\right>\right|\leq \varepsilon \mu([a,b]\times K_{X}')$. It gives the result since $\varepsilon$ is arbitrary and the other part is independent on $\varepsilon$.
\end{proof}
\section{A geometric lemma}
\begin{lem}
\label{lm:volC}
There exists $\epsilon_0>0$ and $D>0$ so that for every $0<\epsilon<\epsilon_0$ and $(x_0,r,R,t)\in ( \R^3,\R_+,\R_+,\R_+)$ so that $r\leq \epsilon |x_0|$, $R\leq \epsilon |x_0|$, $R\leq \epsilon_0 t$, the set 
   $$    C(x_0,r,R,t) := B(x_0,r) \cap \big\{x\in \R^3\textnormal{ s.t. } |x| \in [t-R, t+R] \big\}.$$
satisfies
   \begin{equation} \label{eq:taille_C}
|C(x_0,r,R,t)|\leq D t^2 R\epsilon.
\end{equation}
\end{lem}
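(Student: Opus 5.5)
The plan is to bound the volume of $C:=C(x_0,r,R,t)$ by integrating in spherical coordinates: $|C| = \int_{t-R}^{t+R} \sigma_\rho\, d\rho$, where $\sigma_\rho$ denotes the two-dimensional area of $S_\rho\cap B(x_0,r)$ and $S_\rho$ is the sphere of radius $\rho$ centered at $0$. The radial interval has length $2R$. It therefore suffices to show that, for every $\rho\in[t-R,t+R]$ for which the intersection is nonempty, we have $\sigma_\rho \lesssim t^2\epsilon$.

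The key geometric input is the following. Since $r\le \epsilon|x_0|$ with $\epsilon<\epsilon_0$ small, the ball $B(x_0,r)$ is seen from the origin under a small angle. Every $x\in B(x_0,r)$ satisfies $x\neq 0$, and the angle between $x$ and $x_0$ obeys $\sin\angle(x,x_0)\le r/|x_0|\le\epsilon$. Hence the angle is at most $2\epsilon$ once $\epsilon_0\le 1/2$. So $S_\rho\cap B(x_0,r)$ lies inside a spherical cap of angular radius $2\epsilon$, and
\[
\sigma_\rho\le 2\pi\rho^2\bigl(1-\cos 2\epsilon\bigr)\lesssim \rho^2\epsilon^2 .
\]
Because $R\le\epsilon_0 t$, we have $\rho\le t+R\le 2t$, so $\sigma_\rho\lesssim t^2\epsilon^2$. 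Integrating over the radial interval gives $|C|\lesssim R\,t^2\epsilon^2\le D\,t^2R\epsilon$, which is even better than the claim by a factor $\epsilon$.

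I would double-check that only $r\le\epsilon|x_0|$ and $R\le\epsilon_0 t$ are actually used. The hypothesis $R\le\epsilon|x_0|$ appears unnecessary for this crude bound; it is presumably needed only when the lemma is applied elsewhere in the paper. The main point requiring care is the cap inclusion, that is, justifying that $|x-x_0|<r$ forces the angular deviation to be $O(r/|x_0|)$. This follows from the elementary inequality $\operatorname{dist}(x_0,\mathbb R_+x)\le|x-x_0|$ together with $\sin\angle = \operatorname{dist}(x_0,\mathbb R x)/|x_0|$ for acute angles. Acuteness holds because $x\cdot x_0\ge |x_0|^2-r|x_0|>0$.

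The remaining steps are constant bookkeeping. One fixes $\epsilon_0=1/2$ and takes $D$ absolute, and the degenerate cases ($t<R$, or an empty intersection) are covered automatically because $\rho\ge0$.
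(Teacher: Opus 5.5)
Your argument is correct and follows the same scheme as the paper's proof. Both confine $C$ to the intersection of the shell $\{t-R\le|x|\le t+R\}$ with a narrow cone around $x_0/|x_0|$, and then integrate in spherical coordinates over a radial interval of length $2R$.

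The only difference is how the cone is obtained. The paper bounds the cosine, $\bigl|\frac{x}{|x|}\cdot\frac{x_0}{|x_0|}\bigr|\ge 1-5\epsilon$, using the shell radii; this is where $R\le\epsilon|x_0|$ enters, and it only yields an aperture $O(\sqrt{\epsilon})$ and cap area $O(\epsilon)$. Your bound $\sin\angle(x,x_0)\le r/|x_0|$ gives aperture $O(\epsilon)$ and the stronger estimate $|C|\lesssim Rt^2\epsilon^2$, and it shows that the hypothesis $R\le\epsilon|x_0|$ is unnecessary.
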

 \begin{proof}
      Let $x \in C(x_0,r,t)$. We have
$$
\frac{x}{|x|} \cdot \frac{x_0}{|x_0|} =  \frac{1}{|x|}x_0\cdot  \frac{x_0}{|x_0|} + \frac{1}{|x|}(x-x_0) \cdot  \frac{x_0}{|x_0|} ,
$$
from which
$$
\left| \frac{x}{|x|} \cdot \frac{x_0}{|x_0|} \right| \geq  
\frac{1}{t+R} |x_0| - \frac{1}{t-R}r. 
$$
Therefore, for $t \in [|x_0|-2r,|x_0|+2r]$ 
$$
\left| \frac{x}{|x|} \cdot \frac{x_0}{|x_0|} \right| \geq 
\frac{1}{|x_0|+2r+R} |x_0| - \frac{r}{|x_0|-2r-R}=1- \frac{2r+R}{|x_0|+r+R}  - \frac{r}{|x_0|-2r-R},
$$
and using $r\leq \epsilon |x_0|$, $R\leq \epsilon |x_0|$
$$
\left| \frac{x}{|x|} \cdot \frac{x_0}{|x_0|} \right| \geq 1-\frac{3\epsilon|x_0|}{|x_0|}- \frac{\epsilon|x_0|}{|x_0|(1-3\epsilon)}\geq 1-5\epsilon,
$$
where we have used $\epsilon <1/6$. Assuming $\frac{x_0}{|x_0|}=(1,0,0)$ by rotation, we  have obtained that $x \in C$ implies 
$$
\left| \frac{x_1}{|x|}\right|^2 \geq (1-5\epsilon)^2\geq 1-10 \epsilon (1-\epsilon /2)\geq 1-11 \epsilon ,
$$
for $\epsilon$ small enough. Therefore, 
\begin{align*}
|C|&\leq  4\pi \int_{t-R}^{t+R}\rho^2  \int_{s_1=\sqrt{1-11\epsilon}}^1 \sqrt{1-s_1^2}ds d\rho =4\pi \left[(t+R)^3-(t-R)^3\right]\int_{0}^{\arccos(\sqrt{1-11\epsilon}) } \sin (y)^2dy\\
&\leq Dt^3  \left[(1+\frac{R}{t})^3-(1-\frac{R}{t})^3\right]\epsilon\leq DRt^2\epsilon
\end{align*}
where we have taken $0<\epsilon \leq \epsilon_0$ with $\epsilon_0>0$ small enough and used that $\frac{R}{t}\leq \epsilon_0 \ll 1$.
 \end{proof}

\bibliographystyle{amsalpha}
\bibliography{refs}

\end{document}